\definecolor{navy}{rgb}{0.0,0.0,0.5} 
\numberwithin{equation}{section}
\numberwithin{figure}{section}
\newtheorem {theorem}{Theorem}[section]
\newtheorem {proposition}[theorem]{Proposition}
\newtheorem {lemma}[theorem]{Lemma}
\newtheorem {corollary}[theorem]{Corollary}
\newtheorem {conjecture}[theorem]{Conjecture}
\theoremstyle{definition}
\newtheorem{definition}[theorem]{Definition}
\newtheorem*{convention}{Convention}
\newtheorem {example}[theorem]{Example}
\newtheorem {remark}[theorem]{Remark}
\theoremstyle{remark}
\newcommand{\Cones}{\mathrm{CCC}} 
\newcommand{\PolyCones}{\mathrm{PolyCones}} 
\newcommand{\Simpl}{\mathrm{Simpl}} 
\newcommand{\Beta}{\text{\rm Beta}}
\newcommand{\BetaCone}{\operatorname{BetaCone}}
\newcommand{\BetaPoly}{\operatorname{BetaPoly}}
\newcommand{\Vol}{\operatorname{Vol}}
\newcommand{\Content}{\operatorname{Cont}}
\newcommand{\intern}{\operatorname{Int}}
\newcommand{\extern}{\operatorname{Ext}}
\newcommand{\skel}{\operatorname{skel}}
\DeclareMathOperator*{\argmin}{arg\,min}
\renewcommand{\Re}{\operatorname{Re}}  
\newcommand{\ii}{{\rm{i}}}
\def\ba{\begin{array}}
\def\ea{\end{array}}
\def\bea{\begin{eqnarray} \label}
\def\eea{\end{eqnarray}}
\def\be{\begin{equation} \label}
\def\ee{\end{equation}}
\def\bit{\begin{itemize}}
\def\eit{\end{itemize}}
\def\ben{\begin{enumerate}}
\def\een{\end{enumerate}}
\def\bI{\mathbb{I}}
\def\bJ{\mathbb{J}}
\def\BB{\mathbb{B}}
\def\E{\mathbb{E}}
\def\N{\mathbb{N}}
\def\P{\mathbb{P}}
\def\R{\mathbb{R}}
\def\Z{\mathbb{Z}}
\def\C{\mathbb{C}}
\def\RRd1{\mathbb{R}^{d+1}}
\def\SS{\mathbb{S}}
\def\bS{\mathbb{S}}
\def\a{\alpha}
\def\b{\beta}
\def\g{\gamma}
\def\k{\kappa}
\def\l{\lambda}
\newcommand{\linsp}{\mathop{\mathrm{linsp}}\nolimits}  
\def\cC{\mathcal{C}}
\def\cD{\mathcal{D}}
\def\cF{\mathscr{F}}
\def\sC{\mathscr{C}}
\def\sP{\mathscr{P}}
\def\sD{\mathscr{D}}
\def\sG{\mathscr{G}}
\def\sW{\mathscr{W}}
\def\dint{\textup{d}}
\newcommand{\eee}{{\rm e}}
\newcommand{\ind}{\mathbbm{1}}
\newcommand{\eps}{\varepsilon}
\newcommand{\pos}{\mathop{\mathrm{pos}}\nolimits}
\newcommand{\aff}{\mathop{\mathrm{aff}}\nolimits}
\newcommand{\lin}{\mathop{\mathrm{lin}}\nolimits}
\newcommand{\conv}{\mathop{\mathrm{conv}}\nolimits}
\newcommand{\dd}{{\rm d}}
\newcommand{\bsl}{\backslash}
\DeclareMathOperator{\relint}{relint}
\begin{document}

\title[Beta Polytopes and Beta Cones]{Beta Polytopes and Beta Cones: \\ An Exactly Solvable Model in Geometric Probability}

\author{Zakhar Kabluchko and David Albert Steigenberger}

\date{}

\keywords{Beta distribution, convex hull, positive hull, random polytope, random simplex, random cone, canonical decomposition, volume, internal angles, external angles, solid angle, tangent cone, normal cone, beta cone, stochastic geometry, geometric probability,  conic intrinsic volume, intrinsic volume, $T$-functional, explicit model}

\subjclass[2020]{Primary: 60D05, 52A22; Secondary: 52A55, 52B11, 52B05}  

\begin{abstract}

Let $X_1,\ldots, X_n$ be independent random points in the unit ball of $\R^d$ such that $X_i$ follows a beta distribution with the density proportional to $(1-\|x\|^2)^{\beta_i}\mathbbm{1}_{\{\|x\| <1\}}$.  Here, $\beta_1,\ldots, \beta_n> -1$ are parameters.
We study random polytopes of the form $[X_1,\ldots,X_n]$, called beta polytopes. We determine explicitly expected values of several functionals of these polytopes including the number of $k$-dimensional faces, the volume, the intrinsic volumes, the total $k$-volume of the $k$-skeleton, various angle sums, and the $S$-functional which generalizes and unifies many of the above examples. We identify and study the central object needed to analyze beta polytopes: beta cones.
For these, we determine explicitly expected values of several functionals including the solid angle, conic intrinsic volumes and the number of $k$-dimensional faces.  We identify expected conic intrinsic volumes of beta cones as a crucial quantity needed to express all the functionals mentioned above. We obtain a formula for these expected conic intrinsic volumes  in terms of a  function $\Theta$ for which we provide an explicit integral representation. The proofs combine methods from integral and stochastic geometry with the study of the analytic properties of  the function $\Theta$.
\end{abstract}

\maketitle

\tableofcontents

\section{Introduction}

\subsection{Summary of main results}
In the present paper we study problems of geometric probability related to the $d$-dimensional \emph{beta distribution}. This rotationally invariant probability distribution on the $d$-dimensional unit ball $\BB^d:= \{x\in \R^d: \|x\| \leq 1\}$ is defined via its Lebesgue density
\begin{align}\label{eq:def_f_beta}
f_{d,\b}(x) = c_{d,\b}(1-\|x \|)^\b \ind_{\{\|x\|  < 1\}}
\qquad
\text{ with }
\qquad
c_{d,\b} = \frac{\Gamma\left(\frac{d}{2}+\b+1\right)}{\Gamma\left(\b+1\right)\pi^{d/2}}.
\end{align}
Here, $\beta >-1$ is a parameter and $\|x\|$ is the Euclidean norm of $x\in\R^d$. If a random point $X$ in $\BB^d$ follows this distribution, we write $X\sim f_{d,\beta}$.
For instance, the beta distribution with $\beta=0$ is the uniform distribution on the ball $\BB^d$. The weak limit of the beta distribution as $\beta \downarrow -1$ is the uniform distribution on the unit sphere $\bS^{d-1}:= \{x\in \R^d: \|x\| = 1\}$.  For this reason, we say that a random point $X$ follows a beta distribution with parameter $\beta = -1$ if $X$ is uniformly distributed on $\bS^{d-1}$ and write $X\sim f_{d,-1}$ even though $f_{d,-1}$ does not exist as a probability density.
The beta distribution has been introduced into geometric probability by \citet{miles} and~\citet{ruben_miles}. Their results are reviewed in~\cite{kabluchko_steigenberger_thaele}; see also~\cite{bonnet_etal,BonnetEtAlThresholds,bonnet_kabluchko_turchi,gusakova_kabluchko_sylvester_beta,kabluchko_angle_sums_dim_3_4,kabluchko_angles_of_random_simplices_and_face_numbers,kabluchko_recursive_scheme,kabluchko_on_expected_face_numbers_of_beta_polytopes,beta_polytopes,kabluchko_thaele_zaporozhets_beta_polytopes_poi_polyhedra} for further papers on stochastic geometry where the beta distributions play a major role.

\subsubsection{Beta polytopes}
The main object of investigation in the present paper are random beta polytopes defined in the following way. Let $X_1,\dots,X_n$ be independent random  points in $\BB^d$, where each $X_i$ follows a beta-distribution $f_{d,\beta_i}$ with its own parameter $\beta_i\geq -1$. The convex hull of these points, denoted by
$$
\sP:= \sP_{n,d}^{\beta_1,\ldots, \beta_n} := [X_1,\dots,X_n] := \{\lambda_1X_1+\ldots + \lambda_n X_n: \lambda_1\geq 0,\ldots, \lambda_n\geq 0, \lambda_1+\ldots + \lambda_n = 1\},
$$
is called a \emph{beta polytope}.
We shall also express the fact that $\sP$ is  a beta polytope in the following way,
$$
\sP \sim \BetaPoly(\R^d; \beta_1,\ldots,  \beta_n).
$$
In the course of this work, we shall give explicit formulas for expected values of the following functionals of beta polytopes:
\begin{itemize}
\item $f_{k}(\sP)$, the number of $k$-dimensional faces of $\sP$, for all $k\in \{0,\ldots, d-1\}$;
\item $\Vol_d(\sP)$, the volume of $\sP$;
\item $V_k(\sP)$, the $k$-th intrinsic volume of $\sP$;
\item the total $k$-volume of the $k$-dimensional skeleton of $\sP$ and its $L^p$-generalization;
\item the sum of internal and the sum of external angles at all $k$-dimensional faces of $\sP$;
\item the sum of conic intrinsic volumes of tangent cones of $\sP$ at its $k$-dimensional faces;
\item the beta content, that is the probability that an additional independent beta-distributed point $X\sim f_{d,\beta}$ falls into $\sP$;
\item the $S$-functional, to be introduced in Section~\ref{subsec:S_functional}, which  unifies and generalizes  many of the above mentioned examples.
\end{itemize}
We shall also compute the probability that a beta polytope $\sP_{d+2,d}^{\beta_1,\ldots, \beta_{d+2}}$ generated by $n=d+2$ points is a simplex, thus solving an analogue of the Sylvester problem for beta-distributed points with possibly different parameters. All our formulas are \emph{exact} rather than asymptotic (that is, they are valid for fixed $n$ and $d$).

\subsubsection{Beta cones}
In order to prove the results listed above, we shall identify and study the central object needed to analyze beta polytopes: beta cones. These are random polyhedral cones defined as follows.
Let $Z, Z_1,\ldots, Z_n$ be independent random vectors in $\R^d$ with $Z\sim f_{d, \b}$ and $Z_i\sim f_{d,\b_i}$ for all $i\in \{1,\ldots, n\}$, where $\beta_1,\ldots, \beta_n\geq -1$ are parameters. A \emph{beta cone} is defined as the positive hull of the vectors $Z_1-Z,\ldots, Z_n-Z$,
\begin{align}\label{eq:beta_cones_intro}
\sC
\coloneqq \sC_{n,d}^{\beta; \beta_1,\ldots, \beta_n}
&\coloneqq \pos (Z_1-Z,\dots,Z_n-Z) \notag
\\
&\coloneqq \{\lambda_1(Z_1-Z)+\ldots + \lambda_n (Z_n-Z): \lambda_1\geq 0,\ldots, \lambda_n\geq 0\}.
\end{align}
For visualization, see Figure~\ref{fig:construction_of_cones}. Similarly to the notation used for beta polytopes, we write
	\begin{align*}
	\sC \sim	\BetaCone (\R^d; \b; \b_1,\dots,\b_n).
	\end{align*}
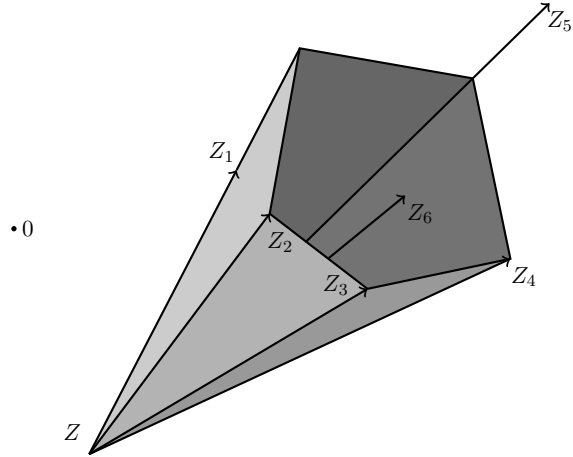
\begin{figure}[h!]\label{fig:beta_cone_realization}
	\centering
  		\begin{tikzpicture}[scale=2]

	\coordinate (Z)  at (0,0);
	\coordinate (Z1) at (1.4,2.7);
	\coordinate (Z1real) at ($0.7*(Z1)$);
	\coordinate (Z2) at ($(1.2,1.6)$);
	\coordinate (Z2real) at ($(1.2,1.36)+(-0.0,0.0)$);
	\coordinate (Z3) at ($(2.55,2.5)+(-0.0,-0.0)$);
	\coordinate (Z3real) at ($1.2*(Z3)+(-0.0,-0.0)$);
	\coordinate (Z3thick) at ($0.564*(Z3)+(-0.0,-0.0)$);
	\coordinate (Z4) at (2.8,1.3);
	\coordinate (Z5) at (1.85,1.1);
	\coordinate (0)  at (-0.5,1.5);
	\coordinate (Z6) at ($(2.0,1.2)+(0.1,0.52)$);
	\coordinate (Z6thick) at ($0.756*(Z6)$);
	\filldraw[black] (0) circle (0.4pt); 
	
	\draw[thick, black] (Z1) -- (Z2) -- (Z5) -- (Z4) -- (Z3) -- cycle;
	\fill[black, opacity=0.2] (Z1) -- (Z2) -- (Z) -- cycle;
	\fill[black, opacity=0.6] (Z1) -- (Z2) -- (Z3thick) -- (Z3) -- cycle;
	\fill[black, opacity=0.55] (Z5) -- (Z4) -- (Z3) -- (Z3thick) -- cycle;
	\fill[black, opacity=0.3] (Z2) -- (Z5) -- (Z) -- cycle;
	\fill[black, opacity=0.4] (Z4) -- (Z5) -- (Z) -- cycle;

	\draw[->,thick] (Z) -- (Z2);
	\draw[->, thick] (Z3thick) -- (Z3real);
	\draw[->,thick] (Z) -- (Z1real);
	\draw[thick] (Z1real) -- (Z1);
	\draw[->,thick] (Z) -- (Z4);
	\draw[->,thick] (Z) -- (Z5);
	\draw[->,thick] (Z6thick) -- (Z6);
	\node[above, scale=0.8] at ($(Z1real)+(-0.1,0)$) {$Z_1$};
	\node[below right, scale=0.8]  at ($(Z2)+(-0.07,-0.07)$) {$Z_2$};
	\node[below right, scale=0.8] at ($(Z3real)+(-0.07,0.0)$) {$Z_5$};
	\node[below right, scale=0.8] at ($(Z4)+(-0.05,0)$) {$Z_4$};
	\node[left, scale=0.8] at ($(Z5)+(-0.07,0.025)$) {$Z_3$};
	\node[below, scale=0.8] at ($(Z6)+(0.1,0)$) {$Z_6$};
	\node[left, scale=0.8] at (0,0.15) {$Z$};
	\node[right, scale=0.8] at (0) {$0$};
  		\end{tikzpicture}
  	\caption{A possible realization of $\sC \sim \BetaCone(\R^3; \b;\b_1,\dots,\b_6)$. Actually, the figure shows $\sC+Z$, a beta cone shifted by the vector $Z$. The apex of the original beta cone is at $0$.}
  	\label{fig:construction_of_cones}
\end{figure} \\
As we shall show, the tangent cones of beta polytopes at their faces are beta cones (after factoring out the lineality space) - this allows to reduce questions about beta polytopes to questions about beta cones. We shall compute the probability of the event that $\lbrace \sC  \neq \R^d \rbrace$ -- equivalently, this is the probability that $Z$ falls outside of the convex hull $[Z_1,\ldots, Z_n]$. Moreover, we shall compute the expected values of the following functionals of beta cones:
\begin{itemize}
\item the solid angle of $\sC$ and its dual cone $\sC^\circ$;
\item more generally, the conic intrinsic volumes $\upsilon_0(\sC), \ldots, \upsilon_d(\sC)$ of $\sC$;
\item $f_k(\sC)$, the number of $k$-dimensional faces of $\sC$, for all $k\in \{0,\ldots, d-1\}$;
\item angles of tangent and normal cones at all faces of $\sC$;
\item more generally, conic intrinsic volumes of the tangent cones of $\sC$ at its $k$-dimensional faces.
\end{itemize}

\subsection{A collection of results}
The expected values of the functionals listed above will be determined in terms of a special function $\Theta$ which we are now going to introduce. First of all, as a special case of the normalizing constant in~\eqref{eq:def_f_beta}, define
$$
c_{\beta} := c_{1,\beta} = \frac{\Gamma(\beta + \frac 32)}{\sqrt \pi \; \Gamma(\beta+1)}.
$$
Let  $Y= \{y_1,\dots, y_\ell\}$ and $Z=\{z_1,\dots, z_k\}$ be multisets of non-negative numbers which might be empty.  A multiset is a set which might have repeated elements -- this means that we do not require the $y_i$'s and the $z_j$'s to be pairwise different.   For $\gamma \geq -1/2$ we define the function
\begin{multline*}
\Theta(\gamma;Y;Z)
:=
\Biggl( \int_{-1}^{+1} c_{\a} (1-t^2)^{\alpha} \prod_{j=1}^\ell \left(\frac{1}{2} + \ii \int_{0}^{t} c_{y_j -\frac{1}{2}} (1-s^2)^{-y_j -1} \dd s \right) \dd t \Biggr)  \\
		\times \Biggl( \int_{-1}^{+1} c_{\a-\frac{1}{2}} (1-t^2)^{\alpha -\frac{1}{2}}
		\prod_{j=1}^k \left(\frac 12 + \int_{0}^t c_{z_j - \frac{1}{2}} (1-s^2)^{z_j - \frac{1}{2}}\,\dd s\right) \,\dd t \Biggr),
\end{multline*}
where $\ii = \sqrt{-1}$ and $\alpha = \gamma + y_1+\dots + y_\ell$. The factors in the definition of $\Theta$ can be expressed in terms of  special functions $a$ and $b$ which will be introduced and studied in Section~\ref{sec:beta_cones_explicit_internal_angles}.

In the next theorem we collect some of our results on beta cones. 
For an index set $I= \{i_1,\ldots, i_k\} \subseteq \{1,\ldots, n\}$ we let $I^c:= \{1,\ldots, n\}\bsl I$ denote its complement. If $\gamma_1,\ldots, \gamma_n$ are real numbers, then  $\{\gamma_i: i\in I\}$  denotes the multiset  $\{\lambda_{i_1},\ldots,\lambda_{i_k}\}$.

\begin{theorem}[Selected results on beta cones]\label{theo:beta_cones_for_intro}
Let $\sC = \sC_{n,d}^{\beta; \beta_1,\ldots, \beta_n}\sim \BetaCone (\R^d; \b; \b_1,\dots,\b_n)$ be a beta cone in $\R^d$ with $n\geq d\geq 2$ and  $\beta_1,\ldots, \beta_n\geq -1$. Define $\g_i = \b_i +d/2$ for $i=1,\ldots, n$.  Then,
\begin{align}
\P [\sC=\R^d]
&=
2\sum_{\substack{I \subseteq \{1,\dots,n\} \\ \# I \in \lbrace d+1,d+3,\ldots \rbrace}}\Theta ( \gamma;\left\{ \g_i : i\in I\right\};\left\{ \g_i : i\in I^c\right\}),
\\
\P [\sC\neq \R^d]
&=
 2\sum_{\substack{I \subseteq \{1,\dots,n\} \\ \# I \in \lbrace d-1,d-3,\ldots \rbrace}} \Theta ( \gamma ;\left\{ \g_i : i\in I\right\};\left\{ \g_i : i\in I^c\right\}).
\end{align}
Let $\alpha(\sC):= \Vol_d (\sC\cap \BB^d)/\Vol_d(\BB^d)$ denote the solid angle of $\sC$. Then,
\begin{align}
\E \left[\alpha(\sC)\right]
&= 1- \sum_{\substack{I \subseteq \{1,\dots,n\} \\ \# I\leq d-1}}\Theta ( \gamma ;\left\{ \g_i : i\in I\right\};\left\{ \g_i : i\in I^c\right\}),\label{eq:expect_upsilon_d_beta_cone_1_intro}\\
\E \left[\alpha(\sC)\ind_{\{\sC\neq \R^d\}}\right]
&= \sum_{\substack{I \subseteq \{1,\dots,n\} \\ \# I\leq d-1}} (-1)^{d-1-\# I}\Theta ( \gamma ;\left\{ \g_i : i\in I\right\};\left\{ \g_i : i\in I^c\right\}). \label{eq:upislon_R_d_ind_BetaCone_intro}
\end{align}
For every $k\in \{0,\ldots, d-1\}$, the expected number of $k$-dimensional faces of $\sC$  is given by
\begin{equation}
\E f_k(\sC) = 2 \sum_{\substack{K\subseteq \{1,\ldots, n\}\\ \# K = k}}
\sum_{\substack{I \subseteq K^c \\ \# I \in \lbrace d-k-1,d-k-3,\ldots \rbrace}}
		\Theta \left(\g  + \sum_{i \in K } \g_i  ;\left\{ \g_i : i\in I\right\}; \left\{\g_{i}: i\in K^c\bsl I\right\}  \right).
\end{equation}
\end{theorem}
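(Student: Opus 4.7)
My strategy would be to reduce all four identities to a single principle: express the indicator of the relevant cone event in terms of sign patterns of inner products $\langle Z_i - Z, u\rangle$ against an auxiliary direction $u \in \Sd$, and then evaluate the resulting probabilities using a canonical decomposition of the beta density. The function $\Theta(\gamma; Y; Z)$ is tailored precisely to encode such sign-pattern probabilities: its product structure reflects the factorization, after conditioning, of the joint law of the direction $u$ of a separating hyperplane and of the signed distance of the apex $Z$ to that hyperplane.

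For the two probabilities $\P[\sC = \R^d]$ and $\P[\sC \neq \R^d]$, I would start from Gordan's alternative: $\sC \neq \R^d$ iff there exists $u \in \Sd$ with $\langle Z_i - Z, u\rangle \geq 0$ for all $i$. Integrating the indicator of this event against a suitable random direction $u$ and decomposing according to the sign pattern $I = \{i : \langle Z_i - Z, u\rangle > 0\}$ yields a Cover--Efron style identity expressing the probability as a sum over $I$ of strict sign-pattern probabilities. The factor $2$ in the result comes from identifying antipodal pairs $(u, -u)$, while the parity constraint on $|I|$ arises from an Euler/Gauss--Bonnet-type cancellation that eliminates all terms whose $|I|$ has the ``wrong'' parity. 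For each fixed $I$, the conditional probability of the sign pattern given $u$ and $\langle Z, u\rangle$ factorizes into two one-dimensional beta integrals, exactly matching the two factors in $\Theta(\gamma; \{\gamma_i : i \in I\}; \{\gamma_i : i \in I^c\})$.

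For the solid-angle identities, I would use $\E[\alpha(\sC)] = \P[U \in \sC]$ for a uniform point $U$ on $\Sd$ independent of everything else. The event $\{U \in \sC\}$ is again a Gordan-type event with $U$ playing the role of an extra vector, so the same sign-pattern decomposition applies and yields the sum~\eqref{eq:expect_upsilon_d_beta_cone_1_intro}. The alternating-sign variant~\eqref{eq:upislon_R_d_ind_BetaCone_intro} then follows from the identity $\alpha(\sC) \ind_{\{\sC \neq \R^d\}} = \alpha(\sC) - \ind_{\{\sC = \R^d\}}$ (valid because $\sC = \R^d$ forces $\alpha(\sC) = 1$), combined with inclusion--exclusion to rewrite the indexing set in signed form.

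For face numbers, I would condition on the ``edge'' indices $K \subseteq \{1, \ldots, n\}$ of size $k$ and reduce to a lower-dimensional beta cone. Setting $V_i := Z_i - Z$, the cone $\pos(\{V_i : i \in K\})$ is a $k$-face of $\sC$ iff, after projecting onto $L_K^\perp$ with $L_K := \linsp(\{V_i : i \in K\})$, the projected cone of the remaining $V_j$ together with the projected apex fails to span $L_K^\perp$. Applying the first identity of the theorem in ambient dimension $d - k$ to this projected configuration then yields the claimed double sum. The main obstacle is to verify that, conditionally on $\{Z_i : i \in K\}$, the projected configuration remains a beta cone with parameters $\gamma_i$ ($i \notin K$) unchanged and with the apex parameter shifted to $\gamma + \sum_{i \in K} \gamma_i$. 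This ``parameter additivity under projection'' is the structural property of the beta family that makes the model exactly solvable, and it is the most delicate technical input; establishing it rigorously relies on the canonical decomposition of the beta density developed earlier in the paper.
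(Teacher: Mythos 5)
There is a genuine gap at the heart of your argument: the claim that, after decomposing according to the sign pattern $I$, ``the conditional probability of the sign pattern given $u$ and $\langle Z,u\rangle$ factorizes into two one-dimensional beta integrals, exactly matching the two factors in $\Theta$''. Only the \emph{second} factor of $\Theta$ has this character: it is the expected external angle, and the paper indeed obtains it by exactly the one-dimensional reduction you describe (replace a rotation-invariant direction by a fixed unit vector and compute $\P[Z_1'\leq Z',\ldots,Z_n'\leq Z']$). The \emph{first} factor of $\Theta$ is the expected internal angle of the face, i.e.\ the expected solid angle of a beta cone spanned by exactly $k$ vectors in $\R^k$; its integrand contains $\tfrac12+\ii\int_0^t c_{y_j-1/2}(1-s^2)^{-y_j-1}\,\dd s$, a complex quantity with negative exponents, and it is not the probability of any sign event. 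In the paper this internal factor is precisely the hard part: it is never computed by a direct probabilistic decomposition, but only indirectly, by (i) deriving nonlinear McMullen-type relations between internal and external quantities from $\sum_k\upsilon_k(\sC)=1$ and the Gauss--Bonnet relation applied to a cone spanned by $d$ vectors, (ii) proving that these relations determine the internal quantities uniquely, and (iii) constructing the analytic $a$- and $b$-quantities, establishing their recurrences by integration by parts, the periodicity of $s_{d,\zeta}(\lambda)$ in $\lambda$, and the Laplace asymptotics as $\lambda\to\infty$, so that uniqueness identifies $A$ with the $a$-based expression. Your sketch silently assumes this entire machinery away. A related flaw is the proposed mechanism for $\P[\sC\neq\R^d]$: this is the probability that a separating direction \emph{exists}, and one cannot compute it by integrating the indicator of ``$u$ separates'' over a random $u$ (that would give the expected external angle, not the absorption probability). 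The correct route, which the paper takes, is $\P[\sC\neq\R^d]=2(\E\upsilon_{d-1}+\E\upsilon_{d-3}+\cdots)$ via Proposition~\ref{prop:v_d_expressed_through_lower_indices}, with each $\E\upsilon_k$ expressed, using the independence of $\alpha(G_I)$ from $(\gamma(G_I,\sC),\ind_{\{G_I\in\cF_k(\sC)\}})$ furnished by the canonical decomposition, as a sum of products of internal and external quantities; similarly $\E[\alpha(\sC)]$ is obtained from $\E\upsilon_d=1-\sum_{k<d}\E\upsilon_k$, not from a Gordan-type event for an extra uniform point.

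By contrast, your treatment of the face numbers is essentially the paper's: $\E f_k(\sC)=\sum_K\P[G_K\text{ is a face}]$, the face event is an absorption event for the projected configuration (Proposition~\ref{prop:face_events_as_absorbtion_events_cones}), and the projected configuration is again a beta cone with the $\gamma$-parameters of the remaining points unchanged and the apex parameter shifted to $\gamma+\sum_{i\in K}\gamma_i$ (Theorem~\ref{theo:representation_angles_and_construction_of_Cone} and Proposition~\ref{prop:beta_cones_joint_distr_projections}), so that the first identity applies in dimension $d-k$. That reduction is sound, but it inherits the gap above, since it relies on the formula for $\P[\cdot\neq\R^{d-k}]$ whose proposed derivation does not go through.
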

Proofs of these and further results on beta cones will be given in Section~\ref{sec:beta_cones_results}.  In the next theorem we collect some results on beta polytopes.

\begin{theorem}[Selected results on beta polytopes]
Consider a beta polytope
$
\sP = \sP_{n,d}^{\beta_1,\ldots,\beta_n} \sim \BetaPoly(\R^d; \beta_1,\ldots,  \beta_n)
$
with $d\in \N$, $n \geq d+1$ and $\beta_1,\dots, \beta_n\geq -1$ (with a strict inequality if $d=1$).  Define $\g_i = \b_i +d/2$ for $i=1,\ldots, n$.
The expected volume of $\sP$ is given by
\begin{align*}
\E \Vol_d(\sP)
= 2\kappa_d \cdot  \sum_{\substack{I \subseteq \{1,\ldots, n\} \\ \# I  = d+1}} \Theta \left(\frac{d}{2};\{\g_i: i \in I\};\{\g_i: i \in I^c\}\right),
\end{align*}
where $\kappa_d := \Vol_d(\BB^d) = \pi^{d/2}/\Gamma(\frac{d}{2}+1)$ is the volume of the $d$-dimensional unit ball.
More generally, for every $\ell\in \{0,\ldots, d\}$, the expected $\ell$-th intrinsic volume of $\sP$ is given by
\begin{align*}
		\E V_\ell (\sP) = 2  \binom{d}{\ell} \frac{\k_d}{\k_{d-\ell}} \sum_{\substack{I \subseteq \{1,\ldots,n\} \\ \# I = \ell+1}}  \Theta \left(\frac{\ell}{2};  \lbrace \gamma_i : i \in I \rbrace;\left\lbrace \g_{i}: i \in  I^c \right\rbrace\right).
	\end{align*}
For every $\ell\in \{0,\ldots, d-1\}$, the expected number of $\ell$-dimensional faces of $\sP$ is given by
$$
\E f_{\ell}(\sP)
=2\sum_{\substack{K \subseteq \{1,\dots,n\} \\ \# K =\ell+1 }} \sum_{\substack{ J \supseteq K \\ \# J = \{d,d-2,\dots\}}} \Theta \left(\sum_{i\in K}\g_i;\{\g_i: i \in J \bsl K\};\{\g_i: i \in J^c\}\right).
$$
If $n=d+2$, then
$$
\P[\,\sP \text{ is a simplex }] = 2 \cdot \sum_{j=1}^{d+2} \Theta \left(\g_j ; \{ \gamma_1,\ldots, \gamma_{j-1}, \gamma_{j+1}, \ldots, \gamma_{d+2}\}; \varnothing\right).
$$
\end{theorem}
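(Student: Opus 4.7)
The plan is to reduce each expected functional of $\sP$ to an associated expected functional of a beta cone, for which Theorem~\ref{theo:beta_cones_for_intro} supplies explicit formulas in terms of $\Theta$. The universal tool is the \emph{canonical projection property} of beta distributions: for $X\sim f_{d,\beta}$ and any fixed $m$-dimensional linear subspace $L\subset\R^d$, one has $\pi_L X\sim f_{m,\beta+(d-m)/2}$. This is precisely what keeps the $\gamma$-parameter $\gamma_i=\beta_i+d/2$ invariant under the dimensional reductions used throughout.

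For the expected number of $\ell$-faces, I start from
\[
\E f_\ell(\sP)=\sum_{K:\,\#K=\ell+1}\P[\conv(X_K)\text{ is an }\ell\text{-face of }\sP].
\]
Given $K$, this event is equivalent to the remaining points $X_i$, $i\in K^c$, lying in a common closed half-space supported by $L=\aff(X_K)$; orthogonal projection to $L^\perp\cong\R^{d-\ell}$ converts it into the condition that the projected residuals fail to positively span $\R^{d-\ell}$. By the canonical projection property, combined with the classical decomposition for affine spans of i.i.d.\ beta points, these residuals together with the apex obtained from the orthogonal projection of the origin onto $L$ are distributed as the generators of a beta cone in $\R^{d-\ell}$ whose apex has aggregated $\gamma$-value $\sum_{i\in K}\gamma_i$ and whose vector parameters $\beta_i+\ell/2$ for $i\in K^c$ have unchanged $\gamma_i$. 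Applying the formula for $\P[\sC\neq\R^{d-\ell}]$ from Theorem~\ref{theo:beta_cones_for_intro} and writing $J=K\cup I$ (so $\#J\in\{d,d-2,\ldots\}$) yields the stated expression.

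For the expected volume, I use
\[
\E\Vol_d(\sP)=\int_{\BB^d}\P[y\in\sP]\,\dd y=\kappa_d\,\P[W\in\sP],
\]
where $W\sim f_{d,0}$ is an auxiliary independent uniform point on $\BB^d$. Almost surely $\{W\in\sP\}$ coincides with $\{\pos(X_1-W,\ldots,X_n-W)=\R^d\}=\{\sC=\R^d\}$ for $\sC\sim\BetaCone(\R^d;0;\beta_1,\ldots,\beta_n)$, so Theorem~\ref{theo:beta_cones_for_intro} gives $\E\Vol_d(\sP)=2\kappa_d\sum_{\#I\in\{d+1,d+3,\ldots\}}\Theta(d/2;\{\gamma_i:i\in I\};\{\gamma_i:i\in I^c\})$. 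To match the target formula, which contains only $\#I=d+1$, I will need the analytic identity
\[
\Theta(d/2;\{\gamma_i:i\in I\};\{\gamma_i:i\in I^c\})=0\qquad\text{for all }\#I\geq d+3.
\]
A parity argument on the odd-function factors $F_j(t)=\int_0^t c_{\gamma_i-1/2}(1-s^2)^{-\gamma_i-1}\dd s$ against the even weight $(1-t^2)^\alpha$ in the first factor of $\Theta$ handles part of the cancellation, but the full vanishing at the special value $\gamma=d/2$ is the main analytic hurdle and will likely require an explicit integral transformation of $\Theta$. Once the volume formula is in hand, the intrinsic volume follows via Kubota's formula
\[
V_\ell(\sP)=\binom{d}{\ell}\frac{\kappa_d}{\kappa_\ell\kappa_{d-\ell}}\int_{G(d,\ell)}\Vol_\ell(\pi_L\sP)\,\dd\nu_{d,\ell}(L)
\]
combined with the canonical property: for each $L\in G(d,\ell)$, $\pi_L\sP$ is a beta polytope in $\R^\ell$ with parameters $\beta_i+(d-\ell)/2$ whose $\gamma$-values remain $\gamma_i$; substituting the volume formula (valid in all dimensions) and integrating over $L$ produces the claim.

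Finally, for $n=d+2$ the polytope $\sP$ is a simplex iff exactly one of the $X_j$ is not a vertex, i.e.\ lies in the convex hull of the remaining $d+1$ points; two simultaneous such containments would force a $(d-1)$-dimensional affine dependence among $d$ points and thus occur with probability zero. Hence
\[
\P[\sP\text{ is a simplex}]=\sum_{j=1}^{d+2}\P[X_j\in\conv(X_i:i\neq j)]=\sum_{j=1}^{d+2}\P[\sC_j=\R^d],
\]
where $\sC_j\sim\BetaCone(\R^d;\beta_j;\beta_1,\ldots,\beta_{j-1},\beta_{j+1},\ldots,\beta_{d+2})$. Since only $d+1$ vector parameters remain, the only admissible subset size in Theorem~\ref{theo:beta_cones_for_intro} is $\#I=d+1$, collapsing the cone formula to $\P[\sC_j=\R^d]=2\Theta(\gamma_j;\{\gamma_i:i\neq j\};\varnothing)$ and producing the claimed expression upon summation over $j$.
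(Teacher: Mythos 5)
Your arguments for the expected $f$-vector and for Sylvester's problem are correct and essentially the paper's own: the face-count reduction (project onto $L^\perp$, identify the apex as $0_L$ with aggregated parameter $\sum_{i\in K}\gamma_i$, apply the absorption probability, substitute $J=K\cup I$) is exactly the route through Theorems~\ref{theo:tangent_cone_is_beta_cone} and~\ref{theo:probabily_that_F_is_face} leading to Theorem~\ref{theo:f_vect}, and your Sylvester argument is verbatim the paper's ``another proof'' (your disjointness claim is sound, since two simultaneous non-vertices would leave $\sP$ the hull of $d$ points, hence at most $(d-1)$-dimensional, contradicting a.s.\ full-dimensionality; you should, however, note that the reverse implication in the face characterization needs general position, Proposition~\ref{prop:face_events_as_absorbtion_events_polytopes} and Lemma~\ref{lem:beta_poi_general_affine}). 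The genuine gap is in the volume formula. The Efron/absorption step correctly yields $\E\Vol_d(\sP)=2\kappa_d\sum_{\#I\in\{d+1,d+3,\ldots\}}\Theta\bigl(\tfrac d2;\{\gamma_i:i\in I\};\{\gamma_i:i\in I^c\}\bigr)$, but the claimed formula keeps only $\#I=d+1$, so you need $\Theta(\tfrac d2;Y;Z)=0$ whenever $\#Y\in\{d+3,d+5,\ldots\}$ --- and you explicitly leave this unproven, calling it ``the main analytic hurdle''. Your parity heuristic cannot close it: expanding the product in the first factor of $\Theta$, oddness kills only the terms containing an odd number of the odd functions, and the surviving even part exhibits no evident cancellation. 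The paper proves the vanishing not by integral manipulation but probabilistically (Proposition~\ref{prop:a_quant_special_val_d} and Corollary~\ref{cor:theta_quant_vanishes_d/2-1}): taking the apex of a beta cone uniform on the sphere ($\beta=-1$) forces $\P[\sC=\R^d]=0$, because an extreme point of $\BB^d$ is a.s.\ not in the convex hull of the remaining points; via~\eqref{eq:theo:beta_cones_conic_intrinsic_vol_as_A_B_1} with $n=d+1$ this gives $a(d+\alpha_1+\cdots+\alpha_{d+1};\alpha_1,\ldots,\alpha_{d+1})=0$, which is then propagated to the shifted arguments $d-2\ell$ by the recurrences of Theorem~\ref{theo:recurrence_relations_a_b}, induction and analytic continuation. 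Without this (or an equivalent) input, your volume formula, and everything built on it, is incomplete.

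Conditional on the volume formula, your Kubota route to the intrinsic volumes is correct and genuinely different from the paper's proof of Theorem~\ref{theo:intrinsic_volume_of_BetaPoly}: there, $V_\ell$ is written as the $S$-functional $\sum_{F\in\cF_\ell(\sP)}\Vol_\ell(F)\,\alpha(N(F,\sP))$ and evaluated through the canonical decomposition, with the simplex factor given by the known volume moments of beta simplices, the cone factor by $\E\,\upsilon_0$ of a beta cone, and the two $\Theta$-factors merged by Theorem~\ref{theo:theta_as_prod_of_thetas}. Your route --- project to $L\in G(d,\ell)$, observe that $\Pi_L\sP$ is again a beta polytope with unchanged $\gamma_i$, apply the dimension-$\ell$ volume formula, and integrate --- is acknowledged in the paper as an alternative (the remark around~\eqref{eq:kubota}, citing Proposition~8.40 of~\cite{kabluchko_steigenberger_thaele}) and buys a one-line reduction, at the price of needing the volume formula in every ambient dimension $\ell$, including $\ell\in\{0,1\}$ where Theorem~\ref{theo:exp_volume_beta_polytope} is stated only for $d\geq 2$; the case $\ell=0$ must be checked directly against $V_0=1$, which again invokes the vanishing identity together with Proposition~\ref{prop:relation_theta_even_odd}. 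So: three of the four parts are fine (two matching the paper, one a legitimate alternative), but the unproven vanishing of $\Theta$ at the special first argument $d/2$ is a real hole that infects both the volume and the intrinsic-volume claims.
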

Proofs of these and further results on beta polytopes will be presented in Section~\ref{sec:beta_polytopes_results}.  The proofs combine techniques from stochastic and integral geometry with the analysis of special functions $a$ and $b$ to be carried out  in Section~\ref{sec:beta_cones_explicit_internal_angles}.

\begin{remark}[On beta type distributions]
It is possible to develop a theory parallel to the one presented here by replacing the beta distribution with the \emph{beta prime distribution} whose probability density is proportional to $(1+\|x\|^2)^{-\beta}$, with parameter $\beta > d/2$, on the whole of $\R^d$. Further, the isotropic normal distribution with density proportional to $\eee^{-\|x\|^2/(2\sigma^2)}$ is a limit of appropriately scaled beta and beta prime distributions as $\beta\to\infty$. For this reason, all results of the present paper have analogues for Gaussian polytopes generated by random Gaussian points $X_1,\ldots, X_n$ in $\R^d$ with possibly different variances $\sigma_1^2,\ldots, \sigma_n^2$.  We refrain from presenting these extensions here.
\end{remark}

\begin{remark}[On variances and moments]
In this paper we focus on computing the \emph{expectations} of geometric functionals. Explicit formulas for variances (or higher moments) are not known except in some special cases. If $n\leq d+1$, that is when a beta polytope $\sP_{n,d}^{\beta_1,\dots, \beta_n}$ is a.s.\ a simplex, \citet{miles} and \citet{ruben_miles} were able to calculate all integer moments of its volume; see~\cite{kabluchko_steigenberger_thaele} for a discussion of these results  and  various extensions. In the regime when $d$ is fixed and $n\to\infty$, several authors~\cite{calka_quilan_first_layers,calka_yukich_variance_asympt,calka_schreiber_yukich_brownian_limits_local_limits,reitzner_clt_for_random_polys} proved \emph{asymptotic} formulas for the variance and central limit theorems for the number of $k$-faces, the volume and the intrinsic volumes of $\sP_{n,d}^{0,\dots, 0}$ and more general random polytopes.
\end{remark}

\subsection{Motivation to introduce different parameters and beta cones}
To conclude the introduction, we would like to motivate the study of beta cones and beta polytopes with different paramaters $\beta_1,\ldots, \beta_n$.
Up until recently, the work on random beta polytopes has been mostly in the setting where all parameters are equal, that is, $\b_1=\dots=\b_n \geq -1$, such  as~\cite{bonnet_etal,BonnetEtAlThresholds,bonnet_kabluchko_turchi,beta_simplices,gusakova_kabluchko_sylvester_beta,kabluchko_angle_sums_dim_3_4,kabluchko_angles_of_random_simplices_and_face_numbers,kabluchko_recursive_scheme,kabluchko_on_expected_face_numbers_of_beta_polytopes,beta_polytopes,kabluchko_thaele_zaporozhets_beta_polytopes_poi_polyhedra}. Notable exceptions are the papers of Ruben and Miles~\cite{miles,ruben_parallelotopes,ruben_miles}, where the authors introduced and studied random \emph{beta simplices}, that is, convex hulls of at most $d+1$  independent beta-distributed points in $\R^d$, allowing for possibly different parameters. An extensive study of beta simplices was conducted in~\cite{kabluchko_steigenberger_thaele}.
Recently, \citet{moseeva2024mixedrandombetapolytopes} derived formulas for the expected volume of beta polytopes with different parameters adapting the method of~\cite{beta_polytopes}.

Let us consider two natural problems that crucially rely on allowing for different parameters. Both problems lead in a natural way to beta cones, which motivates their study.

\begin{example}[Expected volume as absorbtion probability]\label{ex:volume_of_polytope_through_uniform_point}
To calculate the expected volume of a beta polytope $ \sP_{n,d}^{\b_1,\dots,\b_n} = [X_1,\dots,X_n]\subseteq \R^d$, we start with a version of Efron's identity~\cite{Efron-identity,buchta}. Take an additional point $Y$ which is uniformly distributed in the unit ball $\BB^d$, that is, $Y \sim f_{d,0}$, and independent of $X_1,\ldots, X_n$. The expected volume of $\sP_{n,d}^{\b_1,\dots,\b_n}$, divided by the volume of the unit ball, equals the probability that the additional point $Y$ falls inside of $\sP_{n,d}^{\b_1,\dots,\b_n}$, that is
	\begin{align*}
		\frac{\E \Vol_d \sP_{n,d}^{\b_1,\dots,\b_n}}{\Vol_d(\mathbb{B}^d)} = \P \left[Y \in \sP_{n,d}^{\b_1,\dots,\b_n} \right].
	\end{align*}
The right-hand side can be understood as the probability that the point $Y$ is not a vertex of the polytope $\sP_{n+1,d}^{\b_1,\dots,\b_n,0}=[X_1,\ldots, X_n, Y]$. Note that even if $\beta_1= \ldots = \beta_n$, the parameters of this polytope are in general not equal -- this motivates the introduction of  beta polytopes with different parameters.  Furthermore, the event that $Y$ is not a vertex takes place if and only if $\pos (X_1-Y,\ldots, X_{n}-Y)=\R^d$.  
So,
$$
\E \Vol_d \sP_{n,d}^{\b_1,\dots,\b_n} = \Vol_d(\mathbb{B}^d) \cdot \P[\, \pos (X_1-Y,\ldots, X_{n}-Y) = \R^d\,].
$$
This expresses the expected volume of a beta polytope in terms of an ``absorbtion probability'' of $\pos (X_1-Y,\ldots, X_{n}-Y)$, which is a beta cone by~\eqref{eq:beta_cones_intro}.  Thus, in order to calculate
$\E \Vol_d \sP_{n,d}^{\b_1,\dots,\b_n}$ it is useful to introduce beta cones.
\end{example}
\begin{figure}[h!]
	\centering
	\begin{tikzpicture}[scale=1.4]
		\coordinate (O)  at (0,0);
		\coordinate (X1) at (-1.0,-1.2);
		\coordinate (X2) at (-1.2,-0.5);
		\coordinate (X3) at (-1.1,0.7);
		\coordinate (X4) at (-0.3,1.3);
		\coordinate (X5) at (0.7,1.1);
		\coordinate (X6) at (1.5,0.4);
		\coordinate (X7) at (1.3,-0.7);
		\coordinate (X8) at (0.7,-1.3);
		\coordinate (X9) at (0.1,-1.5);
		\coordinate (X10) at (0.6,-0.5);
		\coordinate (X11) at (1.1,-0.2);
		\coordinate (X12) at (-0.6,-0.5);
		\coordinate (Y) at (-0.6,0.5);
		\draw[thick] (0,0) circle (2);

		\fill (0.0, 0.0) circle (0.5pt);
		\fill (X10) circle (0.3pt);
		\fill (X11) circle (0.3pt);
		\fill (X12) circle (0.3pt);

		\fill[green!50!black] (Y) circle (0.6pt);


		\draw[red!80!black, thick]
			(X1) -- (X2) -- (X3) -- (X4) -- (X5) -- (X6) -- (X7) -- (X8) -- (X9) -- cycle;
		\fill[red!40, opacity=0.4] (X1) -- (X2) -- (X3) -- (X4) -- (X5) -- (X6) -- (X7) -- (X8) -- (X9) -- cycle
		;

		\node[left, scale=0.7] at  (X1) {$X_{11}$};
		\node[left, scale=0.7] at  (X2) {$X_3$};
		\node[left, scale=0.7] at  (X3) {$X_5$};
		\node[above, scale=0.7] at  (X4) {$X_4$};
		\node[above, scale=0.7] at  (X5) {$X_{10}$};
		\node[right, scale=0.7] at  (X6) {$X_{12}$};
		\node[right, scale=0.7] at  (X7) {$X_7$};
		\node[right, scale=0.7] at  (X8) {$X_6$};
		\node[below, scale=0.7] at  (X9) {$X_9$};
		\node[below, scale=0.7] at  (X10) {$X_{2}$};
		\node[below, scale=0.7] at  (X11) {$X_{1}$};
		\node[below, scale=0.7] at  (X12) {$X_{8}$};
		\node[right, scale=0.7] at  (Y) {$Y$};
		\node[right, scale=0.7] at  (0,0) {$0$};

	\end{tikzpicture}
	\caption{The expected volume of $\sP_{12,2}^{\b_1,\dots,\b_{12}}$ divided by the area of the unit disk is the probability that the additional point $Y$ takes its value inside of $\sP_{12,2}^{\b_1,\dots,\b_{12}}$.}
\end{figure}
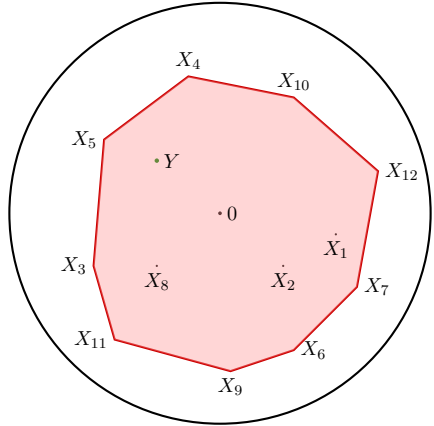

\begin{example}[Expected volume as expected angle]\label{ex:expect_vol_as_angle}
Another way to calculate the expected volume of $\sP_{n,d}^{\b_1,\dots,\b_n} = [X_1,\dots,X_n]$
is by introducing a higher-dimensional polytope
$$
[Y_1,\dots,Y_{n}]  \sim \BetaPoly(\R^{d+1}; \beta_1-1/2,\ldots,  \beta_n-1/2).
$$
The original polytope $[X_1,\ldots, X_n]$ can then be understood as an orthogonal projection of $[Y_1,\dots,Y_{n}]$ onto a fixed hyperplane $L \subseteq \R^{d+1}$ -- this follows from the projection property of the beta distribution stated in Lemma~\ref{lem:projection} below. More precisely, if $\Pi_L:\R^{d+1} \to L$ denotes the orthogonal projection to $L$, then $[X_1,\ldots, X_n]$ has the same distribution as $[\Pi_L Y_1,\ldots, \Pi_L Y_n]$.  Further,  for an additional point $Y_{n+1}\sim f_{d+1,-1/2}$ independent of $Y_1,\ldots, Y_n$ and its projection $X_{n+1}:= \Pi_L (Y_{n+1}) \sim f_{d,0}$,  we have
\begin{align*}
\frac{\E \Vol_d \sP_{n,d}^{\b_1,\dots,\b_n}}{\Vol_d(\mathbb{B}^d)}
=
\P \left[X_{n+1} \in \sP_{n,d}^{\b_1,\dots,\b_n}\right] &= \P \left[\Pi_L(Y_{n+1}) \in [\Pi_L(Y_{1}),\dots,\Pi_L(Y_{n})], \; Y_{n+1} \in [Y_1,\dots,Y_n]\right] \\ &+ \P \left[\Pi_L(Y_{n+1}) \in [\Pi_L(Y_{1}),\dots,\Pi_L(Y_{n})], \; Y_{n+1} \notin [Y_1,\dots,Y_n]\right].
	\end{align*}
The first summand can be rewritten as $\P[Y_{n+1} \in [Y_1,\dots,Y_n]]$, which is the probability that $\pos (Y_1-Y_{n+1},\dots,Y_n- Y_{n+1})=\R^{d+1}$. For the second summand, we do the following. First, we choose $L=U^\perp$, where $U$ is a random unit vector uniformly distributed on $\SS^d\subseteq \R^{d+1}$ and independent of $Y_1,\dots,Y_{n+1}$. Then, applying the formula of total probability, we write the second summand as
\begin{align*}
\E \left[\; \P \left[ \Pi_{U^\perp}(Y_{n+1}) \in [\Pi_{U^\perp}(Y_{1}),\dots,\Pi_{U^\perp}(Y_{n})] \;|\; Y_1,\dots,Y_{n+1} \right] \cdot \ind_{\{Y_{n+1} \notin [Y_1,\dots,Y_n]\}}\;\right].
\end{align*}
	In the conditional probability, the points $Y_1,\dots,Y_{n+1}$ are fixed and only the hyperplane $L= U^\perp$ is random. So, let us fix the points $Y_1,\ldots, Y_{n+1}\in \R^{d+1}$ such that  $Y_{n+1} \notin [Y_1,\dots,Y_n]$. Then, we can understand the conditional probability as \emph{two} times the solid angle of the cone $\sC := \pos (Y_1-Y_{n+1},\dots,Y_n-Y_{n+1})$. Indeed, $Y_{n+1}$ gets projected inside of $[\Pi_{U^\perp}(Y_{1}),\dots,\Pi_{U^\perp}(Y_{n})]$ if and only if  $U\in \sC$ or $U\in -\sC$; see Figures~\ref{fig:angle_2_dim} and~\ref{fig:angle_3_dim} for a clarification of this fact. This gives
	\begin{align*}
\E \left[\; \P \left[ \Pi_{U^\perp}(Y_{n+1}) \in [\Pi_{U^\perp}(Y_{1}),\dots,\Pi_{U^\perp}(Y_{n})] \;|\; Y_1,\dots,Y_{n+1} \right] \cdot \ind_{\{Y_{n+1} \notin [Y_1,\dots,Y_n]\}}\;\right] = 2 \cdot \E \left[\a(\sC)\ind_{\{\sC \neq \R^{d+1}\}}\right].
	\end{align*}
The basic idea of interpreting angles as probabilities can be traced back to~\citet{Perles_Shepard_1967} and~\citet{Shephard_1967}; see the paper of~\citet{Feldman01102009} for a particularly nice explanation.  Recently, in~\cite{gusakova_kabluchko_sylvester_beta}, it was applied to Sylvester's problem for the beta distributions. In total, we get that
\begin{align*}
\frac{\E \Vol_d \sP_{n,d}^{\b_1,\dots,\b_n}}{\Vol_d(\mathbb{B}^d)} = 	\P \left[X_{n+1} \in \sP_{n,d}^{\b_1,\dots,\b_n}\right] = \P \left[\sC = \R^{d+1} \right] + 2 \cdot  \E \left[\a(\sC) \ind_{\{\sC \neq \R^{d+1}\}} \right].
\end{align*}
In particular, as stated in~\eqref{eq:beta_cones_intro}, $\sC = \pos(Y_1-Y_{n+1},\dots,Y_n-Y_{n+1})$ is a beta cone. This again reduces a natural problem concerning beta polytopes to beta cones. Both probabilities on the right-hand side will be made explicit in Theorem~\ref{theo:beta_cones_conic_intrinsic_vol_as_A_B}. For generalizations of these results  we refer to Sections~\ref{sec:beta_cones_results} and~\ref{sec:beta_polytopes_results}.
\end{example}

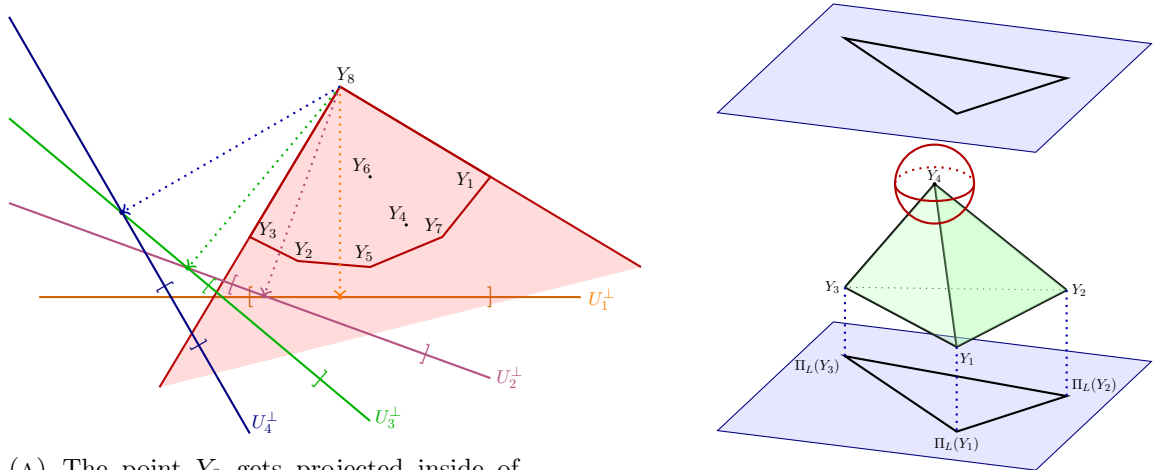
\begin{figure}[h!]
    \centering
	\begin{subfigure}[b]{0.40\textwidth}
        \centering
		\begin{tikzpicture}[scale=0.8, every node/.style={scale=0.8}]
			\draw[thick, orange!80!black] (-4.5,-4) -- (4.5,-4);
			
			\coordinate (X1) at (-1.0,-3.0);
			\coordinate (X2) at (3.0,-2.0);
			\coordinate (X3) at (0.5,-0.5);
			\coordinate (X4) at (-0.2,-3.4);
			\coordinate (X5) at (1.0,-3.5);
			\coordinate (X6) at (2.2,-3.0);
			\coordinate (X7) at (1.0,-2.0);
			\coordinate (X8) at (1.6,-2.8);
			\coordinate (X1proj) at (-1.0,-4.0);
			\coordinate (X2proj) at (3.0,-4.0);
			\coordinate (X3proj) at (0.5,-4.0);
			\coordinate (X1-X3) at ($(X1)-(X3)$);
			\coordinate (X2-X3) at ($(X2)-(X3)$);
			\coordinate (scaledX1-X3) at ($2*(X1-X3)+(X3)$);
			\coordinate (scaledX2-X3) at ($2*(X2-X3)+(X3)$);
			\node[orange!80!black, scale=0.9, rotate=0] at (X1proj) {$\textbf{[}$};
			\node[orange!80!black, scale=0.9, rotate=0] at (X2proj) {$\textbf{]}$};
			\filldraw[orange] (X3proj) circle (1.0pt);
			Cone
			\draw[red!70!black,thick]        (X3) -- (scaledX1-X3);
			\draw[red!70!black,thick]        (X3) -- (scaledX2-X3);
			\fill[red!70, opacity=0.2] (X3) -- (scaledX1-X3) -- (scaledX2-X3) -- cycle;
			\filldraw[black] (X7) circle (0.5pt);
			\filldraw[black] (X8) circle (0.5pt);
			\draw[red!70!black, thick] (X1) -- (X4) -- (X5) -- (X6) -- (X2) -- (X3) -- cycle;
			\draw[->, thick, dotted,  orange] (X3) -- (X3proj);
			
			\node[above, scale=0.8]  at ($(X1)+(0.3,-0.13)$) {$Y_3$};
			\node[left, scale=0.8]  at ($(X2)+(-0.15,-0.05)$) {$Y_1$};
			\node[above, scale=0.8] at ($(X3)+(0.1,-0.05)$) {$Y_8$};
 			\node[above, scale=0.8] at ($(X4)+(0.1,-0.05)$) {$Y_2$};
			\node[above, scale=0.8] at ($(X5)+(-0.1,-0.00)$) {$Y_5$};
			\node[above, scale=0.8] at ($(X6)+(-0.14,-0.05)$) {$Y_7$};
			\node[above, scale=0.8] at ($(X7)+(-0.14,-0.05)$) {$Y_6$};
			\node[above, scale=0.8] at ($(X8)+(-0.14,-0.05)$) {$Y_4$};
			\node[below, orange, scale=0.8] at ($(X2proj)+(1.85,0.25)$) {$U^\perp_1$};
			
			
			\draw[thick, magenta!70!black] plot[domain=-5:3] (\x, {(-4 - 0.3420*\x)/0.9397});
			
			\draw[thick, green!70!black] plot[domain=-5:1] (\x, {(-4 - 0.6428*\x)/0.7660});
			
			\draw[thick, blue!50!black] plot[domain=-5:-1] (\x, {(-4 - 0.8660*\x)/0.5});
			
			
			\coordinate (X1proj250) at (-1.287, -3.787);
			\coordinate (X2proj250) at (1.924, -4.956);
			\coordinate (X3proj250) at (-0.764, -3.977);
			\draw[->, thick, dotted,magenta!70!black] (X3) -- (X3proj250);
			\node[magenta!70!black, scale=0.9, rotate=340] at (X1proj250) {$\textbf{[}$};
			\node[magenta!70!black, scale=0.9, rotate=340] at (X2proj250) {$\textbf{]}$};
			\filldraw[magenta!70!black] (X3proj250) circle (1.0pt);
			\node[below, magenta!70!black, scale=0.8] at ($(X2proj250)+(1.4,-0.1)$) {$U^\perp_2$};
			
			\coordinate (X1proj230) at (-1.680, -3.811);
			\coordinate (X2proj230) at (0.174, -5.366);
			\coordinate (X3proj230) at (-2.029, -3.515);
			\draw[->, thick,  dotted,green!70!black] (X3) -- (X3proj230);
			\node[green!70!black, scale=0.8, rotate=320] at (X1proj230) {$\textbf{[}$};
			\node[green!70!black, scale=0.9, rotate=320] at (X2proj230) {$\textbf{]}$};
			\filldraw[green!70!black] (X3proj230) circle (1.0pt);
			\node[below, green!70!black, scale=0.8] at ($(X2proj230)+(1.13,-0.35)$) {$U^\perp_3$};
			
			\coordinate (X1proj210) at (-2.415, -3.817);
			\coordinate (X2proj210) at (-1.843, -4.799);
			\coordinate (X3proj210) at (-3.123, -2.592);
			\node[blue!50!black, scale=0.8, rotate=300] at (X1proj210) {$\textbf{[}$};
			\node[blue!50!black, scale=0.8, rotate=300] at (X2proj210) {$\textbf{]}$};
			\draw[->, thick, dotted,blue!70!black] (X3) -- (X3proj210);
			\filldraw[blue!50!black] (X3proj210) circle (1.0pt);
			\node[below, blue!50!black, scale=0.8] at ($(X2proj210)+(1.1,-1.0)$) {$U^\perp_4$};
			
		\end{tikzpicture}
		
        \caption{
		The point $Y_8$ gets projected inside of $[\Pi_{U^\perp}(Y_1),\dots,\Pi_{U^\perp}(Y_7)]$, an interval indicated by the square brackets, if and only if the vector $U$  falls inside of $\pm \pos(Y_1-Y_8,\dots,Y_7-Y_8)$. The  presence of $\pm$ explains why we take two times the solid angle. 
		}
		\label{fig:angle_2_dim}
    \end{subfigure}
	\hfill
    \begin{subfigure}[b]{0.55\textwidth}
			\centering
			\tdplotsetmaincoords{70}{110}
		
		\begin{tikzpicture}[tdplot_main_coords, scale=1.50, every node/.style={scale=0.5}]
		
		\coordinate (Plane1_1) at (-1.5, -1.5, 0);
		\coordinate (Plane1_2) at ( 1.5, -1.5, 0);
		\coordinate (Plane1_3) at ( 1.5,  1.5, 0);
		\coordinate (Plane1_4) at (-1.5,  1.5, 0);
		
		\coordinate (Plane2_1) at (-1.5, -1.5, 3);
		\coordinate (Plane2_2) at ( 1.5, -1.5, 3);
		\coordinate (Plane2_3) at ( 1.5,  1.5, 3);
		\coordinate (Plane2_4) at (-1.5,  1.5, 3);
		
		\filldraw[fill=blue!10, draw=navy] (Plane1_1) -- (Plane1_2) -- (Plane1_3) -- (Plane1_4) -- cycle;
		\filldraw[fill=blue!10, draw=navy] (Plane2_1) -- (Plane2_2) -- (Plane2_3) -- (Plane2_4) -- cycle;
		
		\coordinate (Y_4) at ( 0.0,  0.0,  2.0); 
		\coordinate (Y_1) at ( 0.8,  0.5,  0.8);
		\coordinate (Y_2) at (-0.4,  1.1,  1.0);
		\coordinate (Y_3) at (-0.7, -1.1,  0.65);
		
		\coordinate (piLY1) at (0.8,  0.5, 0);
		\coordinate (piLY2) at (-0.4, 1.1, 0);
		\coordinate (piLY3) at (-0.7,-1.1, 0);
		\coordinate (Y1up) at (0.8,  0.5, 3);
		\coordinate (Y2up) at (-0.4, 1.1, 3);
		\coordinate (Y3up) at (-0.7,-1.1, 3);

		\draw[thick] (Y_1) -- (Y_2);
		\draw[thick] (Y_1) -- (Y_3);
		\draw[dotted] (Y_2) -- (Y_3);
		\draw[thick] (Y_4) -- (Y_1);
		\draw[thick] (Y_4) -- (Y_2);
		\draw[thick] (Y_4) -- (Y_3);
		
		\fill[green!30, opacity=0.3] (Y_1) -- (Y_3) -- (Y_4) -- cycle;
		\fill[green!30, opacity=0.5] (Y_1) -- (Y_2) -- (Y_4) -- cycle;
		
		\draw[thick] (piLY1) -- (piLY2) -- (piLY3) -- cycle;
		\draw[thick] (Y1up)  -- (Y2up)  -- (Y3up)  -- cycle;
		
		\node[above] at ($(Y_4)+(0.0,0.0,-0.032)$) {$Y_4$};
		\node[below]  at ($(Y_1)+(0.0,0.1,0.0)$) {$Y_1$};
		\node[right] at (Y_2) {$Y_2$};
		\node[left]  at (Y_3) {$Y_3$};
		
		\node[below]  at ($(piLY1)+(0.0,0.0,0.0)$) {$\Pi_L(Y_1)$};
		\node[right] at ($(piLY2)+(0.0,0.0,0.1)$) {$\Pi_L(Y_2)$};
		\node[left] at ($(piLY3)+(0.0,0.0,-0.1)$) {$\Pi_L(Y_3)$};
		
		\fill (Y_4) circle (0.5pt);
		\draw[thick,dotted,blue!70!black] (Y_3) -- (piLY3);
		\draw[thick,dotted,blue!70!black] (Y_2) -- (piLY2);
		\draw[thick,dotted,blue!70!black] (Y_1) -- (piLY1);
		\begin{scope}[tdplot_screen_coords]
			\path (Y_4) coordinate (SphereCenter);
			\draw[red!70!black,thick] (SphereCenter) circle (0.35);
			\begin{scope}[shift={(SphereCenter)}, xscale=0.35, yscale=0.15]
			  \draw[red!70!black,thick] (180:1) arc (180:360:1);
			  \draw[red!70!black, dotted,thick] (0:1) arc (0:180:1);
			\end{scope}
		  \end{scope}
		
		\end{tikzpicture}
		\caption{The probability that $\Pi_{U^\perp}(Y_4)$ falls inside of $[\Pi_{U^\perp}(Y_1),\Pi_{U^\perp}(Y_2),\Pi_{U^\perp}(Y_3)]$ can be expressed as two times the solid angle of the cone $\pos(Y_1-Y_4,Y_2-Y_4,Y_3-Y_4)$. The red sphere is centered at $Y_4$ and has been added to clarify this.}
\label{fig:angle_3_dim}
    \end{subfigure}
    \caption{Illustration of Example~\ref{ex:expect_vol_as_angle} for $d=1$ and $d=2$.}
\end{figure}
\section{Notation and facts from stochastic and polyhedral geometry}\label{sec:facts}

\subsection{General notation}
For $d\in \N$ let $\R^d$ denote the $d$-dimensional Euclidean space endowed with the standard scalar product $\langle\,\cdot\,,\,\cdot\,\rangle$ and the Euclidean norm $\|\,\cdot\,\|$. The $d$-dimensional volume is denoted by $\Vol_d(\cdot)$.  The (closed) unit ball and the unit sphere in $\R^d$ are denoted by $\BB^d:=\{x\in\R^d:\|x\|\leq 1\}$  and $\SS^{d-1}=\{x\in\R^d:\|x\|=1\}$, respectively.  The volume of the unit ball $\BB_d$ is denoted by
\begin{align}\label{volume_of_the_unit_ball}
\kappa_d = \Vol_d(\mathbb{B}^d) = \frac{\pi^{d/2}}{\Gamma\left(\frac{d}{2}+1\right)}.
\end{align}
We write $[n]:= \{1,\ldots, n\}$. The number of elements of a finite set $K$ is denoted by $\#K$.
We let $(\Omega,\cF,\P)$ be our underlying probability space.
Expectation (i.e.\ integration) with respect to $\P$ is denoted by $\E$.

\subsection{Hulls, polytopes and cones}
In this section we recall some standard facts and notation from convex geometry. For proofs we refer, e.g.,\ to the books by~\citet{barvinok_book}, \citet{broendsted_book}.
Let $v_1,\ldots, v_m\in \R^d$ be vectors. An expression of the form $\lambda_1 v_1 + \ldots + \lambda_m v_m$
with real coefficients $\lambda_1,\ldots,\lambda_m$ is called a \emph{linear combination} of $v_1,\ldots, v_m$.
A linear combination $\lambda_1v_1 + \ldots + \lambda_m v_m$ is called
\begin{itemize}
\item a \emph{positive combination} if $\lambda_1\geq 0, \ldots, \lambda_m \geq 0$;
\item a \emph{convex combination} if $\lambda_1\geq 0,\ldots, \lambda_m \geq 0$ and $\lambda_1 + \ldots + \lambda_m = 1$;
\item an \emph{affine combination} if $\lambda_1+\ldots + \lambda_m = 1$.
\end{itemize}

The \textit{linear hull} $\lin A$ (respectively, the \textit{positive hull} $\pos A$, the \textit{convex hull} $\conv A$ and the \textit{affine hull} $\aff A$) of a set $A\subseteq\R^d$ is the set of all linear (respectively, positive, convex, affine) combinations of elements from $A$. The convex hull of  finitely  many points $v_1,\ldots,v_m$ is also denoted by $[v_1,\ldots,v_m]$, and their positive hull is denoted by $\pos (v_1,\ldots, v_m)$.

A set $A\subseteq \R^d$ is called a \emph{linear subspace} (respectively, a \emph{convex cone}, a \emph{convex set} or an \emph{affine subspace}) if it coincides with its own linear (respectively, positive, convex, affine) hull.
We denote by $G(d,k)$, respectively $A(d,k)$, the set of $k$-dimensional linear, respectively affine, subspaces of $\R^d$, where $k\in \{0,\ldots,d\}$.  For an affine subspace $A\in A(d,k)$ let $\Pi_A:\R^d \to A$ be the orthogonal projection onto $A$. Also, let $A^\perp:= \{x\in \R^d: \langle x, y\rangle = 0 \text{ for all } y\in A-a_0\}$ (where $a_0\in A$ is arbitrary) denote the orthogonal complement of $A$. Note that $A^\perp$ is always a \emph{linear} subspace.   If $v\in \R^d$ is a non-zero vector, then $v^\perp$ denotes its orthogonal complement.

For a set $A\subseteq \R^d$, its relative interior, denoted by $\relint A$, is the interior of $A$ taken with respect to $\aff A$ as the ambient space.

A \emph{polyhedron} is an intersection of finitely many closed half-spaces. A \textit{polytope} is a convex hull of finitely many points. Equivalently, by Minkowski's theorem, a polytope can be defined as a bounded polyhedron.  The dimension $\dim P$ of a polyhedron $P$ is the dimension of its affine hull $\aff P$.
A \textit{polyhedral cone} $C$ is the positive hull of finitely many vectors in $\R^d$. Equivalently, a polyhedral cone is an intersection $H_1\cap \ldots\cap H_p$ of finitely many half-spaces of the form $H_i = \{x\in \R^d: \langle x, u_i\rangle \leq 0\}$, where $u_1,\ldots, u_p \in \R^{d}\bsl \{0\}$.
The \textit{polar} (or \textit{dual}) of a polyhedral cone $C$ is defined by
$$
C^\circ := \{v\in \R^d\colon \langle v, z\rangle\leq 0 \text{ for all } z\in C\}.
$$
The double dual theorem states that $C^{\circ\circ} = C$.

Unless stated otherwise, beta cones are denoted with $\sC$, beta polytopes with $\sP$, arbitrary deterministic %
polyhedral cones with $C$ and deterministic polytopes with $P$.

\subsection{Faces, tangent cones and normal cones}
Let $P\subseteq \R^d$ be a polyhedron.  A \emph{face} of $P$ is defined as an intersection of $P$ with a supporting hyperplane. By convention, $P$ itself is also considered as a face. The dimension of a face $F$ is the dimension of $\aff F$.   Vertices are $0$-dimensional faces, edges $1$-dimensional faces and, fbacets are faces of dimension $\dim P -1$.  The \emph{lineality space} of a polyhedral cone $C$ is defined as the inclusion-maximal linear subspace contained in $C$, or as $\linsp = C\cap C^\circ$.  For $k\in \{0,\ldots, d\}$, let  $\cF_k(P)$ be the set of all $k$-dimensional faces of $P$. Also, let $f_k(P) := \# \cF_k(P)$ be the number of $k$-dimensional faces of $P$.  The \emph{$f$-vector} of $P$ is the vector $(f_i(P))_{i=0}^{\dim P}$.

Let $F$ be a face of $P$ and let $v$ be any point in $\relint F$. The  \textit{tangent cone} of $P$ at $F$  is defined as the set of vectors which, when placed at $v$, point inside $P$. More precisely, we define
$$
T(F,P) := \{x\in\R^d: v+\eps x\in P\text{ for some } \eps>0\} = \pos (P-v).
$$
The \textit{normal cone} of $P$ at $F$ is the polar to the tangent cone, that is
$$
N(F,P) := T^\circ (F,P).
$$

It is known that there is a bijective correspondence between $k$-dimensional faces of $C$ and $(d-k)$-dimensional faces of $C^\circ$ given by
$$
\cF_k(C) \ni F \qquad  \leftrightarrow \qquad N(F,C) = (\lin F)^\perp \cap C^\circ \in \cF_{d-k}(C^\circ).
$$
For example, the lineality space of $C$ corresponds to the linear hull of $C^\circ$.

\subsection{Angles and conic intrinsic volumes}\label{subsec:angles_conic_intrinsic_defs}
The \textit{solid angle} of a polyhedral cone $C\subseteq \R^d$ is defined as the proportion of the unit ball occupied by the cone with $\lin C$ viewed as the ambient space. More precisely, the solid angle of $C$ is
$$
\alpha(C) = \frac{\lambda_{\lin C}(\BB^d \cap C)}{\lambda_{\lin C} (\BB^d \cap \lin C)} = \P[N_{\lin C}\in C],
$$
where $\lambda_{\lin C}$ is the Lebesgue measure on the linear subspace $\lin C$  and $N_{\lin C}$ is a random vector having a standard normal distribution on the linear hull of $C$. Another way to define the solid angle is by taking a random vector $U_{\lin C}$ being either uniformly distributed on the unit ball in $\lin C$ or on the unit sphere in $\lin C$ and setting $\a(C) \coloneqq \P [U_{\lin C} \in C]$. More generally, any rotationally invariant distribution on $\lin C$ could be used.

Let $C\subseteq \R^d$ be a polyhedral cone.  With each face $G$ of $C$ three natural angles are associated: the \emph{internal angle at $G$} defined by $\beta(G,C) = \alpha (T(G,C))$, \emph{the external angle at $G$} defined by $\gamma(G,C) = \alpha (N(G,C))$ and the angle of $G$ itself denoted by $\alpha(G)$. The \emph{conic intrinsic volumes} $\upsilon_0(C), \ldots, \upsilon_d(C)$  of $C$ are defined by
\begin{equation}\label{eq:upsilon_def}
\upsilon_k(C)
=
\sum_{G\in\cF_k(C)} \alpha(G)  \gamma(G,C),
\qquad
k\in \{0,\ldots,d\}.
\end{equation}
For properties of the conic intrinsic volumes we refer to~\cite{amelunxen_lotz,amelunxen_lotz_mccoy_tropp_living_on_the_edge,glasauer_phd} as well as to the books by~\citet[Section~2.3]{schneider_book_convex_cones_probab_geom} and~\citet[Section~6.5]{schneider_weil_book}. The conic intrinsic volumes satisfy the relations
\begin{align*}
\sum_{k=0}^d \upsilon_k(C) = 1,
\qquad \text{as well as} \qquad
&\sum_{k=0}^d (-1)^k \upsilon_k(C) = 0,
\end{align*}
where for the second relation (which is a version of the Gauss-Bonnet theorem) we assume that $C$ is not a linear subspace.

\begin{lemma}\label{lem:cone_intrinsic_volume_lineality_space}
Let $C\subseteq \R^d$ be a polyhedral cone whose  lineality space has dimension $p$ and whose linear hull has dimension $q$. Note that $0\leq p \leq q \leq d$.  Then, $\upsilon_{p}(C)= \alpha (C^\circ)$,  $\upsilon_{q}(C) = \alpha(C)$ and $\upsilon_{k}(C) = 0$ if $k\notin \{p,\ldots, q\}$.
\end{lemma}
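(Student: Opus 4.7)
\emph{Proof plan.} The strategy is to exploit a single structural fact: for a polyhedral cone $C$ with lineality space $L$ (of dimension $p$) and linear hull of dimension $q$, every face $F$ of $C$ satisfies $L \subseteq F \subseteq C$. Once this is in hand, each of the three assertions reduces to identifying a unique extremal face and reading off the corresponding term in the defining sum \eqref{eq:upsilon_def}.

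First I would establish the sandwich $L \subseteq F \subseteq C$ for every face $F$. Writing $F = C \cap H$ with $H = \{x \colon \langle x, u\rangle = 0\}$ a supporting hyperplane so that $C \subseteq \{x \colon \langle x, u\rangle \leq 0\}$, any $\ell \in L = C \cap (-C)$ satisfies both $\langle \ell, u\rangle \leq 0$ and $\langle -\ell, u\rangle \leq 0$, hence $\ell \in H$. This yields $p \leq \dim F \leq q$ for every face, so $\cF_k(C) = \varnothing$ whenever $k \notin \{p,\ldots,q\}$, and the vanishing part of the lemma follows immediately from~\eqref{eq:upsilon_def}.

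Next, for $k = p$ I would argue that $L$ is the \emph{only} $p$-dimensional face: any such face contains $L$ and has the same dimension, so it equals $L$. Since $L$ is a linear subspace, $\alpha(L) = 1$. The external angle is computed from $N(L,C) = (\lin L)^{\perp} \cap C^{\circ} = L^{\perp} \cap C^{\circ}$, and the standard duality between lineality space and linear hull under polarity, namely $\lin(C^{\circ}) = L^{\perp}$, yields $C^{\circ} \subseteq L^{\perp}$ and hence $N(L,C) = C^{\circ}$. This gives $\upsilon_p(C) = \alpha(L)\, \gamma(L,C) = \alpha(C^{\circ})$. For $k = q$, the unique $q$-dimensional face is $C$ itself. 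For any $v \in \relint C$ one has $T(C,C) = \pos(C - v) = \lin C$, so $N(C,C) = (\lin C)^{\perp}$ is a linear subspace and its solid angle equals $1$. Therefore $\upsilon_q(C) = \alpha(C) \cdot 1 = \alpha(C)$.

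The argument is essentially routine; the only non-trivial ingredient is the duality statement $\lin(C^{\circ}) = L^{\perp}$ which upgrades the inclusion $N(L,C) \subseteq C^{\circ}$ to equality. I would simply cite this from the standard references, e.g.\ Schneider--Weil~\cite{schneider_weil_book} or Schneider~\cite{schneider_book_convex_cones_probab_geom}, rather than reproving it.
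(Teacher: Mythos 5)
Your proposal is correct and follows essentially the same route as the paper, which proves the lemma in one line by observing that the only $p$-dimensional face is the lineality space and the only $q$-dimensional face is $C$ itself, then reading off~\eqref{eq:upsilon_def}; your write-up simply fills in the sandwich $L\subseteq F\subseteq C$, the identification $N(L,C)=C^{\circ}$, and $N(C,C)=(\lin C)^{\perp}$ in detail. No gaps beyond the level of detail the paper itself leaves implicit (e.g.\ that $L$ is indeed a face), so this is fine.
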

\begin{proof}
Follows from~\eqref{eq:upsilon_def} upon observing that the only $p$-dimensional face of $C$ is its lineality space, while the only $q$-dimensional face of $C$ is $C$ itself.
\end{proof}
\begin{corollary}
If the lineality space of $C$ is $\{0\}$, then $\alpha(C^\circ) = \upsilon_0(C)$. If the linear hull of $C$ is $\R^d$, then $\alpha(C) = \upsilon_d(C)$.
\end{corollary}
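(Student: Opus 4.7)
The plan is to derive the corollary as an immediate specialization of the preceding Lemma~\ref{lem:cone_intrinsic_volume_lineality_space}. There is no real work to do beyond reading off the two special cases where the lineality-space dimension $p$ or the linear-hull dimension $q$ takes an extreme value.

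First I would observe that the assumption that the lineality space of $C$ is $\{0\}$ means exactly $p=0$ in the notation of the lemma. Plugging $p=0$ into the conclusion $\upsilon_p(C) = \alpha(C^\circ)$ yields the first claimed identity $\alpha(C^\circ) = \upsilon_0(C)$. Symmetrically, the hypothesis that the linear hull of $C$ equals $\R^d$ amounts to $q=d$, and plugging $q=d$ into $\upsilon_q(C) = \alpha(C)$ gives the second identity $\alpha(C) = \upsilon_d(C)$. Note that the two hypotheses are independent and may be combined or applied separately, so no case distinction is needed.

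There is no genuine obstacle here; the only thing to double-check is that the correspondence between the $p$-dimensional (resp.\ $q$-dimensional) face of $C$ used in the lemma's proof specializes correctly: when $p=0$, the lineality space $C\cap C^\circ$ is $\{0\}$, which is indeed the unique $0$-dimensional face, and when $q=d$, the cone $C$ itself is the unique $d$-dimensional face. Both are consistent with the definition~\eqref{eq:upsilon_def} of conic intrinsic volumes, so the corollary follows directly.
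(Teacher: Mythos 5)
Your proposal is correct and matches the paper's intent exactly: the corollary is stated without proof precisely because it is the specialization of Lemma~\ref{lem:cone_intrinsic_volume_lineality_space} to $p=0$ (lineality space $\{0\}$) and $q=d$ (full-dimensional linear hull), which is what you do. Nothing further is needed.
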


\subsection{Properties of beta distributions}\label{subsec:beta_properties}
The family of beta distribution possesses two main properties which will be crucial in the following: invariance under projections and the canonical decomposition (or invariance under slicing). To state these properties, it will be convenient to fix some way to identify every affine subspace of $\R^d$ with the Euclidean space of the same dimension.

\subsubsection{Identification of affine subspaces}\label{subsubsec:identification_affine_subspaces}
Recall that $A(d,k)$ is the set of $k$-dimensional affine subspaces of $\R^d$ and that, for an affine subspace $E \in A(d, k)$, we denote by $\Pi_E: \R^d\to E$ the orthogonal projection onto $E$. Let $0_E=\Pi_E(0)=\argmin_{x\in E}\|x\|$ denote the projection of the origin on $E$. For every affine subspace $E\in A(d,k)$ we fix an isometry $I_E:E\to \R^k$ such that $I_E(0_E)=0$.  The exact choice of the isometries $I_E$ is not important. We only require that $(x,E) \mapsto I_E(\Pi_E(x))$ defines a Borel measurable map from $\R^d \times A(d,k)$ to $\R^k$, where we supply $\R^d$, $\R^k$ and $A(d,k)$ with their standard Borel $\sigma$-algebras; see~\cite[Chapter 13.2]{schneider_weil_book} for the case of $A(d,k)$. We say that a random vector $Z$ taking values in $E$ is beta-distributed on $E\in A(d,k)$ if $I_E(Z)$ is beta-distributed on $\R^k$. By the rotation invariance of the beta distribution, this definition does not depend on the choice of $I_E$.

\subsubsection{Projection property}
The next lemma, taken from  \cite[Lemma 4.4]{beta_polytopes} (alternatively, see~\cite[Section~2.2]{kabluchko_steigenberger_thaele}), states that an orthogonal projection  of a beta distribution is again a beta distribution with a larger parameter: decreasing the dimension by $\ell$ amounts to increasing the parameter by $\ell/2$.

\begin{lemma}[Projection property]\label{lem:projection}
Denote by $\Pi_L: \R^d\rightarrow L$ the orthogonal projection onto a $k$-dimensional linear subspace $L\subseteq \R^d$, where $k\in \{1,\ldots,d\}$. If the random point $X$ has distribution $f_{d,\beta}$ for some $\beta\geq -1$, then $I_L(\Pi_L (X))$ has distribution $f_{k,\b+\frac{d-k}{2}}$ in $\R^k$.
\end{lemma}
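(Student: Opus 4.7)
The plan is to reduce to a standard coordinate setting by rotational invariance, compute the marginal density of the projection by integrating out the orthogonal complement, and recognize the result as a beta density with the claimed parameter.

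First, since the density $f_{d,\beta}$ is rotationally invariant (it depends on $x$ only through $\|x\|$) and $\Pi_L$ is a linear isometry on $L$, we may assume without loss of generality that $L = \R^k \times \{0\}^{d-k}\subseteq \R^d$ and that $I_L$ is the projection onto the first $k$ coordinates. Write $X = (Y, Z)$ with $Y \in \R^k$ and $Z \in \R^{d-k}$, so that $I_L(\Pi_L X) = Y$. The joint density of $(Y,Z)$ is
\[
f_{d,\beta}(y,z) = c_{d,\beta}\, (1 - \|y\|^2 - \|z\|^2)^{\beta}\, \ind_{\{\|y\|^2 + \|z\|^2 < 1\}}.
\]

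Next, I would obtain the marginal density of $Y$ by integrating out $z$. Fix $y \in \R^k$ with $\|y\| < 1$ and put $r := \sqrt{1 - \|y\|^2} > 0$. Then
\[
f_Y(y) = c_{d,\beta} \int_{\|z\| < r} (r^2 - \|z\|^2)^{\beta}\, \dd z.
\]
Substituting $z = r w$ gives $\dd z = r^{d-k}\, \dd w$ and $r^2 - \|z\|^2 = r^2(1 - \|w\|^2)$, so
\[
f_Y(y) = c_{d,\beta}\, r^{2\beta + d - k}\!\! \int_{\|w\| < 1} (1 - \|w\|^2)^{\beta}\, \dd w
= c_{d,\beta}\cdot \frac{1}{c_{d-k,\beta}}\cdot (1 - \|y\|^2)^{\beta + (d-k)/2},
\]
valid for $\|y\| < 1$ (and clearly zero for $\|y\| \geq 1$). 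This is proportional to $(1-\|y\|^2)^{\beta + (d-k)/2}$, which is the shape of $f_{k,\beta + (d-k)/2}$. Since both $f_Y$ and $f_{k,\beta+(d-k)/2}$ are probability densities on $\R^k$, the normalizing constants must agree, and hence $Y \sim f_{k,\beta+(d-k)/2}$, which is the claim.

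There is essentially no obstacle here beyond a one-line change of variables; the only mild subtlety is the bookkeeping with the normalizing constants $c_{d,\beta}$, which can either be resolved as above (by matching both sides as probability densities and appealing to uniqueness) or verified directly from the formula for $c_{d,\beta}$ in \eqref{eq:def_f_beta} using the duplication/Beta-integral identity $\int_{\|w\|<1}(1-\|w\|^2)^\beta\,\dd w = \pi^{(d-k)/2}\,\Gamma(\beta+1)/\Gamma(\beta + (d-k)/2 + 1)$. The rotational invariance of the beta distribution also guarantees that the construction is independent of the particular choice of isometry $I_L$, confirming that the lemma is well-defined in the sense of Section~\ref{subsubsec:identification_affine_subspaces}.
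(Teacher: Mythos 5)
Your computation is correct and is the standard direct argument: reduce by rotational invariance to $L=\R^k\times\{0\}^{d-k}$, integrate out the last $d-k$ coordinates, and rescale. The constant also checks out explicitly: $c_{d,\beta}/c_{d-k,\beta} = \Gamma(\tfrac d2+\beta+1)/(\Gamma(\tfrac{d-k}2+\beta+1)\pi^{k/2}) = c_{k,\beta+\frac{d-k}{2}}$, so you do not even need the ``both are probability densities'' argument. Note that the paper itself does not prove this lemma but cites \cite[Lemma~4.4]{beta_polytopes}; your proof is essentially the computation carried out there, so there is no methodological divergence to speak of.

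One genuine (if small) gap: the lemma is stated for $\beta\geq -1$, and by the paper's convention $\beta=-1$ means $X$ is uniform on the sphere $\bS^{d-1}$, in which case there is no $d$-dimensional joint density $f_{d,\beta}(y,z)$ to integrate, so your argument as written does not cover this boundary case (which the paper does use, e.g.\ in Proposition~\ref{prop:beta_cones_joint_distr_projections} with $\beta_i=-1$ allowed). The fix is routine: either compute directly with the surface measure on $\bS^{d-1}$ (for $k<d$ the projection of the uniform distribution on the sphere has density proportional to $(1-\|y\|^2)^{\frac{d-k}{2}-1}=(1-\|y\|^2)^{-1+\frac{d-k}{2}}$, which is exactly $f_{k,-1+\frac{d-k}{2}}$; the case $k=d$ is trivial), or argue by weak convergence, letting $\beta\downarrow -1$ in the identity you proved and using that $f_{d,\beta}\Rightarrow f_{d,-1}$, that $\Pi_L$ is continuous, and that $f_{k,\beta+\frac{d-k}{2}}\Rightarrow f_{k,-1+\frac{d-k}{2}}$. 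With that addendum the proof is complete for the full stated range of $\beta$.
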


\subsubsection{Canonical decomposition of Ruben and Miles}
Let $X_1,\ldots,X_k$, where $k \in \{1,\ldots, d\}$,  be independent random points in $\R^d$ such that $X_i$ has the beta distribution $f_{d,\beta_i}$ with parameter $\beta_i\geq -1$.  
The next theorem, which will be crucial for what follows, is the so-called \emph{canonical decomposition} due to Ruben and Miles; see Theorem in Section 3 of \citet{ruben_miles} and~\citet[Section~12]{miles} for earlier versions.
It provides a description of the positions of the points $X_1,\ldots,X_k$ inside their own affine hull $A:= \aff(X_1,\ldots,X_k)$, as well as the position of $A$ inside of $\R^d$.
\begin{theorem}[Canonical decomposition]\label{theo:ruben_miles}
For $d \in \N$, $k \in \{1,\dots ,d\}$ and arbitrary $\beta_1,\ldots,\beta_k\geq -1$, let $X_1,\ldots,X_k$ be random points in $\R^d$, where for each $i \in \{1,\dots,k\}$, the point $X_i$ is beta-distributed with parameter $\b_i$.
Let $A:=\aff(X_1,\ldots,X_k)$ be the a.s.\ $(k-1)$-dimensional affine subspace spanned by $X_1,\ldots,X_k$, regarded as a random element taking values in the affine Grassmannian $A(d,k-1)$. Recall that $0_A$ denotes the orthogonal projection of the origin onto $A$ and let $h(A) := \|0_A\|$ denote the distance from the origin to $A$. Observe that $A\cap \BB^d$ is a.s.\ a $(k-1)$-dimensional ball of radius $\sqrt{1-h^2(A)}$ and consider the points
		$$
		Z_i := \frac{I_A(X_i)}{\sqrt{1-h^2(A)}} \in \BB^{k-1}, \quad i=1,\ldots,k,
		$$
		where $I_A:A\to \R^{k-1}$ is an isometry as in  Section~\ref{subsubsec:identification_affine_subspaces}.
		Then, the joint distribution of $(Z_1,\ldots,Z_k)$ and $A$ can be described as follows.
		\begin{itemize}
			\item[(a)] The random tuple $(Z_1,\ldots,Z_k)$ is stochastically independent of $A$.
			\item[(b)] The joint density of the vector $(Z_1,\dots,Z_k)$ with respect to the direct product of distributions $f_{k-1,\beta_1}, \ldots, f_{k-1,\beta_k}$ on $(\BB^{k-1})^k$ is				
			\begin{align}
					c_{k-1;\b_1,\dots,\b_k}^{(d-k+1)} \cdot   \Delta^{d-k+1} (z_1,\ldots,z_k),
					\qquad
					z_1,\ldots, z_k\in \BB^{k-1},
			\end{align}
where $\Delta(z_1,\ldots,z_k)$ is the $k$-dimensional volume of the simplex $[z_1,\ldots,z_k]$ 	and $c_{k-1;\b_1,\dots,\b_k}^{(d-k+1)}$ is a normalization constant whose value will not be needed in the sequel, but can be found in Remark 4.7 of~\cite{kabluchko_steigenberger_thaele}.
			\item[(c)] The distribution of $A$ on the affine Grassmannian $A(d,k-1)$ can be characterized as follows:
\begin{itemize}
\item[(c1)] The random linear space $A-0_A$ is uniformly distributed on the linear Grassmannian $G(d,k-1)$.
\item[(c2)] The density of the random vector $I_{A^\bot}(0_A)$, which takes values in $\BB^{d-k+1}$, is  $f_{d-k+1, \gamma}$ with
$$
\gamma = \frac {(k-1)(d+1)}2 + (\beta_1+\ldots+\beta_k)
= \sum_{i=1}^{k} \left(\b_i+\frac{d}{2} \right) - \frac{d-k+1}{2}.
$$
\item[(c3)] The random vector $I_{A^\bot}(0_A)$ is stochastically independent of $A-0_A$.
\end{itemize}
		\end{itemize}
	\end{theorem}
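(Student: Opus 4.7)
The natural approach is to apply the affine Blaschke--Petkantschin formula to the joint density of $(X_1,\ldots, X_k)$, and then identify each factor in the resulting decomposition. The joint density on $(\R^d)^k$ is $\prod_{i=1}^k c_{d,\beta_i}(1-\|x_i\|^2)^{\beta_i}\ind_{\{\|x_i\|<1\}}$. The affine Blaschke--Petkantschin formula (cf.~Schneider--Weil) asserts
\begin{equation*}
\int_{(\R^d)^k} f(x_1,\ldots,x_k)\,dx_1\cdots dx_k
= b_{d,k} \int_{A(d,k-1)} \int_{A^k} f\cdot \Delta^{d-k+1}(x_1,\ldots,x_k)\,\lambda_A^{\otimes k}(dx)\,\mu_{d,k-1}(dA),
\end{equation*}
where $\lambda_A$ is the $(k-1)$-dimensional Lebesgue measure on $A$, $\mu_{d,k-1}$ is the standard motion-invariant measure on $A(d,k-1)$, and $b_{d,k}$ is an explicit constant. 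Applying this to the joint density gives an integral over $A(d,k-1)\times A^k$.

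Next I would perform the change of variables $x_i \mapsto z_i := I_A(x_i)/\sqrt{1-h^2(A)}$ on $A$; this is just an affine isometry to $\R^{k-1}$ followed by a rescaling by $\sqrt{1-h^2(A)}$. The key geometric observation is Pythagoras: since $x_i\in A$ and $0_A\perp (A-0_A)$, we have $\|x_i\|^2 = h^2(A) + \|I_A(x_i)\|^2 = h^2(A)+(1-h^2(A))\|z_i\|^2$, so
\begin{equation*}
1-\|x_i\|^2 = (1-h^2(A))(1-\|z_i\|^2).
\end{equation*}
Moreover $\Delta(x_1,\ldots,x_k) = (1-h^2(A))^{(k-1)/2}\,\Delta(z_1,\ldots,z_k)$, and the Jacobian of each $x_i\mapsto z_i$ contributes a factor $(1-h^2(A))^{(k-1)/2}$, hence $(1-h^2(A))^{k(k-1)/2}$ in total. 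Collecting all the $(1-h^2(A))$-factors yields the exponent
\begin{equation*}
\sum_i\beta_i + \frac{(k-1)(d-k+1)}{2} + \frac{k(k-1)}{2} = \sum_i \beta_i + \frac{(k-1)(d+1)}{2} =: \gamma,
\end{equation*}
which is exactly the parameter asserted in (c2).

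The final step is to identify the three factors. Decompose $\mu_{d,k-1}$ as the product of the Haar measure on $G(d,k-1)$ (for the direction $L = A - 0_A$) and the Lebesgue measure on $L^\perp\cong\R^{d-k+1}$ (for the base point $0_A$). The integrand then factors as a product of three pieces: one depending only on $L$ (a constant, giving (c1)), one depending only on $0_A$ and proportional to $(1-\|0_A\|^2)^\gamma$ (giving the beta distribution in (c2)), and one depending only on $(z_1,\ldots,z_k)$ and proportional to $\Delta^{d-k+1}(z_1,\ldots,z_k)\prod_i(1-\|z_i\|^2)^{\beta_i}$ on $(\BB^{k-1})^k$ (giving (b)). The full factorization implies the stochastic independence assertions in (a) and (c3), and the rotational invariance of the whole construction guarantees that the outcome does not depend on the choice of the isometries $I_E$.

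\textbf{Main obstacle.} The serious technical work is bookkeeping: stating and applying the affine Blaschke--Petkantschin formula with the correct normalizing constant, tracking the Jacobians through the two-step change of variables (isometry into $A$, then rescaling), and verifying that $\mu_{d,k-1}$ factors as claimed. Once the exponent calculation above checks out, identifying the three distributions is essentially reading off factors, with the normalizing constants absorbed into $c^{(d-k+1)}_{k-1;\beta_1,\ldots,\beta_k}$ and $c_{d-k+1,\gamma}$; the value of the former need not be computed, as stated in the theorem.
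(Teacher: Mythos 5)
Your plan is essentially the argument that the paper itself relies on: the paper does not prove Theorem~\ref{theo:ruben_miles} but quotes it from \cite{ruben_miles} and Theorem~3.3 of \cite{kabluchko_steigenberger_thaele}, and the proof given there is exactly your route — apply the affine Blaschke--Petkantschin formula to the product density, change variables $x_i\mapsto z_i=I_A(x_i)/\sqrt{1-h^2(A)}$, use $1-\|x_i\|^2=(1-h^2(A))(1-\|z_i\|^2)$, $\Delta(x_1,\ldots,x_k)=(1-h^2(A))^{(k-1)/2}\Delta(z_1,\ldots,z_k)$ and the Jacobian factor $(1-h^2(A))^{k(k-1)/2}$, and then read off (a), (b), (c1)--(c3) from the resulting factorization after splitting $\mu_{d,k-1}$ into Haar measure on $G(d,k-1)$ times Lebesgue measure on $L^\perp$; your exponent bookkeeping giving $\gamma=\sum_i\beta_i+\tfrac{(k-1)(d+1)}{2}$ is correct. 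The only point you should add is that the density-based computation covers only $\beta_i>-1$, whereas the statement allows $\beta_i=-1$ (uniform distribution on $\SS^{d-1}$, where $f_{d,-1}$ is not a density); this boundary case needs a supplement, either a spherical variant of the Blaschke--Petkantschin formula or a weak-convergence argument letting $\beta_i\downarrow-1$ and using the a.s.\ continuity of the map $(X_1,\ldots,X_k)\mapsto(A,Z_1,\ldots,Z_k)$.
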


For a proof, see also Theorem 3.3 in \cite{kabluchko_steigenberger_thaele}. In the case $k=1$, the only non-empty statement is  Part~(c): it  states that $X_1$ has density $f_{d,\beta_1}$ (which is trivial).
The fact that $\dim A = k-1$ a.s.\ follows from  Lemma~\ref{lem:beta_poi_general_affine}, below.

\subsection{General position and its consequences}
We say that vectors $v_1,\ldots, v_n\in \R^d$ are in \emph{general linear position} if for every index set $K\subseteq\{1,\ldots, n\}$ we have $\dim \lin (v_i: i\in K) = \min (\#K, d)$.
The next lemma is standard.
\begin{lemma}[Cones spanned by vectors in general linear position]\label{lem:cones_generated_by_vectors_in_gen_lin_pos}
Let $v_1,\ldots, v_n\in \R^d$ be vectors in general linear position.
Then, for the cone $C:= \pos (v_1,\ldots, v_n)$ the following hold:
\begin{itemize}
\item[(i)] $\dim C = \min (n,d)$.
\item[(ii)] For every $k\in \{0,\ldots, \min (n,d)-1\}$,  every $k$-dimensional face $G$ of $C$ has the form $\pos (v_i: i\in K)$ for some subset $K\subseteq \{1,\ldots, n\}$ of size $k$. For $k=0$, we use the convention $\pos (\varnothing) = \{0\}$.
\item[(ii)] Either $\linsp C= \{0\}$ or $C = \R^d$.
\end{itemize}
\end{lemma}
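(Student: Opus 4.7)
My plan is to treat the three claims in order, reducing everything to the standard face description of a positively generated cone combined with the linear independence built into general position.

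For (i) I will simply observe that $\dim C = \dim \lin C = \dim \lin(v_1,\ldots,v_n)$, and the latter equals $\min(n,d)$ by applying the general-position hypothesis to $K = \{1,\ldots,n\}$.

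For (ii) I will invoke the standard fact that every face $G$ of $C = \pos(v_1,\ldots,v_n)$ is of the form $\pos(v_i : i \in K)$ with $K = \{i : v_i \in G\}$. The short proof I have in mind is: write $G = C \cap w^\perp$ for a linear supporting hyperplane with $\langle v_i,w\rangle \le 0$ for all $i$; if $y = \sum \lambda_i v_i \in G$ with $\lambda_i \ge 0$, then $0 = \langle y,w\rangle = \sum \lambda_i \langle v_i,w\rangle$ with all summands non-positive forces $\lambda_i = 0$ whenever $i \notin K$, so $y \in \pos(v_i : i \in K)$, and the reverse inclusion is trivial. Combining this with the general-position hypothesis, $\dim G = \dim \lin(v_i : i \in K) = \min(\#K,d)$; the assumption $k < \min(n,d) \le d$ then forces $\#K = k$. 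The case $k = 0$ I will handle separately by noting that faces of cones contain the origin (the supporting hyperplane is chosen through $0$), so any $0$-dimensional face equals $\{0\} = \pos(\varnothing)$.

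For the last claim I will argue by unpacking what it means for the lineality space to be non-trivial. Pick $u \in (\linsp C)\setminus\{0\}$ and write both $u = \sum \lambda_i v_i$ and $-u = \sum \mu_i v_i$ with $\lambda_i,\mu_i \ge 0$; adding these yields a nontrivial positive relation $\sum_{i \in I}(\lambda_i+\mu_i) v_i = 0$ on $I := \{i : \lambda_i + \mu_i > 0\} \ne \varnothing$. Since general position makes any at most $d$ of the $v_i$'s linearly independent, such a relation is only possible if $\#I \ge d+1$. For each $j \in I$ the relation rewrites $-v_j$ as a nonnegative combination of the remaining $v_i$ with $i \in I\setminus\{j\}$, so $-v_j \in C$ and hence $v_j \in \linsp C$. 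Picking any $d$ of these $v_j$'s and invoking general position again produces $d$ linearly independent vectors in $\linsp C$, so $\linsp C = \R^d$ and therefore $C = \R^d$.

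I do not anticipate any serious obstacle: the only mildly nontrivial input is the face description in (ii), and the rest is routine linear algebra together with the general-position hypothesis. The two points requiring a sentence of care are the $k=0$ boundary case in (ii), and, in the last claim, observing that the mere existence of a positive relation on at least $d+1$ generators is exactly what is needed to trap each $-v_j$ inside $C$.
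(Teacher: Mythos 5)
Your proposal is correct. Parts (i) and (ii) follow essentially the same route as the paper: the dimension count via $\dim C=\dim\lin(v_1,\ldots,v_n)$, and, for (ii), cutting $C$ with a supporting hyperplane through the origin, using strict negativity of $\langle v_i,w\rangle$ for the generators off the face to kill their coefficients, and then the general-position dimension count $\min(\#K,d)=k<d$ to get $\#K=k$ (your separate remark that faces of cones pass through $0$, handling $k=0$, is the same implicit reduction the paper makes when it takes a linear normal $u$). Where you genuinely diverge is the last claim: the paper argues by contraposition, using that the lineality space $L$ is a face of $C$, invoking Part (ii) to write $L=\pos(v_i:i\in K)$, and noting $-\sum_{i\in K}v_i\in L\bsl\pos(v_i:i\in K)$ by linear independence; you instead extract from a nonzero element of $\linsp C$ a nontrivial positive relation $\sum_{i\in I}c_iv_i=0$, force $\#I\ge d+1$ by general position, conclude $-v_j\in C$ hence $v_j\in C\cap(-C)=\linsp C$ for every $j\in I$, and then span $\R^d$ with $d$ of these. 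The paper's version is shorter once (ii) is available, but it leans on the (true, though unproved there) fact that the lineality space of a polyhedral cone is a face; your argument is self-contained, avoids that fact entirely, and as a by-product shows that a nontrivial lineality space already requires a positive circuit on at least $d+1$ generators, which immediately yields $C=\R^d$.
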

\begin{proof}
For Part~(i) observe that $\dim C = \dim \lin C =  \dim \lin (v_1,\ldots, v_n)$ and use the definition of the general linear position.
For Part~(ii) let $G= C\cap H$ be a $k$-face of $C$, where $H$ is a supporting hyperplane of $C$. Let $K= \{i\in \{1,\ldots, n\}: v_i \in H\}$.  Let $u\in \R^d$ be a unit normal vector of $H$. Then, $\langle u, v_i\rangle = 0$ for $i\in K$ and, after replacing $u$ by $-u$ if necessary,   $\langle u, v_{i} \rangle >0$  for $i\notin K$. Clearly, $\pos (v_i: i\in K) \subseteq C\cap H = G$. To prove the converse inclusion, let $w \in (C\cap H) \bsl \pos (v_i: i\in K)$. Since $w\in C$, we can represent it as $w = \lambda_1 v_1 + \ldots + \lambda_n v_n$ with $\lambda_i \geq 0$ for all $i=1,\ldots, n$. Since $v\notin\pos (v_i: i\in K)$, we have $\lambda_{i_0}>0$ for some $i_0\notin K$.   Taking the scalar product with $u$, we get $0 = \lambda_1 \langle u, v_1\rangle + \ldots +\lambda_n \langle u, v_n \rangle$. This is a contradiction since  $u$ is orthogonal to $v_{i}$ with $i\in K$ and $\langle u, v_{i} \rangle >0$  for all $i\notin K$. So, $\pos (v_i: i\in K) = C\cap H = G$.
By the general linear position assumption, we have $k = \dim G = \dim \lin (v_i: i\in K) = \min (\#K, d)$. Since $k\neq d$, we conclude that $k=\# K$, which completes the proof of~(ii). To prove~(iii), assume by contraposition that $C\neq \R^d$ and $L:= \linsp C$ satisfies $\dim L \geq 1$. Since $L$ is a face of $C$, Part~(ii) entails that  $L=\pos (v_i: i \in K)$ for some  $K\subseteq \{1,\dots, n\}$ with $\#K = \dim L\geq 1$. Observe that  $-\sum_{i\in K} v_i \in L$ (since $L$ is a linear subspace) but $-\sum_{i\in K} v_i \notin \pos (v_i: i\in K)$ (since $v_i, i\in K$, are linearly independent). This is a contradiction.
\end{proof}

We say that points $x_1,\ldots, x_n\in \R^d$ are in \emph{general affine position} if for every non-empty subset $K\subseteq\{1,\ldots, n\}$ we have $\dim \aff (x_i: i\in K) = \min (\#K-1, d)$. If $x_1,\ldots, x_n$ are in general affine position, then all proper faces of the polytope $[x_1,\ldots, x_n]$ are simplices - such polytopes are called simplicial. Beta polytopes are simplicial with probability $1$, as shown in the next lemma.

\begin{lemma}[General position a.s.]\label{lem:beta_poi_general_affine}
Let $X_1\sim f_{d,\beta_1},\ldots, X_n\sim f_{d,\beta_n}$ be independent random points in $\R^d$, where $d\geq 2$ and $\beta_1,\ldots, \beta_n\geq -1$ or $d=1$ and $\beta_1,\ldots, \beta_n> -1$.  Then, $X_1,\ldots, X_n$  are in general affine position a.s.  Also, $X_1,\ldots, X_n$  are in general linear position a.s.
\end{lemma}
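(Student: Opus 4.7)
The plan is to reduce the statement to a single key absolute-continuity fact for the beta distribution and then induct on the cardinality of the chosen subset. First, observe that for the affine statement it suffices to treat subsets $K\subseteq\{1,\ldots,n\}$ of size $k\leq d+1$: for $|K|>d+1$ the condition $\dim\aff(X_i:i\in K)=d$ is automatic once some $(d{+}1)$-subset is affinely independent. Similarly, general linear position need only be checked for $|K|\leq d$. There are finitely many such subsets, so by a union bound it is enough to prove the statement for one fixed $K$.

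The key lemma I would isolate first is the following: for any $\beta\geq -1$ (with $\beta>-1$ if $d=1$) and any deterministic affine subspace $A\subseteq\R^d$ with $\dim A\leq d-1$, one has $\P[X\in A]=0$ for $X\sim f_{d,\beta}$. If $\beta>-1$ the distribution has a density with respect to Lebesgue measure on $\R^d$, so any proper affine subspace is null. If $\beta=-1$ (which forces $d\geq 2$), then $X$ is uniform on $\SS^{d-1}$ with respect to $(d{-}1)$-dimensional surface measure; but $A\cap\SS^{d-1}$ is either empty, a single point, a pair of points, or a sphere of dimension at most $d-2$, all of which are null for the surface measure on $\SS^{d-1}$.

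With the key lemma available, fix $K=\{i_1,\ldots,i_k\}$ with $k\leq d+1$ and argue by induction on $k$ that $X_{i_1},\ldots,X_{i_k}$ are affinely independent almost surely. The base $k=1$ is trivial. For the inductive step, by induction hypothesis the random affine subspace $A_{k-1}:=\aff(X_{i_1},\ldots,X_{i_{k-1}})$ has dimension $k-2\leq d-1$ almost surely. Conditioning on $(X_{i_1},\ldots,X_{i_{k-1}})$, the point $X_{i_k}$ still has distribution $f_{d,\beta_{i_k}}$ by independence, and the key lemma applied to the now-deterministic proper subspace $A_{k-1}$ gives $\P[X_{i_k}\in A_{k-1}\mid X_{i_1},\ldots,X_{i_{k-1}}]=0$ a.s. Integrating yields the affine claim. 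The general linear position assertion follows from the same induction with $\lin$ in place of $\aff$, using that every linear subspace of dimension $\leq d-1$ is a proper affine subspace to which the same lemma applies.

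I do not anticipate any serious obstacle. The only slightly delicate point is the verification that, when $\beta_i=-1$, the intersection of a proper affine subspace with $\SS^{d-1}$ is surface-null; this is an elementary geometric fact, handled by splitting according to whether $A$ misses the ball, is tangent to $\SS^{d-1}$, or cuts it in a lower-dimensional subsphere. The assumption $\beta_i>-1$ when $d=1$ is clearly necessary, since otherwise $X_i$ would be uniform on the two-point set $\{-1,+1\}$ and two such points would coincide with positive probability.
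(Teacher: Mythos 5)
Your proof is correct and follows essentially the same route as the paper: condition on a subset of the points and use that each beta distribution (including the spherical case $\beta=-1$ for $d\geq 2$) assigns zero mass to every proper affine subspace, then conclude by total probability. The only difference is presentational — you spell out the induction over subset size and the surface-measure argument for $\beta=-1$, which the paper states more tersely.
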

\begin{proof}
We prove the first claim only, the proof of the second one being analogous.
Take some $k\in \{1,\ldots, d\}$, some pairwise distinct indices $i_1,\ldots, i_{k}\in \{1,\ldots, n\}$ and condition on $X_{i_1},\ldots, X_{i_{k}}$. Let $A := \aff(X_{i_1},\ldots, X_{i_{k}})$ be the affine subspace spanned by these points. The conditional probability that some point $X_j$, $j\notin \{i_1,\ldots, i_{k}\}$, belongs to $A$ is zero since the beta distributions assign mass $0$ to affine hyperplanes (the only exception being the case when $d=1$ and $\beta_j = -1$ for some $j$, but we excluded it). By the formula of total probability, we conclude that $\P[X_j \in \aff (X_{i_1},\ldots, X_{i_{k}})=0]$.
\end{proof}

The next proposition will be used to transform the question whether some points span a face of a beta polytope into the question whether some beta cone is equal to the whole space.

\begin{proposition}[Face events as absorbtion events: polytopal version]\label{prop:face_events_as_absorbtion_events_polytopes}
Let $x_1,\ldots, x_n\in \R^d$ be points in general affine position.
Consider the polytope $P= [x_1,\ldots, x_n]$ and let $A:= \aff(x_1,\ldots, x_k)$, where $k\in \{0,\ldots, \min (n,d)\}$. Then,
$$
F:=[x_1,\ldots, x_k] \text{ is a face of } P \quad \Longleftrightarrow \quad   \pos (\Pi_{A^\perp}x_{k+1}-y,\ldots, \Pi_{A^\perp}x_{n}-y) \neq A^\perp,
$$
where $y= \Pi_{A^\perp}x_{1} = \ldots = \Pi_{A^\perp}x_{k}$.
Without the general affine position assumption, only the implication ``$\Longrightarrow$'' holds.
\end{proposition}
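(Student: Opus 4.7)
\bigskip
\noindent\emph{Proof plan.} The proof will rest on the supporting-hyperplane characterization of faces, reinterpreted as a polar condition for the cone $C := \pos(v_{k+1},\ldots,v_n) \subseteq A^\perp$, where I write $v_j := \Pi_{A^\perp}(x_j) - y = \Pi_{A^\perp}(x_j - x_1)$. The first step is pure bookkeeping: since $x_1,\ldots,x_k \in A$, they share the common projection $y = 0_A$; for any $v \in \R^d$, the condition $\langle v, x_1\rangle = \ldots = \langle v, x_k\rangle$ is equivalent to $v \in A^\perp$; and under $v \in A^\perp$ one computes $\langle v, x_j\rangle - \langle v, x_1\rangle = \langle v, v_j\rangle$. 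Consequently, a hyperplane $H = \{x : \langle v, x\rangle = \langle v, x_1\rangle\}$ contains $F$ and supports $P$ exactly when $v \in C^\circ \cap A^\perp$, and cuts out precisely $F$ as $P \cap H$ exactly when additionally $\langle v, v_j\rangle < 0$ for every $j > k$ with $x_j \notin F$.

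For the ($\Longrightarrow$) direction, which I will do without the general position assumption, I consider two cases. If $F$ is a proper face, then the normal $v$ to the defining supporting hyperplane is a non-zero element of $C^\circ \cap A^\perp$, which forces $C \neq A^\perp$ since the polar of the whole ambient space $A^\perp$ inside itself reduces to $\{0\}$. The corner case $F = P$ means $x_{k+1},\ldots,x_n \in [x_1,\ldots,x_k] \subseteq A$, so each $v_j = 0$, giving $C = \{0\}$; this is distinct from $A^\perp$ because $\dim A \leq k-1 \leq d-1$ ensures $A^\perp \neq \{0\}$.

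The ($\Longleftarrow$) direction is where general affine position enters. First I will transfer general affine position of $x_1,\ldots,x_n$ into general linear position of the projected vectors $v_{k+1},\ldots,v_n$ inside $A^\perp$: writing $L_K := \lin(x_j - x_1 : j \in K)$ and using the dimension formula $\dim \Pi_{A^\perp}(L_K) = \dim L_K - \dim(L_K \cap (A - 0_A))$ together with the three identities $\dim L_K = \min(\#K, d)$, $\dim(A - 0_A) = k-1$, and $\dim(L_K + (A - 0_A)) = \min(k + \#K - 1, d)$ coming from general affine position, one obtains $\dim \lin(v_j : j \in K) = \min(\#K, d-k+1) = \min(\#K, \dim A^\perp)$. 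Applying Lemma~\ref{lem:cones_generated_by_vectors_in_gen_lin_pos}(iii) inside the ambient space $A^\perp$, $C$ is either the full $A^\perp$ or has trivial lineality space; under the hypothesis $C \neq A^\perp$, $C$ is pointed, so $C^\circ \cap A^\perp$ is full-dimensional in $A^\perp$. Picking any $v$ in its relative interior produces $v \in A^\perp \setminus \{0\}$ with $\langle v, v_j\rangle < 0$ for all $j > k$; by the reformulation above, the associated hyperplane then supports $P$ with $P \cap H = F$.

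The only non-routine ingredient is the transfer of general position carried out at the start of the third paragraph, which is a short but somewhat fiddly dimension count; everything else is direct convex-geometric bookkeeping, so I expect no real obstacles beyond handling the edge cases $F = P$ and the strict-versus-weak inequality distinction for $x_j \in F$ with $j > k$ (the latter arising only when general position is not assumed, and being harmless for the $(\Longrightarrow)$ direction).
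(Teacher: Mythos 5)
Your argument is correct, and while your ``$\Longrightarrow$'' direction coincides with the paper's (normal vector of the supporting hyperplane lies in $A^\perp$ and in the dual of the projected cone, hence that cone is not all of $A^\perp$), your ``$\Longleftarrow$'' direction takes a genuinely different route. The paper picks a unit vector $u\in A^\perp$ in the dual cone, which only yields \emph{weak} inequalities, so the resulting supporting hyperplane $H$ may a priori cut out a larger face than $F$; it then invokes general affine position through the simpliciality of $P$, identifies $H\cap P$ as a simplex $\conv(x_i:i\in I)$ containing no other $x_j$'s, and concludes via the face-of-a-face argument. You instead push general position through the projection: your dimension count ($\dim\lin(v_j:j\in K)=\min(k+\#K-1,d)-(k-1)=\min(\#K,\dim A^\perp)$) shows the projected vectors are in general \emph{linear} position in $A^\perp$, so Lemma~\ref{lem:cones_generated_by_vectors_in_gen_lin_pos}(iii) makes the cone pointed whenever it is not all of $A^\perp$; a vector in the interior of its polar then gives \emph{strict} inequalities $\langle v,v_j\rangle<0$ for all $j>k$, so $H\cap P=F$ directly. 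What your route buys: it avoids the simpliciality/face-of-a-face step, it proves along the way a deterministic transfer statement (projections of points in general affine position are in general linear position) which the paper only obtains probabilistically via the projection property, and essentially the same argument proves the conical version (Proposition~\ref{prop:face_events_as_absorbtion_events_cones}) verbatim. What the paper's route buys: it is shorter once simpliciality is granted and needs no dimension bookkeeping. Two trivia worth a sentence in a write-up: when $C\neq\{0\}$ and is pointed, $0\notin\relint C^\circ$, so your chosen $v$ is automatically nonzero (you assert this; it deserves the one-line reason), and the degenerate cases $n=k$ (where $C=\{0\}$ and $F=P$ is a face by convention) and $k=0$ should be dispatched explicitly, as your hyperplane reformulation does not literally cover $F=P$.
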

\begin{proof}
``$\Longrightarrow$'': If $F$ is a face of $P$, then there is an affine hyperplane $H = \{x\in\R^d: \langle x, u \rangle = c\}$ with a unit normal vector $u$ and $c>0$ such that $P\subseteq H^- = \{x\in \R^d: \langle x, u \rangle \leq c\}$ and $P\cap H = F$. Since $H\supseteq F$, we also have $H\supseteq \aff F = A$ and $u\in A^\perp$. Also, for $i\in \{k+1,\ldots, n\}$ we have $\langle u, x_i\rangle \leq  c = \langle u, y\rangle$ and hence $\langle u, \Pi_{A^\perp} x_i- y\rangle \leq 0$. It follows that $\pos (\Pi_{A^\perp}x_{k+1}-y,\ldots, \Pi_{A^\perp}x_{n}-y) \neq A^\perp$ since the dual of this cone contains $u$.

``$\Longleftarrow$'': If $\pos (\Pi_{A^\perp}x_{k+1},\ldots, \Pi_{A^\perp}x_{n})\neq A^\perp$, then there is a unit vector $u\in A^\perp$ such that $\langle u, \Pi_{A^\perp}x_{i}\rangle \leq 0$ for all $i\in \{k+1,\ldots, n\}$. It follows that $\langle u, x_i\rangle \leq 0$ for $i\in \{k+1,\ldots, n\}$ and since $\langle u, x_1\rangle = \ldots = \langle u, x_k\rangle = : c$, we conclude that $H:= \{x\in \R^d: \langle x, u\rangle = c\}$ is a supporting hyperplane of $P$. Clearly, $H\supseteq A$, hence $H\cap P\supseteq [x_1,\ldots, x_k]$ (but equality need not hold, in general). Since $x_1,\ldots, x_n$ are in general affine position, the polytope $P$ is simplicial. Hence, $H\cap P$, which is a face of $P$, is a simplex of the form $\conv(x_i: i \in I)$ for some set $I\subseteq \{1,\ldots, n\}$. By the general affine position assumption, $H\cap P$ does not contain points $x_j$ with $j\notin I$. It follows that $x_1,\ldots, x_k$ are vertices of $H\cap P$. Hence, $[x_1,\ldots, x_k]$ is a face of the simplex $H\cap P$ and therefore a face of $P$.
\end{proof}
\begin{example}
For $k=1$ we get: $x_1$ is a vertex of $[x_1,\ldots, x_n]$ if and only if $\pos (x_{2}-x_1,\ldots, x_{n}-x_1) \neq \R^d$. (Note that $A=\{x_1\}$ and $A^\perp=\R^d$.) We used this equivalence in Example~\ref{ex:volume_of_polytope_through_uniform_point}.
\end{example}

The next proposition is a conical version of Proposition~\ref{prop:face_events_as_absorbtion_events_polytopes}. Its proof is similar and we omit it.

\begin{proposition}[Face events as absorbtion events: conical version]\label{prop:face_events_as_absorbtion_events_cones}
Let $v_1,\ldots, v_n\in \R^d$ be vectors in general linear position and let $C= \pos (v_1,\ldots, v_n)$. Let $L:= \lin(v_1,\ldots, v_k)$, where $k\in \{0,\ldots, \min (n,d)-1\}$. Then,
$$
\pos (v_1,\ldots, v_k) \text{ is a face of } C \quad \Longleftrightarrow \quad   \pos (\Pi_{L^\perp}v_{k+1},\ldots, \Pi_{L^\perp}v_{n}) \neq L^\perp.
$$
Without the general linear position assumption, only the implication ``$\Longrightarrow$'' holds.
\end{proposition}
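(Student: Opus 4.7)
The plan is to adapt the proof of Proposition \ref{prop:face_events_as_absorbtion_events_polytopes} by replacing affine supporting hyperplanes with linear ones (i.e.\ hyperplanes through the origin) and convex hulls with positive hulls. For the direction ``$\Longrightarrow$'', I would assume $G := \pos(v_1,\ldots,v_k)$ is a face of $C$ and produce a linear supporting hyperplane $H = u^\perp$ for some unit $u \in \R^d$ with $C \subseteq \{\langle\cdot,u\rangle \leq 0\}$ and $G \subseteq C \cap H$. Such an $H$ exists: when $G$ is a proper face this follows from the definition, and in the degenerate case $G = C$ one has $\lin C \subsetneq \R^d$ since $\dim L \leq k < d$, so any linear hyperplane containing $\lin C$ works. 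From $L = \lin G \subseteq H$ I deduce $u \in L^\perp$, and for $j > k$ the inequality $\langle u, v_j \rangle \leq 0$ gives $\langle u, \Pi_{L^\perp} v_j\rangle \leq 0$. Thus $u$ is a non-zero element of the polar (inside $L^\perp$) of $C' := \pos(\Pi_{L^\perp} v_{k+1},\ldots, \Pi_{L^\perp} v_n)$, forcing $C' \neq L^\perp$.

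For ``$\Longleftarrow$'', I first verify that the projected vectors $\Pi_{L^\perp} v_{k+1},\ldots, \Pi_{L^\perp} v_n$ are themselves in general linear position inside $L^\perp$. This is a short dimension count using the identity $\dim \lin(\Pi_{L^\perp} v_j: j\in J) = \dim \lin(v_j: j\in J) - \dim(\lin(v_j: j\in J) \cap L)$ together with the general linear position of $v_1,\ldots, v_n$ and a case analysis on $|J|$. Lemma \ref{lem:cones_generated_by_vectors_in_gen_lin_pos}(iii) applied inside the ambient space $L^\perp$ then yields that $C' \neq L^\perp$ forces $\linsp C' = \{0\}$, so the polar $C'^{\circ} \subseteq L^\perp$ has full dimension $d-k$. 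Picking $u$ in the relative interior of $C'^{\circ}$ gives $\langle u, \Pi_{L^\perp} v_j\rangle < 0$ for each $j > k$, with strictness ensured by $\Pi_{L^\perp} v_j \neq 0$ (again a consequence of general linear position, since $k+1\leq d$ implies $v_j\notin L$). Since $u \in L^\perp$ I also have $\langle u, v_i \rangle = 0$ for $i \leq k$, so $H = u^\perp$ is a linear supporting hyperplane of $C$ and the strict signs yield $C \cap H = \pos(v_1,\ldots, v_k)$, i.e.\ $G$ is a face of $C$.

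The main obstacle is the step in ``$\Longleftarrow$'' where general linear position is used to upgrade a merely non-zero separating vector $u\in C'^{\circ}$ into one with \emph{strict} inequalities on every projected generator. Without that hypothesis, $C'$ may have a non-trivial lineality space while still being a proper subcone of $L^\perp$, in which case no element of $C'^{\circ}$ can strictly separate $v_1,\ldots, v_k$ from the remaining $v_j$'s; the natural supporting hyperplane one builds would then absorb additional generators, leaving $C \cap H \supsetneq \pos(v_1,\ldots, v_k)$. This is precisely the asymmetry underlying the one-directional character of the statement in the non-generic case.
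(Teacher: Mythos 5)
Your proposal is correct, and its ``$\Longleftarrow$'' direction takes a genuinely different route from the template the paper has in mind (the paper omits this proof, saying it is analogous to Proposition~\ref{prop:face_events_as_absorbtion_events_polytopes}, whose argument runs through simpliciality: one takes any dual vector $u$, observes $H\cap P \supseteq$ the candidate face, and then uses general position to see that the face $H\cap P$ is a simplex on a subset of the generators containing no extra ones, so the candidate is a face of a face). You instead use general linear position twice in a different way: first to check, via the dimension count $\dim \Pi_{L^\perp}W = \dim W - \dim(W\cap L)$, that the projected generators remain in general linear position inside $L^\perp$, and then to invoke Lemma~\ref{lem:cones_generated_by_vectors_in_gen_lin_pos}(iii) in $L^\perp$, so that $C'\neq L^\perp$ forces $\linsp C'=\{0\}$, the polar $C'^\circ$ is full-dimensional, and an interior point $u$ of $C'^\circ$ gives \emph{strict} inequalities $\langle u,\Pi_{L^\perp}v_j\rangle<0$ (using $\Pi_{L^\perp}v_j\neq 0$); this yields $C\cap u^\perp = \pos(v_1,\ldots,v_k)$ on the nose, with no ``face of a face'' step. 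What each approach buys: yours is more direct and identifies the face exactly, at the cost of the extra (but easy and correctly sketched) verification that general position is inherited by the projections; the paper's adaptation avoids that verification but needs the simplicial-cone structure of $C\cap H$. Your ``$\Longrightarrow$'' direction, including the degenerate case $G=C$, matches the paper's polytopal argument; the only point you gloss over is that a supporting hyperplane meeting a cone in a nonempty face necessarily passes through the origin (so that $H=u^\perp$ is linear and $u\in L^\perp$), which is a one-line scaling argument and not a gap. Your closing discussion of why strictness, and hence the reverse implication, fails without general position is also accurate.
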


\section{Beta cones and their angles}\label{section:beta_cones}

\subsection{Definition of beta cones}
Let us introduce the main object of interest of the present section.
\begin{definition}[Beta cones]\label{def:beta_cone}
Take some $n\in \N$, $d\in \N$ and real numbers $\b, \b_1,\ldots, \b_n \geq -1$. Let $Z, Z_1,\ldots, Z_n$ be independent random vectors in $\R^d$ with $Z\sim f_{d, \b}$ and $Z_i\sim f_{d,\b_i}$ for all $i=1,\ldots, n$. A \emph{beta cone} is defined as the random polyhedral cone
	\begin{align*}
\sC := \sC_{n,d}^{\beta; \beta_1,\ldots, \beta_n} \coloneqq \pos (Z_1-Z,\dots,Z_n-Z) \subseteq \R^d,
	\end{align*}
see Figure~\ref{fig:construction_of_cones} for a visualization. We then write $\sC \sim \BetaCone (\R^d; \b; \b_1,\dots,\b_n)$.
\end{definition}
Our ultimate goal is to compute explicitly the expected internal and external angles of beta cones and, more generally, their expected conic intrinsic volumes. As we shall show, the tangent cones of beta polytopes become beta cones after factoring out the lineality space. This will allow us to express the expected $f$-vectors and many other quantities related to beta polytopes through the expected angles of beta cones, for which we shall obtain explicit formulas. Note that if $n>d$, a beta cone can be equal $\R^d$ with positive probability (which we shall determine in Theorem~\ref{theo:beta_cones_conic_intrinsic_vol_as_A_B} below).

\begin{lemma}\label{lem:beta_cones_generic_properties}
Consider a beta cone $\sC\sim  \BetaCone(\R^d;\b;\b_1,\dots,\b_n)$. If $d=1$, suppose additionally that $\beta, \beta_1,\ldots, \beta_n \neq -1$.
Then, the following hold with probability $1$:
\begin{itemize}
\item[(i)] $\dim \sC = \min (n,d)$.
\item[(ii)] For every $k\in \{0,\ldots, \min (n,d)-1\}$ every $k$-dimensional face of $\sC$ has the form $\pos (Z_i-Z: i\in K)$ for some subset $K\subseteq \{1,\ldots, n\}$ of size $k$. For $k=0$, we use the convention $\pos (\varnothing) = \{0\}$.
\item[(iii)] Either $\linsp \sC = \{0\}$  or  $\sC = \R^d$.
\end{itemize}
\end{lemma}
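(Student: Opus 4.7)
The plan is to reduce everything to Lemma \ref{lem:cones_generated_by_vectors_in_gen_lin_pos}, which already gives precisely the three conclusions for any cone generated by vectors in general linear position. So the only thing to verify is that, under our assumptions, the defining vectors $V_i := Z_i - Z$, for $i=1,\dots,n$, are in general linear position with probability~$1$.

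The key observation is the standard linear/affine translation: for any index set $K\subseteq\{1,\dots,n\}$, the vectors $\{V_i : i\in K\}$ are linearly independent if and only if the points $\{Z\}\cup\{Z_i : i\in K\}$ are affinely independent. Consequently, $V_1,\dots,V_n$ are in general linear position if and only if, for every $K\subseteq\{1,\dots,n\}$ with $\#K\leq d$, the points $Z$ together with $(Z_i)_{i\in K}$ are affinely independent. This in turn is implied by the general affine position of the enlarged tuple $(Z,Z_1,\dots,Z_n)$.

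Thus I would apply Lemma \ref{lem:beta_poi_general_affine} directly to the collection $Z\sim f_{d,\beta}, Z_1\sim f_{d,\beta_1},\dots, Z_n\sim f_{d,\beta_n}$. Under the hypotheses of our lemma (namely $d\geq 2$ with $\beta,\beta_1,\dots,\beta_n\geq -1$, or $d=1$ with all these parameters strictly greater than $-1$), the hypotheses of Lemma~\ref{lem:beta_poi_general_affine} are satisfied, so $(Z,Z_1,\dots,Z_n)$ are in general affine position almost surely. By the equivalence noted above, $V_1,\dots,V_n$ are in general linear position a.s.

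With this in hand, Lemma~\ref{lem:cones_generated_by_vectors_in_gen_lin_pos} applied to $V_1,\dots,V_n$ yields conclusions (i), (ii) and (iii) verbatim, since $\sC=\pos(V_1,\dots,V_n)$ by definition. The only subtle point worth checking is the $d=1$ case, where the parameter restriction in Lemma~\ref{lem:beta_poi_general_affine} reflects the fact that $f_{1,-1}$ concentrates mass on the two-point sphere $\{-1,+1\}$, allowing exact coincidences; our added hypothesis $\beta,\beta_1,\dots,\beta_n\neq -1$ rules this out and keeps the reduction clean. No real computation is needed, and I do not anticipate any substantive obstacle.
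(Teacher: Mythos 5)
Your proposal is correct and matches the paper's own proof: both deduce from Lemma~\ref{lem:beta_poi_general_affine} that $Z,Z_1,\ldots,Z_n$ are a.s.\ in general affine position, note that this makes $Z_1-Z,\ldots,Z_n-Z$ a.s.\ in general linear position, and then invoke Lemma~\ref{lem:cones_generated_by_vectors_in_gen_lin_pos}. Your elaboration of the affine-to-linear translation (and the remark on the $d=1$ restriction) is just a more detailed writing of the same two-line argument.
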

\begin{proof}
The vectors $Z, Z_1,\ldots, Z_n$ from the definition of $\sC$ are a.s.\ in general affine position by Lemma~\ref{lem:beta_poi_general_affine}. It follows that the vectors $Z_1-Z, \ldots, Z_n-Z$ are a.s.\ in general linear position. It remains to apply Lemma~\ref{lem:cones_generated_by_vectors_in_gen_lin_pos}.
\end{proof}

\subsection{Tangent cones of beta cones}
The next theorem identifies the tangent cones of beta cones. In order to state it, we need to introduce some notation.
Let
$$\sC=\pos(Z_1-Z,\ldots, Z_n-Z) \sim \BetaCone(\R^d; \b;\b_1,\dots,\b_n)
$$
be a beta cone as  in Definition \ref{def:beta_cone}, with $n\in \N, d\in \N$ and $\b, \b_1,\ldots, \b_n \geq -1$.  If $d=1$, suppose additionally that $\beta, \beta_1,\ldots, \beta_n \neq -1$. For $k \in \{0,1,\dots,\min(d,n)-1\}$ we consider the polyhedral cone
$G \coloneqq \pos (Z_1-Z,\dots,Z_k-Z)$. If $k=0$, we agree to put $G_{\varnothing} = \{0\}$.  If $G$ is a face of $\sC$, we let  $T(G,\sC)$ be the tangent cone of $\sC$ at $G$ and $N(G,\sC) = (T(G,\sC))^\circ$ the normal cone.  By convention, if $G$ fails to be a face of $\sC$ (which is only possible if $\sC$ is full-dimensional, that is $n\geq d$), then $T(G,\sC):= \R^d$ and $N(G,\sC):= \{0\}$.

Further, let $\Cones(\R^k)$ be the set of closed convex cones in $\R^k$. The distance between two cones $C_1,C_2\in \Cones(\R^k)$ is defined as the Hausdorff distance between $C_1\cap \BB^k$ and $C_2\cap \BB^k$; see~\cite[Section~3.2]{amelunxen_phd}. Endowed with this distance, $\Cones(\R^k)$ becomes a compact metric space. A \emph{random closed convex cone} is a random variable $\cC: \Omega \to \Cones (\R^k)$ taking values in this space.
We say that two random closed convex cones $\cC_1$ and $\cC_2$ are isometric if they can be defined on the same probability space $(\Omega, \cF, \P)$ such that for a.e.\ realization $\omega\in \Omega$ the realizations of these cones, $\cC_1(\omega)$ and $\cC_2(\omega)$, are isometric.

\begin{theorem}[Tangent cones of beta cones]\label{theo:representation_angles_and_construction_of_Cone}
In the setting just introduced, the following are true.
\begin{itemize}
\item[(a)] $T(G,\sC)$ is isometric to $\R^k \oplus \BetaCone \left(\R^{d-k}; \b+ \beta_1 + \ldots + \beta_k +\frac{k(d+1)}{2};\b_{k+1}+\frac{k}{2},\dots,\b_n+\frac{k}{2}\right)$;
\item[(b)] $N(G,\sC)$ is isometric to $\BetaCone^\circ \left(\R^{d-k}; \b+\beta_1 + \ldots + \beta_k+ \frac{k(d+1)}{2};\b_{k+1}+\frac{k}{2},\dots,\b_n+\frac{k}{2}\right)$;
\item[(c)]  the isometry type of $G$   is stochastically independent of $T(G,\sC)$ and $N(G,\sC)$.
\end{itemize}
\end{theorem}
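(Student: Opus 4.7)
The plan is to apply the Ruben--Miles canonical decomposition (Theorem~\ref{theo:ruben_miles}) to the $k+1$ source points $Z, Z_1, \ldots, Z_k$ that span $G$, combine it with the projection property of the beta distribution (Lemma~\ref{lem:projection}) to identify the projections $\Pi_{L^\perp}(Z_i-Z)$ for $i>k$ as independent beta-distributed vectors inside $A^\perp$, and then translate this description back through the canonical isometry $I_{A^\perp}:A^\perp\to\R^{d-k}$.

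First I carry out a purely geometric step. Set $A := \aff(Z, Z_1, \ldots, Z_k)$, which by Lemma~\ref{lem:beta_poi_general_affine} is a.s.\ $k$-dimensional. Its direction $L := A - Z = A - 0_A$ coincides with $\lin G$, and hence $L^\perp = A^\perp$. The standard local description of tangent cones at a face of a polyhedral cone (applied to the generators $Z_i - Z$ of $\sC$ at a point $w = \sum_{i\le k}(Z_i - Z) \in \relint G$), combined with the observation that $L$-components can be absorbed into the lineality part, yields
\[
T(G,\sC) \;=\; L \,+\, \pos\!\bigl(\Pi_{L^\perp}(Z_i - Z) : k+1 \le i \le n\bigr) \;=\; L \,+\, \pos\!\bigl(\Pi_{A^\perp}(Z_i) - 0_A : k+1 \le i \le n\bigr),
\]
using $\Pi_{A^\perp}(Z) = 0_A$, which holds because $Z \in A$. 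By Proposition~\ref{prop:face_events_as_absorbtion_events_cones}, this identity is also consistent with the convention $T(G,\sC) = \R^d$ when $G$ fails to be a face of $\sC$: that is precisely the event that the projected positive hull fills $L^\perp$.

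Next I identify the joint law. Applying Theorem~\ref{theo:ruben_miles} to the $k+1$ beta points $Z, Z_1, \ldots, Z_k$ (whose affine hull is of dimension $k$), part~(c2) gives that $I_{A^\perp}(0_A)$ has distribution $f_{d-k,\gamma}$ in $\R^{d-k}$ with
\[
\gamma \;=\; \beta + \beta_1 + \ldots + \beta_k + \frac{k(d+1)}{2},
\]
exactly the centre parameter claimed in the theorem. Parts~(c1), (c3) and~(a) state respectively that $L$ is uniform on $G(d,k)$, independent of $0_A$, and that the normalised positions of $Z, Z_1, \ldots, Z_k$ inside $A$ are themselves independent of $A$. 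Since $Z_{k+1}, \ldots, Z_n$ are independent of $(Z, Z_1, \ldots, Z_k)$, hence of $A$, Lemma~\ref{lem:projection} together with the rotational invariance of the beta distribution implies that, conditionally on $A$, the vectors $I_{A^\perp}(\Pi_{A^\perp}(Z_i))$ for $i>k$ are independent, each distributed as $f_{d-k,\beta_i + k/2}$, and independent of $0_A$.

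Putting these ingredients together, transporting by $I_{A^\perp}$ turns the positive hull living in $L^\perp$ into $\pos(Y_i - Y : k+1 \le i \le n)$ in $\R^{d-k}$, where $Y \sim f_{d-k,\gamma}$ and $Y_i \sim f_{d-k,\beta_i + k/2}$ are mutually independent. By Definition~\ref{def:beta_cone}, this is precisely $\BetaCone(\R^{d-k}; \gamma; \beta_{k+1} + k/2, \ldots, \beta_n + k/2)$, and combining it with the $k$-dimensional linear summand $L \cong \R^k$ yields part~(a). Part~(b) follows immediately by taking polars and using the identity $(L + C')^\circ = \{0\} \oplus (C')^\circ$ valid for any linear subspace $L \subseteq \R^d$ and cone $C' \subseteq L^\perp$ (with the polar on the right taken inside $L^\perp$). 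For part~(c), the isometry class of $G$ is a scale-invariant function of $(Z_i - Z)_{i \le k}$, and is therefore determined by the normalised positions of $Z, Z_1, \ldots, Z_k$ inside $A$, which by Ruben--Miles~(a) are independent of $A$; meanwhile, the isometry classes of $T(G,\sC)$ and $N(G,\sC)$ are measurable functions of $A$ and of $(Z_{k+1}, \ldots, Z_n)$ only, and the latter tuple is independent of everything pertaining to $G$. The main obstacle I anticipate is the careful bookkeeping of parameters: verifying that passing from $k+1$ source points to a $k$-dimensional affine hull produces exactly the shift $k(d+1)/2$ and not a neighbouring value, and that the projection property delivers the uniform additive shift $\beta_i \mapsto \beta_i + k/2$ for the remaining $n-k$ points; the rest is assembly of standard pieces.
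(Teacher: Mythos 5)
Your proof is correct and follows essentially the same route as the paper's: the decomposition $T(G,\sC) = (A-Z)\oplus \pos\bigl(\Pi_{A^\perp}Z_i - 0_A : i>k\bigr)$, the Ruben--Miles canonical decomposition applied to the $k+1$ points $Z,Z_1,\ldots,Z_k$ combined with the projection lemma (which the paper packages as Proposition~\ref{prop:beta_cones_joint_distr_projections}), polars taken inside $A^\perp$ for part~(b), and the independence of the normalised positions from $A$ and $Z_{k+1},\ldots,Z_n$ for part~(c). The only small imprecision is the phrase ``conditionally on $A$ \ldots independent of $0_A$'' (conditionally on $A$ the point $0_A$ is deterministic); what is needed, and what your cited ingredients do give exactly as in the paper, is the unconditional independence of $I_{A^\perp}(0_A)$ from the pair consisting of $A^\perp$ and $(Z_i)_{i>k}$.
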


Here, $C_1\oplus C_2:= \{(x_1,x_2)\in \R^{d_1+d_2}:x_1\in C_1, x_2\in C_2\}$ denotes the direct orthogonal sum of two polyhedral cones $C_1\subseteq \R^{d_1}$ and $C_2\subseteq \R^{d_2}$. Some additional comments are necessary.

\begin{remark}\label{rem:faces_of_a_beta_cone}
With probability $1$, all $k$-dimensional faces of $\sC$ have the form $\pos(Z_{i_1}-Z, \ldots, Z_{i_k} - Z)$ for some subset $K=\{i_1,\ldots, i_k\} \subseteq \{1,\ldots, n\}$ of size $k$; see Lemma~\ref{lem:beta_cones_generic_properties}.  There is no restriction of generality in considering the case $K= \{1,\ldots, k\}$. In this sense, $G$ is a typical possible face of $\sC$. Note that, by definition,  $G$ is itself a beta cone.  If $n\leq d$, then $G$ is a face of $\sC$ a.s., but we have to keep in mind that in the case when $n>d$, $G$ may or may not be a face with certain positive probabilities. It follows from Claim~(a) that the probability that $\sC$ is not a face coincides with the probability that $\BetaCone (\R^{d-k}; \b+ \beta_1 + \ldots + \beta_k +\frac{k(d+1)}{2};\b_{k+1}+\frac{k}{2},\dots,\b_n+\frac{k}{2}) = \R^{d-k}$.
\end{remark}

\begin{remark}\label{rem:rem:hausdorff_dist_cones}
Part~(c) should be understood as follows.  Consider any Borel-measurable functional $\varphi: \Cones(\R^k)\to\R^m$ defined on the space $\Cones(\R^k)$ of closed convex cones in $\R^k$ and suppose that $\varphi$ is rotationally invariant meaning that $\varphi(OC_1) = \varphi(C_1)$ whenever $C_1\in \Cones(\R^k)$ is a closed convex cone and $O:\R^k \to\R^k$ is an orthogonal transformation. (Examples of rotationally invariant  functionals are solid angles, conic intrinsic volumes, and the functional $C\mapsto \ind_{\{C\neq \R^k\}}$). Then, Part~(c) claims that  $\varphi(G)$ is stochastically independent of $T(G,\sC)$ and $N(G,\sC)$. It is also possible to introduce the space of isometry types of cones as the set of orbits of the natural action of the orthogonal group $O(k)$ on $\Cones(\R^k)$, endowed with the quotient $\sigma$-algebra.
The next two corollaries follow from Part~(c).
\end{remark}

\begin{corollary}
$\alpha(G)$ is stochastically independent of $\beta(G,\sC)$ and $\gamma(G,\sC)$.
\end{corollary}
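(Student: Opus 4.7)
The plan is to observe that this is an immediate consequence of Part~(c) of Theorem~\ref{theo:representation_angles_and_construction_of_Cone} combined with the remark that solid angles are rotationally invariant functionals on the space of cones. First I would note that, by the definitions recalled in Section~\ref{subsec:angles_conic_intrinsic_defs}, we have $\beta(G,\sC)=\alpha(T(G,\sC))$ and $\gamma(G,\sC)=\alpha(N(G,\sC))$, so all three random variables $\alpha(G)$, $\beta(G,\sC)$ and $\gamma(G,\sC)$ are obtained by applying the solid-angle functional $\alpha:\Cones(\R^k)\to[0,1]$ (or its analogues in lower dimensions) to the cones $G$, $T(G,\sC)$, $N(G,\sC)$ respectively.

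Next I would invoke the rotational invariance of $\alpha$: for any orthogonal transformation $O$ one has $\alpha(OC)=\alpha(C)$, since applying $O$ sends $\lin C$ to $\lin(OC)$ isometrically and preserves Lebesgue measure on these subspaces. In particular, $\alpha$ descends to a Borel function on the space of isometry types of closed convex cones, as indicated in Remark~\ref{rem:rem:hausdorff_dist_cones}. Hence $\alpha(G)$ is a Borel function of the isometry type of $G$, while $(\beta(G,\sC),\gamma(G,\sC))$ is a Borel function of the pair $(T(G,\sC),N(G,\sC))$.

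Finally, Part~(c) of Theorem~\ref{theo:representation_angles_and_construction_of_Cone} asserts that the isometry type of $G$ is stochastically independent of $T(G,\sC)$ and $N(G,\sC)$. Independence is preserved under measurable functions of independent random variables, so we conclude that $\alpha(G)$ is stochastically independent of the pair $(\beta(G,\sC),\gamma(G,\sC))$, which in particular yields the claimed pairwise independence statements. Since the argument is a direct application of already-proved structural results, there is no substantive obstacle; the only care needed is to keep track of the conventions when $G$ fails to be a face of $\sC$ (in which case $T(G,\sC)=\R^d$, $N(G,\sC)=\{0\}$, $\beta(G,\sC)=1$ and $\gamma(G,\sC)=0$, and the conclusion trivially persists).
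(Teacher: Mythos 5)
Your proof is correct and is essentially the paper's own argument: the corollary is exactly the specialisation of Part~(c) of Theorem~\ref{theo:representation_angles_and_construction_of_Cone}, read through Remark~\ref{rem:rem:hausdorff_dist_cones}, to the rotationally invariant functional $\alpha$, using $\beta(G,\sC)=\alpha(T(G,\sC))$ and $\gamma(G,\sC)=\alpha(N(G,\sC))$ and the fact that measurable functions of independent random elements remain independent. One peripheral slip: under the paper's conventions $\alpha(\{0\})=1$, so when $G$ fails to be a face one has $\gamma(G,\sC)=1$ rather than $0$ (cf.\ the proofs of Theorems~\ref{theo:beta_cone_external_angle} and~\ref{theo:beta_cones_conic_intrinsic_vol_as_ext_int}), but this does not affect the independence argument.
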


\begin{corollary}
The distribution of the isometry type of $G$ conditionally on the event that $G$ is a face of $\sC$ coincides with the unconditional distribution of the isometry type of $G$.
\end{corollary}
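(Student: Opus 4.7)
The plan is to deduce this statement from Part~(c) of Theorem~\ref{theo:representation_angles_and_construction_of_Cone} by exhibiting the event $\{G \text{ is a face of } \sC\}$ as a measurable function of the tangent cone $T(G,\sC)$, and then invoking the interpretation of Part~(c) explained in Remark~\ref{rem:rem:hausdorff_dist_cones}.

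First, I would observe that with our conventions, the event $\{G \text{ is a face of } \sC\}$ coincides with the event $\{T(G,\sC) \neq \R^d\}$. Indeed, if $n\leq d$, then $G$ is a face of $\sC$ almost surely by Lemma~\ref{lem:beta_cones_generic_properties} and, by Part~(a) of Theorem~\ref{theo:representation_angles_and_construction_of_Cone}, the tangent cone equals $\R^k \oplus \sC'$ for some beta cone $\sC'$ in $\R^{d-k}$, which is a proper subcone of $\R^d$ since $k<d$. If $n>d$, then by the convention introduced just before the statement of Theorem~\ref{theo:representation_angles_and_construction_of_Cone}, we have $T(G,\sC) = \R^d$ precisely when $G$ fails to be a face of $\sC$, so the equivalence holds in this case as well. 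In particular, $\ind_{\{G \text{ is a face of } \sC\}}$ is a Borel function of $T(G,\sC)$.

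Second, I would invoke Part~(c) of Theorem~\ref{theo:representation_angles_and_construction_of_Cone}, interpreted as in Remark~\ref{rem:rem:hausdorff_dist_cones}: the isometry type of $G$ is stochastically independent of $T(G,\sC)$. Since independence is preserved under measurable functions, the isometry type of $G$ is independent of the indicator $\ind_{\{G \text{ is a face of } \sC\}}$. Consequently, for any Borel-measurable rotationally invariant functional $\varphi$ on $\Cones(\R^k)$ and any Borel set $B$,
\begin{equation*}
\P[\varphi(G)\in B \mid G \text{ is a face of } \sC] = \P[\varphi(G) \in B],
\end{equation*}
which is precisely the claim about the conditional and unconditional distributions of the isometry type of $G$. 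There is no real obstacle here once Theorem~\ref{theo:representation_angles_and_construction_of_Cone}(c) is available; the only subtlety is to notice that the face event is encoded by the tangent cone via the convention adopted right before that theorem.
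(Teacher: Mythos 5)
Your proposal is correct and follows essentially the route the paper intends: the corollary is stated as a consequence of Theorem~\ref{theo:representation_angles_and_construction_of_Cone}(c), and your only added ingredient — that with the stated convention $\{G \text{ is a face of } \sC\}=\{T(G,\sC)\neq\R^d\}$, so the face indicator is a measurable function of the tangent cone — is exactly how the paper uses this event elsewhere (e.g.\ in the proof of Theorem~\ref{theo:beta_cones_angles_as_A_B}). Only a cosmetic point: in the case $n\le d$ the justification "proper subcone since $k<d$" should rather be that a proper face always has its tangent cone contained in a supporting half-space (or simply that the conditioning event then has probability one), but this does not affect the argument.
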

For the proof of Theorem~\ref{theo:representation_angles_and_construction_of_Cone}, which is visualized in Figure~\ref{fig:construction_of_cones_proof_projection},  we first need to introduce some notation.  Consider the random affine space $A \coloneqq \aff (Z,Z_1,\ldots, Z_k)$. By Lemma~\ref{lem:beta_poi_general_affine}  we have $\dim A = k$ almost surely, hence the orthogonal complement $A^\perp$ has dimension $(d-k)$ almost surely. Note that $A^\perp$ passes through the origin, while $A$ does not need to pass through the origin as it is an \textit{affine} subspace. We project the points $Z_{k+1},\dots,Z_n$ onto $A^\perp$ using the orthogonal projection $\Pi_{A^\perp}:\R^d \to A^\perp$. That is, we define
\begin{align*}
&Y_{k+1} \coloneqq \Pi_{A^\perp} (Z_{k+1}) \in A^\perp,\quad \dots, \quad Y_n \coloneqq \Pi_{A^\perp} (Z_n) \in A^\perp,
\\
&Y:=\Pi_{A^\perp} (Z) = \Pi_{A^\perp} (Z_1) = \ldots = \Pi_{A^\perp} (Z_k);
\end{align*}
see Figure~\ref{fig:construction_of_cones_proof_projection}.
In the definition of $Y$, we used that for every $a\in A$ we have $\Pi_{A^\perp}(a) = \Pi_{A^\perp}(Z)$.  It is convenient to identify $A^\perp$ with $\R^{d-k}$ using the isometry $I_{A^\perp}$; see Section~\ref{subsubsec:identification_affine_subspaces}. We remind that it  satisfies $I_{A^\perp} (0) = 0$.
This leads to the points
\begin{align*}
Y_{k+1}' := I_{A^\perp}(Y_{k+1})\in \R^{d-k}, \quad \dots, \quad Y_{n} := I_{A^\perp}(Y_n)\in \R^{d-k}, \quad  Y' := I_{A^\perp} (Y)\in \R^{d-k}.
\end{align*}
Their joint distribution is characterized in the following proposition.
\begin{proposition}\label{prop:beta_cones_joint_distr_projections}
With the notation just introduced, the following distributional statements hold:
	\begin{itemize}
		\item[(i)] $Y', Y_{k+1}',\dots,Y_{n}'$ are stochastically independent points in $\R^{d-k}$;
		\item[(ii)] $Y_{i}'$ has density $f_{d-k, \b_{i} + \frac{k}{2}}$ for each $i \in \{k+1,\dots,n\}$;
		\item[(iii)] $Y'$ has density $f_{d-k,\b+ \beta_1 + \ldots + \beta_k + \frac{k(d+1)}{2}}$ .
	\end{itemize}
\end{proposition}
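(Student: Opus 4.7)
The plan is to reduce the statement to two ingredients available from the excerpt: the canonical decomposition of Ruben and Miles (Theorem~\ref{theo:ruben_miles}) applied to the $(k+1)$-tuple $(Z, Z_1,\dots,Z_k)$, and the projection property of the beta distribution (Lemma~\ref{lem:projection}) combined with the rotational invariance of $f_{d,\beta_i}$. The canonical decomposition will deliver the law of $Y'$ together with its independence from $A^\perp$, while the projection property, applied fibrewise to the random subspace $A^\perp$, will deliver the laws of the $Y_i'$ for $i \geq k+1$.

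For part~(iii), observe first that $Z, Z_1,\dots,Z_k$ all lie in $A$ by construction, so they project under $\Pi_{A^\perp}$ to the common point $0_A = \Pi_A(0)$; hence $Y = 0_A$ and $Y' = I_{A^\perp}(0_A)$. Applying Theorem~\ref{theo:ruben_miles} to the tuple $(Z, Z_1,\dots,Z_k)$ (with ``$k$'' in that theorem replaced by $k+1$ and the parameters relabelled as $\beta, \beta_1,\dots,\beta_k$), part~(c2) yields that $Y'$ has density $f_{d-k,\gamma'}$ with
\[
\gamma' = (\beta + \beta_1 + \ldots + \beta_k) + \frac{(k+1)d}{2} - \frac{d-k}{2} = \beta + \beta_1 + \ldots + \beta_k + \frac{k(d+1)}{2},
\]
as required. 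Part~(c3) further gives that $Y'$ is independent of the linear subspace $A - 0_A$, hence of its orthogonal complement $A^\perp$.

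For parts~(i) and~(ii), I would condition on the $\sigma$-algebra $\cG$ generated by $(Z, Z_1,\dots,Z_k)$. The subspace $A^\perp$ is $\cG$-measurable, while $Z_{k+1},\dots,Z_n$ are independent of $\cG$ and therefore remain conditionally independent given $\cG$, each retaining its unconditional law $f_{d,\beta_i}$. Applying Lemma~\ref{lem:projection} with $L = A^\perp$ to each realization, and using the rotational invariance of $f_{d,\beta_i}$ together with the measurability convention on the isometries $I_L$ fixed in Section~\ref{subsubsec:identification_affine_subspaces}, the conditional law of $Y_i' = I_{A^\perp}(\Pi_{A^\perp}(Z_i))$ given $\cG$ is $f_{d-k,\beta_i + k/2}$ and, crucially, does \emph{not} depend on the realization of $A^\perp$. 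Consequently $(Y_{k+1}',\dots,Y_n')$ is conditionally independent given $\cG$ with a product law that is $\cG$-free, so the tuple is unconditionally independent and independent of $\cG$; in particular, independent of the $\cG$-measurable vector $Y'$. Combined with part~(iii), this yields the full joint independence claimed in~(i) and the marginal laws in~(ii).

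The main, though rather mild, technical obstacle is bookkeeping around the random isometry $I_{A^\perp}$: one must verify that the rotational invariance of $f_{d,\beta_i}$ can genuinely be transferred through $I_{A^\perp}$, so that the conditional law really is free of $A^\perp$. This is precisely what the measurability convention on the family $\{I_L\}$ was set up for, and no further computation beyond the arithmetic simplification of $\gamma'$ above is required.
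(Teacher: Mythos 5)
Your proof is correct and follows essentially the same route as the paper: part (c2) of the Ruben--Miles canonical decomposition applied to $(Z,Z_1,\dots,Z_k)$ gives the law of $Y'$, and the projection property (Lemma~\ref{lem:projection}) applied conditionally on the random subspace $A^\perp$ gives the laws and mutual independence of $Y_{k+1}',\dots,Y_n'$. The only (harmless) difference is that you condition on all of $\cG=\sigma(Z,Z_1,\dots,Z_k)$ and deduce the independence of $Y'$ from the tuple via its $\cG$-measurability, whereas the paper conditions only on $A^\perp$ and instead invokes part (c3) (independence of $Y'$ and $A^\perp$) together with the independence of $Y'$ from $Z_{k+1},\dots,Z_n$; both arguments are valid.
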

\begin{proof}
Recall the notation $0_{A}:=\Pi_{A} (0) = \argmin_{x \in A} \| x\|$ and note that $Y= 0_A$. The point $Y'=I_{A^\perp}(0_A)$ has density $f_{d-k,\beta + \beta_1 + \ldots + \beta_k + \frac{k(d+1)}{2}}$ due to part (c2) of Theorem~\ref{theo:ruben_miles}. Note that $A$ and hence also $A^\perp$ is stochastically independent of $Z_{k+1},\ldots, Z_n$. Let us  condition on some fixed realization of $A^\perp$. The projected and then embedded points $Y_{k+1}', \ldots, Y_n'$ are conditionally stochastically independent. Further, Lemma~\ref{lem:projection} gives that, for each $i \in \{k+1,\ldots, n\}$, the point $Y_{i}'=I_{A^\perp}(\Pi_{A^\perp}(Z_{i}))$ has conditional density $f_{d-k,\b_{i} + \frac{k}{2}}$ as a projection of a beta-distributed point. Since the joint distribution of $Y_{k+1}',\ldots, Y_n'$ is the same for all realizations of $A^\perp$, it is also the unconditional joint distribution of these points (by the formula of total probability). It follows that the points $Y_{k+1}',\ldots, Y_n'$ are stochastically independent and that for $i \in \lbrace k+1,\dots,n\rbrace$ each point $Y_{i}'$ has density $f_{d-k, \b_{i} + \frac{k}{2}}$.  It remains to prove that $Y'$ is stochastically independent of $Y_{k+1}',\ldots, Y_n'$. To this end, observe that $Y'=I_{A^\perp}(0_A)$ is stochastically independent of $A^\perp$ due to part~(c3) of Theorem~\ref{theo:ruben_miles}. Clearly, $Y'$ is also independent of $Z_{k+1},\ldots, Z_n$. Since the random points $Y_{k+1}',\ldots, Y_n'$ are defined as functions of $Z_{k+1}, \ldots, Z_n$ and $A^\perp$ only, we conclude that $Y'$ is independent of $Y_{k+1}',\ldots, Y_n'$. We thus proved (i), (ii), (iii).
\end{proof}
\begin{figure}[h!]\label{fig:proof_theo:representation_angles_and_construction_of_Cone}
	\centering
	\begin{tikzpicture}[scale=2]

		\coordinate (O)  at (0,0);
		\coordinate (Z1) at (1.5,2.5);
		\coordinate (Z1real) at (1.2,2.0);
		\coordinate (Z2) at (1.5,1.6);
		\coordinate (Z3) at (2.7,2.3);
		\coordinate (Z3thick) at ($0.6165*(Z3)$);
		\coordinate (Z4) at (2.9,1.2);
		\coordinate (Z4real) at ($1.12*(Z4)$);
		\coordinate (Z4thick) at ($0.752*(Z4)$);
		\coordinate (Z5) at (2.0,1.05);
		\coordinate (origin) at (-2,1.0);
		
		
		
		\coordinate (Shift) at (1.5,2.5);
		\coordinate (A) at ($-1*(2.5,-1.5) + (-0.9,-0.9)$);
		\coordinate (B) at ($ 1*(2.5,-1.5) + (-0.9,-0.9)$);
		\coordinate (D) at ($(Shift) - 0.4*(2.5,-1.5) + (0.0,0.0)$);
		\coordinate (C) at ($(Shift) + 0.65*(2.5,-1.5) + (0.0,0.0)$);
		
		\fill[green!40, opacity=0.4] (A) -- (B) -- (C) -- (D) -- cycle
		  node [pos=0, above, scale=0.9, opacity=1, green!50!black] {$A^\perp$};
		\draw[thick, green!50!black] (A) -- (B) -- (C) -- (D) -- cycle;
		
		\filldraw[red] (O) circle (0.9pt);   
		\filldraw[black] (origin) circle (0.7pt);
		
		\coordinate (ExtNeg)  at ($-0.7*(1.5,2.5)$);
		\coordinate (ExtNeg2) at ($-0.4307*(1.5,2.5)$);
		\coordinate (ExtPos)  at ($ 1.35*(1.5,2.5)$);
		
		\draw[red!80!black, thick]        (ExtNeg) -- (ExtNeg2);
		\draw[red!80!black, thick, dotted](ExtNeg2) -- (0,0);
		\draw[red!80!black, thick]        (0,0) -- (ExtPos)
		  node [pos=1, above, scale=0.9] {$A$};
		

  %
  %


		
		\coordinate (Z2proj) at
		  ($(0.40, -0.24) + -0.267 *(1.5,2.5)$);
		\coordinate (Z3proj) at
		  ($(0.97, -0.58)+ 0.152 *(1.5,2.5)$);
		\coordinate (Z4proj) at
		  ($(1.48, -0.89) + 0*(1.5,2.5)$);
		\coordinate (Z5proj) at
		  ($(1.007, -0.604) + -0.34 *(1.5,2.5)$);
		  \coordinate (Z4realproj) at
		  ($(1.79, -1.077)+ 0.0*(1.5,2.5)$);
		
		\pgfmathsetmacro{\PlaneScaleTwo}{2.3}
		\coordinate (F1) at (O);
		\coordinate (F2) at ($1.0*(Z1)$);
		\coordinate (F3) at
		  ($1.0*(Z1) + \PlaneScaleTwo*(Z3proj)$);
		\coordinate (F4) at
		  ($\PlaneScaleTwo*(Z3proj)$);
		
		\fill[blue!30, opacity=0.15] (F1) -- (F2) -- (F3) -- (F4) -- cycle;
		\draw[thick, blue!50!black] (F1) -- (F2) -- (F3) -- (F4) -- cycle
		  node [pos=0, right, opacity=1, scale=0.9, blue!50!black] {$T(G,\sC)$};
		

		\draw[orange, thick, loosely dotted] (Z3) -- (Z3proj);
		\fill[orange] (Z3proj) circle (0.6pt);
		
		\draw[orange, thick, loosely dotted] (Z4real) -- (Z4realproj);
		\fill[orange] (Z4realproj) circle (0.6pt);

		\fill[black!10] (Z1) -- (Z2) -- (Z) -- cycle;
		\fill[black!20] (Z2) -- (Z5) -- (Z) -- cycle;
		\fill[black!30] (Z5) -- (Z4) -- (Z) -- cycle;
		\fill[black!60] (Z1) -- (Z2) -- (Z3thick) -- (Z3) -- cycle;
		\fill[black!55] (Z3thick) -- (Z5) -- (Z4) -- (Z3) -- cycle;
		\draw[black] (Z1) -- (Z2) -- (Z5) -- (Z4) -- (Z3) -- cycle;

		\draw[->,thick]        (O) -- (Z1real);
		\draw[->, thick]        (Z3thick) -- (Z3);
		\draw[->,thick]        (O) -- (Z2);
		\draw[thick](Z4thick) -- (Z4real);
		\draw[->,thick]        (O) -- (Z5);
		\draw[->, thick]        (O) -- (Z4real);

		\pgfmathsetmacro{\PlaneScaleOne}{2.3}
		\coordinate (E1) at (O);
		\coordinate (E2) at ($1.0*(Z1)$);
		\coordinate (E3) at ($1.0*(Z1) + \PlaneScaleOne*(Z2proj)$);
		\coordinate (E4) at ($\PlaneScaleOne*(Z2proj)$);
		
		\fill[blue!30, opacity=0.2] (E1) -- (E2) -- (E3) -- (E4) -- cycle;
		\draw[thick, blue!50!black] (E1) -- (E2) -- (E3) -- (E4) -- cycle;

		\draw[orange, thick, loosely dotted] (Z2) -- (Z2proj);
		\fill[orange] (Z2proj) circle (0.6pt);

		\draw[orange, thick, loosely dotted] (Z5) -- (Z5proj);
		\fill[orange] (Z5proj) circle (0.6pt);
		
		\coordinate (v2) at (-0.0005, -0.9075);
		\coordinate (v3) at ( 0.9735, -0.2175);
		
		\coordinate (p2) at ( -0.9075, -0.0005);
		
		\coordinate (p3) at ( 0.2175, 0.9735);
		
		\def\T{2.5}
		
		\coordinate (C0) at (0,0);
		\coordinate (C1) at ($\T*(p2)$);
		\coordinate (C3) at ($\T*(p3)$);
		
		\fill[red, opacity=0.2] (C0) -- (C1) -- (C3) -- cycle;
		\draw[red!70!black, thick] (C0) -- (C1) -- (C3) -- cycle;
		
		\node[red!70!black, below left, scale=0.9] at (C1) {$N(G,\sC)$};
		
		\node[above, scale=0.8] at ($(Z1real)+(-0.1,0)$)  {$Z_1$};
		\node[left,  scale=0.8] at  ($(Z2)+(0,0.1)$) {$Z_2$};
		\node[above, scale=0.8] at  (Z3)            {$Z_5$};
		\node[right, scale=0.8] at  ($(Z4real)+(0.0,-0.05)$) {$Z_4$};
		\node[left, scale=0.8] at  ($(Z5)+(-0.055,0.05)$) {$Z_3$};
		
		\node[right, scale=0.8] at (origin) {$0$};
		\node[left,  scale=0.8] at (0,0.15)  {$Z$};
		\node[left,  scale=0.8] at (Z2proj)  {$Y_2$};
		\node[above, scale=0.8] at ($(Z3proj)+(-0.12,0.01)$) {$Y_5$};
		\node[right, scale=0.8] at (Z4realproj)  {$Y_4$};
		\node[below, scale=0.8] at (Z5proj)  {$Y_3$};
		\end{tikzpicture}
  	\caption{Visualization of the construction used in the proof of Theorem~\ref{theo:representation_angles_and_construction_of_Cone}. Here, the gray cone is $\sC=\pos(Z_1-Z,\ldots, Z_5-Z) \sim \BetaCone(\R^3; \b;\b_1,\dots,\b_5)$ and the continuous red line is the face $G=\pos (Z_1-Z)$. Its continuation is the affine subspace $A=\aff (Z,Z_1)$, which is also the lineality space of the tangent cone $T(G,\sC)$. The green plane is the orthogonal complement $A^\perp$ of the line $A$. We project each point $Z,Z_1,\dots,Z_5$ onto $A^\perp$ which is identified with  $\R^2$. The projections $Y,Y_2,\dots,Y_5$ are beta-distributed as stated in Proposition~\ref{prop:beta_cones_joint_distr_projections}. Thus, the isometry of $T(G,\sC)$ to $\R \oplus \BetaCone(\R^2; \b+\b_1+2;\b_2+\frac{1}{2},\dots,\b_5+\frac{1}{2})$ becomes clear.
   }
  	\label{fig:construction_of_cones_proof_projection}
\end{figure}
\begin{proof}[Proof of Theorem~\ref{theo:representation_angles_and_construction_of_Cone}]
We observe that the tangent cone of $\sC$ at $G$ can be expressed as
\begin{align}
T(G,\sC)
&=
\lin (Z_1-Z, \ldots, Z_k-Z) + \pos(Z_{k+1}-Z, \ldots, Z_n-Z) \notag
\\
&=
(A-Z) \oplus \pos(Y_{k+1}-Y, \dots, Y_n-Y). \label{eq:beta_cone_tangent_cone_decomposition_direct_sum}
\end{align}
Note that this holds true also if $G$ fails to be a face of $\sC$ -- then $\pos(Y_{k+1}-Y, \dots, Y_n-Y) = A^\bot$ by Proposition~\ref{prop:face_events_as_absorbtion_events_cones}, which is consistent with the convention $T(G, \sC) = \R^d$.

\vspace*{2mm}
Now we can prove (a). Since $A-Z$ is isometric to $\R^k$  and since $I_{A^\perp}$ is an isometry between $A^\perp$ and $\R^{d-k}$, we conclude that $T(G,\sC)$ is isometric to $\R^k \oplus \pos(Y_{k+1}'-Y', \dots, Y_n'-Y')$.  Knowing this, (a) follows from the definition of a beta cone and Proposition~\ref{prop:beta_cones_joint_distr_projections}.

\vspace*{2mm}
In order to prove (b), we note that the normal cone at $G$, which is defined as the polar of the tangent cone, is the polar cone of $\pos(Y_{k+1}-Y, \dots, Y_n-Y)$ taken inside $A^\perp$ as the ambient space.

\vspace*{2mm}
Lastly, in order to prove (c), we recall that $A = \aff (Z,Z_1,\ldots, Z_k)$ and consider the random  points $Z_i':= I_A(Z_i)/ \sqrt{1-h^2(A)}\in \R^k$, for $i=1,\ldots, k$,  as well as $Z':= I_A(Z)/\sqrt{1-h^2(A)}\in \R^k$. By part~(c1) of Theorem~\ref{theo:ruben_miles}, $Z', Z_1',\ldots, Z_k'$ are stochastically independent of $A$ and $A^\perp$. Clearly, these points are also stochastically independent of $Z_{k+1},\dots,Z_{n}$. We conclude that $Z',Z_1',\ldots, Z_k'$ are stochastically independent of $A, A^\perp, Y_{k+1}, \ldots, Y_n$.

Since $I_A$ is an isometry between $A$ and $\R^k$, we realize that the cone  $G = \pos (Z_1-Z,\dots,Z_k-Z)$ is isometric to $\pos(Z_1'-Z',\ldots, Z_k'-Z')$. If $\varphi:\Cones(\R^k) \to \R^m$ is a measurable, rotationally invariant functional on the space of closed convex cones in $\R^k$, then $\varphi(G) = \varphi(\pos(Z_1'-Z',\ldots, Z_k'-Z'))$. On the other hand, $T(G,\sC)= (A-a_0) \oplus \pos(Y_{k+1}-Y, \dots, Y_n-Y)$, where $a_0$ is any point in $A$.   Hence, $\varphi(G)$ is a function of $Z', Z_1',\ldots, Z_k'$, while $T(G,\sC)$ (and hence its polar $N(G,\sC)$) are functions of $A,Y_{k+1}, \ldots, Y_n$. It follows that $\varphi(G)$ is stochastically independent of $T(G,\sC)$ and $N(G,\sC)$.
\end{proof}

Let us now give an explicit formula for the expected \emph{external} angle of beta polytopes. Computing expected \emph{internal} angles is much more difficult and will occupy us in the following.
\begin{theorem}[External angles]\label{theo:beta_cone_external_angle}
Let $\sC=\pos(Z_1-Z,\ldots, Z_n-Z) \sim \BetaCone(\R^d; \b; \b_1,\dots,\b_n)$ be a beta cone as defined in Definition \ref{def:beta_cone}. Suppose that either  $d\geq 2$ or $d=1$ and $\beta, \beta_1,\ldots, \beta_n\neq -1$.
Then,
\begin{align*}
\E \left[\alpha (\sC^\circ)\ind_{\{\sC \neq \R^d\}}\right]
=
\int_{-1}^{+1}  c_{\beta + \frac {d-1}{2}} (1-t^2)^{\beta +  \frac {d-1}{2}}
		\prod_{j =1}^{n} \left(\int_{-1}^t c_{\beta_j + \frac{d-1}{2}} (1-s^2)^{\beta_j + \frac{d-1}{2}}\,\dd s\right) \,\dd t.
	\end{align*}
\end{theorem}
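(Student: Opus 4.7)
The key idea is to interpret the external angle $\alpha(\sC^\circ)$ as an absorption probability for an auxiliary uniform direction and then reduce the computation to a one-dimensional integral via the projection property of the beta distribution. Let $U$ be a random vector uniformly distributed on $\SS^{d-1}$, independent of $Z, Z_1, \ldots, Z_n$. By Lemma~\ref{lem:beta_cones_generic_properties}(iii), on the event $\{\sC \neq \R^d\}$ the lineality space of $\sC$ is a.s.\ trivial, so $\sC^\circ$ has full linear hull $\R^d$. Consequently, on this event, $\alpha(\sC^\circ) = \P[U \in \sC^\circ \mid Z, Z_1, \ldots, Z_n]$ by the sphere-definition of solid angle recalled in Section~\ref{subsec:angles_conic_intrinsic_defs}. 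Since $\alpha(\sC^\circ)\ind_{\{\sC \neq \R^d\}}$ vanishes on the complementary event and since $\P[U = 0] = 0$, taking expectations yields
\begin{align*}
\E\bigl[\alpha(\sC^\circ)\ind_{\{\sC \neq \R^d\}}\bigr]
= \P\bigl[U \in \sC^\circ\bigr]
= \P\bigl[\langle U, Z_i - Z\rangle \leq 0 \text{ for all } i = 1, \ldots, n\bigr].
\end{align*}

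Next I would compute this probability by conditioning on $U$ and using rotational invariance. Since each of $Z, Z_1, \ldots, Z_n$ has a rotationally invariant density on $\BB^d$, they are mutually independent, and $U$ is independent of them, the conditional joint law of $(\langle U, Z\rangle, \langle U, Z_1\rangle, \ldots, \langle U, Z_n\rangle)$ given $U$ does not depend on the value of $U$. Taking $U = e_d$ as a canonical representative, these scalar products become the last coordinates of the corresponding points, which, by the projection property (Lemma~\ref{lem:projection}) applied with $L = \lin(e_d)$, are independent with densities $f_{1, \beta + (d-1)/2}$ for $\langle U, Z\rangle$ and $f_{1, \beta_j + (d-1)/2}$ for $\langle U, Z_j\rangle$.

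Finally, I would integrate over the value $t$ of $\langle U, Z\rangle$ and use the conditional independence of the $\langle U, Z_j\rangle$ to factor:
\begin{align*}
\P\bigl[\langle U, Z_j\rangle \leq \langle U, Z\rangle \text{ for all } j\bigr]
&= \int_{-1}^{+1} c_{\beta + (d-1)/2}(1 - t^2)^{\beta + (d-1)/2} \prod_{j=1}^{n} \P\bigl[\langle U, Z_j\rangle \leq t\bigr]\, \dd t \\
&= \int_{-1}^{+1} c_{\beta + (d-1)/2}(1 - t^2)^{\beta + (d-1)/2} \prod_{j=1}^{n} \int_{-1}^t c_{\beta_j + (d-1)/2}(1 - s^2)^{\beta_j + (d-1)/2}\, \dd s\, \dd t,
\end{align*}
which is exactly the claimed formula. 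The only subtlety is the convention $\alpha(\{0\}) = 1$, which is precisely why the indicator $\ind_{\{\sC \neq \R^d\}}$ appears: without it the identity with $\P[U \in \sC^\circ]$ would be off by $\P[\sC = \R^d]$, whereas inserting the indicator makes the two sides agree since $\P[U \in \{0\}] = 0$. The case $d = 1$ forces $\beta, \beta_j \neq -1$ so that $f_{1, \beta}$ and $f_{1, \beta_j}$ are genuine densities; with $U$ uniform on $\SS^0 = \{\pm 1\}$, the symmetry $t \mapsto -t$ of each beta density preserves the integrand and the same formula emerges.
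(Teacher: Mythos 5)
Your proposal is correct and takes essentially the same route as the paper: the paper likewise rewrites $\E\left[\alpha(\sC^\circ)\ind_{\{\sC\neq\R^d\}}\right]$ as the probability that an independent rotationally invariant direction lies in $\sC^\circ$ (using a standard Gaussian $\xi$ where you use $U$ uniform on $\SS^{d-1}$, an immaterial difference), then reduces to one dimension via the projection property and integrates over the value of the projection of $Z$. Your treatment of the degenerate event $\{\sC=\R^d\}$ via the indicator and $\P[U\in\{0\}]=0$ matches the paper's identity $\alpha(D)\ind_{\{D\neq\{0\}\}}=\P[\xi\in D]$.
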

\begin{proof}
We know from Lemma~\ref{lem:beta_cones_generic_properties} that $\sC$ is either $\R^d$ or its lineality space is $\{0\}$. By duality, this means that  $\sC^\circ$ is either $\{0\}$ or it has non-empty interior. Let $D\subseteq \R^d$ be a deterministic cone which either has a non-empty interior or equals $\{0\}$.  In the former case, we have $\alpha(D) = \P[\xi\in D]$, where $\xi$ denotes a vector with standard normal distribution on $\R^{d}$. In the latter case, we have $\alpha (D) = 1$ and  $\P[\xi\in D] = 0$.
So, in both cases,
$$
\alpha (D)\ind_{\{D \neq \{0\}\}} =  \P[\xi\in D].
$$
Now, let $\xi$ be independent of $Z, Z_1,\ldots, Z_n$. Applying the above formula to $D= \sC^\circ$, we get
\begin{align*}
\E \left[\alpha (\sC^\circ)\ind_{\{\sC \neq \R^d\}}\right] =
\E \left[\alpha (\sC^\circ)\ind_{\{\sC^\circ \neq \{0\}\}}\right]
= \P[\xi\in \sC^\circ] = \P [\langle Z_{1} -Z, \xi \rangle \leq 0 , \dots, \langle Z_{n} - Z, \xi \rangle \leq 0].
\end{align*}
Since the joint distribution of $Z,Z_{1},\dots,Z_n$ is rotationally invariant, we can replace $\xi$ by any unit vector $e \in \R^{d}$ and rewrite this equation as
\begin{align*}
\E \left[\alpha (\sC^\circ)\ind_{\{\sC \neq \R^d\}}\right] = \P [\langle Z_{1} -Z, e \rangle \leq 0 , \dots, \langle Z_{n} -Z, e \rangle \leq 0].
\end{align*}
We introduce the random variables $Z_{i}' \coloneqq \langle Z_{i},e \rangle$, for all $i \in \{1,\dots,n\}$, as well as $Z' \coloneqq \langle Z,e \rangle$ to get
\begin{align}\label{eq:representation_external_cone_one_dim}
\E \left[\alpha (\sC^\circ)\ind_{\{\sC \neq \R^d\}}\right]  = \P [Z_{1}' \leq Z', \dots, Z_{n}'\leq Z'].
\end{align}	
Having thus reduced the dimension by $(d-1)$, we use Lemma \ref{lem:projection}  to get
\begin{itemize}
	\item[(i)] $Z',Z_{1}',\dots,Z_n'$ are independent random variables;
	\item[(ii)] $Z_{i}'$ has density $f_{1,\b_{i}+ \frac{d-1}{2}}$, for all $i = 1,\ldots, n$;
	\item[(iii)] $Z'$ has density $f_{1, \b + \frac{d-1}{2}}$.
\end{itemize}
For a fixed $t\in \R$, we have
$$
\P[ Z_1' \leq t, \ldots, Z_n'\leq t] = \prod_{j=1}^{n} \left(\int_{-1}^t c_{\beta_j + \frac{d-1}{2}} (1-s^2)^{\beta_j + \frac{d-1}{2}}\,\dd s\right).
$$
Conditioning on the event that $Z'=t$ and then integrating over $t$ shows that the probability on the right-hand side of \eqref{eq:representation_external_cone_one_dim} is given by
\begin{align*}
\E \left[\alpha (\sC^\circ)\ind_{\{\sC \neq \R^d\}}\right] =&  \int_{-1}^{+1}  c_{\beta + \frac {d-1}{2}}
	(1-t^2)^{\beta +  \frac {d-1}{2}}
	\prod_{j =1}^{n} \left(\int_{-1}^t c_{\beta_j + \frac{d-1}{2}} (1-s^2)^{\beta_j + \frac{d-1}{2}}\,\dd s\right) \,\dd t.
\end{align*}
This completes the proof.
\end{proof}

\subsection{Notation for internal and external angles}
Let us introduce  notation for expected internal and external angles of beta cones in the case when the number of vectors spanning the cone coincides with the dimension of its ambient space.
\begin{definition}[Internal and external quantities]\label{def:int_and_ext}
Consider a beta cone $\sC=\pos(Z_1-Z,\ldots, Z_d-Z) \sim \BetaCone(\R^d; \b; \b_1,\dots,\b_d)$, where $d \geq 2$ and $\beta, \b_1,\ldots, \b_d\geq -1$  The \emph{internal quantities} are defined by
	\begin{equation}\label{eq:int_definition}
		\intern (\b; \b_1,\ldots, \b_d) :=  \E\a (\pos(Z_1-Z,\ldots, Z_d-Z)).
		\end{equation}
The \emph{external quantities} are defined by
	\begin{equation}\label{eq:ext_definition}
		\extern(\b; \b_1,\ldots, \b_d)  :=  \E\a (\pos^\circ(Z_1-Z,\ldots, Z_d-Z)).
	\end{equation}
By convention, for $d=0$, we put $\sC= \{0\}$ and $\intern(\beta; \varnothing) = \extern(\beta; \varnothing) = 1$. For $d=1$, we always put $\intern (\b; \b_1):= \extern(\b; \b_1) := 1/2$ (which is consistent with~\eqref{eq:int_definition} and~\eqref{eq:ext_definition} except when    $\beta = \beta_1 = -1$).
\end{definition}

\begin{convention}
In the course of this work, we shall introduce several functions (such as $\intern$ and $\extern$) whose arguments are multisets. By convention, $\intern(\b; \b_1,\ldots, \b_d)$ means the same as $\intern(\b; \{\b_1,\ldots, \b_d\})$.  Also, instead of listing the elements of a multiset, we sometimes use index sets: If $I=\{i_1,\ldots, i_k\}\subseteq \{1,\ldots,n\}$ is an index  set, then  $\intern(\b; \{\b_i: i\in I\})$ means  the same as $\intern(\b; \b_{i_1},\ldots, \b_{i_k})$.
\end{convention}

Taking $n=d$ in Theorem~\ref{theo:beta_cone_external_angle}  we immediately obtain an explicit formula for the \emph{external} quantities. Note that we can omit the indicator in Theorem~\ref{theo:beta_cone_external_angle} since $\sC \neq \R^d$ a.s.\ for $n=d$.
\begin{corollary}[Formula for $\extern$]\label{cor:external_beta_formula}
Let $d\in \N$ and $\b, \b_1,\ldots, \b_d \geq -1$. If $d=1$, suppose also that $\beta, \beta_1>-1$. Then,
\begin{align}\label{eq:external_beta_formula}
\extern(\beta; \beta_1,\ldots, \beta_d)
=
\int_{-1}^{+1}  c_{\beta + \frac {d-1}{2}} (1-t^2)^{\beta +  \frac {d-1}{2}}
		\prod_{j =1}^{d} \left(\int_{-1}^t c_{\beta_j + \frac{d-1}{2}} (1-s^2)^{\beta_j + \frac{d-1}{2}}\,\dd s\right) \,\dd t.
	\end{align}
\end{corollary}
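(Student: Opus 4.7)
The proof is essentially a direct specialisation of Theorem~\ref{theo:beta_cone_external_angle} to the case $n=d$, plus the observation that the indicator $\ind_{\{\sC\neq\R^d\}}$ becomes superfluous in this regime.  My plan is therefore split into two short steps.

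\textbf{Step 1: Apply Theorem~\ref{theo:beta_cone_external_angle} with $n=d$.}  The hypotheses of the corollary ($d\geq 2$, or $d=1$ with $\beta,\beta_1>-1$) are precisely those required by Theorem~\ref{theo:beta_cone_external_angle}, so substituting $n=d$ into its conclusion yields
\begin{equation*}
\E\bigl[\alpha(\sC^\circ)\,\ind_{\{\sC\neq\R^d\}}\bigr]
= \int_{-1}^{+1}  c_{\beta + \frac{d-1}{2}} (1-t^2)^{\beta +  \frac{d-1}{2}}
\prod_{j =1}^{d} \left(\int_{-1}^t c_{\beta_j + \frac{d-1}{2}} (1-s^2)^{\beta_j + \frac{d-1}{2}}\,\dd s\right) \dd t.
\end{equation*}

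\textbf{Step 2: Remove the indicator.}  I would next argue that when $n=d$ one has $\sC\neq\R^d$ almost surely, so that the indicator equals $1$ a.s.  Under our regularity assumptions, Lemma~\ref{lem:beta_poi_general_affine} yields that the points $Z,Z_1,\ldots,Z_d$ are in general affine position a.s., and hence the difference vectors $Z_1-Z,\ldots,Z_d-Z$ are in general linear position a.s.  Since there are exactly $d$ such vectors in $\R^d$, they are linearly independent a.s., and their positive hull is then a simplicial cone of dimension $d$ with trivial lineality space (by Lemma~\ref{lem:cones_generated_by_vectors_in_gen_lin_pos}(iii), its lineality space is $\{0\}$, since the alternative $\sC=\R^d$ would require the existence of a nontrivial positive linear relation among linearly independent vectors, which is impossible).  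Thus $\sC\neq\R^d$ a.s.

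\textbf{Conclusion.}  Combining Steps 1 and 2, the left-hand side of the displayed identity above equals $\E[\alpha(\sC^\circ)] = \extern(\beta;\beta_1,\ldots,\beta_d)$ by the very definition~\eqref{eq:ext_definition}, which produces the stated formula.  There is no genuine obstacle here; the only mildly delicate point is verifying that the $d=1$ edge case is covered, but the matching of the $\neq -1$ hypotheses between the corollary and Theorem~\ref{theo:beta_cone_external_angle} ensures that the argument goes through verbatim (and is moreover consistent with the convention $\extern(\beta;\beta_1)=1/2$, since the integral reduces to $\P[Z_1\leq Z]=1/2$ by the symmetry of the one-dimensional beta laws about the origin).
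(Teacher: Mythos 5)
Your proposal is correct and follows essentially the same route as the paper: specialise Theorem~\ref{theo:beta_cone_external_angle} to $n=d$ and drop the indicator because a beta cone spanned by $d$ vectors is almost surely not all of $\R^d$ (the paper states this in one line; your Step 2 just spells out the general-position argument). The remarks about the $d=1$ convention are consistent with the paper's Definition~\ref{def:int_and_ext}.
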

The task of computing the \emph{internal} quantities will be accomplished only much later in Theorem~\ref{theo:A_integral_representation}.

Originally, we defined $\extern(\beta; \beta_1,\ldots, \beta_d)$ for $\beta, \beta_1,\ldots, \beta_d\geq -1$. However, the double integral on the right-hand side of~\eqref{eq:external_beta_formula} converges in a larger range of parameters, as the next proposition shows.

\begin{proposition}
If  $d\in \N$ and $\beta, \beta_1,\ldots, \beta_d > -(d+1)/2$, the integral on the right-hand side of~\eqref{eq:external_beta_formula} is finite.
\end{proposition}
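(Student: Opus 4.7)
The plan is to reduce the claim to the observation that the integrand in \eqref{eq:external_beta_formula} is pointwise dominated by a probability density, so that the full double integral is bounded above by $1$. To do this cleanly, I would first introduce the shifted parameters $\alpha := \beta + (d-1)/2$ and $\alpha_j := \beta_j + (d-1)/2$ for $j=1,\ldots,d$. The hypothesis $\beta, \beta_1,\ldots,\beta_d > -(d+1)/2$ is exactly equivalent to $\alpha, \alpha_1,\ldots,\alpha_d > -1$, and this is precisely the range in which the one-dimensional beta density $f_{1,\alpha}(t) = c_\alpha(1-t^2)^\alpha \mathbbm{1}_{\{|t|<1\}}$ is a well-defined probability density with strictly positive normalizing constant.

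Next I would verify positivity of the relevant normalizing constants: for any $\alpha > -1$, both $\Gamma(\alpha+1)$ and $\Gamma(\alpha + 3/2)$ are strictly positive, so $c_\alpha = \Gamma(\alpha+3/2)/(\sqrt{\pi}\,\Gamma(\alpha+1)) > 0$. Hence each inner integral
\[
F_j(t) := \int_{-1}^t c_{\alpha_j}(1-s^2)^{\alpha_j}\,\dd s
\]
is the cumulative distribution function of a $f_{1,\alpha_j}$-distributed random variable evaluated at $t$, and therefore satisfies $0 \leq F_j(t) \leq 1$ for every $t \in [-1,+1]$.

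Substituting this into the outer integral, the full integrand is nonnegative and pointwise bounded by $c_\alpha(1-t^2)^\alpha$. Consequently,
\[
\int_{-1}^{+1} c_\alpha(1-t^2)^\alpha \prod_{j=1}^d F_j(t)\,\dd t \;\leq\; \int_{-1}^{+1} c_\alpha(1-t^2)^\alpha\,\dd t \;=\; 1,
\]
and finiteness (with the explicit bound $1$) follows. There is no genuine obstacle here; the only point requiring any attention is making sure the parameter shift is done correctly so that the $c_\alpha(1-t^2)^\alpha$ and $c_{\alpha_j}(1-s^2)^{\alpha_j}$ factors are honest probability densities exactly in the stated range.
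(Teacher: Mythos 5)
Your proof is correct and follows essentially the same route as the paper: bound the product of inner integrals uniformly in $t$ and then use that the outer exponent exceeds $-1$. The only (harmless) refinement is that you observe each inner factor is a beta cumulative distribution function, so the whole integral is bounded by $1$ rather than just by some finite constant.
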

\begin{proof}
Since $\beta_j + \frac{d-1}{2} >-1$, the integral $\int_{-1}^{+1} (1-s^2)^{\beta_j + \frac{d-1}{2}}\,\dd s$ is finite. Thus, for a suitable constant $C>0$ we have
$$
0\leq \prod_{j =1}^{d} \left(\int_{-1}^t c_{\beta_j + \frac{d-1}{2}} (1-s^2)^{\beta_j + \frac{d-1}{2}}\,\dd s\right) \leq C
\quad
\text{ for all } t\in (-1,1).
$$
Since $\beta + \frac{d-1}{2} >-1$, the integral on the right-hand side of~\eqref{eq:external_beta_formula} is finite.
\end{proof}

\begin{convention}
We use the right-hand side of~\eqref{eq:external_beta_formula} as the definition of $\extern(\beta; \beta_1,\ldots, \beta_d)$ in the range $\beta, \beta_1,\ldots, \beta_d > -(d+1)/2$. (This will be necessary, for example, in Theorem~\ref{theo:beta_cones_faces_alpha_and_gamma}.)
\end{convention}

\begin{proposition}[Limit as $\beta \to\infty$]\label{prop:intern_extern_beta_infinity}
For all $d\in \N_0$ and $\beta_1,\ldots,  \beta_d \geq -1$,
$$
\lim_{\beta \to+\infty} \intern (\b; \b_1,\ldots, \b_d) = 2^{-d},
\qquad
\lim_{\beta \to+\infty} \extern (\b; \b_1,\ldots, \b_d) = 2^{-d}.
$$
\end{proposition}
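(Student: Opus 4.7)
The plan is to treat the two limits by different methods: the external limit comes directly from the explicit integral formula in Corollary~\ref{cor:external_beta_formula}, while the internal limit requires a probabilistic continuity argument paired with a Rademacher symmetrisation, since no closed-form expression for $\intern$ is yet available at this point in the paper. The boundary cases $d \in \{0,1\}$ are immediate from the conventions in Definition~\ref{def:int_and_ext} (they give $1$ and $1/2$, matching $2^0$ and $2^{-1}$), so the substance of the argument is for $d\geq 2$.

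For the external quantity I would rewrite the identity of Corollary~\ref{cor:external_beta_formula} as
\begin{equation*}
\extern(\beta;\beta_1,\ldots,\beta_d)=\int_{-1}^{+1} f_{1,\beta+\frac{d-1}{2}}(t)\prod_{j=1}^{d}F_{\beta_j+\frac{d-1}{2}}(t)\,\dd t,
\end{equation*}
where $f_{1,\gamma}(t)=c_\gamma(1-t^2)^\gamma$ and $F_\gamma(t)=\int_{-1}^{t}f_{1,\gamma}(s)\,\dd s$. The densities $f_{1,\beta+(d-1)/2}$ are even, unimodal at $0$, and their variances tend to $0$ as $\beta\to\infty$, so the associated probability measures converge weakly to $\delta_0$. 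Since $t\mapsto\prod_{j=1}^{d}F_{\beta_j+(d-1)/2}(t)$ is bounded and continuous on $[-1,1]$ with value $\prod_{j=1}^{d}\frac12=2^{-d}$ at $t=0$ (each $F_\gamma(0)=\tfrac12$ by symmetry of $f_{1,\gamma}$), this proves the claim for $\extern$.

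For the internal quantity I would work with the probabilistic representation $\intern(\beta;\beta_1,\ldots,\beta_d)=\E\bigl[\alpha(\pos(Z_1-Z,\ldots,Z_d-Z))\bigr]$ with $Z\sim f_{d,\beta}$ and $Z_i\sim f_{d,\beta_i}$ independent. The radial density of $\|Z\|$ is proportional to $r^{d-1}(1-r^2)^\beta$ on $[0,1)$, whose mode and mean tend to $0$ as $\beta\to\infty$, so $\|Z\|\to 0$ in probability. Hence $(Z_1-Z,\ldots,Z_d-Z)\to (Z_1,\ldots,Z_d)$ jointly in probability. By Lemma~\ref{lem:beta_poi_general_affine}, $Z_1,\ldots,Z_d$ are almost surely linearly independent, and on the open set of linearly independent $d$-tuples the map $(v_1,\ldots,v_d)\mapsto \alpha(\pos(v_1,\ldots,v_d))$ is continuous (the cone depends continuously on its generators in the Hausdorff metric, and the solid angle is a Gaussian mass of it). The continuous mapping theorem together with bounded convergence ($0\leq\alpha\leq 1$) gives $\lim_{\beta\to\infty}\intern(\beta;\beta_1,\ldots,\beta_d)=\E[\alpha(\pos(Z_1,\ldots,Z_d))]$. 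To evaluate this limit, introduce independent Rademacher signs $\varepsilon_1,\ldots,\varepsilon_d\in\{\pm1\}$, independent of $Z_1,\ldots,Z_d$. Since each $Z_i$ is rotationally invariant, $(\varepsilon_1 Z_1,\ldots,\varepsilon_d Z_d)\stackrel{d}{=}(Z_1,\ldots,Z_d)$. On the full-measure event that $Z_1,\ldots,Z_d$ are linearly independent, any $x\in\R^d$ has a unique expansion $x=\sum_i a_i Z_i$, and choosing $\varepsilon_i=\operatorname{sign}(a_i)$ places $x$ in $\pos(\varepsilon_1 Z_1,\ldots,\varepsilon_d Z_d)$; the $2^d$ cones $\pos(\varepsilon_1 Z_1,\ldots,\varepsilon_d Z_d)$ therefore partition $\R^d$ up to a Lebesgue null set and their solid angles sum to $1$. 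Averaging over $\varepsilon$ yields $\E[\alpha(\pos(Z_1,\ldots,Z_d))]=2^{-d}$, completing the proof.

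The main obstacle I anticipate is the continuity claim for $\alpha$ at linearly independent tuples; I would handle it by writing $\pos(v_1,\ldots,v_d)$ as an intersection of $d$ closed halfspaces whose defining linear functionals depend continuously on $(v_1,\ldots,v_d)$ on the open set of bases, which makes $\alpha$ — viewed as a Gaussian mass of this cone — continuous there. A minor bookkeeping point is to remember that the rotational symmetry of $Z_i$ forces $-Z_i\stackrel{d}{=}Z_i$ even in the limit case $\beta_i=-1$, where $Z_i$ lives on the sphere, so the symmetrisation step works uniformly across the admissible parameter range.
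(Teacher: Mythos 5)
Your proposal is correct. For the \emph{internal} quantity you follow essentially the same route as the paper: the base cases $d\in\{0,1\}$ from Definition~\ref{def:int_and_ext}, the observation that $Z\sim f_{d,\beta}$ degenerates to $0$ as $\beta\to+\infty$, passage to the limit $\E\,\alpha(\pos(Z_1,\ldots,Z_d))$, and the sign-flip symmetrisation showing that the $2^d$ cones $\pos(\varepsilon_1 Z_1,\ldots,\varepsilon_d Z_d)$ tile $\R^d$ up to a null set, giving $2^{-d}$; your treatment is in fact more careful than the paper's, since you make explicit the continuity of $(v_1,\ldots,v_d)\mapsto\alpha(\pos(v_1,\ldots,v_d))$ on the set of bases (via dual halfspace normals and the Gaussian-mass representation, with the continuous mapping theorem and bounded convergence), a step the paper compresses into ``it follows that''. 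For the \emph{external} quantity you genuinely diverge: the paper applies the same degeneration-plus-symmetrisation argument to the polar cone, writing the limit as $\P[\langle U,Z_1\rangle\le 0,\ldots,\langle U,Z_d\rangle\le 0]$, whereas you instead read the limit off the explicit integral of Corollary~\ref{cor:external_beta_formula}, using that the one-dimensional densities $f_{1,\beta+\frac{d-1}{2}}$ converge weakly to $\delta_0$ and that each factor $F_{\beta_j+\frac{d-1}{2}}(0)=\tfrac12$ by symmetry (this is legitimate for all $\beta_j\ge-1$ when $d\ge2$, since then $\beta_j+\frac{d-1}{2}>-1$). Your route buys a purely analytic proof for $\extern$ that sidesteps any continuity argument for the polar-cone functional; the paper's route is shorter and treats both quantities uniformly with a single symmetrisation. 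Both arguments are sound.
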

\begin{proof}
For $d \in \lbrace 0,1\rbrace$ the claim holds by Definition~\ref{def:int_and_ext}. Let $d\geq 2$.
Then, as $\beta\to+\infty$, the random point $Z\sim f_{d,\beta}$ converges weakly to $0$. (To see this, observe that $\lim_{\beta \to +\infty}f_{d,\beta}(x) = 0$ uniformly on $\eps \leq \|x\|\leq 1$ for fixed $\eps>0$.) It follows that
\begin{align*}
&\lim_{\beta \to+\infty} \intern (\b; \b_1,\ldots, \b_d) = \E\a \pos(Z_1,\ldots, Z_d),
\\
&\lim_{\beta \to+\infty} \extern (\b; \b_1,\ldots, \b_d) = \E\a \pos^\circ(Z_1,\ldots, Z_d) = \P[\langle U, Z_1\rangle\leq 0,\ldots, \langle U, Z_d\rangle \leq 0],
\end{align*}
where $U$ is uniformly distributed on $\bS^{d-1}$ and independent of $Z_1,\ldots, Z_d$.
Since the joint distribution of $(Z_1,\ldots, Z_d)$ is the same as that of $(\pm Z_1,\ldots, \pm Z_d)$ with arbitrary choice of signs, it follows that $\E\a \pos(Z_1,\ldots, Z_d) = \E\a \pos^\circ(Z_1,\ldots, Z_d) = 2^{-d}$ and the proof is complete.
\end{proof}

It is much more difficult to derive a formula for the internal quantities. Our plan is as follows. First we derive a formula for the expected conic intrinsic volumes of beta cones. Then we use properties of the conic intrinsic volumes to derive nonlinear McMullen-type relations between internal and external quantities. This will occupy us in the remainder of Section~\ref{section:beta_cones}.  In Section~\ref{sec:beta_cones_explicit_internal_angles} we shall then resolve these relations to arrive at explicit formulas for the internal quantities.

\subsection{Expected conic intrinsic volumes of beta cones in terms of \texorpdfstring{$\intern$}{Int} and \texorpdfstring{$\extern$}{Ext}}
We recall that $\upsilon_0(C),\ldots, \upsilon_d(C)$ denote the conic intrinsic volumes of a polyhedral cone $C$; see Section~\ref{subsec:angles_conic_intrinsic_defs}.
\begin{theorem}[Angles of faces and normal faces]\label{theo:beta_cones_faces_alpha_and_gamma}
Consider a beta cone $\sC=\pos(Z_1-Z,\ldots, Z_n-Z)\sim\BetaCone(\R^d; \b; \b_1,\dots,\b_n)$ with $d\in \N$ and $n\in \N$. If $d=1$, suppose additionally that  $\beta, \beta_1,\ldots, \beta_n>-1$.
For $k \in \{0,1,\dots,\min(n,d)-1\}$ let $G \coloneqq \pos (Z_1-Z,\dots,Z_k-Z)$ be a possible face of $\sC$ with the usual convention that $G= \{0\}$ if $k=0$.
\begin{itemize}
\item[(a)] The expected angle of $G$ is $\E \alpha(G) = \intern(\b + \frac{d-k}{2}; \b_1 + \frac{d-k}{2}, \dots, \b_k + \frac{d-k}{2})$.
\item[(b)] The expected external angle of $\sC$ at $G$ is given by
\begin{multline}\label{eq:beta_cones_external_angles_explicit}
\E \left[\gamma (G,\sC)\ind_{\{G \text{ is a face of } \sC\}}\right]
=
\int_{-1}^{+1}  c_{\beta +\frac{d}{2} + \sum_{i=1}^{k} \left(\beta_i + \frac{d}{2}\right) -\frac {1}{2}} (1-t^2)^{\beta +\frac{d}{2} + \sum_{i=1}^{k} \left(\beta_i + \frac{d}{2}\right) -\frac {1}{2}}
\\ \times \prod_{j =k+1}^{n} \left(\int_{-1}^t c_{\beta_j + \frac{d-1}{2}} (1-s^2)^{\beta_{j}+ \frac{d-1}{2}}\,\dd s\right) \,\dd t.
\end{multline}
Hence, we can express the above expectation  as an external quantity with $n-k$ arguments after the semicolon:
\begin{multline}\label{eq:beta_cones_external_angles_as_external_quant}
\E \left[\gamma (G,\sC)\ind_{\{G \text{ is a face of } \sC\}}\right]
\\
= \extern\left(\beta +\frac{d}{2} + \sum_{i=1}^{k} \left( \beta_i+\frac{d}{2}\right)  - \frac{n-k}{2}; \beta_{k+1} + \frac d2 - \frac{n-k}{2},\ldots, \beta_{n} + \frac d2 - \frac{n-k}{2} \right). 
\end{multline}
\item [(c)] The random variable $\alpha(G)$ is stochastically independent of the pair $(\gamma(G,\sC),\ind_{\{G\in \cF_j(\sC)\}})$.
\end{itemize}
\end{theorem}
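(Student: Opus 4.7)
I plan to prove the three parts in the order (c), (b), (a), with (a) being the main technical challenge.

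Part~(c) follows directly from Theorem~\ref{theo:representation_angles_and_construction_of_Cone}(c). Indeed, $\alpha(G)$ is a rotation-invariant functional of the isometry type of $G$, while $\gamma(G,\sC)=\alpha(N(G,\sC))$ and the face-event $\{G\in\cF_j(\sC)\}=\{T(G,\sC)\neq\R^d\}$ are rotation-invariant functionals of the isometry types of $N(G,\sC)$ and $T(G,\sC)$, respectively. Stochastic independence of $\alpha(G)$ from the pair $(\gamma(G,\sC),\ind_{\{G\in\cF_j(\sC)\}})$ is therefore inherited from Theorem~\ref{theo:representation_angles_and_construction_of_Cone}(c).

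For part~(b), I combine Theorem~\ref{theo:representation_angles_and_construction_of_Cone}(b) with Theorem~\ref{theo:beta_cone_external_angle}. The isometry in Theorem~\ref{theo:representation_angles_and_construction_of_Cone}(b) realises $N(G,\sC)$ as the polar $\tilde\sC^\circ$ of
\[
\tilde\sC\sim\BetaCone\bigl(\R^{d-k};\tilde\b;\tilde\b_{k+1},\ldots,\tilde\b_n\bigr),\qquad \tilde\b=\b+\sum_{i=1}^{k}\b_i+\tfrac{k(d+1)}{2},\qquad \tilde\b_j=\b_j+\tfrac{k}{2}.
\]
By Remark~\ref{rem:faces_of_a_beta_cone}, $\{G\text{ is a face of }\sC\}$ coincides with $\{\tilde\sC\neq\R^{d-k}\}$, hence $\E[\gamma(G,\sC)\,\ind_{\{G\text{ is a face of }\sC\}}]=\E[\alpha(\tilde\sC^\circ)\,\ind_{\{\tilde\sC\neq\R^{d-k}\}}]$. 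Applying Theorem~\ref{theo:beta_cone_external_angle} in $\R^{d-k}$ with the parameters above, and using the identities $\tilde\b_j+\tfrac{d-k-1}{2}=\b_j+\tfrac{d-1}{2}$ together with $\tilde\b+\tfrac{d-k-1}{2}=\b+\tfrac d2+\sum_{i=1}^{k}(\b_i+\tfrac d2)-\tfrac 12$, yields~\eqref{eq:beta_cones_external_angles_explicit}; the equivalent form~\eqref{eq:beta_cones_external_angles_as_external_quant} then follows by unpacking Definition~\ref{def:int_and_ext}.

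Part~(a) is the most delicate. I would apply the Ruben--Miles canonical decomposition (Theorem~\ref{theo:ruben_miles}) to the $k+1$ beta points $Z,Z_1,\ldots,Z_k$, whose affine hull $A$ is a.s.\ $k$-dimensional by Lemma~\ref{lem:beta_poi_general_affine}. Setting $Z':=I_A(Z)/\sqrt{1-h^2(A)}$ and $Z'_i:=I_A(Z_i)/\sqrt{1-h^2(A)}$ in $\BB^k$, the Ruben--Miles joint density on $(\BB^k)^{k+1}$ is proportional to $\Delta^{d-k}(z',z'_1,\ldots,z'_k)(1-\|z'\|^2)^\b\prod_{i=1}^{k}(1-\|z'_i\|^2)^{\b_i}$. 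Since $\alpha$ is invariant under isometries and positive scalings, $\alpha(G)=\alpha(\pos(Z'_1-Z',\ldots,Z'_k-Z'))$, and hence
\[
\E\alpha(G)=c\int_{(\BB^k)^{k+1}}\alpha\bigl(\pos(z_1-z_0,\ldots,z_k-z_0)\bigr)\,\Delta^{d-k}(z_0,\ldots,z_k)\prod_{i=0}^{k}(1-\|z_i\|^2)^{\b_i}\,dz_0\cdots dz_k,
\]
with $\b_0:=\b$ and $c$ the Ruben--Miles normalisation. The main obstacle is to recognise this expression as $\intern\bigl(\b+\tfrac{d-k}{2};\b_1+\tfrac{d-k}{2},\ldots,\b_k+\tfrac{d-k}{2}\bigr)$: this is the \emph{shape-equivalence} identity asserting that the $\Delta^{d-k}$-weighted expectation on $(\BB^k)^{k+1}$ with original parameters agrees, for any rotation-invariant functional of $\pos(z_1-z_0,\ldots,z_k-z_0)$, with the unweighted expectation under the product of beta densities on $\BB^k$ with parameters shifted by $(d-k)/2$. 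This shift coincides precisely with the one predicted by the projection property (Lemma~\ref{lem:projection}), and the identity is the classical Miles--Ruben bridge already exploited in~\cite{kabluchko_steigenberger_thaele}; its verification is an affine Blaschke--Petkantschin computation in which the $(1-h^2(A))$-Jacobians arising from the scaling by $\sqrt{1-h^2(A)}$ exactly account for the $\Delta^{d-k}$ weight versus the parameter shift.
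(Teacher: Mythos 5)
Parts (b) and (c) of your proposal are correct and are essentially the paper's own argument: (c) is read off from Theorem~\ref{theo:representation_angles_and_construction_of_Cone}(c), and (b) combines Theorem~\ref{theo:representation_angles_and_construction_of_Cone}(b) with Theorem~\ref{theo:beta_cone_external_angle} applied in $\R^{d-k}$, with exactly the parameter bookkeeping the paper carries out (your passage to \eqref{eq:beta_cones_external_angles_as_external_quant} tacitly uses the extended-range convention for $\extern$ via \eqref{eq:external_beta_formula}, which is fine).

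Part (a), however, contains a genuine gap. Your Ruben--Miles reduction correctly expresses $\E\alpha(G)$ as the $\Delta^{d-k}$-tilted integral over $(\BB^k)^{k+1}$, but the ``shape-equivalence identity'' you then assert --- that this tilted expectation agrees with the expectation under \emph{independent} beta densities with parameters shifted by $\tfrac{d-k}{2}$ for \emph{every} rotation-invariant functional of the cone --- is false, and the proposed verification cannot repair it. Equality for all rotation-invariant functionals would mean that the isometry type of the cone under the tilted law and under the independent shifted-parameter law have the same distribution. But the shifted-parameter model is, by Lemma~\ref{lem:projection}, exactly the law of the projected cone $\Pi_k G$, and for a fixed $k$-dimensional cone $C\subseteq\R^d$ with $2\le k<d$ the angle of its projection onto a uniformly random $k$-plane is a non-degenerate random variable whose \emph{mean} equals $\alpha(C)$ (this is \cite[Theorem~2.3.4]{schneider_book_convex_cones_probab_geom}, i.e.\ Proposition~\ref{prop:angles_of_rotationally_invariant_projections}); hence $\alpha(\Pi_k G)$ is a strict mean-preserving spread of $\alpha(G)$, so the two shape laws share the first moment of the angle but not, e.g., the second. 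This is also why a pure change-of-variables / Blaschke--Petkantschin computation, which is insensitive to the particular integrand, cannot make the $(1-h^2(A))$-Jacobians ``exactly account'' for the $\Delta^{d-k}$ weight versus the parameter shift: the two densities on $(\BB^k)^{k+1}$ induce genuinely different distributions of the cone's shape, and only expectations of conic intrinsic volumes transfer between them. The missing ingredient is precisely this expectation-level projection formula, and it is the paper's actual route: project $Z,Z_1,\dots,Z_k$ onto a fixed $\R^k$, note via Lemma~\ref{lem:projection} that $\Pi_k G$ is a full-dimensional beta cone in $\R^k$ with parameters $\beta+\tfrac{d-k}{2},\ \beta_i+\tfrac{d-k}{2}$, and then apply Proposition~\ref{prop:angles_of_rotationally_invariant_projections} (legitimate since the law of $G$ is rotationally invariant and $\dim G=k$ a.s.) to get $\E\alpha(G)=\E\alpha(\Pi_k G)=\intern\bigl(\beta+\tfrac{d-k}{2};\beta_1+\tfrac{d-k}{2},\dots,\beta_k+\tfrac{d-k}{2}\bigr)$; the Ruben--Miles tilt is not needed for (a) at all.
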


For the proof we need a formula for expected angles of projected cones which is based on~\cite[Theorem~2.3.4]{SchneiderBook}.

\begin{proposition}\label{prop:angles_of_rotationally_invariant_projections}
Let $\sD$ be a random polyhedral cone in $\R^d$ whose distribution is rotationally invariant in the following sense: $O \sD$ has the same law as $\sD$ for every orthogonal transformation $O:\R^d \to \R^d$. Suppose that $\sD\neq \lin \sD$ with probability $1$.  Take some $k\in \{1,\ldots, d\}$ and let $\Pi_k : \R^{d} \to \R^{k}$ be the orthogonal projection onto the first $k$ coordinates defined by
\begin{align*}
\Pi_k (x_1,\dots,x_{d}) = (x_1,\dots,x_{k}).
\end{align*}
Then,
$$
\E\left[\upsilon_k(\Pi_k \sD)\right]=  \sum_{j=k}^d \E \upsilon_j(\sD).
$$
If, moreover, $\dim \sD= k$ with probability $1$, then
$
\E \alpha (\Pi_k \sD) = \E \alpha (\sD).
$
\end{proposition}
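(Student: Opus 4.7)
The plan is to average over rotations to reduce the claim to a deterministic Crofton-type identity for a fixed cone, and then invoke Schneider's projection formula. Set $L_0:=\R^k\times\{0\}^{d-k}$, so $\Pi_k=\Pi_{L_0}$. For every $\rho\in O(d)$, the relation $\Pi_{L_0}(\rho x)=\rho\Pi_{\rho^{-1}L_0}(x)$ together with the isometric invariance of the functional $\upsilon_k$ (viewed on cones in $k$-dimensional subspaces via any isometry to $\R^k$) gives $\upsilon_k(\Pi_{L_0}(\rho\sD))=\upsilon_k(\Pi_{\rho^{-1}L_0}\sD)$. Introducing an independent Haar-uniform $\rho\in O(d)$ and using $\sD\eqdistr\rho\sD$, Fubini yields
$$\E[\upsilon_k(\Pi_k\sD)]=\E_\sD\int_{G(d,k)}\upsilon_k(\Pi_L\sD)\,\nu_k(\dint L),$$
where $L:=\rho^{-1}L_0$ is uniform on $G(d,k)$ under the Haar probability measure $\nu_k$.

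It therefore suffices to establish the deterministic identity
$$\int_{G(d,k)}\upsilon_k(\Pi_L C)\,\nu_k(\dint L)=\sum_{j=k}^d\upsilon_j(C)$$
for every fixed closed convex cone $C\subseteq\R^d$ with $C\neq\lin C$. This is (a reformulation of) the conic projection formula in Schneider's Theorem~2.3.4. A self-contained derivation proceeds via the Gaussian representation: let $\xi$ be a standard Gaussian in $\R^d$ independent of $L$. Conditionally on $L$, $\Pi_L\xi$ is standard Gaussian in $L$, and $\{\Pi_L\xi\in\Pi_L C\}=\{\xi\in C+L^\perp\}$, so the left-hand side equals $\P[\xi\in C+L^\perp]$. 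Decomposing $\xi$ by the Moreau identity $\xi=\Pi_C\xi+\Pi_{C^\circ}\xi$ and partitioning according to the face $F$ of $C$ on whose relative interior $\Pi_C\xi$ lands (an event of probability $\upsilon_j(C)$ when summed over $j$-faces) reduces the task to checking that, conditionally on $\dim F=j$, the averaged event $\{\xi\in C+L^\perp\}$ has probability exactly $\ind_{j\geq k}$; summing gives $\sum_{j\geq k}\upsilon_j(C)$.

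For the second assertion, assume $\dim\sD=k$ almost surely. Lemma~\ref{lem:cone_intrinsic_volume_lineality_space} gives $\upsilon_k(\sD)=\alpha(\sD)$ and $\upsilon_j(\sD)=0$ for $j>k$, so the right-hand side of the first identity collapses to $\E[\alpha(\sD)]$. Rotation invariance of the law of $\sD$ forces $\lin\sD$ to be distributed according to the unique rotation-invariant probability on $G(d,k)$, namely $\nu_k$; hence $\lin\sD$ is a.s.\ in general position relative to $L_0$, so $\dim\Pi_k\sD=k$ a.s. A second appeal to Lemma~\ref{lem:cone_intrinsic_volume_lineality_space} gives $\upsilon_k(\Pi_k\sD)=\alpha(\Pi_k\sD)$ almost surely, completing the proof.

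The main obstacle is the deterministic Crofton identity. Citing Schneider's Theorem~2.3.4 is the fastest route; the self-contained argument sketched above reduces everything to the face-by-face verification of the probabilistic claim $\ind_{j\geq k}$, whose delicate point is to show that averaging the orientation of the uniform $(d-k)$-dimensional subspace $L^\perp$ annihilates the contributions of the faces of $C$ of dimension strictly less than $k$ and preserves, with coefficient one, those of dimension at least $k$.
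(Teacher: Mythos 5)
Your main argument is correct and is essentially the paper's own proof: you average over a Haar-random rotation, use the distributional invariance of $\sD$ together with the isometry invariance of $\upsilon_k$ to replace the fixed projection $\Pi_k$ by the projection onto a uniform $L\in G(d,k)$, and then invoke Schneider's Theorem~2.3.4 for the resulting deterministic identity. Your handling of the case $\dim\sD=k$ (the lineality-free linear hull $\lin\sD$ is Haar on $G(d,k)$, hence in general position with respect to $\R^k\times\{0\}^{d-k}$, so $\dim\Pi_k\sD=k$ a.s.\ and $\upsilon_k(\Pi_k\sD)=\alpha(\Pi_k\sD)$) is in fact slightly more careful than the paper, which leaves this full-dimensionality point implicit.

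The ``self-contained derivation'' you sketch for the deterministic identity is, however, not correct as stated. The reduction of $\int_{G(d,k)}\upsilon_k(\Pi_L C)\,\nu_k(\dint L)$ to $\P[\xi\in C+L^\perp]$ is fine, but the claim that, conditionally on $\Pi_C\xi$ lying in the relative interior of a $j$-dimensional face, the $L$-averaged event $\{\xi\in C+L^\perp\}$ has probability exactly $\ind_{\{j\geq k\}}$ is false. Take $d=2$, $k=1$ and $C$ the closed positive quadrant: if $\xi$ lies in the interior of $C^\circ$ (so $j=0$), one still has $\xi\in C+L^\perp$ whenever the line $L^\perp$ meets the interior of $C\cup(-C)$, because then $C+L^\perp=\R^2$; this happens with probability $1/2$, not $0$. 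Likewise, conditionally on a $1$-face the probability is strictly less than $1$. The global identity does hold (both sides equal $3/4$ in this example), but the contributions do not localize face by face as you assert, and a genuinely self-contained proof requires the conic Steiner/kinematic machinery rather than this pointwise argument. Since your actual proof rests on the citation of Schneider's theorem, the proposal stands, but the sketch should be repaired or dropped.
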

\begin{proof}
Let $\mathbf{O}$ be a random Haar-distributed matrix in the special orthogonal group $SO(d)$. Then, $\mathbf{O} \R^k$ is a random uniformly distributed $k$-dimensional linear subspace of $\R^d$. The orthogonal projection onto this subspace is $\mathbf{O} \Pi_k \mathbf{O}^{-1}$.  By~\cite[Theorem~2.3.4]{schneider_book_convex_cones_probab_geom}, for every deterministic polyhedral cone $C\subseteq \R^d$ we have
$$
\E\left[\upsilon_k(\mathbf{O} \Pi_k \mathbf{O}^{-1}  C)\right] = \sum_{j = k}^d \upsilon_j(C).
$$
Let now $\mathbf{O}$ be independent of $\sD$. Then, $\mathbf{O}^{-1} \sD$ has the same law as $\sD$ and we have
$$
\E\left[\upsilon_k(\Pi_k \sD)\right] = \E\left[\upsilon_k(\Pi_k \mathbf{O}^{-1}\sD)\right] = \E\left[\upsilon_k(\mathbf{O} \Pi_k \mathbf{O}^{-1}  \sD)\right]
=
\sum_{j = k}^d \E \upsilon_j(\sD).
$$
If $\dim \sD= k$ with probability $1$, then $\E \alpha (\Pi_k \sD) = \E \upsilon_k (\Pi_k \sD)  = \sum_{j = k}^d \E \upsilon_j(\sD) = \E \upsilon_k (\sD) = \E \alpha (\sD)$.
\end{proof}

\begin{proof}[Proof of Theorem~\ref{theo:beta_cones_faces_alpha_and_gamma}]
To prove (a), we let $\Pi_k : \R^{d} \to \R^{k}$ be the orthogonal projection as above and consider the projected points $W \coloneqq \Pi_k Z$, $W_{1} \coloneqq \Pi_k Z_{1}, \dots, W_{k} \coloneqq \Pi_k Z_{k}$. By Lemma~\ref{lem:projection}, their joint distribution satisfies:
\begin{itemize}
	\item[(1)] $W,W_{1},\dots,W_{k}$ are independent points in $\R^{k}$;
	\item[(2)] For each $i \in \{1,\dots,k\}$, the point $W_{i}$ has density $f_{k,\b_{i}  + \frac{d-k}{2}}$;
	\item[(3)] $W$ has density $f_{k,\b + \frac{d-k}{2}}$.
\end{itemize}
It follows that
\begin{align*}
\Pi_k G
&=
\Pi_k (\pos(Z_1-Z,\dots,Z_k-Z))
=
\pos(W_1-W,\dots,W_k-W)
\\
&\sim
\BetaCone\left(\R^{k};\b + \frac{d-k}{2};\b_{1}+\frac{d-k}{2},\dots,\b_k+\frac{d-k}{2}\right).
\end{align*}
Note that $\Pi_k G$ is a full-dimensional beta cone in $\R^k$.
Proposition~\ref{prop:angles_of_rotationally_invariant_projections} gives us that the expected angles of $G$ and its projection are equal, hence
\begin{align*}
\E \alpha (G)
=
\E \alpha(\Pi_k G)
=
\intern \left(\b + \frac{d-k}{2};\b_1 + \frac{d-k}{2}, \dots, \b_k + \frac{d-k}{2}\right),
\end{align*}
where the last equation holds by definition of the internal quantities.

\vspace*{2mm}
To prove Part (b), we recall from Part (b) of Theorem~\ref{theo:representation_angles_and_construction_of_Cone}  that the normal cone $N(G,\sC)$ is isometric to the polar of
$$
\BetaCone \left(\R^{d-k}; \b+\beta_1 + \ldots + \beta_k+ \frac{k(d+1)}{2};\b_{k+1}+\frac{k}{2},\dots,\b_n+\frac{k}{2}\right).
$$
By Proposition~\ref{prop:face_events_as_absorbtion_events_cones}, this beta cone is not equal to $\R^{d-k}$ if and only if  $G$ is a face of $\sC$.
Further, applying Theorem~\ref{theo:beta_cone_external_angle} to a cone $\sD\sim \BetaCone^\circ(\R^{m}; \gamma; \gamma_1,\dots,\gamma_{n-k})$ we get
\begin{align*}
\E \left[\alpha (\sD^\circ) \ind_{\{\sD \neq \R^{m}\}}\right]
=
\int_{-1}^{+1}  c_{\gamma + \frac {m-1}{2}} (1-t^2)^{\gamma +  \frac {m-1}{2}}
		\prod_{j =1}^{n-k} \left(\int_{-1}^t c_{\gamma_j + \frac{m-1}{2}} (1-s^2)^{\gamma_j + \frac{m-1}{2}}\,\dd s\right) \,\dd t.
\end{align*}
Applying this formula with $m:=d-k$, $\gamma := \b+\beta_1 + \ldots + \beta_k+ \frac{k(d+1)}{2}$ and $\gamma_j:=\b_{j}+\frac{k}{2}$ and observing that
$$
\gamma + \frac{m-1}{2} = \beta + \frac{d}{2} + \sum_{i=1}^{k} \left( \beta_i +  \frac {d}{2} \right) -\frac{1}{2},
\qquad
\gamma_j + \frac{m-1}{2} = \beta_j + \frac{d-1}{2}.
$$
gives
\begin{multline*}
\E \left[\gamma (G,\sC)\ind_{\{G \text{ is a face of } \sC\}}\right]
	=
	\int_{-1}^{+1}  c_{\beta +\frac{d}{2} + \sum_{i=1}^{k} \left(\beta_i + \frac{d}{2}\right) -\frac {1}{2}} (1-t^2)^{\beta +\frac{d}{2} + \sum_{i=1}^{k} \left(\beta_i + \frac{d}{2}\right) -\frac {1}{2}}
	\\ \times \prod_{j =k+1}^{n} \left(\int_{-1}^t c_{\beta_j + \frac{d-1}{2}} (1-s^2)^{\beta_{j}+ \frac{d-1}{2}}\,\dd s\right) \,\dd t
	\end{multline*}
and proves~\eqref{eq:beta_cones_external_angles_explicit}.  To prove~\eqref{eq:beta_cones_external_angles_as_external_quant}, recall from Corollary~\ref{cor:external_beta_formula} that
$$
\extern(\delta; \delta_{k+1},\ldots, \delta_n)
=
\int_{-1}^{+1}  c_{\delta + \frac {n-k-1}{2}} (1-t^2)^{\delta +  \frac {n-k-1}{2}}
		\prod_{j =1}^{n-k} \left(\int_{-1}^t c_{\delta_j + \frac{n-k-1}{2}} (1-s^2)^{\delta_j + \frac{n-k-1}{2}}\,\dd s\right) \,\dd t.
$$
It order for this integral to coincide with the above formula for $\E[\gamma (G,\sC)\ind_{\{G \text{ is a face of } \sC\}}]$, we set
$$
\delta := \beta +\frac{d}{2} + \sum_{i=1}^{k}\left( \beta_i +\frac{d}{2}\right) - \frac{n-k}{2},
\qquad
\delta_j:= \beta_{j} + \frac d2 - \frac{n-k}{2}.
$$
Finally, Part~(c) follows from Part (c) of Theorem~\ref{theo:representation_angles_and_construction_of_Cone}.
\end{proof}

\begin{theorem}[$\E \upsilon_k(\sC)$ through $\intern$ and $\extern$]\label{theo:beta_cones_conic_intrinsic_vol_as_ext_int}
Consider a beta cone $\sC=\pos(Z_1-Z,\ldots, Z_n-Z)\sim\BetaCone(\R^d;\b;\b_1,\dots,\b_n)$  with $d\in\N$ and $n\in \N$. If $d=1$, suppose also that $\beta, \beta_1,\ldots, \beta_n>-1$.  Then, for all $k \in \{0,\dots,\min(n,d)-1\}$,
\begin{multline}\label{eq:beta_cones_conic_intrinsic_vol_as_ext_int}
\E \left[\upsilon_k(\sC)\right]
=
\sum_{\substack{I \subseteq \{1,\dots,n\} \\ \# I=k}}
\intern\left(\b + \frac{d-k}{2}; \;\; \left\{ \b_i + \frac{d-k}{2}: i\in I\right\}\right)
\\
\times
\extern\left(\beta+\frac{d}{2} + \sum_{i\in I} \left(\beta_i+\frac{d}{2}\right)  - \frac{n-k}{2}; \;\; \left\{\beta_{i} + \frac {d}{2} - \frac{n-k}{2}: i\in I^c\right\}\right).
\end{multline}
Recall the notation  $I^c := \{1,\ldots, n\}\bsl I$.
\end{theorem}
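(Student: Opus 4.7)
The plan is to start from the definition
\[
\upsilon_k(\sC) \;=\; \sum_{G\in\cF_k(\sC)} \alpha(G)\,\gamma(G,\sC)
\]
and parametrize the possible $k$-faces by $k$-subsets of $\{1,\dots,n\}$. By Lemma~\ref{lem:beta_cones_generic_properties}(ii) and Lemma~\ref{lem:beta_poi_general_affine}, with probability one every $k$-dimensional face of $\sC$ has the form $G_I := \pos(Z_i - Z : i \in I)$ for some $I \subseteq \{1,\dots,n\}$ with $\#I = k$, and distinct $I$'s give distinct faces. Hence almost surely
\[
\upsilon_k(\sC) \;=\; \sum_{\substack{I\subseteq\{1,\dots,n\}\\ \#I=k}} \alpha(G_I)\,\gamma(G_I,\sC)\,\ind_{\{G_I\in\cF_k(\sC)\}}.
\]

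Next I would take expectations and use the independence statement of Theorem~\ref{theo:beta_cones_faces_alpha_and_gamma}(c), which guarantees that $\alpha(G_I)$ is stochastically independent of the pair $(\gamma(G_I,\sC),\ind_{\{G_I\in\cF_k(\sC)\}})$. This splits each term as a product of expectations:
\[
\E\left[\alpha(G_I)\,\gamma(G_I,\sC)\,\ind_{\{G_I\in\cF_k(\sC)\}}\right]
=
\E[\alpha(G_I)] \cdot \E\left[\gamma(G_I,\sC)\,\ind_{\{G_I\in\cF_k(\sC)\}}\right].
\]
Theorem~\ref{theo:beta_cones_faces_alpha_and_gamma} was formally stated for the canonical choice $I=\{1,\dots,k\}$, but the joint law of $(Z,Z_1,\dots,Z_n)$ depends on $\beta_1,\dots,\beta_n$ only through the multiset of these parameters. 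So by relabelling the indices, the formulas in parts~(a) and~(b) carry over verbatim to an arbitrary $I$ with $\#I=k$, yielding
\[
\E[\alpha(G_I)] = \intern\!\left(\beta+\tfrac{d-k}{2};\,\{\beta_i+\tfrac{d-k}{2}: i\in I\}\right)
\]
and, using the form~\eqref{eq:beta_cones_external_angles_as_external_quant},
\[
\E\left[\gamma(G_I,\sC)\,\ind_{\{G_I\in\cF_k(\sC)\}}\right]
=
\extern\!\left(\beta+\tfrac{d}{2}+\sum_{i\in I}(\beta_i+\tfrac{d}{2})-\tfrac{n-k}{2};\,\{\beta_i+\tfrac{d}{2}-\tfrac{n-k}{2}: i\in I^c\}\right).
\]

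Summing over $I$ with $\#I=k$ and combining the last two displays gives precisely~\eqref{eq:beta_cones_conic_intrinsic_vol_as_ext_int}. The argument is essentially a bookkeeping exercise once the heavy lifting of Theorem~\ref{theo:beta_cones_faces_alpha_and_gamma} is in hand; the only subtle point is justifying the exchange of expectation and summation (trivial, since the sum is finite) and confirming that the general affine position of $Z,Z_1,\dots,Z_n$ rules out the measure-zero event on which the face enumeration~(ii) of Lemma~\ref{lem:beta_cones_generic_properties} could fail. I expect no real obstacle: the key structural input — independence of $\alpha(G_I)$ from $(\gamma(G_I,\sC),\ind_{\{G_I\in\cF_k\}})$ — is already encapsulated in Theorem~\ref{theo:beta_cones_faces_alpha_and_gamma}(c), which in turn rests on the canonical decomposition of Ruben and Miles (Theorem~\ref{theo:ruben_miles}).
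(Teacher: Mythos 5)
Your proposal is correct and follows essentially the same route as the paper: write $\upsilon_k(\sC)$ as the sum of $\alpha(G_I)\gamma(G_I,\sC)\ind_{\{G_I\in\cF_k(\sC)\}}$ over $k$-subsets $I$ (the indicator also disposing of the event $\{\sC=\R^d\}$, on which $\upsilon_k$ vanishes for $k<d$), then factor each expectation using the independence statement of Theorem~\ref{theo:beta_cones_faces_alpha_and_gamma}(c) and insert the formulas of parts (a) and (b), relabelled to general $I$. No gaps.
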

\begin{proof}
For $k\in\{0,\ldots, \min(n,d)-1\}$ we have
$$
\E \left[\upsilon_k(\sC)\right] = \E \left[\upsilon_k(\sC) \ind_{\{\sC\neq \R^d\}}\right].
$$
Recall the formula given in~\eqref{eq:upsilon_def},
$$
\upsilon_k(\sC) =
\sum_{G\in\cF_k(\sC)} \alpha(G)  \gamma (G,\sC),
\qquad
k\in \{0,\ldots,d\}.
$$
Now observe  that on the event $\{\sC\neq \R^d\}$, each face $G \in \cF_k(\sC)$ has the form $G_I := \pos (Z_{i_1} - Z,\dots,Z_{i_k}-Z )$ for some set $I = \{i_1,\dots,i_k\} \subseteq \{1,\ldots, n\}$ of size $k$.
Multiplying the previous formula by $\ind_{\{\sC\neq \R^d\}}$ and taking expectation  gives
\begin{align*}
\E \left[\upsilon_k(\sC)\right]
=
\E \left[ \sum_{G \in \cF_k(\sC)} \a(G) \g(G,\sC)  \ind_{\{\sC\neq \R^d\}}\right]
=
\E \left[\sum_{\substack{I \subseteq \{1,\dots,n\} \\ \# I=k}} \a(G_I) \g(G_I,\sC)  \ind_{\{G_I\in \cF_k(\sC), \sC\neq \R^d\}}\right].
\end{align*}
Next we claim that
$$
\g(G_I,\sC)  \ind_{\{G_I\in \cF_k(\sC), \sC\neq \R^d\}} = \g(G_I,\sC) \ind_{\{G_I\in \cF_k(\sC)\}} .
$$
Indeed, if $G_I$ is a face of $\sC$, then $\sC\neq \R^d$ and both sides are equal to $\g(G_I,\sC)$. If $G_I$ is not a face of $\sC$, then $\gamma(G_I,\sC) = 1$ but the indicator functions make  both sides vanish.
It follows that
\begin{align*}
\E \left[\upsilon_k(\sC)\right]
=
\sum_{\substack{I \subseteq \{1,\dots,n\} \\ \# I=k}} \E \left[\a(G_I) \g(G_I,\sC) \ind_{\{G_I\in \cF_k(\sC)\}} \right]
=
\sum_{\substack{I \subseteq \{1,\dots,n\} \\ \# I=k}} \E \left[\a(G_I)\right] \E \left[\g(G_I,\sC) \ind_{\{G_I\in \cF_k(\sC)\}} \right],
\end{align*}
where we used that the random variables $\alpha(G_I)$ and $\gamma(G_I,\sC)\ind_{\{G_I\in \cF_k(\sC)\}}$ are independent by Part~(c) of Theorem~\ref{theo:beta_cones_faces_alpha_and_gamma}. Moreover, by the same theorem we have
\begin{align*}
\E \alpha(G_I)
&=
\intern\left(\b + \frac{d-k}{2}; \;\; \left\{ \b_i + \frac{d-k}{2}: i\in I\right\}\right),
\\
\E \left[\gamma(G_I,\sC)\ind_{\{G_I\in \cF_k(\sC)\}}\right]
&=
\extern\left(\beta+\frac{d}{2} + \sum_{i\in I} \left(\beta_i+\frac{d}{2}\right)  - \frac{n-k}{2}; \;\; \left\{\beta_{i} + \frac {d}{2} - \frac{n-k}{2}: i\in I^c\right\}\right).
\end{align*}
Inserting these values completes the proof of~\eqref{eq:beta_cones_conic_intrinsic_vol_as_ext_int}.
\end{proof}

\begin{theorem}[Expected internal angle through $\intern$ and $\extern$]
Let $\sC\sim\BetaCone(\R^d;\b;\b_1,\dots,\b_n)$ be a beta cone with $n\geq d$. If $d=1$, suppose additionally that  $\beta,\beta_1,\ldots,\beta_n >-1$.  Then,
\begin{multline*}
\E \left[\upsilon_d(\sC)\right]
=
1
-
\sum_{\substack{I \subseteq \{1,\dots,n\} \\ \# I\leq d-1}}
\intern\left(\b + \frac{d-k}{2}; \;\; \left\{ \b_i + \frac{d-k}{2}: i\in I\right\}\right)
\\
\times
\extern\left(\beta+\frac{d}{2} + \sum_{i\in I} \left(\beta_i+\frac{d}{2}\right)  - \frac{n-k}{2}; \;\; \left\{\beta_{i} + \frac {d}{2} - \frac{n-k}{2}: i\in I^c\right\}\right).
\end{multline*}
\end{theorem}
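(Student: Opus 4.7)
The plan is to derive this as an immediate corollary of Theorem~\ref{theo:beta_cones_conic_intrinsic_vol_as_ext_int} using the fundamental identity $\sum_{k=0}^{d} \upsilon_k(C) = 1$ for the conic intrinsic volumes of any polyhedral cone $C \subseteq \R^d$, recalled in Section~\ref{subsec:angles_conic_intrinsic_defs}.

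First, I would apply this identity pointwise to $\sC$ and rearrange to get
\begin{equation*}
\upsilon_d(\sC) = 1 - \sum_{k=0}^{d-1} \upsilon_k(\sC) \qquad \text{a.s.}
\end{equation*}
Taking expectations and using linearity then yields
\begin{equation*}
\E[\upsilon_d(\sC)] = 1 - \sum_{k=0}^{d-1} \E[\upsilon_k(\sC)].
\end{equation*}

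Next, because $n \geq d$ we have $\min(n,d)-1 = d-1$, so the range $k \in \{0,\ldots,d-1\}$ lies entirely within the range of validity of Theorem~\ref{theo:beta_cones_conic_intrinsic_vol_as_ext_int}. Substituting the formula for $\E[\upsilon_k(\sC)]$ from that theorem into each term gives
\begin{multline*}
\E[\upsilon_d(\sC)] = 1 - \sum_{k=0}^{d-1} \sum_{\substack{I \subseteq \{1,\dots,n\} \\ \# I=k}} \intern\left(\b + \tfrac{d-k}{2}; \left\{ \b_i + \tfrac{d-k}{2}: i\in I\right\}\right) \\
\times \extern\left(\beta+\tfrac{d}{2} + \sum_{i\in I} \left(\beta_i+\tfrac{d}{2}\right)  - \tfrac{n-k}{2}; \left\{\beta_{i} + \tfrac {d}{2} - \tfrac{n-k}{2}: i\in I^c\right\}\right).
\end{multline*}
Finally, collapsing the double sum via $\sum_{k=0}^{d-1} \sum_{\#I = k} = \sum_{\#I \leq d-1}$ (with $k = \#I$ throughout the summand) produces exactly the claimed expression.

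There is no real obstacle here: every ingredient is in place, and the argument is essentially bookkeeping. The only small point worth verifying is the applicability of Theorem~\ref{theo:beta_cones_conic_intrinsic_vol_as_ext_int} in the boundary index $k = d-1$, but the hypothesis $n \geq d$ ensures $d-1 \leq \min(n,d)-1$. In the case $d=1$, the assumption $\beta, \beta_1, \ldots, \beta_n > -1$ carries over to both the pointwise identity (which is unconditional) and to the cited theorem, so no additional care is needed.
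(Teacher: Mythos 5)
Your proof is correct and follows exactly the paper's argument: apply the identity $\sum_{k=0}^{d}\upsilon_k(\sC)=1$, take expectations, and substitute the formula of Theorem~\ref{theo:beta_cones_conic_intrinsic_vol_as_ext_int} for each $\E[\upsilon_k(\sC)]$ with $k\le d-1$, reading $k=\#I$ in the collapsed sum. Nothing further is needed.
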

\begin{proof}
Use the relation
$
\E [\upsilon_d(\sC)]
=
1 - \sum_{k=0}^{d-1} \E[\upsilon_k(\sC)]$ and apply Theorem~\ref{theo:beta_cones_conic_intrinsic_vol_as_ext_int} to each summand on the right-hand side.
\end{proof}

\subsection{Nonlinear relations for \texorpdfstring{$A$}{A}- and \texorpdfstring{$B$}{B}-quantities}\label{subsec:A-and_B-quantities}
The main result of this section is a set of nonlinear relations between the expected internal and external angles of beta cones. Recall that these are denoted by $\intern$ and $\extern$. To express these relations in an especially simple form,  we introduce the following notation.

\begin{definition}[$A$ and $B$-quantities]\label{def:A_and_B}
For $p\in \N$ and $\g, \g_1,\ldots, \g_p \geq \frac{p}{2}-1$ we define the \emph{$A$- and $B$-quantities} by
\begin{align}
A\left(\g; \g_1,\ldots, \g_p\right)
&\coloneqq
\intern \left( \g - \frac{p}{2} ; \g_1 - \frac{p}{2}, \ldots,  \g_p - \frac{p}{2} \right),
\label{eq:A_through_Int}\\
B\left(\g; \g_1,\ldots, \g_p\right)
&\coloneqq
\extern \left( \gamma - \sum_{i=1}^p\gamma_i - \frac{p}{2}; \g_1 - \frac{p}{2}, \ldots, \g_p - \frac{p}{2}  \right)
\label{eq:B_through_Ext}
\end{align}
assuming in the $B$-case additionally that $\gamma - \sum_{i=1}^p\gamma_i \geq  \frac{p}{2}-1$.
Also, for  $p=0$ we  define $A(\g; \varnothing)= B(\g; \varnothing) = 1$, which is consistent with the convention $\intern(\beta; \varnothing) = \extern(\beta; \varnothing) = 1$.
\end{definition}

Conversely, we can express $\intern$ and $\extern$ through $A$ and $B$ as follows: For $\beta, \beta_1,\ldots, \beta_p\geq -1$,
\begin{align}
\intern \left(\b;\b_1, \ldots,  \b_p\right)
&=
A\left(\b + \frac p2; \b_1+ \frac p2,\ldots, \b_p + \frac p2\right), \label{eq:A_through_Int_converse}
\\
\extern\left(\b;\b_1, \ldots,  \b_p\right)
&=
B\left(\b  + \frac {p} 2 + \sum_{i=1}^p \left(\b_i + \frac p2\right); \b_1+ \frac p2,\ldots, \b_p + \frac p2\right). \label{eq:B_through_Ext_converse}
\end{align}

Thanks to Corollary~\eqref{cor:external_beta_formula} we already have an explicit formula for $\extern$ and hence for the $B$-quantities. One of our main tasks in the following is to derive a formula for the $A$-quantities. The first step is to establish relations between  $A$- and $B$-quantities. These relations are somewhat similar to the nonlinear angle-sum relations of McMullen~\cite{mcmullen} (though they do not follow from these directly).

\begin{theorem}[Nonlinear relations]\label{theo:beta_cones_relations_ext_int_second_version}
	Let $d\in \N_0$ and $\gamma, \gamma_1,\ldots, \gamma_d \geq \frac{d}{2}-1$.  Then,
	\begin{align}
	\sum_{I \subseteq \{1,\dots,d\}}
	A \left(\g ; \;\; \left\{ \g_i : i\in I\right\}\right) B \left(\g + \sum_{k=1}^d \g_k ; \;\; \left\{\g_{i} : i \in I^c\right\}\right)
	=
	1,
	\end{align}
	as well as
	\begin{align}
	\sum_{I \subseteq \{1,\dots,d\}}
	(-1)^{\#I}
	A \left(\g ; \;\; \left\{ \g_i : i\in I\right\}\right) B \left(\g + \sum_{k=1}^d \g_k  ; \;\; \left\{\g_{i} : i \in I^c\right\}\right)
	=
	\begin{cases}
	1, &\text{if } d=0,\\
	0, &\text{if } d\geq 1.
	\end{cases}
	\end{align}
\end{theorem}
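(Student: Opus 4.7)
The strategy is to interpret both identities as expectations of the universal conic relations
\begin{align*}
\sum_{k=0}^d \upsilon_k(C) = 1
\qquad \text{and} \qquad
\sum_{k=0}^d (-1)^k \upsilon_k(C) = 0
\end{align*}
(the second being Gauss--Bonnet, valid whenever $C$ is not a linear subspace) applied to a carefully chosen beta cone whose expected conic intrinsic volumes are already given by Theorem~\ref{theo:beta_cones_conic_intrinsic_vol_as_ext_int} as explicit sums of products of $\intern$ and $\extern$ quantities.

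Given parameters $\g, \g_1, \ldots, \g_d \geq d/2 - 1$ with $d \geq 1$, I would set $\b := \g - d/2 \geq -1$ and $\b_i := \g_i - d/2 \geq -1$ and consider the beta cone
\[
\sC \sim \BetaCone(\R^d; \b; \b_1, \ldots, \b_d)
\]
generated by exactly $n = d$ vectors. By Lemma~\ref{lem:beta_cones_generic_properties}, these vectors are a.s.\ in general linear position, so $\sC$ is a.s.\ a full-dimensional simplicial cone with trivial lineality space; in particular $\sC \neq \R^d$ a.s.\ (since $d$ linearly independent vectors cannot positively span $\R^d$) and $\sC$ is a.s.\ not a linear subspace, so both conic identities apply almost surely. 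Taking expectations yields the two scalar identities $\sum_{k=0}^d \E[\upsilon_k(\sC)] = 1$ and $\sum_{k=0}^d (-1)^k \E[\upsilon_k(\sC)] = 0$.

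Next, I would apply Theorem~\ref{theo:beta_cones_conic_intrinsic_vol_as_ext_int} with $n = d$ to each summand with $k \in \{0, \ldots, d-1\}$, and add the missing top-dimensional term $\E[\upsilon_d(\sC)] = \E[\alpha(\sC)] = \intern(\b; \b_1, \ldots, \b_d)$; this last term fits the general summation pattern as the unique $I = \{1, \ldots, d\}$ contribution with trivial factor $\extern(\,\cdot\,; \varnothing) = 1$. Substituting $\g = \b + d/2$ and $\g_i = \b_i + d/2$ and comparing with Definition~\ref{def:A_and_B}, the $\intern$-factor becomes $A(\g; \{\g_i : i \in I\})$ while the $\extern$-factor becomes $B(\g + \sum_{k=1}^d \g_k; \{\g_i : i \in I^c\})$, since a direct computation shows
\[
\b + \tfrac{d}{2} + \sum_{i \in I} \bigl(\b_i + \tfrac{d}{2}\bigr) - \tfrac{d - |I|}{2} = \Bigl(\g + \sum_{k=1}^d \g_k\Bigr) - \sum_{i \in I^c} \g_i - \tfrac{d - |I|}{2}.
\]
Summing over all $I \subseteq \{1, \ldots, d\}$ then gives precisely the two stated relations.

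The main obstacle is bookkeeping the parameter shifts together with handling edge cases. The case $d = 0$ is trivial: each identity collapses to the single term $A(\g; \varnothing) B(\g; \varnothing) = 1$. The subcase $d = 1$ with $\b = -1$ or $\b_1 = -1$ falls outside the scope of Theorem~\ref{theo:beta_cones_conic_intrinsic_vol_as_ext_int}; here one either appeals to the conventions of Definition~\ref{def:int_and_ext} (yielding $A = B = 1/2$, so both sides evaluate to $1$ and $0$ respectively), or passes to the limit $\b, \b_1 \downarrow -1$ using continuity of the integral representation of $\extern$ from Corollary~\ref{cor:external_beta_formula}.
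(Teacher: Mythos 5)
Your proposal is correct and follows essentially the same route as the paper: take the beta cone $\sC\sim\BetaCone(\R^d;\gamma-\tfrac d2;\gamma_1-\tfrac d2,\ldots,\gamma_d-\tfrac d2)$ with $n=d$, apply Theorem~\ref{theo:beta_cones_conic_intrinsic_vol_as_ext_int} (noting the $k=d$ term is $\intern(\b;\b_1,\ldots,\b_d)\cdot 1$), use $\sum_k\E\upsilon_k(\sC)=1$ and the Gauss--Bonnet relation (valid since $\sC\neq\R^d$ and is a.s.\ not a linear subspace), and translate $\intern/\extern$ into $A/B$ via Definition~\ref{def:A_and_B}. Your explicit treatment of the $d=1$, $\beta=-1$ or $\beta_1=-1$ boundary case via the conventions of Definition~\ref{def:int_and_ext} is a welcome extra care that the paper's proof passes over silently.
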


\begin{proof}
For $d=0$ both relations just state that $1=1$ since, by convention, $A(\g; \varnothing)= B(\g; \varnothing) = 1$.
Let in the following $d\in \N$ and $\beta, \beta_1,\ldots, \beta_d \geq -1$. Consider a beta cone $\sC=\pos(Z_1-Z,\ldots, Z_d-Z)\sim\BetaCone(\R^d;\b;\b_1,\dots,\b_d)$, where now $n=d$. Theorem~\ref{theo:beta_cones_conic_intrinsic_vol_as_ext_int} implies that
\begin{multline}\label{eq:beta_cones_conic_intrinsic_vol_as_ext_int_case_n_eq_d}
\E \left[\upsilon_k(\sC)\right]
=
\sum_{\substack{I \subseteq \{1,\dots,d\} \\ \# I=k}}
\intern\left(\b + \frac{d-k}{2}; \;\; \left\{ \b_i + \frac{d-k}{2}: i\in I\right\}\right)
\\
\times
\extern\left(\beta +\frac{d}{2} + \sum_{i\in I} \left( \beta_i +\frac{d}{2}\right)- \frac{d-k}{2}; \;\; \left\{\beta_{i}+\frac{k}{2} : i\in I^c\right\}\right).
\end{multline}
whenever $k\in \{0,\ldots, d-1\}$. Moreover, in the special case $n=d$ we are dealing with, the same formula stays true for $k=d$ since then  the external quantity becomes $1$ by the convention $\extern(\beta; \varnothing) = 1$ and the formula turns into
$
\E \left[\upsilon_d(\sC)\right]
=
\intern\left(\b;  \beta_1,\ldots, \beta_d\right) \cdot 1,
$
which is the definition of the internal quantities.  We conclude that~\eqref{eq:beta_cones_conic_intrinsic_vol_as_ext_int_case_n_eq_d} holds for all $k\in \{0,\ldots, d\}$.

Observe that $\sC\neq \R^d$ since it is generated by $d$ vectors. Moreover, $\sC$ is a.s.\ not a linear subspace of $\R^d$ by Lemma~\ref{lem:beta_cones_generic_properties}.   Therefore, the relations
$$
\sum_{k=0}^d \E \upsilon_k(\sC) = 1,
\qquad
\sum_{k=0}^d (-1)^k \E \upsilon_k(\sC) = 0
$$
hold.  After taking into account~\eqref{eq:beta_cones_conic_intrinsic_vol_as_ext_int_case_n_eq_d}, these identities take the form
\begin{multline*}
\sum_{I \subseteq \{1,\dots,d\}}
\intern\left(\b + \frac{d-(\#I)}{2}; \;\; \left\{ \b_i + \frac{d-(\#I)}{2}: i\in I\right\}\right)
\\
\times
\extern\left(\beta+\frac{d}{2} + \sum_{i\in I}\left( \beta_i+\frac{d}{2}\right)    - \frac{\#I^c}{2}; \;\; \left\{\beta_{i} + \frac{\#I}{2}: i\in I^c\right\}\right)
=
1,
\end{multline*}
as well as
\begin{multline*}
\sum_{I \subseteq \{1,\dots,d\}}
(-1)^{\#I}
\intern\left(\b + \frac{d-(\#I)}{2}; \;\; \left\{ \b_i + \frac{d-(\#I)}{2}: i\in I\right\}\right)
\\
\times
\extern\left(\beta+\frac{d}{2} + \sum_{i\in I}\left( \beta_i+\frac{d}{2}\right)   - \frac{\#I^c}{2}; \;\; \left\{\beta_{i} + \frac{\#I}{2}: i\in I^c\right\}\right)
=
\begin{cases}
1, &\text{if } d=0,\\
0, &\text{if } d\geq 1.
\end{cases}
\end{multline*}
Setting $\gamma: = \beta + \frac d2$ and $\gamma_j:= \beta_j + \frac d2$ and using \eqref{eq:A_through_Int} as well as \eqref{eq:B_through_Ext} to represent the internal and external quantities through the $A$- and $B$-quantities  gives the claim.
\end{proof}

The next theorem states that the $A$-quantities are uniquely determined by the nonlinear relations of Theorem~\ref{theo:beta_cones_relations_ext_int_second_version}.
\begin{theorem}[Solution to nonlinear relations]\label{theo:uniqueness_of_solution}
Fix some $d\in \N_0$ and $\gamma, \gamma_1,\ldots, \gamma_d \geq \frac{d}{2}-1$. For a set $I = \{i_1,\dots, i_k\} \subseteq \{1,\ldots, d\}$ we write $S_I:= \sum_{i\in I} \gamma_i$.
Let $f_J$, where $J\subseteq\{1,\ldots, d\}$, be $2^d$ unknowns. For every $I_0\subseteq \{1,\ldots, d\}$ we consider an equation
\begin{align}\label{eq:linear_system}
	\sum_{I \subseteq I_0} (-1)^{\#I}  B \left(\g + S_{I_0}  ; \;\; \left\{\g_{i} : i \in I_0\bsl I\right\}\right) f_I = \begin{cases}
		1, &\text{if } I_0=\varnothing,\\
		0, &\text{if } \# I_0 \geq 1,
		\end{cases}
\end{align}
so that in total we have a system of $2^d$ linear equations in the unknowns $f_J$. The solution to this system is unique and is  given by $f_\varnothing =1$ and
\begin{align}\label{eq:unique_solution_linear_system}
	f_{I_0} = (-1)^{\#I_0} \sum_{p=0}^{\#I_0-1} \sum_{\varnothing \subsetneq I_p \subsetneq \dots \subsetneq I_1 \subsetneq I_0} (-1)^{p+1}
\prod_{\ell=0}^p  B\left(\g+S_{I_\ell}; \{\gamma_i: i\in I_\ell \bsl I_{\ell+1}\}\right),
\end{align}
for all non-empty $I_0\subseteq \{1,\ldots, d\} = [d]$, where we put $I_{p+1} := \varnothing$.
\end{theorem}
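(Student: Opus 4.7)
The plan is to exploit the triangular structure of the system and then iterate the resulting recursion, tracking signs via telescoping.

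\textbf{Step 1: Triangularity and uniqueness.} The key observation is that the equation indexed by $I_0$ only involves the unknowns $\{f_I : I \subseteq I_0\}$, so when we order the unknowns by the size $\#I_0$ (or any linear extension of inclusion on $2^{[d]}$), the matrix of the system is lower-triangular. Moreover, the diagonal coefficient, i.e.\ the coefficient of $f_{I_0}$ in the $I_0$-equation, equals $(-1)^{\#I_0} B(\g+S_{I_0}; \varnothing) = (-1)^{\#I_0}$, which is never zero by the convention $B(\g;\varnothing)=1$. This already proves uniqueness and shows that $f_\varnothing$ is forced to equal $1$ by the $I_0=\varnothing$ equation.

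\textbf{Step 2: Solving the recursion.} For $I_0\neq\varnothing$, isolating the $I=I_0$ term in \eqref{eq:linear_system} yields the recursion
\begin{align*}
f_{I_0}
=\sum_{I\subsetneq I_0}(-1)^{\#I_0-\#I+1}\,B\bigl(\g+S_{I_0};\{\g_i: i\in I_0\bsl I\}\bigr)\,f_I.
\end{align*}
I plan to prove the closed form \eqref{eq:unique_solution_linear_system} by strong induction on $\#I_0$. The base case $\#I_0=0$ gives $f_\varnothing=1$, matching the convention $f_\varnothing=1$ (the sum in \eqref{eq:unique_solution_linear_system} is empty when $\#I_0=0$).

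\textbf{Step 3: Iterating and sign bookkeeping.} Substituting the inductive hypothesis for each $f_I$ with $I\neq\varnothing$ on the right-hand side, and treating the $I=\varnothing$ term separately via $f_\varnothing=1$, produces sums over strictly decreasing chains $I_0\supsetneq I_1\supsetneq\dots\supsetneq I_p\supsetneq I_{p+1}=\varnothing$ where $I_1,\dots,I_p$ are non-empty and $0\leq p\leq \#I_0-1$ (the chain of length $p+1$ corresponds to having applied the recursion $p$ times after the initial expansion and then using $f_\varnothing=1$). For such a chain, the product of $B$-factors is exactly $\prod_{\ell=0}^{p}B(\g+S_{I_\ell};\{\g_i:i\in I_\ell\bsl I_{\ell+1}\})$, and the accumulated sign is
\begin{align*}
\prod_{\ell=0}^{p}(-1)^{\#I_\ell-\#I_{\ell+1}+1}
=(-1)^{\,p+1\,+\,\sum_{\ell=0}^{p}(\#I_\ell-\#I_{\ell+1})}
=(-1)^{p+1+\#I_0},
\end{align*}
since the inner sum telescopes to $\#I_0-\#I_{p+1}=\#I_0$. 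Pulling the factor $(-1)^{\#I_0}$ out front reproduces exactly the right-hand side of \eqref{eq:unique_solution_linear_system}.

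\textbf{Main obstacle.} There is no deep difficulty here; the content is entirely combinatorial. The one place that demands care is the sign-telescoping in Step 3 together with making sure the enumeration of chains matches the one in \eqref{eq:unique_solution_linear_system} (in particular, that $p=0$ correctly produces the single-factor term $(-1)^{\#I_0+1}B(\g+S_{I_0};\{\g_i:i\in I_0\})$, and that the indexing convention $I_{p+1}=\varnothing$ is applied consistently). A cleaner alternative, if the chain bookkeeping becomes cumbersome, is to verify \eqref{eq:unique_solution_linear_system} directly by plugging it back into \eqref{eq:linear_system}: the resulting double sum can be rearranged as a sum over chains from $I_0$ to $\varnothing$ passing through a distinguished subset, and a standard cancellation on the Boolean lattice $2^{I_0}$ collapses it to the required right-hand side. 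Either route gives the theorem.
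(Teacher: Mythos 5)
Your proposal is correct and takes essentially the same route as the paper: isolate the $I=I_0$ term (the triangular structure with diagonal coefficient $(-1)^{\#I_0}B(\g+S_{I_0};\varnothing)=(-1)^{\#I_0}$ gives uniqueness and $f_\varnothing=1$), then establish the closed form by induction with the same chain-recombination and sign-telescoping. The only difference is organizational: the paper inducts on $d$ and reduces to the case $I_0=[d]$, while you run strong induction on $\#I_0$ directly, which is an equivalent bookkeeping of the identical argument.
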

\begin{proof}
We show the statement using induction over $d \in \N_0$. If  $d=0$,  then~\eqref{eq:linear_system} simplifies to $f_{\varnothing} = 1$.
Since the above was trivial, let us also show the case $d=1$. In this case, \eqref{eq:linear_system} gives $f_\varnothing = 1$ and
\begin{align*}
B(\gamma+S_{\lbrace 1 \rbrace}; \g_1) f_\varnothing - B(\gamma+S_{\lbrace 1 \rbrace}; \varnothing) f_{\lbrace 1 \rbrace} = 0,
\end{align*}
	which leads to $f_{\lbrace 1 \rbrace} =B(\gamma+S_{\lbrace 1 \rbrace}; \g_1)$ and coincides with the expression for $f_{\lbrace 1 \rbrace}$ given by \eqref{eq:unique_solution_linear_system}. Let us now assume that the statement of the theorem holds with $d$ replaced by $d-1$. It suffices to prove~\eqref{eq:unique_solution_linear_system} for $I_0= [d]$.  We start with \eqref{eq:linear_system}, where we separate one term from the sum on the left-hand side to get
	\begin{align*}
		(-1)^d B(\gamma + S_{[d]}; \varnothing) f_{[d]} + \sum_{I \subsetneq [d]} (-1)^{\# I} B(\gamma + S_{[d]}; \lbrace \gamma_i : i \in  I^c\rbrace) = 0.
	\end{align*}
This gives us an expression for $f_{[d]}$,
	\begin{align}\label{proof_unique_Solution_expression_for_f_I}
		f_{[d]} =& \sum_{I \subsetneq [d]} (-1)^{\# I+d+1} B(\gamma +S_{[d]}; \lbrace \gamma_i : i \in  I^c\rbrace) f_I \notag \\
		=& (-1)^{d+1} B(\gamma + S_{[d]};\lbrace \gamma_i : i \in  [d]\rbrace)+ \sum_{\varnothing \subsetneq I \subsetneq [d]} (-1)^{\# I+d+1} B(\gamma +S_{[d]}; \lbrace \gamma_i : i \in  I^c\rbrace) f_I.
	\end{align}
	We can apply the induction assumption on the term $f_I$ which simplifies the last  sum  to
	\begin{multline*}
		\sum_{\varnothing \subsetneq I \subsetneq [d]} (-1)^{\# I+d+1}  B(\gamma +S_{[d]}; \lbrace \gamma_i : i \in  I^c\rbrace) f_I = \sum_{\varnothing \subsetneq I \subsetneq [d]}\Biggl( (-1)^{\# I+d+1} B(\gamma +S_{[d]}; \lbrace \gamma_i : i \in  I^c\rbrace)  \\
		\times (-1)^{\#I} \sum_{p=0}^{\#I-1} \sum_{\varnothing \subsetneq I_p \subsetneq \dots \subsetneq I_1 \subsetneq I} (-1)^{p+1}
		\prod_{\ell=0}^p  B\left(\g+S_{I_\ell}; \{\gamma_i: i\in I_\ell \bsl I_{\ell+1}\}\right) \Biggr)\\
		= (-1)^{d+1} \sum_{q=1}^{d-1} (-1)^q \sum_{\varnothing \subsetneq I_q' \subsetneq \dots \subsetneq I'_1 \subsetneq [d]} \prod_{j=0}^{q}  B\left(\g+S_{I_j'}; \{\gamma_i: i\in I_j' \bsl I_{j+1}'\}\right),
	\end{multline*}
where we have set $q=p+1$ in the last line. Let us explain why we can combine the sums in the last equation. In the first and second line, we first choose any non-empty, strict subset $I$ of $[d]$. Then, we have summed over all chains of length at most $p$ of this set $I$. Notice that the length of this chain might be $0$; in this case, the third sum is taken only over the previously chosen set $I$. But this is the same as taking all chains of length $q \coloneqq p+1$ of the set $[d]$. Finally, we notice that the summand for $q=0$ is actually given by the left-hand side of \eqref{proof_unique_Solution_expression_for_f_I}. Combining both parts finally gives that
\begin{align*}
	f_{[d]} = (-1)^{d} \sum_{q=0}^{d-1}  \sum_{\varnothing \subsetneq I_q' \subsetneq \dots \subsetneq I'_1 \subsetneq [d]} (-1)^{q+1}\prod_{j=0}^{q}  B\left(\g+S_{I_j'}; \{\gamma_i: i\in I_j' \bsl I_{j+1}'\}\right)
\end{align*}
which completes the induction and gives the claim.
\end{proof}

The next theorem gives a first, though non-satisfactory, formula for the $A$-quantities.  A more useful formula will be obtained in Section~\ref{sec:beta_cones_explicit_internal_angles}.
\begin{theorem}[$A$-quantities through $B$-quantities]\label{theo:A_expressed_through_B}
	For $d \in \N$ and $\g,\g_1,\dots,\g_d \geq -\frac{1}{2}$, we have
	\begin{align}\label{eq:A_through_B}
		A(\gamma; \gamma_1, \dots, \gamma_d) = (-1)^{d} \sum_{p=0}^{d-1} \sum_{\varnothing \subsetneq I_p \subsetneq \dots \subsetneq I_1 \subsetneq [d]} (-1)^{p+1}
\prod_{\ell=0}^p  B\left(\g+S_{I_\ell}; \{\gamma_i: i\in I_\ell \bsl I_{\ell+1}\}\right),
	\end{align}
	where $I_0 := \{1,\dots,d\}$ by convention.
\end{theorem}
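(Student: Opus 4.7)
The plan is to recognize the $A$-quantities indexed by subsets of $[d]$ as the unique solution of the linear system~\eqref{eq:linear_system} of Theorem~\ref{theo:uniqueness_of_solution}, and then read off~\eqref{eq:A_through_B} from the closed form~\eqref{eq:unique_solution_linear_system} specialized to $I_0 = [d]$. The whole argument is a one-step reduction once the right system is in place.

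First, I would observe that the second (alternating-sign) identity in Theorem~\ref{theo:beta_cones_relations_ext_int_second_version} is not a single relation but a family indexed by subsets: for each $I_0 \subseteq [d]$, apply that identity in dimension $d' = \#I_0$ to the reduced tuple $\bigl(\gamma;\, \{\gamma_i : i \in I_0\}\bigr)$. This yields
\begin{align*}
\sum_{I \subseteq I_0} (-1)^{\#I}\, A\bigl(\gamma;\, \{\gamma_i : i \in I\}\bigr)\, B\bigl(\gamma + S_{I_0};\, \{\gamma_i : i \in I_0 \bsl I\}\bigr)
=
\begin{cases} 1, & I_0 = \varnothing,\\ 0, & \#I_0 \geq 1, \end{cases}
\end{align*}
which coincides with~\eqref{eq:linear_system} upon setting $f_I := A\bigl(\gamma;\{\gamma_i : i \in I\}\bigr)$: the coefficient of $f_I$ on the left-hand side of~\eqref{eq:linear_system} is $(-1)^{\#I} B(\gamma + S_{I_0}; \{\gamma_i : i \in I_0 \bsl I\})$, matching term by term. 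Thus the $2^d$ numbers $\bigl(A(\gamma; \{\gamma_i : i \in I\})\bigr)_{I \subseteq [d]}$ satisfy the $2^d$ equations of~\eqref{eq:linear_system}.

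Next, I would invoke Theorem~\ref{theo:uniqueness_of_solution}, which asserts that this linear system has a unique solution and provides it in closed form via~\eqref{eq:unique_solution_linear_system}. Specializing to $I_0 = [d]$ gives
\begin{align*}
A(\gamma; \gamma_1,\ldots,\gamma_d) \;=\; f_{[d]} \;=\; (-1)^{d} \sum_{p=0}^{d-1} \sum_{\varnothing \subsetneq I_p \subsetneq \dots \subsetneq I_1 \subsetneq [d]} (-1)^{p+1} \prod_{\ell=0}^{p} B\bigl(\gamma+S_{I_\ell};\{\gamma_i: i\in I_\ell \bsl I_{\ell+1}\}\bigr),
\end{align*}
with the conventions $I_0 = [d]$ and $I_{p+1} = \varnothing$, which is exactly~\eqref{eq:A_through_B}.

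The main obstacle is a minor domain-of-validity point. Theorem~\ref{theo:beta_cones_relations_ext_int_second_version} is established under the hypothesis $\gamma, \gamma_i \geq d'/2 - 1$ in dimension $d'$, whereas the present theorem assumes only $\gamma, \gamma_i \geq -1/2$, which is weaker than that hypothesis as soon as $d' \geq 2$. I would handle this by first carrying out the argument in the stronger range $\gamma, \gamma_i \geq d/2 - 1$, where all the invoked nonlinear identities are valid for every subset $I_0 \subseteq [d]$, and then extending to $\gamma, \gamma_i \geq -1/2$ using the extended definition of $\extern$ (and hence of $B$) introduced after Corollary~\ref{cor:external_beta_formula}: both sides of~\eqref{eq:A_through_B} are real-analytic in the parameters on the wider domain, so the identity persists by analytic continuation. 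Alternatively one may simply adopt~\eqref{eq:A_through_B} as the definition of $A$ in the extended range; this is consistent with the original $\intern$-based definition on the overlap.
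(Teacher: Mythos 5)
Your proposal is correct and follows essentially the same route as the paper's own proof: the family of alternating-sign relations from Theorem~\ref{theo:beta_cones_relations_ext_int_second_version}, applied to each subset $I_0\subseteq[d]$ in dimension $\#I_0$, shows that $f_I = A(\gamma;\{\gamma_i: i\in I\})$ solves the linear system~\eqref{eq:linear_system}, and the uniqueness statement of Theorem~\ref{theo:uniqueness_of_solution} together with the closed form~\eqref{eq:unique_solution_linear_system} at $I_0=[d]$ yields~\eqref{eq:A_through_B}. Your additional remarks (spelling out the subsetwise application of the nonlinear relations, and handling the discrepancy between the stated range $\gamma,\gamma_i\geq -\tfrac12$ and the range $\geq \tfrac d2 -1$ under which $A$, $B$ and the nonlinear relations are actually established, via analytic continuation or by taking~\eqref{eq:A_through_B} as the definition of $A$ on the extended range) only make explicit points that the paper's proof leaves implicit.
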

\begin{proof}
We use Theorem~\ref{theo:uniqueness_of_solution}.
By Theorem~\ref{theo:beta_cones_relations_ext_int_second_version}, $f_{I} = A(\gamma; I)$ defines a solution to~\eqref{eq:linear_system}.
On the other hand, the solution to~\eqref{eq:linear_system} is unique and given by~\eqref{eq:unique_solution_linear_system}. Taking there $I_0 = \lbrace 1, \dots, d\rbrace$ we get~\eqref{eq:A_through_B}.
\end{proof}

\section{Explicit formula for angles of beta cones}\label{sec:beta_cones_explicit_internal_angles}
The aim of this section is to derive explicit formulas for $A$-quantities and thus also for the internal quantities. This will be done as follows. We shall introduce two new types of quantities, denoted by $a$ and $b$. Both, $a$ and $b$ will be defined in an analytic way as certain integrals. In Theorem~\ref{theo:relations_a_b} we shall show that $a$- and $b$-quantities satisfy nonlinear relations similar to those that hold for $A$- and $B$-quantities, as stated in Theorem~\ref{theo:beta_cones_relations_ext_int_second_version}. Since there is a simple way to relate $b$ to $B$ and since the nonlinear relations \textit{uniquely} define $A$ (respectively, $a$), we shall finally conclude that $A$ can be expressed through $a$; see Theorem~\ref{lem:A_through_a_B_through_b}.

\subsection{\texorpdfstring{$a$}{a}- and \texorpdfstring{$b$}{b}-quantities and their properties}\label{subsec:a_and_b}
First, recall  the notation
$$
c_{\beta}:= c_{1,\beta} =\frac{\Gamma(\beta + \frac 32)}{\sqrt \pi\; \Gamma(\beta+1)}.
$$
We shall need the following  known formulas (see, e.g.,  Equation~(3.14) in~\cite{kabluchko_angles_of_random_simplices_and_face_numbers}):
\begin{align}\label{eq:int_cos_cosh}
\int_{-\pi/2}^{+\pi/2} (\cos x)^{h-1} \dd x
=
\int_{-\infty}^{+\infty} (\cosh y)^{-h} \dd y
=
\frac{\sqrt \pi\; \Gamma \left(\frac{h}{2}\right)}{\Gamma\left(\frac{h+1}{2}\right)}
=
\frac{1}{c_{\frac{h-2}{2}}}
,
\quad
\Re h >0.
\end{align}
\begin{definition}
For $\beta\in \C$ with  $\Re \beta \geq -1$ we introduce the function
\begin{align}\label{eq:def_F_as_cos_integral}
F_\b (x) \coloneqq \int_{-\pi /2}^{x} \cos^\b (y) \dd y, \qquad x\in \sG:= \C \bsl ((-\infty, - \pi/2) \cup (\pi/2, +\infty)).
\end{align}
\end{definition}
The integral is taken along any contour connecting $-\pi/2$ to $x$ in the simply connected domain $\sG$. The intervals $(-\infty, -\pi/2)$ and $(\pi/2,+\infty)$ have been excluded since the function $x\mapsto \cos^\beta x$ has branching points at $\pi/2 + \pi m$, $m\in \Z$, if $\beta$ is non-integer.  On $\sG$, this function is a univalued analytic function.   Let us record a formula for $F_\b (\ii x)$ with $x\in \R$:
\begin{equation}\label{eq:F_a_imaginary_argument}
F_\b (\ii x)
=
\frac{1}{2c_{(\b-1)/2}} + \int_{0}^{\ii x} \cos^{\b} (z) \dd z
=
\frac{1}{2c_{(\b-1)/2}} + \ii \cdot  \int_{0}^{x} \cosh^{\b}(y) \dd y,
\end{equation}
where we used that  $\int_{-\pi /2}^{0} \cos^{\b}(z)  \dd z = \frac{1}{2c_{(\b-1)/2}}$.  For $\Re \beta \geq 0$, it follows that $F_\b (\ii x) = O(|x|\eee^{|x| \Re \beta})$  as $x\to \pm\infty$.

\begin{definition}[$a$-quantities]\label{dfn:a_quantities}
For $d\in \N_0$, real numbers $\alpha_1,\ldots, \alpha_d \geq 0$ and $\alpha > \alpha_1+\ldots +\alpha_d$  we define the quantities
\begin{align}
a(\a;\a_1,\dots, \a_d) &= \int_{-\infty}^{+\infty} \cosh^{-\a} (x) \prod_{j=1}^{d} F_{\a_j}(\ii x) \dd x, \label{eq:def_a}\\
a_1(\a;\a_1,\dots, \a_d) &= \ii\cdot  \int_{-\infty}^{+\infty} \cosh^{-\a-1} (x) \sinh(x) \prod_{j=1}^{d} F_{\a_j}(\ii x) \dd x.\label{eq:def_a_1}
\end{align}
\end{definition}

\begin{lemma}
The integrals in~\eqref{eq:def_a} and~\eqref{eq:def_a_1} converge absolutely.
\end{lemma}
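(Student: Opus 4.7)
The plan is to reduce absolute convergence to an exponential decay estimate at $\pm\infty$, since the integrands are continuous on $\mathbb{R}$ and thus locally integrable. Specifically, for $\alpha_j\ge 0$ the representation~\eqref{eq:F_a_imaginary_argument} shows that each $F_{\alpha_j}(\ii x)$ is a continuous function of $x\in \mathbb{R}$, and $\cosh^{-\alpha}(x)$, $\sinh(x)\cosh^{-\alpha-1}(x)$ are obviously continuous; hence the only issue is the tail behavior.

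The main estimate is already effectively recorded in the discussion following~\eqref{eq:F_a_imaginary_argument}: from the trivial bound $\cosh^{\alpha_j}(y)\le \eee^{\alpha_j|y|}$ applied inside the integral in~\eqref{eq:F_a_imaginary_argument}, I obtain $|F_{\alpha_j}(\ii x)| = O(|x|\,\eee^{\alpha_j|x|})$ as $|x|\to\infty$. Multiplying these $d$ bounds together gives
\[
\Bigl|\prod_{j=1}^{d} F_{\alpha_j}(\ii x)\Bigr| \;=\; O\bigl(|x|^d\, \eee^{(\alpha_1+\cdots+\alpha_d)|x|}\bigr),\qquad |x|\to\infty.
\]
Combined with $\cosh^{-\alpha}(x)\le 2^{\alpha}\eee^{-\alpha|x|}$, the integrand of~\eqref{eq:def_a} is dominated at infinity by a constant multiple of $|x|^{d}\eee^{-(\alpha-\alpha_1-\cdots-\alpha_d)|x|}$. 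By the hypothesis $\alpha>\alpha_1+\cdots+\alpha_d$, the exponent in the exponential is strictly negative, so this majorant is integrable on $\mathbb{R}$, establishing absolute convergence of $a(\alpha;\alpha_1,\dots,\alpha_d)$.

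For $a_1$, I use $|\sinh(x)|\le \cosh(x)$, which yields $|\sinh(x)\cosh^{-\alpha-1}(x)|\le \cosh^{-\alpha}(x)$. Thus the integrand in~\eqref{eq:def_a_1} is dominated, up to the same constants, by the integrand already handled for $a$, and absolute convergence follows at once under the same condition $\alpha>\alpha_1+\cdots+\alpha_d$.

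There is no substantive obstacle to anticipate; the only point requiring minimal care is confirming continuity of $x\mapsto F_{\alpha_j}(\ii x)$ on $\mathbb{R}$ in spite of the branch cuts excluded in~\eqref{eq:def_F_as_cos_integral}, but since the imaginary axis lies in the simply connected domain $\sG$ and~\eqref{eq:F_a_imaginary_argument} exhibits $F_{\alpha_j}(\ii x)$ as a manifestly smooth function of $x\in\mathbb{R}$, this is immediate.
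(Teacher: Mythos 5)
Your argument is correct and follows essentially the same route as the paper: the tail estimate $F_{\alpha_j}(\ii x)=O(|x|\,\eee^{\alpha_j|x|})$ from~\eqref{eq:F_a_imaginary_argument}, the exponential decay $\cosh^{-\alpha}(x)=O(\eee^{-\alpha|x|})$ (and $|\sinh(x)|\cosh^{-\alpha-1}(x)\le\cosh^{-\alpha}(x)$ for the $a_1$-integral), and the hypothesis $\alpha>\alpha_1+\cdots+\alpha_d$ giving an integrable exponential majorant. The added remarks on local integrability and continuity of $x\mapsto F_{\alpha_j}(\ii x)$ are fine but not needed beyond what the paper already uses.
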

\begin{proof}
As $x\to \pm \infty$, we have $\prod_{j=1}^{d} F_{\a_j}(\ii x) = O(|x|^d \eee^{(\alpha_1 + \ldots +\alpha_d) |x|})$. On the other hand,  $|\cosh^{-\alpha} x| =O(\eee^{-\alpha |x|})$ and $|\cosh^{-\a-1} (x) \sinh(x)| = O(\eee^{- \alpha |x|})$. Condition $\alpha > \alpha_1 + \ldots + \alpha_d$ implies the absolute convergence of the integrals in~\eqref{eq:def_a} and~\eqref{eq:def_a_1}.
\end{proof}
\begin{remark}
A slightly more elaborate argument shows that~\eqref{eq:def_a} and~\eqref{eq:def_a_1} define analytic functions in $\alpha, \alpha_1,\ldots, \alpha_d$ on the domain defined by $\Re \alpha_1 >  0, \ldots, \Re \alpha_d > 0$ and $\Re \alpha > \Re \alpha_1 + \ldots + \Re \alpha_d$.
\end{remark}

\begin{example}[On $d=0$ and $d=1$]\label{ex:a_def_case_d_0}
If $d=0$, the empty product is defined as $1$ and we evaluate $a(\alpha;\varnothing) = \int_{-\infty}^{+\infty} \cosh^{-\a} (x)  \dd x = c_{\frac \a2-1}^{-1}$ and $a_1(\alpha; \varnothing) = 0$, both for $\alpha >0$. For $d=1$, \eqref{eq:F_a_imaginary_argument} implies
\begin{align*}
a(\a;\a_1) &= \int_{-\infty}^{+\infty} \cosh^{-\a} (x)  F_{\a_1}(\ii x) \dd x = \frac{1}{2c_{\frac{\a_1-1}2}} \int_{-\infty}^{+\infty} \cosh^{-\a} (x) \dd x  = \frac{1}{2 c_{\frac{\a_1-1}2} c_{\frac{\alpha-2}{2}}}.
\end{align*}
As it will turn out, the ranges imposed in Definition~\ref{dfn:a_quantities} are too restrictive for $d=0$ and $d=1$. We therefore agree to use the formulas from the present example to define $a(\alpha; \varnothing)$ and $a(\alpha; \alpha_1)$ as meromorphic functions of $\alpha, \alpha_1\in \C$.
\end{example}

We can transform the integral representation of $a(\a;\a_1,\dots,\a_d)$ in the following way.
\begin{proposition}[Formulas for $a$-quantities]\label{prop:other_integral_representation_of_a}
For  $\alpha_1,\ldots, \alpha_d \geq 0$ and $\alpha >\alpha_1+\ldots + \alpha_d$ we have
\begin{align}
a(\a;\a_1,\dots, \a_d)
&=
\int_{-1}^{+1} (1-t^2)^{\frac{\a}{2}-1} \prod_{j=1}^{d} \left(\frac{1}{2c_{(\a_j-1)/2}} + \ii \int_{0}^{t} (1-s^2)^{-\frac{\a_j}{2} -1} \dd s
\right) \dd t  \label{eq:a_quant_integral_1}
\\
&=
\int_{-\pi/2}^{+\pi/2} \cos^{\a-1} (u) \prod_{j=1}^{d} \left(\frac{1}{2c_{(\a_j-1)/2}} + \ii \int_{0}^{u} \cos^{-\a_j - 1} (v) \dd v
\right) \dd u. \label{eq:a_quant_integral_2}
\end{align}
\end{proposition}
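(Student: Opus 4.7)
The plan is to obtain both integral representations from the definition~\eqref{eq:def_a} by two successive changes of variables, applied simultaneously to the outer variable $x$ and to the dummy variable $y$ inside $F_{\a_j}(\ii x)$. The absolute convergence established just before Example~\ref{ex:a_def_case_d_0} justifies applying Fubini and the substitutions below without further comment.

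First I would rewrite the integrand of~\eqref{eq:def_a} using the closed form~\eqref{eq:F_a_imaginary_argument}, i.e.
\[
F_{\a_j}(\ii x) = \frac{1}{2c_{(\a_j-1)/2}} + \ii \int_0^x \cosh^{\a_j}(y)\,\dd y.
\]
Then I would substitute $t = \tanh x$ in the outer integral and $s = \tanh y$ in each of the $d$ inner integrals. The key identities are $1 - t^2 = \cosh^{-2} x$ and $\dd t = \cosh^{-2} x\, \dd x$, which together give
\[
\cosh^{-\a}(x)\,\dd x = (1-t^2)^{\a/2 - 1}\,\dd t,
\qquad
\cosh^{\a_j}(y)\,\dd y = (1-s^2)^{-\a_j/2 - 1}\,\dd s,
\]
and the bijection $t = \tanh x : \R \to (-1,1)$ (and likewise for $s$). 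Substituting these transforms~\eqref{eq:def_a} into~\eqref{eq:a_quant_integral_1}.

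To pass from~\eqref{eq:a_quant_integral_1} to~\eqref{eq:a_quant_integral_2}, I would substitute $t = \sin u$ (with $u \in (-\pi/2, \pi/2)$) in the outer integral and $s = \sin v$ in the inner integrals. Using $1 - t^2 = \cos^2 u$ and $\dd t = \cos u\,\dd u$, one obtains
\[
(1-t^2)^{\a/2 - 1}\,\dd t = \cos^{\a - 1}(u)\,\dd u,
\qquad
(1-s^2)^{-\a_j/2 - 1}\,\dd s = \cos^{-\a_j - 1}(v)\,\dd v,
\]
which directly yields~\eqref{eq:a_quant_integral_2}.

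This proof is essentially mechanical: there is no real obstacle beyond verifying that each substitution maps the domain bijectively and produces the stated integrand. The absolute integrability hypothesis $\a > \a_1 + \ldots + \a_d$ is only used to justify interchanging the limiting integration bounds and the finite product before substitution; once the substitutions are made the resulting integrands are manifestly integrable on the compact intervals $(-1,1)$ and $(-\pi/2, \pi/2)$.
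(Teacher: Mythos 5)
Your proof is correct and follows essentially the same route as the paper: rewrite $F_{\a_j}(\ii x)$ via~\eqref{eq:F_a_imaginary_argument}, substitute $s=\tanh y$ and $t=\tanh x$ to get~\eqref{eq:a_quant_integral_1}, then $t=\sin u$, $s=\sin v$ for~\eqref{eq:a_quant_integral_2}. One small caveat in your closing aside: after the substitution the integrand is \emph{not} manifestly integrable on $(-1,1)$, since near $t=\pm1$ the product behaves like $(1-t^2)^{-(\a_1+\dots+\a_d)/2}$ against the factor $(1-t^2)^{\a/2-1}$, so integrability at the endpoints is exactly where the hypothesis $\a>\a_1+\dots+\a_d$ is used.
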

\begin{proof}
First using~\eqref{eq:def_a} and~\eqref{eq:F_a_imaginary_argument},  then substituting $s= \tanh(y)$  and after that $t = \tanh(x)$ gives
\begin{align*}
a\left(\a;\a_1,\dots,\a_d\right)
&= \int_{-\infty}^{+\infty} \cosh^{-\a}(x) \prod_{j=1}^{d} \left(\frac{1}{2c_{(\a_j-1)/2}} + \ii \cdot  \int_{0}^{x} \cosh^{\a_j} (y) \dd y   \right) \dd x \\
	&= \int_{-\infty}^{+\infty} \cosh^{-\a}(x) \prod_{j=1}^{d} \left(\frac{1}{2c_{(\a_j-1)/2}} + \ii \int_{0}^{\tanh(x)} (1-s^2)^{-\frac{\a_j}{2} -1} \dd s   \right) \dd x \\
	&= \int_{-1}^{+1} (1-t^2)^{\frac{\a}{2} -1} \prod_{j=1}^{d} \left(\frac{1}{2c_{(\a_j-1)/2}} + \ii \int_{0}^{t} (1-s^2)^{-\frac{\a_j}{2} -1} \dd s   \right) \dd t.
\end{align*}
This proves~\eqref{eq:a_quant_integral_1}. To prove~\eqref{eq:a_quant_integral_2}, we substitute $t= \sin u$ and $s = \sin v$.
\end{proof}

\begin{theorem}[Recurrence relations for $a$- and $a_1$-quantities] \label{theo:dfn_a_quantities}
Let $d\in \N_0$,  $\alpha_1,\ldots, \alpha_d \geq 0$ and $\alpha >\alpha_1+\ldots + \alpha_d$. Then, the following relations hold:
\begin{align}
&\a \cdot a(\a;\a_1,\dots,\a_d) - (\a+1) \cdot a(\a+2; \a_1,\dots,\a_d) = \sum_{j=1}^{d} a_1(\a-\a_j;\widehat{\a_j}), \label{eq:a_as_sum_a_1} \\
&\a \cdot a_1(\a; \a_1,\dots,\a_d) = - \sum_{j=1}^{d} a(\a - \a_j; {\widehat \alpha_j}), \label{eq:a_1_as_sum_a}
	\end{align}
where the notation $a(\gamma;\widehat{\a_j}):=a(\gamma; \a_1,\dots,\a_{j-1},\a_{j+1},\dots,\a_d)$ indicates that $\alpha_j$ has been removed from the list $\alpha_1,\ldots, \alpha_d$.
\end{theorem}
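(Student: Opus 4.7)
The plan is to prove both recurrences by integration by parts, exploiting the identities
\[
\frac{d}{dx}\bigl[\cosh^{-\a}(x)\bigr] = -\a\cosh^{-\a-1}(x)\sinh(x)
\]
and
\[
\frac{d}{dx}\bigl[\cosh^{-\a-1}(x)\sinh(x)\bigr] = -\a\cosh^{-\a}(x) + (\a+1)\cosh^{-\a-2}(x),
\]
the second of which follows from the first after writing $\sinh^2 = \cosh^2 -1$. Combined with the fact that $F_{\a_j}$ is, by its defining integral~\eqref{eq:def_F_as_cos_integral}, an antiderivative of $\cos^{\a_j}$, so that by the chain rule
\[
\frac{d}{dx}F_{\a_j}(\ii x) = \ii\cos^{\a_j}(\ii x) = \ii\cosh^{\a_j}(x),
\]
one gets via the product rule
\[
\frac{d}{dx}\prod_{j=1}^{d}F_{\a_j}(\ii x) = \ii\sum_{j=1}^{d}\cosh^{\a_j}(x)\prod_{k\neq j}F_{\a_k}(\ii x).
\]

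For~\eqref{eq:a_1_as_sum_a}, I would write $\cosh^{-\a-1}(x)\sinh(x) = -\a^{-1}\frac{d}{dx}\cosh^{-\a}(x)$, insert this into the definition~\eqref{eq:def_a_1} of $a_1(\a;\a_1,\dots,\a_d)$, and integrate by parts. The boundary term vanishes because $\cosh^{-\a}(x)\prod_j F_{\a_j}(\ii x) = O(|x|^d \eee^{(\a_1+\ldots+\a_d-\a)|x|})$ tends to $0$ at $\pm\infty$ under the hypothesis $\a > \a_1+\ldots+\a_d$. After differentiating the product as above, the resulting integrals are (apart from the prefactor $-1/\a$ and a factor $\ii\cdot\ii = -1$) precisely the quantities $a(\a-\a_j;\widehat{\a_j})$, giving the identity.

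For~\eqref{eq:a_as_sum_a_1}, I would apply the second derivative identity above to combine $\a\cdot a(\a;\dots) - (\a+1)\cdot a(\a+2;\dots)$ into a single integral of $-\frac{d}{dx}[\cosh^{-\a-1}(x)\sinh(x)]\prod_j F_{\a_j}(\ii x)$, and integrate by parts. Again the boundary term vanishes since $\cosh^{-\a-1}(x)\sinh(x)\prod_j F_{\a_j}(\ii x) = O(|x|^d \eee^{(\a_1+\ldots+\a_d-\a)|x|})$, and differentiating the product yields
\[
\sum_{j=1}^{d}\ii\int_{-\infty}^{+\infty}\cosh^{-\a-1+\a_j}(x)\sinh(x)\prod_{k\neq j}F_{\a_k}(\ii x)\dd x,
\]
which is exactly $\sum_{j=1}^{d} a_1(\a-\a_j;\widehat{\a_j})$ by definition~\eqref{eq:def_a_1} with parameter $\a - \a_j$ in place of $\a$.

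The only non-routine point is verifying the decay of the boundary terms, but this is immediate from the growth estimate $F_\b(\ii x) = O(|x|\eee^{|x|\Re\b})$ already recorded after~\eqref{eq:F_a_imaginary_argument} and the strict inequality $\a > \a_1+\ldots+\a_d$. Everything else is elementary: differentiation of elementary hyperbolic functions, the product rule, and the chain rule applied to $F_{\a_j}(\ii x)$. No delicate complex-analytic issues arise because we integrate along the real axis in $x$ and only the values $F_{\a_j}(\ii x)$ are used, which lie in the univalued domain $\sG$.
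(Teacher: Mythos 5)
Your proposal is correct and is essentially the paper's own argument: the paper proves \eqref{eq:a_as_sum_a_1} by evaluating the auxiliary integral $\int_{-\infty}^{+\infty}\cosh^{-\a-1}(x)\sinh(x)\bigl(\prod_{j}F_{\a_j}(\ii x)\bigr)'\,\dd x$ once by the product rule and once by integration by parts, and proves \eqref{eq:a_1_as_sum_a} exactly as you do, writing $\a\cosh^{-\a-1}(x)\sinh(x)=-\bigl(\cosh^{-\a}x\bigr)'$ and integrating by parts, with the boundary terms killed by $\a>\a_1+\dots+\a_d$. The only nitpick is the sign bookkeeping in your sketch of \eqref{eq:a_1_as_sum_a} (the integration by parts itself contributes an extra $-1$ besides the factors $-1/\a$ and $\ii\cdot\ii$ you list), but the stated final identity is the correct one and the route is the same.
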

\begin{proof}
For $d=0$, the right-hand sides of both equations are $0$ by definition and the equations can be verified directly using Example~\ref{ex:a_def_case_d_0}.
Let in the following $d\in \N$. Observe that
\begin{equation}\label{eq:a_rec_part_int_boundary_0}
\lim_{x\to\pm \infty} \biggl( \cosh^{-\a}x \cdot \prod_{j=1}^{d} F_{\a_j}(\ii x)\biggr)  = \lim_{x\to\pm \infty} \biggl(\cosh^{-(\a+1)} (x) \sinh(x) \cdot \prod_{j=1}^{d} F_{\a_j}(\ii x)\biggr)= 0
\end{equation}
by the assumption $\alpha > \alpha_1+\ldots +\alpha_d$. To prove  \eqref{eq:a_as_sum_a_1}, we shall evaluate the following integral in two different ways:
$$
I:= \int_{-\infty}^{+\infty} \cosh^{-(\a+1)} (x) \sinh(x) \biggl(\prod_{j=1}^{d} F_{\a_j}(\ii x)\biggr)' \dd x.
$$
On the one hand, we use the generalized product rule for the derivative in  the following way,
\begin{align*}
I=& \int_{-\infty}^{+\infty} \cosh^{-(\a+1)} (x) \sinh(x) \sum_{j=1}^{d} \biggl( \ii\cdot \cos^{\a_j}(\ii x) \prod_{p:p\neq j} F_{\a_p}(\ii x)\biggr) \dd x \\
		=& \ii \cdot \sum_{j=1}^{d}
		\int_{-\infty}^{+\infty} \cosh^{-(\a-\a_j+1)} (x) \sinh (x) \prod_{p:p\neq j} F_{\a_p}(\ii x) \dd x \\
		=& \sum_{j=1}^{d} a_1(\a-\a_j;\a_1,\dots,\a_{j-1},\a_{j+1},\dots,\a_d).
	\end{align*}
On the other hand, using integration by parts leads to
\begin{align*}
I
=& - \int_{-\infty}^{+\infty} \cosh^{-(\a+2)} (x) \left(\cosh^2(x)-(\a+1) \sinh^2(x)\right) \prod_{j=1}^{d} F_{\a_j}(\ii x) \dd x \\
=&\a \cdot \int_{-\infty}^{+\infty} \cosh^{-\a} (x)\prod_{j=1}^{d} F_{\a_j}(\ii x) \dd x - (\a+1) \cdot \int_{-\infty}^{+\infty} \cosh^{-\a-2} (x)\prod_{j=1}^{d} F_{\a_j}(\ii x) \dd x \\
=& \a \cdot a(\a;\a_1,\dots,\a_d) - (\a+1) \cdot a(\a+2; \a_1,\dots,\a_d).
\end{align*}
Note that the boundary term in the partial integration formula vanishes by~\eqref{eq:a_rec_part_int_boundary_0}. Comparing both expressions for $I$ gives us \eqref{eq:a_as_sum_a_1}. To show \eqref{eq:a_1_as_sum_a}, we start with the definition of $a_1$ and then  again use partial integration and the product rule for the derivative,
\begin{align*}
\alpha \cdot a_1(\a;\a_1,\dots, \a_d)
&= \ii \alpha \cdot  \int_{-\infty}^{+\infty} \cosh^{-\a-1} (x) \sinh(x) \prod_{j=1}^{d} F_{\a_j}(\ii x) \dd x
\\
&= - \ii  \cdot  \int_{-\infty}^{+\infty} \left(\cosh^{-\a}x\right)' \prod_{j=1}^{d} F_{\a_j}(\ii x) \dd x
\\
&= + \ii  \cdot  \int_{-\infty}^{+\infty} \left(\cosh^{-\a}x\right) \Biggl(\prod_{j=1}^{d} F_{\a_j}(\ii x)\Biggr)' \dd x
\\
&= -  \int_{-\infty}^{+\infty} \left(\cosh^{-\a}x\right) \sum_{j=1}^{d} \Biggl(\cosh^{\a_j}(x) \prod_{p:p\neq j} F_{\a_p}(\ii x)\Biggr) \dd x
\\
&=
- \sum_{j=1}^{d} a\left(\a - \a_j; {\widehat \alpha_j}\right).
\end{align*}
Note that again the boundary term in the partial integration formula vanishes by~\eqref{eq:a_rec_part_int_boundary_0}.
\end{proof}
\begin{definition}[$b$-quantities]\label{dfn:b_quantities}
For $d\in \N_0$ and  $\alpha_1,\ldots, \alpha_d \geq 0$ we define the quantities
	\begin{align}
		b(\a;\a_1,\dots, \a_d) &= \int_{-\pi /2}^{+\pi /2} \cos^\a (x) \prod_{j=1}^{d} F_{\a_j}(x) \dd x, \qquad \alpha>-1, \label{eq:def_b}\\
		b_1(\a;\a_1,\dots, \a_d) &=  \int_{-\pi /2}^{+\pi /2} \cos^{\a-1} (x) \sin(x) \prod_{j=1}^{d} F_{\a_j}(x) \dd x, \qquad \alpha>0.  \label{eq:def_b_1}
	\end{align}
\end{definition}
\begin{remark}
The integrals in~\eqref{eq:def_b} and~\eqref{eq:def_b_1} converge absolutely since $F_{\alpha_j}(x)$ is bounded on $(-\frac \pi 2, \frac \pi 2)$. A slightly more elaborate argument shows that~\eqref{eq:def_b} and~\eqref{eq:def_b_1} define analytic functions of their arguments on the domain  $\Re \alpha_1 > 0, \ldots, \Re \alpha_d > 0$ and $\Re \alpha > -1$ (for $b$) or $\Re \alpha > 0$ (for $b_1$).
\end{remark}
\begin{example}[$d=0$ and $d=1$]\label{ex:b:def_d_0}
For $d=0$, we have $b(\alpha;\varnothing) = \int_{-\pi/2}^{+\pi/2} \cos^{\a} (x)  \dd x = c_{(\a-1)/2}^{-1}$ and $b_1(\alpha;\varnothing)=0$,  where the integral has been made explicit in~\eqref{eq:int_cos_cosh}. For $d=1$, we observe that the function $F_{\alpha_1}(x) - F_{\alpha_1}(0)$ is odd and use~\eqref{eq:int_cos_cosh} and~\eqref{eq:F_a_imaginary_argument} to obtain
$$
b(\a;\a_1) = \int_{-\pi /2}^{+\pi /2} \cos^\a (x) F_{\a_1}(x) \dd x = F_{\a_1}(0)\int_{-\pi /2}^{+\pi /2} \cos^\a (x) \dd x
=
\frac 1 {2 c_{\frac{\alpha_1-1}{2}} c_{\frac{a-1}{2}}}.
$$
As it will turn out, the assumptions on the arguments imposed in Definition~\eqref{dfn:b_quantities} are too restrictive when $d=0$ or $d=1$.  Instead, we shall use the formulas from the present example to define $b(\alpha;\varnothing)$ and $b(\alpha;\alpha_1)$ as meromorphic functions of $\alpha,\alpha_1 \in \C$.  
\end{example}
\begin{proposition}[Formulas for $b$-quantities]\label{prop:other_integral_representation_of_b}
For  $\alpha_1,\ldots, \alpha_d \geq 0$ and $\alpha > -1$ we have
\begin{align}
b(\a;\a_1,\dots, \a_d)
&=
\int_{-1}^{+1} (1-t^2)^{\frac{\a-1}{2}} \prod_{j=1}^{d} \left(\frac{1}{2c_{(\a_j-1)/2}} + \int_{0}^{t} (1-s^2)^{\frac{\a_j-1}{2}} \dd s
\right) \dd t \label{eq:b:formula_1}
\\
&=
\int_{-\infty}^{+\infty} \cosh^{-\a-1} (u) \prod_{j=1}^{d} \left(\frac{1}{2c_{(\a_j-1)/2}} + \int_{0}^{u} \cosh^{-\a_j-1} (v) \dd v
\right) \dd u. \label{eq:b:formula_2}
\end{align}
\end{proposition}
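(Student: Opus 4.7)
The plan is to derive both identities by direct changes of variables, closely paralleling the proof of Proposition~\ref{prop:other_integral_representation_of_a} for the $a$-quantities. First I would split
\[
F_{\a_j}(x) = \int_{-\pi/2}^{0} \cos^{\a_j}(y)\,\dd y + \int_{0}^{x} \cos^{\a_j}(y)\,\dd y = \frac{1}{2\,c_{(\a_j-1)/2}} + \int_{0}^{x} \cos^{\a_j}(y)\,\dd y,
\]
where the closed form of the first summand is a consequence of the identity~\eqref{eq:int_cos_cosh} and the symmetry of $\cos$ about $0$.

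To prove~\eqref{eq:b:formula_1}, I would then substitute $s=\sin y$ in the remaining inner integral, turning $\int_{0}^{x}\cos^{\a_j}(y)\,\dd y$ into $\int_{0}^{\sin x}(1-s^2)^{(\a_j-1)/2}\,\dd s$. Substituting $t=\sin x$ in the outer integral of~\eqref{eq:def_b} gives $\cos^{\a}(x)\,\dd x=(1-t^2)^{(\a-1)/2}\,\dd t$, and the upper limit $\sin x$ in every inner integral becomes $t$. Collecting everything yields exactly~\eqref{eq:b:formula_1}.

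For~\eqref{eq:b:formula_2}, I would start from~\eqref{eq:b:formula_1} and apply the substitutions $t=\tanh u$ in the outer integral and $s=\tanh v$ in each of the inner integrals. Using $1-\tanh^2=\cosh^{-2}$ and $(\tanh)'=\cosh^{-2}$, one computes
\[
(1-t^2)^{(\a-1)/2}\,\dd t = \cosh^{-(\a+1)}(u)\,\dd u, \qquad (1-s^2)^{(\a_j-1)/2}\,\dd s = \cosh^{-(\a_j+1)}(v)\,\dd v,
\]
while the limits transform as $t=\pm 1\leftrightarrow u=\pm\infty$ and $s=t\leftrightarrow v=u$. Substituting produces the right-hand side of~\eqref{eq:b:formula_2}.

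The argument is entirely routine and there is no conceptual obstacle. The only care needed is to check integrability, which is immediate from the hypotheses: for $\a>-1$ the weight $(1-t^2)^{(\a-1)/2}$ is integrable at $t=\pm 1$ (equivalently, $\cosh^{-(\a+1)}(u)$ is integrable at $u=\pm\infty$), and for $\a_j\geq 0$ the inner factors $F_{\a_j}$ are bounded and continuous on $[-\pi/2,\pi/2]$, so Fubini and the substitution rule apply without issue. One must only keep track of how the additive constant $\tfrac{1}{2 c_{(\a_j-1)/2}}$ coming from the splitting of $F_{\a_j}$ survives each substitution unchanged.
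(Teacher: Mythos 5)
Your proposal is correct and follows essentially the same route as the paper: split off $F_{\a_j}(0)=\tfrac{1}{2c_{(\a_j-1)/2}}$ via~\eqref{eq:int_cos_cosh}, then obtain~\eqref{eq:b:formula_1} by the substitutions $t=\sin x$, $s=\sin y$ in the defining integral~\eqref{eq:def_b}, and~\eqref{eq:b:formula_2} by $t=\tanh u$, $s=\tanh v$. The substitution computations and the integrability remarks are all accurate, so nothing is missing.
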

\begin{proof}
By definition,
\begin{align}
b(\a;\a_1,\dots, \a_d) = \int_{-\pi /2}^{+\pi /2} \cos^\a (x) \prod_{j=1}^{d} \left(\frac{1}{2c_{(\a_j-1)/2}} + \int_{0}^{x} \cos^{\a_j} (y) \dd y\right) \dd x.
\end{align}
To see that~\eqref{eq:b:formula_1} is equivalent to this formula, apply in~\eqref{eq:b:formula_1} the substitution $t = \sin x$, $s= \sin y$. To pass from~\eqref{eq:b:formula_1} to~\eqref{eq:b:formula_2}, make the substitution $t= \tanh u$, $s= \tanh v$ in~\eqref{eq:b:formula_1}.
\end{proof}

\begin{theorem}[Recurrence relations for $b$- and $b_1$-quantities]
Let $d\in \N_0$ and  $\alpha_1,\ldots, \alpha_d \geq 0$. Assuming that $\alpha>1$ for the first relation and  $\alpha>0$ for the second one, we have
\begin{align}
&\a \cdot b(\a;\a_1,\dots,\a_d) - (\a-1) \cdot b(\a - 2; \a_1,\dots,\a_d)  = - \sum_{j=1}^{d} b_1(\a+\a_j;\widehat{\alpha_j}), \label{eq:b_as_sum_b_1} \\
&\a \cdot b_1(\a; \a_1,\dots,\a_d) = \sum_{j=1}^{d} b(\a + \a_j; \widehat{\alpha_j}), \label{eq:b_1_as_sum_b}
	\end{align}
with the usual convention that $\widehat{\alpha_j}$ means that $\alpha_j$ has been excluded from the list $\alpha_1,\ldots, \alpha_d$.
\end{theorem}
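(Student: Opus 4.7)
The approach is to mirror the proof of Theorem~\ref{theo:dfn_a_quantities}, replacing $\cosh$ by $\cos$ and the real line $\R$ by the interval $(-\pi/2, \pi/2)$. The case $d=0$ will be handled separately by direct computation from Example~\ref{ex:b:def_d_0}: for~\eqref{eq:b_as_sum_b_1} the empty sum reduces to verifying $\alpha/c_{(\alpha-1)/2} = (\alpha-1)/c_{(\alpha-3)/2}$, which is a routine consequence of $\Gamma(z+1) = z\Gamma(z)$; for~\eqref{eq:b_1_as_sum_b} both sides vanish trivially. Assume henceforth $d \geq 1$.

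For~\eqref{eq:b_1_as_sum_b}, I consider the auxiliary integral
\begin{equation*}
J := -\int_{-\pi/2}^{+\pi/2} \bigl(\cos^\alpha x\bigr)' \prod_{j=1}^d F_{\alpha_j}(x)\,\dd x.
\end{equation*}
Using $(\cos^\alpha x)' = -\alpha\cos^{\alpha-1}(x)\sin(x)$ gives directly $J = \alpha \cdot b_1(\alpha;\alpha_1,\dots,\alpha_d)$. Alternatively, integration by parts combined with $F'_{\alpha_j}(x) = \cos^{\alpha_j}(x)$ and the generalized product rule yields
\begin{equation*}
J = \sum_{j=1}^d \int_{-\pi/2}^{+\pi/2} \cos^{\alpha+\alpha_j}(x)\prod_{p\neq j} F_{\alpha_p}(x)\,\dd x = \sum_{j=1}^d b(\alpha+\alpha_j;\widehat{\alpha_j}),
\end{equation*}
the boundary term vanishing because $\cos^\alpha$ vanishes at $\pm\pi/2$ for $\alpha > 0$. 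Equating the two expressions proves~\eqref{eq:b_1_as_sum_b}.

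For~\eqref{eq:b_as_sum_b_1}, I evaluate
\begin{equation*}
I := \int_{-\pi/2}^{+\pi/2} \cos^{\alpha-1}(x)\sin(x)\,\Bigl(\prod_{j=1}^d F_{\alpha_j}(x)\Bigr)'\,\dd x
\end{equation*}
in two ways. Expanding the derivative by the product rule produces $I = \sum_{j=1}^d b_1(\alpha+\alpha_j;\widehat{\alpha_j})$. Integrating by parts and invoking the identity $(\cos^{\alpha-1}(x)\sin(x))' = \alpha\cos^\alpha(x) - (\alpha-1)\cos^{\alpha-2}(x)$, which follows from $\sin^2 = 1 - \cos^2$, gives
\begin{equation*}
I = -\alpha \cdot b(\alpha;\alpha_1,\dots,\alpha_d) + (\alpha-1)\cdot b(\alpha-2;\alpha_1,\dots,\alpha_d);
\end{equation*}
the boundary contribution vanishes because $\cos^{\alpha-1}$ vanishes at $\pm\pi/2$ under the hypothesis $\alpha > 1$. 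Equating both expressions for $I$ and rearranging delivers~\eqref{eq:b_as_sum_b_1}.

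The argument is essentially a transcription of the hyperbolic proof of Theorem~\ref{theo:dfn_a_quantities} to the trigonometric setting, and presents no genuine analytic obstacle. The two minor points requiring care are the algebraic identity for $(\cos^{\alpha-1}\sin)'$, which is the source of the two-term structure $\alpha b(\alpha;\cdot)-(\alpha-1)b(\alpha-2;\cdot)$ on the left-hand side of~\eqref{eq:b_as_sum_b_1}, and the verification that the boundary contributions from the integrations by parts vanish throughout the stated ranges of $\alpha$.
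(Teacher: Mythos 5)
Your proposal is correct and follows essentially the same route as the paper: both relations are obtained by evaluating the auxiliary integral $\int_{-\pi/2}^{+\pi/2}\cos^{\alpha-1}(x)\sin(x)\bigl(\prod_j F_{\alpha_j}(x)\bigr)'\,\dd x$ (respectively $-\int_{-\pi/2}^{+\pi/2}(\cos^{\alpha}x)'\prod_j F_{\alpha_j}(x)\,\dd x$) in two ways via the product rule and integration by parts, with the boundary terms vanishing under the stated ranges of $\alpha$, and the case $d=0$ checked directly from Example~\ref{ex:b:def_d_0}. No gaps.
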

\begin{proof}
For $d=0$ the right-hand sides of both equations are defined as $0$ and both identities can be verified using Example~\ref{ex:b:def_d_0}.
Let $d\in \N$. The proof follows the same idea as the proof for Theorem~\ref{theo:dfn_a_quantities}.
For~\eqref{eq:b_as_sum_b_1},  we shall evaluate the following integral in two different ways:
$$
J:= \int_{-\pi /2}^{+\pi /2} \cos^{\a-1} (x) \sin(x) \Biggl( \prod_{j=1}^{d} F_{\a_j}(x)\Biggr)' \dd x.
$$
On the one hand, by the generalized product rule for the derivative,
\begin{align*}
J  
&=
\int_{-\pi /2}^{+\pi /2} \cos^{\a-1} (x) \sin(x) \sum_{j=1}^{d} \biggl(\cos^{\a_j}(x) \prod_{i:i\neq j} F_{\a_j}(x) \biggr)\dd x
=
\sum_{j=1}^{d}b_1(\a+\a_j;\widehat{\a_{j}}).
\end{align*}
On the other hand, we can apply integration by parts on \eqref{eq:def_b_1}, which gives
	\begin{align*}
J=& - \int_{-\pi /2}^{+\pi /2} \left(\cos^{\a} (x) - (\a-1) \cos^{\a-2}(x)\sin^2(x)\right) \prod_{j=1}^{d} F_{\a_j}(x) \dd x \\
		=& (\a-1) \cdot  \int_{-\pi /2}^{+\pi /2} \cos^{\a-2} (x)  \prod_{j=1}^{d} F_{\a_j}(x) \dd x -\a \cdot \int_{-\pi /2}^{+\pi /2} \cos^{\a} (x)  \prod_{j=1}^{d} F_{\a_j}(x) \dd x \\
		=& (\a-1)\cdot  b(\a -2; \a_1,\dots,\a_d)- \a \cdot b(\a;\a_1,\dots,\a_d).
	\end{align*}
Combining both equations gives us \eqref{eq:b_as_sum_b_1}.
To show \eqref{eq:b_1_as_sum_b}, we use partial integration and the product rule for the derivative,
\begin{align*}
\alpha \cdot b_1(\a;\a_1,\dots, \a_d)
&=
\alpha\int_{-\pi /2}^{+\pi /2} \cos^{\a-1} (x) \sin(x) \prod_{j=1}^{d} F_{\a_j}(x) \dd x
=
- \int_{-\pi /2}^{+\pi /2} \left(\cos^{\a}(x)\right)'  \prod_{j=1}^{d} F_{\a_j}(x) \dd x
\\
&=
\int_{-\pi /2}^{+\pi /2} \cos^{\a}(x)  \biggl(\prod_{j=1}^{d} F_{\a_j}(x)\biggr)' \dd x
=
\int_{-\pi /2}^{+\pi /2} \cos^{\a}(x)  \sum_{j=1}^d \biggl(\prod_{p: p\neq j}  \cos^{\a_j}(x) F_{\a_j}(x) \biggr) \dd x
\\
&=
\sum_{j=1}^{d} b(\a + \a_j; \widehat{\alpha_j}).
\end{align*}
Note that the boundary terms in partial integration vanish since $\cos (\pm \frac \pi2)= 0$ and $\alpha >0$.
\end{proof}

Next we want to state recurrence relations involving $a$- and $b$-quantities only, without $a_1$ and $b_1$.
\begin{theorem}[Recurrence relations for $a$- and $b$-quantities]\label{theo:recurrence_relations_a_b}
Let $d\in \N_0$ and   $\alpha_1, \ldots, \alpha_d \geq 0$. Assuming $\alpha>0$ for the first relation and $\alpha>1$ for the second one, we have
\begin{align}
&(\a+1)\cdot a(\a+2;\a_1,\dots,\a_d) - \a\cdot a(\a;\a_1,\dots,\a_d) = \sum_{\substack{j_1,j_2\in \{1,\ldots, d\}\\ j_1 \neq j_2}} \frac{a(\a-\a_{j_1}-\a_{j_2};\widehat{\a_{j_1}},\widehat{\a_{j_2}})}{\a-\a_{j_1}}, \label{eq:a_first_relation}
\\
&
(\a-1) \cdot  b(\a-2;\a_1,\dots,\a_d) - \a \cdot b(\a;\a_1,\dots,\a_d)
= \sum_{\substack{j_1,j_2\in \{1,\ldots, d\}\\ j_1 \neq j_2}} \frac{b(\a+\a_{j_1}+\a_{j_2};\widehat{\a_{j_1}},\widehat{\a_{j_2}})}{\a+\a_{j_1}},
\label{eq:b_first_relation}
\end{align}
where the notation such as $a(\gamma;\widehat{\a_{j_1}}, \widehat{\a_{j_2}}) := a(\gamma;\{\a_1,\ldots, \a_d\}\bsl \{\a_{j_1}, \a_{j_2}\})$ indicates that $\a_{j_1}, \a_{j_2}$ have been excluded from the list $\alpha_1,\ldots,\alpha_d$.
\end{theorem}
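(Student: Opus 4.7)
The plan is to derive both relations by composing the two pairs of recurrences already established in Theorems~\ref{theo:dfn_a_quantities} and on the $b$-quantities, eliminating the auxiliary functions $a_1$ and $b_1$. Concretely, for the $a$-relation I would start from~\eqref{eq:a_as_sum_a_1}, solve for $(\alpha+1)\cdot a(\alpha+2;\alpha_1,\dots,\alpha_d) - \alpha\cdot a(\alpha;\alpha_1,\dots,\alpha_d)$, which gives $-\sum_{j=1}^{d} a_1(\alpha-\alpha_j;\widehat{\alpha_j})$. Then, for each fixed $j\in\{1,\dots,d\}$, apply~\eqref{eq:a_1_as_sum_a} with $\alpha$ replaced by $\alpha-\alpha_j$ and the list $\alpha_1,\dots,\alpha_d$ replaced by the remaining $(d-1)$ parameters $\{\alpha_i: i\neq j\}$; this yields
\[
(\alpha-\alpha_j)\cdot a_1(\alpha-\alpha_j;\widehat{\alpha_j}) = -\sum_{i\neq j} a(\alpha-\alpha_j-\alpha_i;\widehat{\alpha_j},\widehat{\alpha_i}).
\]
Dividing by $\alpha-\alpha_j$, substituting back, and relabeling $j\mapsto j_1$, $i\mapsto j_2$ produces exactly~\eqref{eq:a_first_relation}.

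For the $b$-relation the reasoning is completely parallel: from~\eqref{eq:b_as_sum_b_1} I extract $(\alpha-1)\cdot b(\alpha-2;\alpha_1,\dots,\alpha_d) - \alpha\cdot b(\alpha;\alpha_1,\dots,\alpha_d) = \sum_{j=1}^d b_1(\alpha+\alpha_j;\widehat{\alpha_j})$, and then apply~\eqref{eq:b_1_as_sum_b} with $\alpha$ replaced by $\alpha+\alpha_j$ and the parameter list reduced to $\{\alpha_i: i\neq j\}$, giving
\[
(\alpha+\alpha_j)\cdot b_1(\alpha+\alpha_j;\widehat{\alpha_j}) = \sum_{i\neq j} b(\alpha+\alpha_j+\alpha_i;\widehat{\alpha_j},\widehat{\alpha_i}).
\]
Dividing by $\alpha+\alpha_j$ and summing over $j$ yields~\eqref{eq:b_first_relation} after relabeling.

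The only genuinely nontrivial points are verifying that the ranges of arguments stay inside the regions where Theorems~\ref{theo:dfn_a_quantities} and the analogous $b$-recurrence apply when we substitute $\alpha\rightsquigarrow\alpha-\alpha_j$ (respectively $\alpha+\alpha_j$) and drop $\alpha_j$ from the list. For the $a$-case one needs $\alpha-\alpha_j>\sum_{i\neq j}\alpha_i$, which is equivalent to the standing hypothesis $\alpha>\sum_{i=1}^d\alpha_i$ implicit in the definition of $a$-quantities; for the $b$-case one needs $\alpha+\alpha_j>0$, which is automatic from $\alpha>0$ and $\alpha_j\geq 0$. Also the boundary cases $d=0$ and $d=1$ should be checked separately (with $d=0$ both sides vanish trivially, and for $d=1$ the double sum on the right is empty, so the claim reduces to the already-known one-variable recurrences applied with the single parameter removed).

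The division by $\alpha-\alpha_j$ in the $a$-case deserves a brief comment: if $\alpha-\alpha_j=0$, then $a_1(\alpha-\alpha_j;\widehat{\alpha_j})$ is itself undefined by~\eqref{eq:a_1_as_sum_a} in its original form, but the assumption $\alpha>\alpha_1+\dots+\alpha_d$ together with $\alpha_i\geq 0$ forces $\alpha-\alpha_j\geq\sum_{i\neq j}\alpha_i\geq 0$ with strict inequality when $d\geq 2$; so no zero denominator appears in the double sum. I do not expect any serious obstacle here: the proof is essentially algebraic manipulation of two pairs of recurrences, and the main care is just in bookkeeping of indices and hypotheses.
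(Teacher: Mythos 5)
Your proposal is correct and is essentially identical to the paper's own proof: the paper likewise starts from~\eqref{eq:a_as_sum_a_1} (resp.~\eqref{eq:b_as_sum_b_1}) and substitutes~\eqref{eq:a_1_as_sum_a} (resp.~\eqref{eq:b_1_as_sum_b}) with shifted argument and reduced parameter list to eliminate $a_1$ and $b_1$, then rearranges. Your bookkeeping of the hypotheses (in particular that $\alpha-\alpha_{j_1}>\sum_{i\neq j_1}\alpha_i>0$ under $\alpha>\alpha_1+\dots+\alpha_d$, and that $d=0,1$ give empty sums) is also sound.
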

\begin{proof}
We first show \eqref{eq:a_first_relation}. For this, we begin with \eqref{eq:a_as_sum_a_1}, and then apply \eqref{eq:a_1_as_sum_a} to every $a_1$-term in the sum,
\begin{align*}
\a a(\a;\a_1,\dots,\a_d)
&= (\a+1)\cdot a(\a+2;\a_1,\dots,\a_d) + \sum_{j_1=1}^{d} a_1 (\a-\a_{j_1};\widehat{\a_{j_1}})
\\
&= (\a+1)\cdot a(\a+2;\a_1,\dots,\a_d) - \sum_{j_1=1}^d \frac{1}{\a - \a_{j_1}}\sum_{\substack{j_2\in \{1,\ldots, d\}\\ j_2\neq j_1}} a(\a - \a_{j_1} - \a_{j_2}; \widehat{\a_{j_1}},\widehat{\a_{j_2}}).
\end{align*}
(For $d=0$ and $d=1$, the sums on the right-hand side vanish).
Rearranging the terms gives \eqref{eq:a_first_relation}. To show \eqref{eq:b_first_relation}, we start with \eqref{eq:b_as_sum_b_1} and then use \eqref{eq:b_1_as_sum_b} to get
\begin{align*}
(\a-1)\cdot b(\a-2;\a_1,\dots,\a_d) &= \a \cdot b(\a;\a_1,\dots,\a_d) + \sum_{j_1=1}^{d} b_1 (\a+\a_{j_1};\widehat{\a_{j_1}})
\\
								 &= \a \cdot b(\a;\a_1,\dots,\a_d) + \sum_{j_1=1}^d\frac{1}{\a + \a_{j_1}} \sum_{\substack{j_2\in \{1,\ldots, d\}\\ j_2\neq j_1}} b(\a + \a_{j_1} + \a_{j_2}; \widehat{\a_{j_1}},\widehat{\a_{j_2}}),
\end{align*}
which gives \eqref{eq:b_first_relation} after rearranging the terms.
\end{proof}

\subsection{Nonlinear relations for \texorpdfstring{$a$}{a}- and \texorpdfstring{$b$}{b}-quantities}\label{subsec:a_and_b_nonlinear_rel}
Let us fix some $d\in \N_0$ and some numbers $\lambda_1,\ldots, \lambda_d \geq 0$. We agree to use the following notational shorthand: For a set $I = \{i_1,\dots, i_k\} \subseteq \{1,\ldots, d\}$ we write
$$
S_I:= \sum_{i\in I} \lambda_i,
\qquad
a(\lambda; I):= a(\lambda; \lambda_{i_1},\ldots, \lambda_{i_k}),
\qquad
b(\lambda; I):= b(\lambda; \lambda_{i_1},\ldots, \lambda_{i_k}).
$$
Let $\zeta\in \{+1,-1\}$. We are interested in the following sum,
\begin{equation}\label{eq:s_d_def_sum_ab}
s_{d,\zeta}(\lambda) \coloneqq \sum_{I \subseteq \{1,\dots,d\}}
	\zeta^{\#I} a\left( \lambda + S_I +2; I \right) \left(\lambda+ S_I + 1\right) b \left( \lambda + S_I;  I^c \right),
\qquad \lambda>-1.
\end{equation}
For convenience, we suppressed the dependence on $\lambda_1,\ldots, \lambda_d$ in the notation.  Our aim is to prove that  $s_{d,+1}(\lambda)$ and $s_{d,-1}(\lambda)$ are constant in $\lambda$, more precisely,
$$
s_{d,-1}(\lambda)
=
\begin{cases}
2\pi, &\text{if } d=0,\\
0, &\text{if } d\geq 1,
\end{cases}
\qquad \text{ and } \qquad
s_{d,+1}(\lambda)
=
2\pi \prod_{i=1}^d  c_{\frac{1}{2}\lambda_i - \frac{1}{2}}^{-1}.
$$
This will be done in two steps. First, we shall prove that $s_{d,\zeta}(\lambda)$ is periodic with period $2$. To this end, we shall use the recurrence relations for $a$- and $b$-quantities stated in Theorem~\ref{theo:recurrence_relations_a_b}.  Then we shall compute the limits of $s_{d,+1}(\lambda)$ and $s_{d,-1}(\lambda)$ as $\lambda\to +\infty$ by determining the asymptotic behavior of the $a$- and $b$-quantities.
\begin{theorem}[Periodicity]\label{theo:s_d_is_periodic}
For all $d\in \N_0$, $\lambda >-1$ and $\zeta\in \{+1,-1\}$ it holds that
\begin{align*}
s_{d,\zeta}(\lambda) = s_{d,\zeta}(\lambda +2). 
\end{align*}
\end{theorem}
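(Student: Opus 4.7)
The plan is to show that the difference $s_{d,\zeta}(\lambda+2) - s_{d,\zeta}(\lambda)$ vanishes by applying the two recurrence relations from Theorem~\ref{theo:recurrence_relations_a_b} to the $a$- and $b$-factors and then detecting a symmetry between the two resulting remainder sums. Throughout the proof I will write $\alpha_I := \lambda + S_I$, so each summand of $s_{d,\zeta}(\lambda)$ has the shape $\zeta^{\#I} a(\alpha_I+2;I)(\alpha_I+1)b(\alpha_I;I^c)$, while each summand of $s_{d,\zeta}(\lambda+2)$ has $\alpha_I$ replaced by $\alpha_I+2$.

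First I would apply~\eqref{eq:a_first_relation} with $\alpha = \alpha_I+2$ and list of parameters $\{\lambda_i:i\in I\}$ to rewrite
\[
(\alpha_I+3)\,a(\alpha_I+4;I) = (\alpha_I+2)\,a(\alpha_I+2;I) + R^a_I,
\]
with
\[
R^a_I = \sum_{\substack{j_1,j_2\in I\\ j_1\ne j_2}} \frac{a(\alpha_I+2-\lambda_{j_1}-\lambda_{j_2};\,I\setminus\{j_1,j_2\})}{\alpha_I+2-\lambda_{j_1}}.
\]
Dually, I would apply~\eqref{eq:b_first_relation} with $\alpha = \alpha_I+2$ and list $\{\lambda_i: i\in I^c\}$ to obtain
\[
(\alpha_I+1)\,b(\alpha_I;I^c) = (\alpha_I+2)\,b(\alpha_I+2;I^c) + R^b_I,
\]
with
\[
R^b_I = \sum_{\substack{j_1,j_2\in I^c\\ j_1\ne j_2}} \frac{b(\alpha_I+2+\lambda_{j_1}+\lambda_{j_2};\,I^c\setminus\{j_1,j_2\})}{\alpha_I+2+\lambda_{j_1}}.
\]
Plugging these into $s_{d,\zeta}(\lambda+2)$ and $s_{d,\zeta}(\lambda)$ respectively, the ``main'' parts $(\alpha_I+2)\,a(\alpha_I+2;I)\,b(\alpha_I+2;I^c)$ are identical, so
\[
s_{d,\zeta}(\lambda+2)-s_{d,\zeta}(\lambda) = \Sigma_a - \Sigma_b,
\]
where $\Sigma_a := \sum_I \zeta^{\#I} R^a_I\, b(\alpha_I+2;I^c)$ and $\Sigma_b := \sum_I \zeta^{\#I} a(\alpha_I+2;I)\, R^b_I$.

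The key step is the reindexing of $\Sigma_a$. In the double sum defining $\Sigma_a$ I would set $J := I\setminus\{j_1,j_2\}$, so that the pair $(j_1,j_2)$ ranges over ordered pairs in $J^c$ with $j_1\neq j_2$, and use $\alpha_I-\lambda_{j_1}-\lambda_{j_2}=\alpha_J$, $I^c = J^c\setminus\{j_1,j_2\}$, and $\zeta^{\#I}=\zeta^{\#J+2}=\zeta^{\#J}$ (this is the only place where $\zeta\in\{+1,-1\}$ matters) to obtain
\[
\Sigma_a = \sum_{J}\sum_{\substack{j_1,j_2\in J^c\\ j_1\ne j_2}} \zeta^{\#J} \frac{a(\alpha_J+2;J)\, b(\alpha_J+\lambda_{j_1}+\lambda_{j_2}+2;\,J^c\setminus\{j_1,j_2\})}{\alpha_J+2+\lambda_{j_2}}.
\]
Meanwhile $\Sigma_b$, after merely renaming $I\to J$, has exactly the same shape but with denominator $\alpha_J+2+\lambda_{j_1}$ instead of $\alpha_J+2+\lambda_{j_2}$.

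Finally, since every other factor in the summand — namely $a(\alpha_J+2;J)$, the value $b(\alpha_J+\lambda_{j_1}+\lambda_{j_2}+2;J^c\setminus\{j_1,j_2\})$, the sign $\zeta^{\#J}$, and the index set $J^c\setminus\{j_1,j_2\}$ — is symmetric in $j_1\leftrightarrow j_2$, I can swap $j_1$ and $j_2$ in $\Sigma_a$ to turn its denominator $\alpha_J+2+\lambda_{j_2}$ into $\alpha_J+2+\lambda_{j_1}$. This renaming identifies $\Sigma_a$ with $\Sigma_b$, so $\Sigma_a-\Sigma_b=0$ and the periodicity follows. I do not foresee a serious obstacle: the only point that requires care is verifying that $\zeta^{\#I}=\zeta^{\#J}$ in the reindexing, which is exactly why the statement holds for both signs $\zeta=\pm 1$, and ensuring that the hypotheses $\alpha>0$ (for the $a$-recurrence) and $\alpha>1$ (for the $b$-recurrence) are satisfied at $\alpha=\alpha_I+2$, which holds since $\alpha_I\geq\lambda>-1$.
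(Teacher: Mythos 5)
Your proof is correct and follows essentially the same route as the paper: both arguments rest on the two recurrences \eqref{eq:a_first_relation} and \eqref{eq:b_first_relation} applied at $\alpha=\lambda+S_I+2$, combined with the reindexing $I\leftrightarrow I\cup\{j_1,j_2\}$ (your $J=I\setminus\{j_1,j_2\}$) and the observation $\zeta^{\#I}=\zeta^{\#J}$ since $\zeta^2=1$. The only difference is presentational — you cancel the main terms of $s_{d,\zeta}(\lambda+2)-s_{d,\zeta}(\lambda)$ and match the two remainder sums via the $j_1\leftrightarrow j_2$ symmetry, whereas the paper transforms $s_{d,\zeta}(\lambda)$ step by step into $s_{d,\zeta}(\lambda+2)$.
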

\begin{proof}
We shall transform the sums by repeatedly using Theorem~\ref{theo:recurrence_relations_a_b}.
We start by using the recurrence equation \eqref{eq:b_first_relation} for the $b$-quantity with  $\a = \lambda + S_I+2$,
\begin{align*}
s_{d,\zeta}(\lambda) = &\sum_{I \subseteq \{1,\dots,d\}}
\zeta^{\#I} a\left( \lambda + S_I +2; I \right) \left(\lambda+ S_I + 1\right) b \left( \lambda + S_I;  I^c \right) \\
=&
\sum_{I \subseteq \{1,\dots,d\}}
\zeta^{\#I} a\left( \lambda +S_I +2;  I  \right) \left(\lambda+S_I + 1\right)\\
 & \qquad \qquad \times\biggl(\frac{\lambda + S_I +2}{\lambda + S_I +1} b(\lambda + S_I +2;I^c) + \sum_{\substack{j_1,j_2 \in I^c\\j_1 \neq j_2}} \frac{b(\lambda + S_I +\lambda_{j_1}+\lambda_{j_2} +2;I^c \bsl \{j_1,j_2\})}{(\lambda + S_I +1)(\lambda + S_I + \lambda_{j_1} +2)} \biggr)
 \\
		=& \sum_{I \subseteq \{1,\dots,d\}}
		\zeta^{\#I} a\left( \lambda +S_I +2;  I  \right) \left(\lambda+S_I + 2\right)  b(\lambda + S_I +2;I^c) \\
		& \qquad \qquad +\sum_{I \subseteq \{1,\dots,d\}} \zeta^{\#I} \sum_{\substack{j_1,j_2 \in I^c\\j_1 \neq j_2}} \frac{ a\left( \lambda +S_I +2;  I  \right)b(\lambda + S_I +\lambda_{j_1}+\lambda_{j_2}+2;I^c \bsl \{j_1,j_2\})}{\lambda + S_I + \lambda_{j_1} +2}.
	\end{align*}
Let us denote the first sum by $S_1$ and the second one by $S_2$. Further, we set $K = I \cup \{j_1,j_2\}$, which gives $K^c = I^c \bsl \{j_1,j_2\}$.
Recall that $\zeta \in \{+1,-1\}$  and hence $\zeta^{\#I} = \zeta^{\#K}$.
This lets us write $S_2$ in the following way,
	\begin{align*}
		S_2 = \sum_{K \subseteq \{1,\dots,d\}} \zeta^{\#K} b(\lambda + S_K +2;K^c) \sum_{\substack{j_1,j_2 \in K\\j_1 \neq j_2}} \frac{ a\left( \lambda +S_K- \lambda_{j_1}- \lambda_{j_2} +2;  K \bsl \{j_1;j_2\}  \right)}{\lambda + S_K - \lambda_{j_2} +2}.
	\end{align*}
Note that we omitted the condition $\# K \geq 2$ since for $\# K \in \lbrace 0,1 \rbrace$ the sum on the right-hand side becomes empty and is defined as $0$.
Next, we apply the recurrence relation given in~\eqref{eq:a_first_relation} (which also holds for $\# K=0,1$) with $\a = \lambda + S_K +2$ to the sum of the $a$-quantities on the right-hand side,
	\begin{align*}
		S_2 &= \sum_{K \subseteq \{1,\dots,d\}} \zeta^{\#K} b(\lambda + S_K +2;K^c) \bigl((\lambda + S_K + 3)a (\lambda + S_K +4;K) - (\lambda + S_K+2)a(\lambda + S_K +2;K) \bigr) \\
		&= \sum_{I \subseteq \{1,\dots,d\}} \zeta^{\#I} b(\lambda + S_I +2;I^c) \bigl((\lambda + S_I + 3)a (\lambda + S_I +4;I) - (\lambda + S_I+2)a(\lambda + S_I +2;I) \bigr),
	\end{align*}
where in the second line we changed the notation from $K$ to $I$.
	Thus, for $S_1+S_2$, we get,
	\begin{align*}
		S_1+S_2 =& \sum_{I \subseteq \{1,\dots,d\}} \zeta^{\#I} a\left( \lambda +S_I +2;  I  \right) \left(\lambda+S_I + 2\right)  b(\lambda + S_I +2;I^c) \\+& \sum_{I \subseteq \{1,\dots,d\}} \zeta^{\#I} b(\lambda + S_I +2;I^c) \bigl((\lambda + S_I + 3)a (\lambda + S_I +4;I) - (\lambda + S_I+2)a(\lambda + S_I +2;I) \bigr) \\
		=& \sum_{I \subseteq \{1,\dots,d\}} \zeta^{\#I} a\left( \lambda +S_I +4;  I  \right) \left(\lambda+S_I + 3\right)  b(\lambda + S_I +2;I^c),
	\end{align*}
which is a shifted version of the equation we started with. In particular, we have shown that $s_{d,\zeta}(\lambda) = s_{d,\zeta}(\lambda +2)$,  which completes the proof.
\end{proof}

\begin{lemma}[Asymptotics of $a$ and $b$]\label{lem:asymptotics_a_b}
Let $\lambda_1, \dots, \lambda_d \geq 0$ be fixed.  Then, as $T\to +\infty$,
	\begin{align}
		b(T; \lambda_1, \dots, \lambda_d) &\sim \sqrt{\frac{2\pi}{T}} \prod_{i=1}^{d} F_{\lambda_i}(0) \label{eq:asymptotics_b},\\
		a(T; \lambda_1, \dots, \lambda_d) &\sim \sqrt{\frac{2\pi}{T}} \prod_{i=1}^{d} F_{\lambda_i}(0)\label{eq:asymptotics_a}.
	\end{align}
\end{lemma}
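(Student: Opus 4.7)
My plan is to derive both asymptotics as two applications of Laplace's method with the maximum located at the interior point $x=0$ of the respective integrals.

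For~\eqref{eq:asymptotics_b}, I would start from the definition of $b$ and rescale by $x = y/\sqrt{T}$ to obtain
\begin{equation*}
\sqrt{T}\cdot b(T;\lambda_1,\ldots,\lambda_d) = \int_{-\pi\sqrt{T}/2}^{+\pi\sqrt{T}/2} \cos^T\left(\frac{y}{\sqrt{T}}\right) \prod_{j=1}^d F_{\lambda_j}\left(\frac{y}{\sqrt{T}}\right) dy.
\end{equation*}
The Taylor expansion $\log\cos u = -u^2/2 + O(u^4)$ as $u \to 0$, together with the continuity of $\prod_j F_{\lambda_j}$ at $0$, shows that the integrand converges pointwise to $e^{-y^2/2}\prod_{j=1}^d F_{\lambda_j}(0)$. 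Since $\log \cos u \le -u^2/3$ on $(-\pi/2,\pi/2)$ (as one checks by verifying that $\log\cos u + u^2/3$ is concave on this interval and vanishes to second order at $0$) and $\prod_j F_{\lambda_j}$ is bounded on the compact interval $[-\pi/2, \pi/2]$, the rescaled integrand admits a $T$-independent integrable majorant of the form $C e^{-y^2/3}$ on its domain. Dominated convergence then yields the limit $\sqrt{2\pi}\prod_j F_{\lambda_j}(0)$, which is the claim.

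For~\eqref{eq:asymptotics_a}, the same substitution applied to the integral defining $a$ together with the expansion $\log\cosh u = u^2/2 + O(u^4)$ yields the same pointwise limit $e^{-y^2/2}\prod_j F_{\lambda_j}(0)$ for the rescaled integrand. The main obstacle here is that the domain is unbounded and $F_{\lambda_j}(\ii x)$ grows like $e^{\lambda_j |x|}$ as $|x|\to\infty$ by formula~\eqref{eq:F_a_imaginary_argument}. I would handle this by truncating at $|x|=1$: for any $T \ge T_0$ with $T_0 > 2(\lambda_1+\ldots+\lambda_d)$ fixed, the elementary bound
\begin{equation*}
\int_{|x|\ge 1}\cosh^{-T}(x)\prod_{j=1}^d|F_{\lambda_j}(\ii x)|\,dx \le (\cosh 1)^{-(T-T_0)}\int_{|x|\ge 1}\cosh^{-T_0}(x)\prod_{j=1}^d|F_{\lambda_j}(\ii x)|\,dx
\end{equation*}
shows that the tail contributes $O((\cosh 1)^{-T})$ to $a(T;\lambda_1,\ldots,\lambda_d)$, hence $o(1/\sqrt{T})$ after multiplication by $\sqrt{T}$. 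On the complementary region $|x|\le 1$ the factors $F_{\lambda_j}(\ii x)$ are uniformly bounded, and the inequality $\log\cosh u \ge u^2/3$ for $|u|\le 1$ (which one again verifies by a monotonicity argument near $0$) provides a Gaussian majorant $C e^{-y^2/3}$ for the rescaled integrand on $|y|\le \sqrt{T}$. Dominated convergence again delivers the Gaussian integral $\sqrt{2\pi}\prod_j F_{\lambda_j}(0)$.

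The only non-routine point is the construction of the uniform integrable majorant in the $a$ case, where one must balance the exponential growth of $F_{\lambda_j}(\ii x)$ against the exponential decay of $\cosh^{-T}(x)$; the choice $T_0 > 2(\lambda_1+\ldots+\lambda_d)$ makes this explicit. Everything else is a standard Laplace-method computation.
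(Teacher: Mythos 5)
Your proof is correct and follows essentially the same route as the paper, which simply invokes the Laplace asymptotics for the integrals $\int \eee^{T\log\cos x}\prod_j F_{\lambda_j}(x)\,\dd x$ and $\int \eee^{-T\log\cosh x}\prod_j F_{\lambda_j}(\ii x)\,\dd x$ with interior maximum at $x=0$; you merely carry out the Laplace method by hand via the rescaling $x=y/\sqrt{T}$ and dominated convergence. Your explicit truncation at $|x|=1$ for the $a$-integral, controlling the exponential growth of $F_{\lambda_j}(\ii x)$ against $\cosh^{-T}(x)$, is a welcome detail that the paper's one-line citation leaves implicit.
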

\begin{proof}
We start with the first asymptotics. By \eqref{eq:def_b} and the Laplace asymptotics \cite[Theorem B.7, p. 758]{Flajolet_book}, we have
\begin{align*}
b(T; \lambda_1, \dots, \lambda_d) = \int_{-\pi /2}^{+\pi /2} \cos^{T} (x) \prod_{j=1}^{d} F_{\lambda_j}(x) \dd x
=\int_{-\pi /2}^{+\pi /2} \eee^{T \log \cos x}  \prod_{j=1}^{d} F_{\lambda_j}(x) \dd x
\sim \sqrt{\frac{2\pi}{T}}\prod_{j=1}^{d} F_{\lambda_j}(0).
\end{align*}
The second asymptotics can be done analogously. By \eqref{eq:def_a} and the Laplace asymptotics,
\begin{align*}
a(T; \lambda_1, \dots, \lambda_d) = \int_{-\infty}^{+\infty} \cosh^{-T} (x) \prod_{j=1}^{d} F_{\lambda_j}(\ii x) \dd x
= \int_{-\infty}^{+\infty} \eee^{-T \log \cosh x} \prod_{j=1}^{d} F_{\lambda_j}(\ii x) \dd x
\sim \sqrt{\frac{2\pi}{T}}\prod_{j=1}^{d} F_{\lambda_j}(0).
	\end{align*}
	This completes the proof.
\end{proof}

We are now able to compute the limit of $s_{d,-1}(T)$ and $s_{d,+1}(T)$  as $T\to+\infty$.
\begin{theorem}\label{theo:limit_s_d}
For $d\in \N_0$ and any fixed  $\lambda_1,\ldots, \lambda_d \geq 0$,
\begin{align}\label{eq:limit_s_d}
		\lim_{T \to +\infty} s_{d,-1}(T) =  \begin{cases}
			2 \pi, &\text{if } d=0,\\
			0, &\text{if } d\geq 1,
			\end{cases}
\qquad 		
\lim_{T \to +\infty} s_{d,+1}(T) =  2\pi \prod_{i=1}^d  c_{\frac{1}{2}\lambda_i - \frac{1}{2}}^{-1}.
\end{align}
\end{theorem}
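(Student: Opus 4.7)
The plan is to apply the Laplace asymptotics of Lemma~\ref{lem:asymptotics_a_b} termwise to the finite sum defining $s_{d,\zeta}(T)$, and then exploit that the resulting leading coefficient is the same for every index set $I$, so that the total limit is governed solely by the combinatorial sum $\sum_I \zeta^{\#I}$.

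Fix $\lambda_1,\dots,\lambda_d\geq 0$ and $I\subseteq\{1,\dots,d\}$. As $T\to+\infty$ the values $T+S_I+2$, $T+S_I$ and $T+S_I+1$ all go to infinity with the same leading order $T$. Applying~\eqref{eq:asymptotics_a} and~\eqref{eq:asymptotics_b} I obtain
\begin{align*}
a(T+S_I+2;I)&\sim \sqrt{\frac{2\pi}{T}}\prod_{i\in I}F_{\lambda_i}(0),\\
b(T+S_I;I^c)&\sim \sqrt{\frac{2\pi}{T}}\prod_{i\in I^c}F_{\lambda_i}(0),
\end{align*}
and therefore the $I$-term in~\eqref{eq:s_d_def_sum_ab} satisfies
\begin{equation*}
a(T+S_I+2;I)\,(T+S_I+1)\,b(T+S_I;I^c)\;\underset{T\to+\infty}{\longrightarrow}\;2\pi\prod_{i=1}^{d}F_{\lambda_i}(0),
\end{equation*}
since $(T+S_I+1)\cdot(2\pi/T)\to 2\pi$ and $\prod_{i\in I}F_{\lambda_i}(0)\prod_{i\in I^c}F_{\lambda_i}(0)=\prod_{i=1}^{d}F_{\lambda_i}(0)$. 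Crucially, the limit does not depend on $I$. Summing the finite sum in~\eqref{eq:s_d_def_sum_ab} therefore gives
\begin{equation*}
\lim_{T\to+\infty}s_{d,\zeta}(T)=2\pi\prod_{i=1}^{d}F_{\lambda_i}(0)\cdot\sum_{I\subseteq\{1,\dots,d\}}\zeta^{\#I}.
\end{equation*}

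To finish, I evaluate the combinatorial factor and the product separately. For $\zeta=-1$ the inner sum equals $\sum_{k=0}^d\binom{d}{k}(-1)^k$, which is $1$ if $d=0$ and $0$ if $d\geq 1$; in the latter case the product factor is irrelevant and the limit is $0$, while in the former case the empty product is $1$ and the limit equals $2\pi$, matching the stated formula for $s_{d,-1}$. For $\zeta=+1$ the inner sum equals $2^d$. By~\eqref{eq:F_a_imaginary_argument} (taking $x=0$) one has $F_{\lambda_i}(0)=\tfrac{1}{2}c_{(\lambda_i-1)/2}^{-1}$, hence
\begin{equation*}
2\pi\cdot 2^d\prod_{i=1}^{d}F_{\lambda_i}(0)=2\pi\cdot 2^d\cdot 2^{-d}\prod_{i=1}^{d}c_{\frac{\lambda_i}{2}-\frac{1}{2}}^{-1}=2\pi\prod_{i=1}^{d}c_{\frac{\lambda_i}{2}-\frac{1}{2}}^{-1},
\end{equation*}
which is the stated value of $\lim s_{d,+1}(T)$.

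There is no genuine obstacle here: the sum over $I$ is finite, so taking the limit term by term requires no uniformity argument, and the asymptotics needed have already been established in Lemma~\ref{lem:asymptotics_a_b}. The only tiny care point is the degenerate cases $d=0$ and $d=1$, where the Definitions~\ref{dfn:a_quantities} and~\ref{dfn:b_quantities} are supplemented by the meromorphic continuations from Examples~\ref{ex:a_def_case_d_0} and~\ref{ex:b:def_d_0}; but those explicit formulas are compatible with the Laplace asymptotics (e.g.\ $a(T;\varnothing)=c_{T/2-1}^{-1}\sim\sqrt{2\pi/T}$ via $c_{\beta}\sim\sqrt{\beta/\pi}$), so the computation above goes through for all $d\geq 0$.
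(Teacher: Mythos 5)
Your proposal is correct and follows essentially the same route as the paper: termwise application of the Laplace asymptotics of Lemma~\ref{lem:asymptotics_a_b} to the finite sum, observing that the limit $2\pi\prod_{i=1}^d F_{\lambda_i}(0)$ of each term is independent of $I$, and then evaluating $\sum_I\zeta^{\#I}$ together with $F_{\lambda_i}(0)=\tfrac12 c_{\lambda_i/2-1/2}^{-1}$. The only cosmetic difference is that the paper treats $d=0$ by the exact identity $s_{0,\zeta}(T)=(T+1)(c_{T/2}c_{(T-1)/2})^{-1}=2\pi$ via Lemma~\ref{lem:c_shifted_by_1/2}, whereas you handle it asymptotically through the explicit formulas for $a(\cdot;\varnothing)$ and $b(\cdot;\varnothing)$, which is equally valid.
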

For the proof, we make use of a short Lemma.
\begin{lemma}\label{lem:c_shifted_by_1/2}
Let $\alpha \geq -1/2$. Then, we have
	\begin{align*}
		c_{\a} \cdot c_{\a  - \frac{1}{2}} = (2\a + 1) \cdot \frac{1}{2\pi}.
	\end{align*}
\end{lemma}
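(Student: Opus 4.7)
The plan is to unfold the definition $c_\beta = \Gamma(\beta + 3/2)/(\sqrt{\pi}\,\Gamma(\beta+1))$ for both factors, notice that the $\Gamma(\alpha+1)$ in the denominator of $c_\alpha$ cancels the $\Gamma(\alpha+1)$ in the numerator of $c_{\alpha - 1/2}$, and then use the functional equation $\Gamma(z+1) = z\,\Gamma(z)$ with $z = \alpha + 1/2$ to simplify the remaining ratio $\Gamma(\alpha + 3/2)/\Gamma(\alpha + 1/2)$ to $\alpha + 1/2$. The factor $\sqrt{\pi} \cdot \sqrt{\pi} = \pi$ in the two denominators then yields the claimed $(2\alpha+1)/(2\pi)$.

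More explicitly, I would write
\begin{equation*}
c_\alpha \cdot c_{\alpha - \tfrac{1}{2}}
= \frac{\Gamma(\alpha + \tfrac{3}{2})}{\sqrt{\pi}\,\Gamma(\alpha+1)} \cdot \frac{\Gamma(\alpha + 1)}{\sqrt{\pi}\,\Gamma(\alpha + \tfrac{1}{2})}
= \frac{\Gamma(\alpha + \tfrac{3}{2})}{\pi\,\Gamma(\alpha + \tfrac{1}{2})}
= \frac{\alpha + \tfrac{1}{2}}{\pi}
= \frac{2\alpha + 1}{2\pi},
\end{equation*}
where the hypothesis $\alpha \geq -1/2$ ensures that all Gamma factors are defined (in fact positive for $\alpha > -1/2$, and the identity extends by analytic continuation, so no convergence issue arises).

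There is no real obstacle here; the only thing to check is that the assumption $\alpha \geq -1/2$ is consistent with the definitions (for $\alpha = -1/2$ the factor $c_{\alpha - 1/2} = c_{-1}$ has $\Gamma(\alpha+1/2) = \Gamma(0)$ in the denominator, so $c_{-1} = 0$, and indeed the right-hand side vanishes as well, so the identity remains valid as a limit). Thus the lemma is a one-line Gamma-function computation.
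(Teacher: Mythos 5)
Your proof is correct and is essentially identical to the paper's: both unfold the definition of $c_\beta$, cancel $\Gamma(\alpha+1)$, and use $\Gamma(\alpha+\tfrac32) = (\alpha+\tfrac12)\Gamma(\alpha+\tfrac12)$ to conclude. Your additional remark about the boundary case $\alpha = -\tfrac12$ is a fine (if unnecessary) observation that the paper does not spell out.
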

\begin{proof}
	We calculate directly:
	\begin{align*}
		c_{\a} \cdot c_{\a  - \frac{1}{2}} = \frac{\Gamma(\a + 3/2)}{\Gamma(\a + 1)} \cdot \frac{\Gamma(\a + 1)}{\Gamma(\a + 1/2)} \cdot \frac{1}{\pi} =\frac{\Gamma(\a + 3/2)}{\Gamma(\a + 1/2)} \cdot \frac{1}{\pi}= (2\a + 1) \cdot \frac{1}{2\pi}.
	\end{align*}
\end{proof}
\begin{proof}[Proof of Theorem \ref{theo:limit_s_d}]
By definition of $s_{d,\zeta}$ given in~\eqref{eq:s_d_def_sum_ab},
	\begin{align*}
		s_{d,\zeta}(T) = \sum_{I \subseteq \{1,\dots,d\}}
		\zeta^{\#I} a\left(T + S_I+2; I \right) \left(T + S_I +1\right) b \left(T + S_I;  I^c \right).
	\end{align*}
For $d=0$, we have
$$
s_{0, \zeta}(T) = a(T +2; \varnothing) (T +1) b (T; \varnothing) = (T+1) \left(c_{T/2} \cdot c_{(T-1)/2}\right)^{-1} = 2\pi
$$
by Lemma~\ref{lem:c_shifted_by_1/2}. In the following, let $d\in \N$.
For every subset $I \subseteq \{1,\dots,d\}$, the asymptotics established in Lemma \ref{lem:asymptotics_a_b} gives us, as $T\to +\infty$,
\begin{align*}
a\left(T + S_I+2; I \right) \left(T + S_I +1\right) b \left(T + S_I;  I^c \right)
&\sim
\sqrt{\frac{2\pi}{T+2}} \Biggl(\prod_{i \in I} F_{\lambda_i} (0)\Biggr) \cdot \left(T+ S_I + 1\right) \cdot \sqrt{\frac{2\pi}{T}} \prod_{i \in I^c} F_{\lambda_i}
(0)
\\
&\to 2\pi \prod_{i=1}^{d} F_{\lambda_i} (0).
\end{align*}
Note that the limit does not depend on $I\subseteq \{1,\ldots, d\}$. It follows that
\begin{align*}
\lim_{T \to +\infty} s_{d,\zeta}(T) =& \sum_{I \subseteq \{1,\dots,d\}} \zeta^{\#I} \lim_{T \to +\infty}  a\left(T + S_I + 2; I \right) \left(T+ S_I + 1\right) b \left(T + S_I;  I^c \right)
\\
	=& 2\pi \Biggl( \prod_{i=1}^{d} F_{\lambda_i} (0)\Biggr) \cdot \sum_{i=0}^{d} \zeta^i \binom{d}{i},
\end{align*}
where we used that there are $\binom di$ subsets $I\subseteq\{1,\ldots, d\}$ with $\#I = i$. Now, if $\zeta = -1$, then
$$
\lim_{T \to +\infty} s_{d,-1}(T) = 2\pi \Biggl(\prod_{i=1}^{d} F_{\lambda_i} (0)\Biggr) \cdot \sum_{i=0}^{d} (-1)^i \binom{d}{i} =  2\pi \Biggl(\prod_{i=1}^{d} F_{\lambda_i} (0)\Biggr) \cdot (1-1)^d = 0,
$$
where we used that $d\neq 0$.  This gives the first equation in~\eqref{eq:limit_s_d}. Next, if $\zeta = +1$, then
\begin{align*}
\lim_{T \to +\infty} s_{d,+1}(T)
= 2\pi \Biggl(\prod_{i=1}^{d} F_{\lambda_i} (0)\Biggr) \cdot \sum_{i=0}^{d} \binom{d}{i}
= 2\pi \Biggl(\prod_{i=1}^{d} F_{\lambda_i} (0)\Biggr) \cdot 2^d
=  2\pi \prod_{i=1}^d  c_{\frac{1}{2}\lambda_i - \frac{1}{2}}^{-1}
	\end{align*}
since, for any $i\in \{1,\ldots, d\}$, we have
\begin{align*}
F_{\lambda_i} (0) = \int_{-\pi/2}^{0} \cos^{\lambda_i} (x)\dd x = \frac 12  \int_{-\pi/2}^{+\pi/2} \cos^{\lambda_i}(x) \dd x
= \frac 12  \cdot c_{\frac12 \lambda_i - \frac 12}^{-1}
\end{align*}
by~\eqref{eq:int_cos_cosh} or~\eqref{eq:F_a_imaginary_argument}. This completes the proof of the second equation in~\eqref{eq:limit_s_d}.
\end{proof}
We are now ready to state the main result of this section, a set of nonlinear relations between the $a$-quantities and the $b$-quantities.
\begin{theorem}[Relations between $a$ and $b$]\label{theo:relations_a_b}
	For all $d \in \N_0$, $\lambda>-1$ and $\lambda_1,\ldots, \lambda_d\geq 0$, we have
\begin{align*}
s_{d,-1}(\lambda)
&=
\sum_{I \subseteq \{1,\dots,d\}}
		(-1)^{\#I}
a\left( \lambda +\sum_{i \in I} \lambda_i +2; \{ \lambda_i: i \in I \} \right) \left(\lambda+\sum_{i \in I} \lambda_i + 1\right)
b \left( \lambda +\sum_{i \in I} \lambda_i; \{ \lambda_i : i \in I^c \} \right) \\
&=
\begin{cases}
2\pi, &\text{if } d=0,\\
0, &\text{if } d\geq 1.
\end{cases}
\end{align*}
Moreover, under the same assumptions,
\begin{align*}
s_{d,+1}(\lambda)
&=
\sum_{I \subseteq \{1,\dots,d\}}
a\left( \lambda +\sum_{i \in I} \lambda_i +2; \{ \lambda_i: i \in I \} \right)
\left(\lambda+\sum_{i \in I} \lambda_i + 1\right) b \left( \lambda +\sum_{i \in I} \lambda_i; \{ \lambda_i : i \in I^c \} \right) \\
&=
2\pi \prod_{i=1}^d  c_{\frac{1}{2}\lambda_i - \frac{1}{2}}^{-1}.
\end{align*}
\end{theorem}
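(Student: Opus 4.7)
The proof is essentially a combination of the two preceding results: the periodicity statement of Theorem~\ref{theo:s_d_is_periodic} and the asymptotic evaluation of Theorem~\ref{theo:limit_s_d}. My plan is to iterate the periodicity relation to reduce the value at any $\lambda > -1$ to the limit at infinity, which has already been computed.

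More precisely, fix $\zeta \in \{+1,-1\}$, $d \in \N_0$, $\lambda_1,\ldots,\lambda_d \geq 0$, and any $\lambda > -1$. By Theorem~\ref{theo:s_d_is_periodic}, the map $\lambda \mapsto s_{d,\zeta}(\lambda)$ satisfies $s_{d,\zeta}(\lambda) = s_{d,\zeta}(\lambda+2)$. Iterating this identity $k$ times gives
\begin{align*}
s_{d,\zeta}(\lambda) = s_{d,\zeta}(\lambda + 2k) \qquad \text{for all } k \in \N_0.
\end{align*}
Since $\lambda + 2k \to +\infty$ as $k \to \infty$, we may pass to the limit on the right-hand side and invoke Theorem~\ref{theo:limit_s_d}, which yields
\begin{align*}
s_{d,\zeta}(\lambda) = \lim_{T \to +\infty} s_{d,\zeta}(T).
\end{align*}
Substituting $\zeta = -1$ gives $s_{d,-1}(\lambda) = 2\pi$ if $d=0$ and $s_{d,-1}(\lambda) = 0$ if $d \geq 1$, while $\zeta = +1$ yields $s_{d,+1}(\lambda) = 2\pi \prod_{i=1}^d c_{\frac{1}{2}\lambda_i - \frac{1}{2}}^{-1}$, which are precisely the claimed identities.

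There is no real obstacle here, since the heavy lifting has already been carried out: the periodicity was established by the lengthy manipulation of the recurrence relations from Theorem~\ref{theo:recurrence_relations_a_b}, while the asymptotic value was obtained via the Laplace method in Lemma~\ref{lem:asymptotics_a_b} combined with the binomial identities $\sum_i (-1)^i \binom{d}{i} = 0$ (for $d \geq 1$) and $\sum_i \binom{d}{i} = 2^d$. The only point worth checking in writing the proof carefully is that all the domains of definition are consistent: for $\lambda > -1$ and $\lambda_i \geq 0$, each summand $a(\lambda + S_I + 2; \{\lambda_i : i \in I\})$ and $b(\lambda + S_I; \{\lambda_i : i \in I^c\})$ lies in the domain where the integral representations in Definitions~\ref{dfn:a_quantities} and~\ref{dfn:b_quantities} converge absolutely, so iterating $\lambda \mapsto \lambda + 2$ remains in the admissible range.
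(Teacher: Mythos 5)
Your proposal is correct and follows essentially the same route as the paper: iterate the periodicity $s_{d,\zeta}(\lambda)=s_{d,\zeta}(\lambda+2)$ from Theorem~\ref{theo:s_d_is_periodic} to write $s_{d,\zeta}(\lambda)=\lim_{n\to\infty}s_{d,\zeta}(\lambda+2n)$, and then evaluate this limit with Theorem~\ref{theo:limit_s_d}. The only difference is cosmetic — your remark on domains of definition is a reasonable extra check, but the substance of the argument coincides with the paper's proof.
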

\begin{proof}
Let $\zeta\in \{-1,+1\}$. We start with a repeated use of Theorem \ref{theo:s_d_is_periodic}, which lets us write
$$
s_{d,\zeta}(\lambda) = s_{d,\zeta}(\lambda + 2) = s_{d,\zeta}(\lambda + 4) = \ldots = \lim_{n\to\infty}s_{d,\zeta}(\lambda + 2n).
$$
The limit on the right-hand side has been identified in Theorem \ref{theo:limit_s_d}.
\end{proof}

\begin{proposition}\label{prop:relation_a_b_even_odd}
Let $d\in \N$ (note that $d=0$ is excluded) and $\lambda_1,\ldots, \lambda_d \geq 0$, $\lambda>-1$.
Then,
		\begin{align*}
			& \sum_{\substack{I \subseteq \{1,\dots,d\} \\ \#I \text{ odd}}} a\left(\lambda + \sum_{i \in I} \lambda_i +2;\{\lambda_i: i \in I\}\right)  \left(\lambda + \sum_{i \in I} \lambda_i +1\right) b\left(\lambda + \sum_{i \in I} \lambda_i;\{\lambda_i: i \in I^c\}\right) \\
			&\qquad = \sum_{\substack{I \subseteq \{1,\dots,d\} \\ \#I \text{ even}}} a\left(\lambda + \sum_{i \in I} \lambda_i +2;\{\lambda_i: i \in I\}\right)  \left(\lambda + \sum_{i \in I} \lambda_i +1\right) b\left(\lambda + \sum_{i \in I} \lambda_i;\{\lambda_i: i \in I^c\}\right) \\
			&\qquad \qquad \qquad= \pi \cdot  \left( \prod_{i=1}^{d} c_{\frac12 \lambda_i- \frac 12}^{-1}\right).
		\end{align*}
	\end{proposition}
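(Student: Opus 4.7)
The proposition is essentially an immediate corollary of Theorem~\ref{theo:relations_a_b}. The plan is to introduce the two sums
\begin{align*}
E &:= \sum_{\substack{I \subseteq \{1,\dots,d\} \\ \#I \text{ even}}} a\!\left(\lambda + {\textstyle\sum_{i \in I}} \lambda_i +2;\{\lambda_i: i \in I\}\right)\!\left(\lambda + {\textstyle\sum_{i \in I}} \lambda_i +1\right)\! b\!\left(\lambda + {\textstyle\sum_{i \in I}} \lambda_i;\{\lambda_i: i \in I^c\}\right), \\
O &:= \sum_{\substack{I \subseteq \{1,\dots,d\} \\ \#I \text{ odd}}} a\!\left(\lambda + {\textstyle\sum_{i \in I}} \lambda_i +2;\{\lambda_i: i \in I\}\right)\!\left(\lambda + {\textstyle\sum_{i \in I}} \lambda_i +1\right)\! b\!\left(\lambda + {\textstyle\sum_{i \in I}} \lambda_i;\{\lambda_i: i \in I^c\}\right),
\end{align*}
and observe that the sums $s_{d,+1}(\lambda)$ and $s_{d,-1}(\lambda)$ defined in~\eqref{eq:s_d_def_sum_ab} can be written as $E+O$ and $E-O$, respectively.

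Next I would invoke Theorem~\ref{theo:relations_a_b}, which (since $d \geq 1$) yields the two identities
\begin{equation*}
E + O \;=\; s_{d,+1}(\lambda) \;=\; 2\pi \prod_{i=1}^d c_{\frac{1}{2}\lambda_i - \frac{1}{2}}^{-1},
\qquad
E - O \;=\; s_{d,-1}(\lambda) \;=\; 0.
\end{equation*}
Adding and subtracting these two equations gives $E = O = \pi \prod_{i=1}^d c_{\frac{1}{2}\lambda_i - \frac{1}{2}}^{-1}$, which is exactly the claim.

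There is no real obstacle here: all the hard work (the recurrence-based periodicity in Theorem~\ref{theo:s_d_is_periodic}, the Laplace-asymptotic computation of $\lim_{T\to\infty} s_{d,\pm 1}(T)$ in Theorem~\ref{theo:limit_s_d}, and the resulting closed forms in Theorem~\ref{theo:relations_a_b}) has already been carried out. The present proposition is simply the splitting of the $\pm$ relations into their even-cardinality and odd-cardinality parts.
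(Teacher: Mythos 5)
Your proposal is correct and coincides with the paper's own argument: the paper likewise observes that the even and odd sums are $\tfrac12(s_{d,+1}(\lambda)\pm s_{d,-1}(\lambda))$ and then applies Theorem~\ref{theo:relations_a_b}, noting $s_{d,-1}(\lambda)=0$ because $d\neq 0$. Nothing is missing.
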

	\begin{proof}
Observe that $\sum_{\#I \text{ even}}  = \frac 12 (s_{d,+1}(\lambda) + s_{d,-1}(\lambda))$ and $\sum_{\#I \text{ odd}}  = \frac 12 (s_{d,+1}(\lambda) - s_{d,-1}(\lambda))$. It remains to apply Theorem \ref{theo:relations_a_b}. In particular, $s_{d,-1}(\lambda) = 0$ since $d\neq 0$.
\end{proof}
\begin{example}\label{ex:a_and_b_for_d_equals_1}
	If we choose $d=1$ in the above proposition, we get a system of linear equations in the unknowns $a(\l + \l_1 +2;\l_1)$ and $b(\l ;\l_1)$ that can be uniquely solved for $x,y > -1$,
	\begin{align*}
		a(x+2;y) = \frac{\pi}{x+1} \frac{c_{\frac{x-1}{2}}}{c_{\frac{y-1}{2}}} = \left(2c_{\frac{x}{2}}c_{\frac{y-1}{2}}\right)^{-1} \qquad \text{and} \qquad b(x;  y ) = \frac{\pi}{x+1} \frac{c_{\frac{x}{2}}}{c_{\frac{y-1}{2}}} = \left(2c_{\frac{x-1}{2}}c_{\frac{y-1}{2}}\right)^{-1},
	\end{align*}
	where we have also used Lemma~\ref{lem:c_shifted_by_1/2}. These formulas have been obtained in Examples~\ref{ex:a_def_case_d_0}, \ref{ex:b:def_d_0}.
\end{example}

\subsection{Angles of beta cones}
In this section we express internal and external angles of beta cones, as well as many related quantities, through the $a$- and $b$-quantities which, as we recall, are defined by
$$
a(\a;\a_1,\dots, \a_d) = \int_{-\infty}^{+\infty} \cosh^{-\a} (x) \prod_{j=1}^{d} F_{\a_j}(\ii x) \dd x,
\qquad
b(\a;\a_1,\dots, \a_d) = \int_{-\pi /2}^{+\pi /2} \cos^\a (x) \prod_{j=1}^{d} F_{\a_j}(x) \dd x.
$$
Let us begin with the $A$- and $B$-quantities, which were introduced and studied in subsection~\ref{subsec:A-and_B-quantities}.
\begin{theorem}\label{lem:A_through_a_B_through_b}
Let $d\geq 2$ and $\gamma_1,\ldots, \gamma_d \geq \frac d2 - 1$. Assuming that $\gamma\geq \frac d2-1$ in the $A$-case and $\gamma-(\gamma_1 + \ldots + \gamma_d) \geq  \frac d2 - 1$ in the $B$-case, we have
	\begin{align}
		A(\g;\g_1,\dots,\g_d) &= c_{\g+\sum_{i=1}^{d}\g_i} \cdot \prod_{j=1}^{d} c_{\g_j - \frac{1}{2}} \cdot a\left(2\g + 2\sum_{i=1}^{d}\g_i +2;2\g_1,\dots,2\g_d\right), \label{eq:A_through_a}\\
		B(\g;\g_1,\dots,\g_d) &= c_{\g-\sum_{i=1}^{d}\g_i -\frac{1}{2}} \cdot \prod_{j=1}^{d} c_{\g_j - \frac{1}{2}} \cdot b\left(2\g - 2\sum_{i=1}^{d}\g_i;2\g_1,\dots,2\g_d\right). \label{eq:B_through_b}
	\end{align}
For $d=0,1$, the formulas hold with the conventions on the extended ranges of the $a$ and $b$ quantities made in  Examples~\ref{ex:a_def_case_d_0} and~\ref{ex:b:def_d_0}.
\end{theorem}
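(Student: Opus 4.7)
The plan is to prove \eqref{eq:B_through_b} by a direct substitution in the explicit formula for $\extern$, and then deduce \eqref{eq:A_through_a} from \eqref{eq:B_through_b} using the nonlinear relations of Theorem~\ref{theo:beta_cones_relations_ext_int_second_version} combined with the analytic identities of Theorem~\ref{theo:relations_a_b}.

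For \eqref{eq:B_through_b}, I would start from Corollary~\ref{cor:external_beta_formula} with the substitutions $\b = \g - \sum_i \g_i - d/2$ and $\b_j = \g_j - d/2$, so that the exponents become $\b + (d-1)/2 = \g - \sum_i \g_i - 1/2$ and $\b_j + (d-1)/2 = \g_j - 1/2$. Splitting each inner integral at $s = 0$ and using $\int_{-1}^0 c_{\g_j-1/2}(1-s^2)^{\g_j-1/2}\,\dd s = 1/2$, one factors $c_{\g_j-1/2}$ out of each factor, extracts the outer constant $c_{\g - \sum_i \g_i - 1/2}$, and recognizes the remaining double integral as $b(2\g - 2\sum_i\g_i;\, 2\g_1,\dots,2\g_d)$ via Proposition~\ref{prop:other_integral_representation_of_b} (with $\a = 2\g - 2\sum_i \g_i$ and $\a_j = 2\g_j$). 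The cases $d = 0, 1$ are checked directly against the formulas in Examples~\ref{ex:a_def_case_d_0} and~\ref{ex:b:def_d_0}.

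For \eqref{eq:A_through_a}, define the candidate
$$
\tilde A(\g;\g_1,\dots,\g_d) := c_{\g+\sum_i\g_i} \prod_{j=1}^d c_{\g_j-1/2}\cdot a\!\left(2\g + 2\sum_i \g_i + 2;\, 2\g_1,\dots,2\g_d\right),
$$
and prove $\tilde A = A$ by induction on $d$. The cases $d=0,1$ yield $\tilde A(\g;\varnothing)=1$ and $\tilde A(\g;\g_1)=1/2$ by a direct computation with the explicit $a$-values in Example~\ref{ex:a_def_case_d_0} and Lemma~\ref{lem:c_shifted_by_1/2}, matching $A$ by convention. For the inductive step ($d \geq 2$), substitute the just-proven formula \eqref{eq:B_through_b} into the product $\tilde A(\g;\{\g_i: i\in I\}) \cdot B(\g + \sum_k\g_k; \{\g_i: i\in I^c\})$: writing $S_I := \sum_{i\in I}\g_i$, the prefactor becomes $c_{\g+S_I}\cdot c_{\g+S_I-1/2}\cdot \prod_{j=1}^d c_{\g_j-1/2}$, multiplying $a(2\g+2S_I+2;\{2\g_i: i\in I\})\cdot b(2\g+2S_I; \{2\g_i: i\in I^c\})$. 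By Lemma~\ref{lem:c_shifted_by_1/2}, the first two $c$-factors collapse to $(2\g + 2S_I + 1)/(2\pi)$, and the remaining $c$-product is $I$-independent. Consequently,
$$
\sum_{I\subseteq \{1,\dots,d\}}(-1)^{\# I}\tilde A(\g;\{\g_i: i\in I\})\, B\!\left(\g+\sum_k\g_k;\{\g_i: i\in I^c\}\right) = \frac{\prod_j c_{\g_j-1/2}}{2\pi}\cdot s_{d,-1}(2\g),
$$
where $s_{d,-1}$ is the sum defined in~\eqref{eq:s_d_def_sum_ab} with $\lambda=2\g$ and $\lambda_i=2\g_i$. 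Theorem~\ref{theo:relations_a_b} yields $s_{d,-1}(2\g) = 0$ for $d\geq 1$, so $\tilde A$ satisfies the same nonlinear alternating-sign relation as $A$ (Theorem~\ref{theo:beta_cones_relations_ext_int_second_version}). Isolating the term with $I = \{1,\dots,d\}$, and noting that $B(\g+\sum_k\g_k;\varnothing)=1$, expresses $\tilde A(\g;\g_1,\dots,\g_d)$ as a fixed linear combination (in the $B$-values) of the $\tilde A$-values on strict subsets; the inductive hypothesis then forces $\tilde A(\g;\g_1,\dots,\g_d)=A(\g;\g_1,\dots,\g_d)$.

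The main obstacle is keeping track of the various dimension shifts and $c$-factors; the whole argument hinges on the identity $c_{\g+S_I}\, c_{\g+S_I-1/2} = (2\g+2S_I+1)/(2\pi)$, which is precisely what is needed to match the leading $(a,b)$-product to the coefficient $(\lambda + \sum_{i\in I}\lambda_i + 1)$ inside $s_{d,-1}$. Once that is observed, the nonlinear identity on the $(a,b)$-side of Theorem~\ref{theo:relations_a_b} translates \emph{mutatis mutandis} into the nonlinear identity on the $(A,B)$-side of Theorem~\ref{theo:beta_cones_relations_ext_int_second_version}, and uniqueness (which also follows from Theorem~\ref{theo:uniqueness_of_solution}) completes the proof.
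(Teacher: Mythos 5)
Your proposal is correct and follows essentially the same route as the paper: the $B$-identity is obtained by the same substitution into the explicit $\extern$ formula of Corollary~\ref{cor:external_beta_formula}, and the $A$-identity is obtained by showing the $a$-based candidate satisfies the alternating nonlinear relation via Lemma~\ref{lem:c_shifted_by_1/2} and $s_{d,-1}=0$ (Theorem~\ref{theo:relations_a_b}/Proposition~\ref{prop:relation_a_b_even_odd}), then concluding by uniqueness. Your induction on $d$ merely unpacks the uniqueness argument of Theorem~\ref{theo:uniqueness_of_solution} that the paper cites directly, so the two proofs coincide in substance.
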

\begin{proof}
We start with showing \eqref{eq:B_through_b}. For $d=0$, it holds true since the left-hand side equals $1$ by Definition~\ref{def:int_and_ext}, while the right-hand side simplifies to $1$ by using the definition of $b(\g;\varnothing)$ given in Example~\ref{ex:b:def_d_0}. If $d=1$, the left-hand side simplifies to $1/2$, while we can use Example~\ref{ex:a_and_b_for_d_equals_1} to simplify the right-hand side,
\begin{align*}
	c_{\g-\g_1-\frac{1}{2}} c_{\g_1-\frac{1}{2}} (2c_{\g-\g_1-\frac{1}{2}}c_{\g_1-\frac{1}{2}})^{-1} = \frac{1}{2}.
\end{align*}
Next, let $d \geq 2$. We use Equation~\eqref{eq:B_through_Ext} and Corollary~\ref{cor:external_beta_formula} to get
	\begin{align*}
		B(\g;\g_1,\dots,\g_d) &= \extern\left( \g - \sum_{i=1}^{d}\g_i - \frac{d}{2} ; \g_1- \frac{d}{2}, \dots, \g_d - \frac{d}{2}\right)\\
		&= \int_{-1}^{+1} c_{\g-\sum_{i=1}^{d}\g_i-1/2}(1-t^2)^{\g-\sum_{i=1}^{d}\g_i-1/2} \prod_{j=1}^{d} \left(\int_{-1}^{t} c_{\g_j- \frac{1}{2}} (1-s^2)^{\g_j- \frac{1}{2}} \dd s   \right) \dd t\\
		&= \int_{-\pi /2}^{+\pi /2} c_{\g-\sum_{i=1}^{d}\g_i -\frac{1}{2}}\cos(x)^{2\g - 2\sum_{i=1}^{d}\g_i } \prod_{j=1}^{d} c_{\g_j - \frac{1}{2}} F_{2\g_j}(x) \dd x \\
		&= c_{\g-\sum_{i=1}^{d}\g_i -\frac{1}{2}} \cdot \prod_{j=1}^{d} c_{\g_j - \frac{1}{2}} \cdot b\left(2\g - 2\sum_{i=1}^{d}\g_i;2\g_1,\dots,2\g_d\right),
	\end{align*}
where we have substituted $t=\sin(x)$ and $s=\sin(y)$ and used \eqref{eq:def_F_as_cos_integral}. In total, this shows~\eqref{eq:B_through_b} for $d \in \N_0$. To prove \eqref{eq:A_through_a}, we once more look at the system of linear equations with unknowns $f_I$ that was studied in Theorem~\ref{theo:uniqueness_of_solution},
	\begin{align}\label{eq:system_of_linear_equations}
		\sum_{I \subseteq \{1,\dots,d\}} (-1)^{\#I} f_I B \left(\g + \sum_{k=1}^d \g_k ; \;\; \left\{\g_{i} : i \in I^c\right\}\right) =
		\begin{cases}
			1, &\text{if } d=0,\\
			0, &\text{if } d\geq 1.
		\end{cases}
	\end{align}
	We know from Theorem \ref{theo:uniqueness_of_solution} that this system has a unique solution. Setting $f_I=A(\gamma;\{\g_i: i \in I\})$ one recovers the second relation between the $A-$ and $B$-quantities given in Theorem~\ref{theo:beta_cones_relations_ext_int_second_version}, which means that $A(\gamma;\{\g_i: i \in I\})$ solves this system. Next, we claim that
	\begin{align}\label{eq:another_unique_solution}
		f_I(\gamma) \coloneqq c_{\g+\sum_{i \in I}\g_i} \cdot \Biggl(\prod_{i \in I} c_{\g_i - \frac{1}{2}}\Biggr) \cdot a\left(2\g + 2\sum_{i \in I}\g_i +2;\{2\g_i: i \in I\}\right)
	\end{align}
	also solve it, which will directly prove Equation~\ref{eq:A_through_a} by uniqueness of the solution. For this, we plug \eqref{eq:another_unique_solution} and Equation \eqref{eq:B_through_b} into \eqref{eq:system_of_linear_equations} (we remind the reader that if $\# I \in \lbrace 0,1 \rbrace$, we use Examples~\ref{ex:a_def_case_d_0} and~\ref{ex:a_and_b_for_d_equals_1} for the definition of $a\left(2\g + 2\sum_{i \in I}\g_i +2;\{2\g_i: i \in I\}\right)$). This then gives us
\begin{multline}\label{eq:some_equation}
	\sum_{I \subseteq \{1,\dots,d\}} (-1)^{\#I} c_{\g+\sum_{i \in I}\g_i} \cdot \Biggl(\prod_{i \in I} c_{\g_i - \frac{1}{2}}\Biggr) \cdot a\left(2\g + 2\sum_{i \in I}\g_i +2;\{2\g_i: i \in I\}\right) \\
	\times c_{\g+\sum_{i \in I}\g_i -\frac{1}{2}} \cdot \Biggl(\prod_{i \in I^c} c_{\g_i - \frac{1}{2}} \Biggr)\cdot b\left(2\g + 2\sum_{i \in I}\g_i;\{2\g_i: i \in I^c\}\right).
\end{multline}
Next, we use Lemma~\ref{lem:c_shifted_by_1/2} to get
	\begin{align*}
		c_{\g+\sum_{i \in I}\g_i} \cdot c_{\g+\sum_{i \in I}\g_i  - 1/2} = (2\g+2\sum_{i \in I}\g_i+1) \cdot \frac{1}{2\pi}.
	\end{align*}
	This lets us write \eqref{eq:some_equation} as
	\begin{multline*}
		\frac{1}{2\pi}\Biggl(\prod_{j=1}^{d} c_{\g_j - \frac{1}{2}}\Biggr) \sum_{I \subseteq \{1,\dots,d\}} \Biggl( (-1)^{\#I} a\left(2\g+2\sum_{i \in I}\g_i +2;\{2\lambda_i: i \in I\}\right) \\ \times \left(2\g+2\sum_{i \in I}\g_i +1\right) b\left(2\g + 2\sum_{i \in I}\g_i;\{2\lambda_i: i \in I^c\}\right)\Biggr)
		=\begin{cases}
			1, &\text{if } d=0,\\
			0, &\text{if } d\geq 1,
		\end{cases}
	\end{multline*}
	where the last equation holds for the following reason. In the case $d\geq 1$, it immediately follows with the help of Proposition~\ref{prop:relation_a_b_even_odd}. To verify it in the case $d=0$, we use the definitions of the $a-$ and $b$-quantities given in Examples~\ref{ex:a_def_case_d_0} and~\ref{ex:b:def_d_0} as well as Lemma~\ref{lem:c_shifted_by_1/2} to see that the above equation simplifies to
	\begin{align*}
		\frac{1}{2\pi}\cdot  a(2\g +2;\varnothing) \cdot (2\g +1) \cdot b(2\g;\varnothing) =\frac{1}{2\pi}\cdot  (2\g +1) \cdot \left(c_{\g} \cdot c_{\g- 1/2}\right)^{-1} = 1.
	\end{align*}
	In total, we showed that $f_I(\gamma)$ indeed solves \eqref{eq:system_of_linear_equations} and therefore coincides with $A(\gamma;\{\g_i: i \in I\})$, as the system has a unique solution. This completes the proof.
\end{proof}
This relation between the $A/B$ and $a/b$-quantities lets us also express the internal and external quantities through the $a$- and $b$-quantities.
\begin{proposition}\label{prop:int_ext_through_a_b}
	For $d \geq 2$, $\b\geq -1$  and $\b_1, \dots, \b_d \geq - 1$, we can express the internal and external quantities in the following way,
\begin{align*}
	\intern (\b; \b_1,\ldots, \b_d) =& c_{\b + \frac{d}{2} + \sum_{i=1}^d \left(\b_i + \frac{d}{2}\right)} \left(\prod_{i=1}^d c_{\b_i + \frac{d-1}{2}}\right) a\left(2\b+d+\sum_{i=1}^d \left(2\b_i + d\right)+2 ; 2\b_1 +d,\dots, 2\b_d+d \right),\\
	\extern (\b; \b_1,\ldots, \b_d) =& c_{\b + \frac{d-1}{2} } \left(\prod_{i=1}^d c_{\b_i + \frac{d-1}{2}}\right) b\left(2\b+d ; 2\b_1 +d,\dots,2\b_d+d\right).
\end{align*}
For $d=0,1$, the formulas hold with the conventions on the extended range of $a$ and $b$ made in Examples~\ref{ex:a_def_case_d_0}, and~\ref{ex:b:def_d_0}.
\end{proposition}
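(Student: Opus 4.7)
The plan is to derive this proposition as a direct substitution from Theorem~\ref{lem:A_through_a_B_through_b}, translating between the two parameter conventions. Recall that equations~\eqref{eq:A_through_Int_converse} and~\eqref{eq:B_through_Ext_converse} already express $\intern$ and $\extern$ in terms of the $A$- and $B$-quantities via the substitution $\gamma_i = \beta_i + p/2$. On the other hand, Theorem~\ref{lem:A_through_a_B_through_b} expresses $A$ and $B$ in terms of $a$ and $b$. Composing these two translations should yield the claim, with only routine bookkeeping of the arguments involved.

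More concretely, for the internal quantity I would set $\gamma := \beta + d/2$ and $\gamma_i := \beta_i + d/2$ in~\eqref{eq:A_through_Int_converse}, then invoke~\eqref{eq:A_through_a}:
\begin{align*}
\intern(\beta;\beta_1,\ldots,\beta_d)
&= A\bigl(\beta + \tfrac{d}{2};\,\beta_1 + \tfrac{d}{2},\ldots,\beta_d+\tfrac{d}{2}\bigr)\\
&= c_{\gamma+\sum_i\gamma_i}\,\Bigl(\prod_{i=1}^d c_{\gamma_i-\frac12}\Bigr)\,a\bigl(2\gamma + 2\textstyle\sum_i\gamma_i + 2;\,2\gamma_1,\ldots,2\gamma_d\bigr).
\end{align*}
Substituting back the values of $\gamma$ and $\gamma_i$, one obtains $\gamma+\sum_i\gamma_i = \beta+\frac{d}{2} + \sum_i(\beta_i+\frac{d}{2})$, $\gamma_i - \frac12 = \beta_i + \frac{d-1}{2}$, $2\gamma + 2\sum_i\gamma_i + 2 = 2\beta + d + \sum_i(2\beta_i+d) + 2$, and $2\gamma_i = 2\beta_i + d$, which reproduces exactly the asserted formula for $\intern$.

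For the external quantity the procedure is analogous but slightly heavier on arithmetic. Using~\eqref{eq:B_through_Ext_converse} with the same substitution $\gamma_i = \beta_i + d/2$ and $\gamma := \beta + d/2 + \sum_i(\beta_i+d/2)$, one has $\extern(\beta;\beta_1,\ldots,\beta_d) = B(\gamma;\gamma_1,\ldots,\gamma_d)$. Then~\eqref{eq:B_through_b} gives
\begin{align*}
B(\gamma;\gamma_1,\ldots,\gamma_d) = c_{\gamma - \sum_i\gamma_i - \frac12}\,\Bigl(\prod_{i=1}^d c_{\gamma_i-\frac12}\Bigr)\,b\bigl(2\gamma - 2\textstyle\sum_i\gamma_i;\,2\gamma_1,\ldots,2\gamma_d\bigr),
\end{align*}
and one verifies $\gamma - \sum_i\gamma_i - \frac12 = \beta + \frac{d-1}{2}$ and $2\gamma - 2\sum_i\gamma_i = 2\beta + d$, completing the reduction to the asserted formula.

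Since every ingredient has already been established in Theorem~\ref{lem:A_through_a_B_through_b} (which in turn relies on Theorem~\ref{theo:relations_a_b} and Theorem~\ref{theo:uniqueness_of_solution}), there is no genuine obstacle here; the only thing to check carefully is the range of validity: one needs $d\geq 2$ together with $\beta,\beta_1,\ldots,\beta_d\geq -1$ to ensure that $\gamma,\gamma_1,\ldots,\gamma_d \geq d/2 - 1$, which is exactly the hypothesis of Theorem~\ref{lem:A_through_a_B_through_b}. The cases $d=0,1$ follow upon invoking the analytic extensions of $a$ and $b$ recorded in Examples~\ref{ex:a_def_case_d_0} and~\ref{ex:b:def_d_0} together with the corresponding extensions of the $A$- and $B$-quantities stated in Theorem~\ref{lem:A_through_a_B_through_b}.
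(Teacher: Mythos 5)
Your proposal is correct and follows essentially the same route as the paper: the paper's proof likewise combines~\eqref{eq:A_through_Int_converse} with~\eqref{eq:A_through_a} for the internal quantity (and notes the external case is handled the same way via~\eqref{eq:B_through_Ext_converse} and~\eqref{eq:B_through_b}), with the same parameter substitution $\gamma = \beta + d/2$, $\gamma_i = \beta_i + d/2$. Your additional bookkeeping of the validity ranges and the $d=0,1$ conventions is consistent with the hypotheses of Theorem~\ref{lem:A_through_a_B_through_b}.
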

\begin{proof}
	We only show the first equation, since the second one can be shown the same way. Using equations \eqref{eq:A_through_Int_converse} and \eqref{eq:A_through_a}, we get
	\begin{align*}
		\intern (\b; \b_1,\ldots, \b_d) =& A\left(\b+\frac{d}{2};\b_1+\frac{d}{2},\dots,\b_d+\frac{d}{2}\right)\\
		=& c_{\b + \frac{d}{2} + \sum_{i=1}^d \left(\b_i + \frac{d}{2}\right)} \Biggl(\prod_{i=1}^d c_{\b_i + \frac{d-1}{2}}\Biggr) a\left(2\b+d+\sum_{i=1}^d \left(2\b_i +d\right)+2 ; 2\b_1 +d,\dots, 2\b_d+d \right).
	\end{align*}
\end{proof}
Since the $a$- and $b$-quantities have integral representations, this also gives us integral representations for the $A$- and $B$-quantities.
\begin{theorem}\label{theo:A_integral_representation}
Let $d\geq 2$ and $\gamma_1,\ldots, \gamma_d \geq \frac d2 - 1$. Assuming that $\gamma\geq \frac d2-1$ in the $A$-case and $\gamma-(\gamma_1 + \ldots + \gamma_d) \geq  \frac d2 - 1$ in the $B$-case, we have the following integral representations,
	\begin{align*}
		A(\g;\g_1,\dots,\g_d) = \int_{-1}^{+1} c_{\g+\sum_{i=1}^{d}\g_i}(1-t^2)^{\g+\sum_{i=1}^{d}\g_i} \prod_{j=1}^{d} \left(\frac{1}{2} + \ii \int_{0}^{t} c_{\g_j -\frac{1}{2}} (1-s^2)^{-\g_j -1} \dd s   \right) \dd t, \\
		B(\g;\g_1,\dots,\g_d) =  \int_{-1}^{+1} c_{\g-\sum_{i=1}^{d}\g_i-\frac{1}{2}}(1-t^2)^{\g-\sum_{i=1}^{d}\g_i-\frac{1}{2}} \prod_{j=1}^{d} \left(\int_{-1}^{t} c_{\g_j-\frac{1}{2}} (1-s^2)^{\g_j-\frac{1}{2}} \dd s   \right) \dd t.
	\end{align*}
Also, for all $d\geq 2$ and $\beta, \beta_1,\ldots, \beta_d \geq -1$,
\begin{align*}
				\intern (\b;\b_1,\dots,\b_d) &= \int_{-1}^{+1} c_{\b+\frac{d}{2}+\sum_{i=1}^{d}\left(\b_i+\frac{d}{2}\right)}(1-t^2)^{\b+\frac{d}{2}+\sum_{i=1}^{d}\left(\b_i+\frac{d}{2}\right)}
\\ &\qquad \qquad \qquad
				\times \prod_{j=1}^{d} \left(\frac{1}{2} + \ii \int_{0}^{t} c_{\b_j +\frac{d-1}{2}} (1-s^2)^{-\b_j-\frac{d}{2} -1} \dd s   \right) \dd t, \\
				\extern (\b;\b_1,\dots,\b_d) &=
				\int_{-1}^{+1}  c_{\b + \frac {d-1}{2}} (1-t^2)^{\b +  \frac {d-1}{2}}
				\prod_{j =1}^{d} \left(\int_{-1}^t c_{\b_j + \frac{d-1}{2}} (1-s^2)^{\b_j + \frac{d-1}{2}}\,\dd s\right) \,\dd t.
		\end{align*}
\end{theorem}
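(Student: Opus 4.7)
The proof is essentially a routine unpacking of material already in place: the theorem follows by combining Theorem~\ref{lem:A_through_a_B_through_b} (which expresses $A$ and $B$ through $a$ and $b$), Proposition~\ref{prop:int_ext_through_a_b} (the analogous statement for $\intern$ and $\extern$), and the integral representations for $a$ and $b$ given by Proposition~\ref{prop:other_integral_representation_of_a} (specifically formula~\eqref{eq:a_quant_integral_1}) and Proposition~\ref{prop:other_integral_representation_of_b} (specifically formula~\eqref{eq:b:formula_1}). So the plan is simply: substitute, rescale, and rewrite.

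For the $A$-quantity, I would set $\alpha = 2\gamma + 2\sum_i\gamma_i + 2$ and $\alpha_j = 2\gamma_j$ in~\eqref{eq:a_quant_integral_1}; the exponent $\alpha/2-1$ becomes $\gamma+\sum_i\gamma_i$, the index $(\alpha_j-1)/2$ on $c$ becomes $\gamma_j-\tfrac{1}{2}$, and $-\alpha_j/2-1$ becomes $-\gamma_j-1$. Then multiply through by the prefactor $c_{\gamma+\sum_i\gamma_i}\prod_j c_{\gamma_j-1/2}$ prescribed by~\eqref{eq:A_through_a}, absorbing $c_{\gamma+\sum_i\gamma_i}$ into the Lebesgue weight $(1-t^2)^{\gamma+\sum_i\gamma_i}$ and each $c_{\gamma_j-1/2}$ into the corresponding parenthesized factor (turning $\tfrac{1}{2c_{\gamma_j-1/2}}$ into $\tfrac{1}{2}$ and promoting the imaginary integrand to $c_{\gamma_j-1/2}(1-s^2)^{-\gamma_j-1}$). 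This yields the stated integral for $A$. The $B$-quantity is handled analogously using~\eqref{eq:B_through_b} and~\eqref{eq:b:formula_1} with $\alpha = 2\gamma - 2\sum_i\gamma_i$ and $\alpha_j = 2\gamma_j$; the only extra observation is that the normalization $c_{\gamma_j-1/2}\int_{-1}^{1}(1-s^2)^{\gamma_j-1/2}\dd s = 1$ together with the evenness of $(1-s^2)^{\gamma_j-1/2}$ gives
\[
\frac{1}{2 c_{\gamma_j-1/2}} + \int_{0}^{t}(1-s^2)^{\gamma_j-1/2}\dd s \;=\; \int_{-1}^{t}(1-s^2)^{\gamma_j-1/2}\dd s,
\]
which, after multiplication by $c_{\gamma_j-1/2}$, converts the half-integral-plus-constant into the symmetric form $\int_{-1}^{t}c_{\gamma_j-1/2}(1-s^2)^{\gamma_j-1/2}\dd s$ displayed in the theorem.

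For the $\intern$ and $\extern$ formulas I see two equivalent routes. The first is to start from~\eqref{eq:A_through_Int_converse}--\eqref{eq:B_through_Ext_converse} and substitute $\gamma = \beta+\tfrac{d}{2}$, $\gamma_j = \beta_j+\tfrac{d}{2}$ into the two integral formulas just derived, keeping track that the shift sends $\gamma_j-\tfrac{1}{2}\mapsto \beta_j+\tfrac{d-1}{2}$ and $-\gamma_j-1 \mapsto -\beta_j-\tfrac{d}{2}-1$. The second is to apply the same substitution-and-rescaling recipe directly to Proposition~\ref{prop:int_ext_through_a_b}, inserting~\eqref{eq:a_quant_integral_1} with $\alpha = 2\beta+d+\sum_i(2\beta_i+d)+2$ and $\alpha_j = 2\beta_j+d$. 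Note in particular that the $\extern$ identity is merely a restatement of Corollary~\ref{cor:external_beta_formula}, so nothing new is needed there.

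No genuine obstacle remains at this stage: all the analytic work (absolute convergence of the contour integrals defining $a$ and $b$, the recurrence relations of Section~\ref{sec:beta_cones_explicit_internal_angles}, the nonlinear identities of Theorem~\ref{theo:relations_a_b}, and the uniqueness argument underlying Theorem~\ref{lem:A_through_a_B_through_b}) has already been carried out. What is left is pure bookkeeping: one must carefully track the parameter shifts (particularly the ubiquitous $+d/2$), verify that each prefactor $c_\bullet$ is absorbed into the correct integrand so that the densities $c_{\cdot}(1-t^2)^{\cdot}$ appear in their standard normalized form, and check that the ranges of $\gamma$, $\gamma_1,\ldots,\gamma_d$ stated in the theorem are exactly those under which the feeder results apply.
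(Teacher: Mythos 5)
Your proposal is correct and follows essentially the same route as the paper: express $A$, $B$ (and hence $\intern$, $\extern$) through the $a$- and $b$-quantities via Theorem~\ref{lem:A_through_a_B_through_b} and Proposition~\ref{prop:int_ext_through_a_b}, insert the integral representations of Propositions~\ref{prop:other_integral_representation_of_a} and~\ref{prop:other_integral_representation_of_b}, and track the parameter shifts exactly as you describe. The only cosmetic difference is that for $B$ and $\extern$ the paper substitutes directly into Corollary~\ref{cor:external_beta_formula} via~\eqref{eq:B_through_Ext} instead of passing through $b$ and~\eqref{eq:B_through_b}, which amounts to the same computation.
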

\begin{proof}
	For $B(\g;\g_1,\dots,\g_d)$, we use Corollary \ref{cor:external_beta_formula} and the expression of $B(\g ;\g_1,\dots,\g_d)$ through the external quantity, Equation \eqref{eq:B_through_Ext},
	\begin{align*}
		B(\g;\g_1,\dots,\g_d) 
		&= \extern\left( \g - \sum_{i=1}^{d}\g_i - \frac{d}{2} ; \g_1- \frac{d}{2}, \dots, \g_d - \frac{d}{2}\right) \\
		&= \int_{-1}^{+1} c_{\g-\sum_{i=1}^{d}\g_i-\frac{1}{2}}(1-t^2)^{\g-\sum_{i=1}^{d}\g_i-\frac{1}{2}} \prod_{j=1}^{d} \left(\int_{-1}^{t} c_{\g_j-\frac{1}{2}} (1-s^2)^{\g_j-\frac{1}{2}} \dd s   \right) \dd t.
	\end{align*}
	Setting $\b \coloneqq \g - \sum_{i=1}^{d}\g_i - \frac{d}{2}$ and $\b_i \coloneqq \g_i - \frac{d}{2}$ gives the integral representation for the external quantity, where the ranges given in the statement of this theorem follow from the ranges of $\g,\g_1,\dots,\g_d$. For $A(\g;\g_1,\dots,\g_d)$, we use Equation \eqref{eq:A_through_a} and Proposition~\ref{prop:other_integral_representation_of_a} to see
	\begin{align*}
		A(\g;\g_1,\dots,\g_d) &= c_{\g+\sum_{i=1}^{d}\g_i} \cdot \prod_{j=1}^{d} c_{\g_j - \frac{1}{2}} \cdot a\left(2\g + 2\sum_{i=1}^{d}\g_i +2;2\g_1,\dots,2\g_d\right) \\
				&= \int_{-1}^{+1} c_{\g+\sum_{i=1}^{d}\g_i}(1-t^2)^{\g+\sum_{i=1}^{d}\g_i} \prod_{j=1}^{d} \left(\frac{1}{2} + \ii \int_{0}^{t} c_{\g_j -\frac{1}{2}} (1-s^2)^{-\g_j -1} \dd s   \right) \dd t.
	\end{align*}
Setting $\b \coloneqq \g -\frac{d}{2}$ and $\b_i \coloneqq \g_i - \frac{d}{2}$ and using \eqref{eq:A_through_Int} gives the integral representation for the internal quantities with the correct ranges.
This completes the proof.
\end{proof}

\section{Results on beta cones}\label{sec:beta_cones_results}
\subsection{The \texorpdfstring{$\Theta$}{Theta}-function}\label{sec:Theta_quantity}
Now we are in the position to express all possible quantities related to beta cones through the $A$- and $B$-quantities, as well as the $a$- and $b$-quantities. Since all of those are sums involving products of $A$- and $B$-quantities of a certain structure, we introduce the following function.
\begin{definition}[$\Theta$-function]\label{def:theta}
Let $Y=\{y_1,\ldots, y_{\ell}\}$ and $Z= \{z_1,\dots, z_k\}$ be finite multisets of nonnegative numbers and let $x> -1/2$. We define
	\begin{multline}
		\Theta (x;Y;Z) \coloneqq \frac{1}{2\pi} \prod_{\omega \in Y \sqcup Z}c_{\omega -\frac{1}{2}}
\\
\times a\left(2x+2\sum_{\omega \in Y} \omega + 2;2Y\right)\cdot\left(2x+2\sum_{\omega \in Y} \omega+1\right)\cdot b\left(2x+2\sum_{\omega \in Y} \omega ;2Z\right).
\end{multline}
\end{definition}
\begin{remark}[Multiset notation]
A \emph{multiset} is a set which is allowed to have repeated elements. The multiplicity of an element $\omega$ in a multiset $\Lambda$ is denoted by $m_\Lambda(\omega)\in \N_0$.
For two multisets $Y$ and $Z$ we denote by $Y \sqcup Z$ their disjoint union (or sum), that is a multiset which contains all elements from $Y$ and all elements from $Z$, adding their multiplicities: $m_{Y\sqcup Z}(\omega) = m_Y(\omega) + m_Z(\omega)$.  The number of elements of a multiset $\Lambda= \{\lambda_1,\ldots, \lambda_m\}$, counting multiplicities, is denoted by $\#\Lambda = m$. We use the product notation $\prod_{\omega\in \Lambda}\varphi(\omega):= \varphi(\lambda_1) \ldots \varphi(\lambda_m)$. We write $a(\lambda; \Lambda)$ for $a(\lambda; \lambda_1,\ldots, \lambda_m)$ and similarly with $b(\lambda; \Lambda)$.
\end{remark}

When one of the multisets is empty, the $\Theta$-function simplifies as follows.
\begin{lemma}\label{lem:Theta_with_one_set_empty}
Let $x>-1/2$ and $Y=\{y_1,\ldots, y_{\ell}\}$ and $Z= \{z_1,\dots, z_k\}$ be finite multisets of nonnegative numbers. Then,
\begin{align*}
\Theta (x;Y;\varnothing)
&=
c_{x+\sum_{\omega \in Y} \omega} \cdot  \prod_{\omega \in Y} c_{\omega - \frac{1}{2}}  \cdot a\biggl(2x+2\sum_{\omega \in Y} \omega+2;2Y\biggr)
\\
\text{as well as}\qquad
\Theta (x;\varnothing;Z)
&=
c_{x-1/2} \cdot \prod_{\omega \in Z} c_{\omega - \frac{1}{2}}   \cdot b(2x;2Z).
\end{align*}
	In particular, we have $\Theta(x;\varnothing;\varnothing)=1$ (defining an empty product to be $1$).
\end{lemma}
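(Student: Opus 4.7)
The plan is to directly substitute the known values of $a(\,\cdot\,;\varnothing)$ and $b(\,\cdot\,;\varnothing)$ from Examples~\ref{ex:a_def_case_d_0} and~\ref{ex:b:def_d_0} into Definition~\ref{def:theta}, and then clean up the resulting prefactor using Lemma~\ref{lem:c_shifted_by_1/2}. There is no genuine obstacle here; the lemma is a bookkeeping calculation that records how the general $\Theta$-expression collapses in the two degenerate cases, and the identity $c_\alpha c_{\alpha - 1/2} = (2\alpha+1)/(2\pi)$ is precisely what is needed to absorb the factor $1/(2\pi)$ and the factor $2x + 2\sum_{\omega\in Y}\omega + 1$.

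For the first identity, assume $Z = \varnothing$. Writing $\sigma := \sum_{\omega\in Y}\omega$ for brevity, Example~\ref{ex:b:def_d_0} gives $b(2x+2\sigma; \varnothing) = c_{x+\sigma - 1/2}^{-1}$, so that
\begin{equation*}
\Theta(x; Y; \varnothing)
= \frac{2x + 2\sigma + 1}{2\pi\, c_{x+\sigma - 1/2}}\,
\Biggl(\prod_{\omega \in Y} c_{\omega - 1/2}\Biggr)\,
a\bigl(2x + 2\sigma + 2;\, 2Y\bigr).
\end{equation*}
Applying Lemma~\ref{lem:c_shifted_by_1/2} with $\alpha = x + \sigma$ yields $(2x+2\sigma+1)/(2\pi) = c_{x+\sigma}\, c_{x+\sigma - 1/2}$, so the prefactor simplifies to $c_{x+\sigma}$, giving exactly the claimed formula for $\Theta(x; Y; \varnothing)$.

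For the second identity, assume $Y = \varnothing$; then $\sigma = 0$ and Example~\ref{ex:a_def_case_d_0} gives $a(2x+2; \varnothing) = c_x^{-1}$. Thus
\begin{equation*}
\Theta(x; \varnothing; Z)
= \frac{2x+1}{2\pi\, c_x}\,
\Biggl(\prod_{\omega\in Z} c_{\omega - 1/2}\Biggr)\,
b(2x; 2Z),
\end{equation*}
and Lemma~\ref{lem:c_shifted_by_1/2} with $\alpha = x$ turns $(2x+1)/(2\pi\, c_x)$ into $c_{x - 1/2}$, which is the second claimed formula. Finally, plugging $Y=Z=\varnothing$ into either of the two formulas just derived and using $b(2x;\varnothing) = c_{x-1/2}^{-1}$ (or $a(2x+2;\varnothing) = c_x^{-1}$) yields $\Theta(x;\varnothing;\varnothing) = 1$, completing the proof.
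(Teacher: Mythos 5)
Your proposal is correct and follows essentially the same route as the paper's own proof: substitute the values $a(\alpha;\varnothing)=c_{\alpha/2-1}^{-1}$ and $b(\alpha;\varnothing)=c_{(\alpha-1)/2}^{-1}$ from Examples~\ref{ex:a_def_case_d_0} and~\ref{ex:b:def_d_0} into Definition~\ref{def:theta}, and absorb the factor $(2x+2\sum_{\omega\in Y}\omega+1)/(2\pi)$ via the identity $c_{\alpha}c_{\alpha-1/2}=(2\alpha+1)/(2\pi)$ of Lemma~\ref{lem:c_shifted_by_1/2}. The only difference is that you write out the bookkeeping that the paper leaves implicit, and all of your computations check out.
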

\begin{proof}
The proof is a direct computation using the formulas $a(\alpha;\varnothing) = c_{(\a/2)-1}^{-1}$ and $b(\alpha;\varnothing)= c_{(\a-1)/2}^{-1}$; see Examples~\ref{ex:a_def_case_d_0}, \ref{ex:b:def_d_0}.  To simplify the constants, use the formula  		$(2\a + 1)/(2\pi) =  c_{\a} \cdot c_{\a  - (1/2)}$; see Lemma~\ref{lem:c_shifted_by_1/2}.
\end{proof}

\begin{theorem}[Formulas for the $\Theta$-function]\label{theo:theta_as_prod_of_thetas}
	For $x>-1/2$ and $Y$, $Z$ finite multisets of nonnegative numbers, we have
	\begin{align}
		\Theta (x ;Y;Z) = \Theta(x;Y;\varnothing) \cdot \Theta \left(\alpha;\varnothing;Z\right), \label{eq:theo:theta_as_prod_of_thetas}
	\end{align}
where we defined $\alpha:= x+\sum_{\omega \in Y} \omega$. Moreover, for the factors on the right-hand side we have
\begin{align*}
\Theta(x;Y;\varnothing)
&=
\int_{-1}^{+1} c_{\a} (1-t^2)^{\alpha} \prod_{\omega \in Y} \left(\frac{1}{2} + \ii \int_{0}^{t} c_{\omega -\frac{1}{2}} (1-s^2)^{-\omega -1}\, \dd s   \right) \dd t
\\
&=
 \int_{-\pi/2}^{+\pi/2} c_{\a} \cos^{2\alpha+1}(v) \prod_{\omega \in Y} \left(\frac{1}{2} + \ii \int_{0}^{v} c_{\omega -\frac{1}{2}} \cos^{-2\omega -1}(u)\, \dd u   \right) \dd v
\\
&=
\int_{-\infty}^{+\infty} c_{\a} \cosh^{-2\alpha-2}(z) \prod_{\omega \in Y} \left(\frac{1}{2} + \ii \int_{0}^{z} c_{\omega -\frac{1}{2}} \cosh^{2\omega}(y)\, \dd y   \right) \dd z
\end{align*}
as well as
\begin{align*}
\Theta \left(\alpha;\varnothing;Z\right)
&=
\int_{-1}^{+1} c_{\a-\frac{1}{2}} (1-t^2)^{\alpha -\frac{1}{2}}
		\prod_{\omega \in Z} \left(\frac 12 + \int_{0}^t c_{\omega - \frac{1}{2}} (1-s^2)^{\omega - \frac{1}{2}}\,\dd s\right) \,\dd t
\\
&=
\int_{-\pi/2}^{+\pi/2} c_{\a-\frac{1}{2}} \cos^{2\alpha}(v)
		\prod_{\omega \in Z} \left(\frac 12 + \int_{0}^v c_{\omega - \frac{1}{2}} \cos^{2\omega}(u)\,\dd u\right) \,\dd v
\\
&=
 \int_{-\infty}^{+\infty} c_{\a-\frac{1}{2}} \cosh^{-2\alpha -1}(z)
		\prod_{\omega \in Z} \left(\frac 12 + \int_{0}^z c_{\omega - \frac{1}{2}} \cosh^{-2\omega - 1} (y) \,\dd y\right) \,\dd z.
\end{align*}
\end{theorem}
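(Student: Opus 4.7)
The plan is to first establish the factorization \eqref{eq:theo:theta_as_prod_of_thetas} by direct computation from Definition~\ref{def:theta}, then deduce the three integral forms for each factor by applying the alternative integral representations of the $a$- and $b$-quantities (Propositions~\ref{prop:other_integral_representation_of_a} and~\ref{prop:other_integral_representation_of_b}) and absorbing the prefactor constants into the integrands.

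For the factorization, set $\alpha := x + \sum_{\omega \in Y} \omega$, so that $2\alpha = 2x + 2\sum_{\omega \in Y}\omega$. By the two evaluations in Lemma~\ref{lem:Theta_with_one_set_empty},
\begin{align*}
\Theta(x;Y;\varnothing)\cdot \Theta(\alpha;\varnothing;Z)
= c_{\alpha}\, c_{\alpha - 1/2}\prod_{\omega \in Y \sqcup Z} c_{\omega - 1/2}\cdot a(2\alpha+2;2Y)\cdot b(2\alpha;2Z).
\end{align*}
Lemma~\ref{lem:c_shifted_by_1/2} gives $c_{\alpha}\,c_{\alpha - 1/2} = (2\alpha+1)/(2\pi)$, and inserting this into the right-hand side yields exactly $\Theta(x;Y;Z)$ from Definition~\ref{def:theta}. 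This step is purely symbolic and straightforward.

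For the integral formulas for $\Theta(x;Y;\varnothing)$, I apply Proposition~\ref{prop:other_integral_representation_of_a} with the parameter $\alpha$ there replaced by $2\alpha+2$ and each $\alpha_j$ replaced by $2\omega$ with $\omega$ running over $Y$. The resulting integrand has exponent $\tfrac{2\alpha+2}{2}-1 = \alpha$ on $(1-t^2)$, and each factor in the product becomes $\tfrac{1}{2 c_{\omega-1/2}} + \ii \int_0^t (1-s^2)^{-\omega-1}\,\dd s$. Multiplying by the prefactor $c_{\alpha}\prod_{\omega \in Y} c_{\omega - 1/2}$ from Lemma~\ref{lem:Theta_with_one_set_empty} and pulling $c_{\omega - 1/2}$ into the $\omega$-th factor of the product gives the first representation. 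The trigonometric and hyperbolic forms then follow by the standard substitutions $t = \sin v,\ s = \sin u$ and $t = \tanh z,\ s = \tanh y$ respectively; one just checks that $(1-s^2)^{-\omega-1}\,\dd s = \cosh^{2\omega} y\,\dd y$ under the $\tanh$-substitution and similarly for the outer variable, which is the same calculation as in Proposition~\ref{prop:other_integral_representation_of_a}.

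For $\Theta(\alpha;\varnothing;Z)$, the argument is entirely parallel but uses Proposition~\ref{prop:other_integral_representation_of_b} in place of Proposition~\ref{prop:other_integral_representation_of_a}: replace $\alpha$ there by $2\alpha$ and each $\alpha_j$ by $2\omega$ with $\omega\in Z$, so the outer exponent $\tfrac{2\alpha - 1}{2} = \alpha - 1/2$ and the inner exponent becomes $\omega - 1/2$. Multiplying by $c_{\alpha - 1/2}\prod_{\omega\in Z} c_{\omega - 1/2}$ and distributing the inner $c_{\omega - 1/2}$ into the $\omega$-th factor yields the first form, and the substitutions $t = \sin v$ and $t = \tanh z$ produce the remaining two. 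I do not expect any genuine obstacle here: everything reduces to careful bookkeeping of the doubling of parameters and the absorption of the normalizing constants $c_\bullet$ into the integrals, with Lemma~\ref{lem:c_shifted_by_1/2} providing the only non-trivial identity needed.
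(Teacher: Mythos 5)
Your proposal is correct and follows essentially the same route as the paper's own proof: the factorization via Lemma~\ref{lem:Theta_with_one_set_empty} and the identity $c_{\a}c_{\a-1/2}=(2\a+1)/(2\pi)$ of Lemma~\ref{lem:c_shifted_by_1/2}, and the integral formulas via the representations of the $a$- and $b$-quantities in Propositions~\ref{prop:other_integral_representation_of_a} and~\ref{prop:other_integral_representation_of_b} with parameters doubled and the constants $c_{\omega-1/2}$ absorbed into the integrands. Your bookkeeping of exponents and the $\sin$/$\tanh$ substitutions is accurate, so no gaps remain.
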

\begin{proof}
Equation~\eqref{eq:theo:theta_as_prod_of_thetas} follows from Definition~\ref{def:theta} and Lemma~\ref{lem:Theta_with_one_set_empty}, taking also into account the identity $(2\a + 1)/(2\pi) =  c_{\a} \cdot c_{\a  - (1/2)}$; see Lemma~\ref{lem:c_shifted_by_1/2}. The formulas for $\Theta(x;Y;\varnothing)$ and $\Theta \left(\alpha;\varnothing;Z\right)$ follow from Lemma~\ref{lem:Theta_with_one_set_empty} combined with the formulas for $a$ and $b$; see Definitions~\ref{dfn:a_quantities} and~\ref{dfn:b_quantities} as well as Propositions~\ref{prop:other_integral_representation_of_a} and~\ref{prop:other_integral_representation_of_b}.
\end{proof}

Further, the structure of the expressions given in Lemma~\ref{lem:Theta_with_one_set_empty} resembles the expression of the internal and external quantities through the $a$- and $b$-quantities. This leads to the following.
\begin{proposition}\label{prop:int_ext_through_theta}
Let $d\in \N_0$ and  $\beta, \beta_1,\ldots, \beta_d \geq -1$. Then,
\begin{align*}
\intern (\beta; \beta_1,\ldots, \beta_d)  =& \Theta \left(\beta + \frac{d}{2} ;\left\{\beta_1 + \frac{d}{2}, \dots,\beta_d + \frac{d}{2}\right\}; \varnothing\right),\\
\extern (\beta; \beta_1,\ldots, \beta_d) =&  \Theta \left(\beta + \frac{d}{2}; \varnothing ;\left\{\beta_1 + \frac{d}{2}, \dots,\beta_d + \frac{d}{2}\right\}\right).
\end{align*}
\end{proposition}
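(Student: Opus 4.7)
The plan is a direct verification: both sides are already available in closed form in terms of the $a$- and $b$-quantities, and the identity simply amounts to matching parameters.

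First I would recall the two formulas from Proposition~\ref{prop:int_ext_through_a_b}, namely that
$
\intern(\b;\b_1,\dots,\b_d) = c_{\b+\frac{d}{2}+\sum_i(\b_i+\frac{d}{2})}\cdot \bigl(\prod_{i=1}^d c_{\b_i+\frac{d-1}{2}}\bigr) \cdot a\bigl(2\b+d+\sum_i(2\b_i+d)+2;\,2\b_1+d,\dots,2\b_d+d\bigr)
$
and the analogous expression for $\extern$ in terms of $b$, both valid for $d\in\N_0$ with the conventions from Examples~\ref{ex:a_def_case_d_0} and~\ref{ex:b:def_d_0} in the degenerate cases. Next I would recall the two specialisations of $\Theta$ given in Lemma~\ref{lem:Theta_with_one_set_empty}, namely $\Theta(x;Y;\varnothing) = c_{x+\sum_{\omega\in Y}\omega}\cdot\prod_{\omega\in Y}c_{\omega-\frac{1}{2}}\cdot a(2x+2\sum_{\omega\in Y}\omega+2;\,2Y)$ and $\Theta(x;\varnothing;Z) = c_{x-\frac{1}{2}}\cdot\prod_{\omega\in Z}c_{\omega-\frac{1}{2}}\cdot b(2x;\,2Z)$.

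Now I would substitute $x = \b+\frac{d}{2}$ together with the multiset $Y = Z = \{\b_1+\frac{d}{2},\dots,\b_d+\frac{d}{2}\}$ and simply read off that the arguments match term by term. For the internal quantity: the outer constant becomes $c_{x+\sum\omega} = c_{\b+\frac{d}{2}+\sum_i(\b_i+\frac{d}{2})}$; each factor in the product becomes $c_{(\b_i+\frac{d}{2})-\frac{1}{2}} = c_{\b_i+\frac{d-1}{2}}$; the first argument of $a$ is $2x+2\sum\omega+2 = 2\b+d+\sum_i(2\b_i+d)+2$; and the remaining arguments are $2\b_1+d,\dots,2\b_d+d$. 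These are precisely the parameters appearing in the formula for $\intern$. For the external quantity the check is the same: $c_{x-\frac{1}{2}} = c_{\b+\frac{d-1}{2}}$, each $c_{\omega-\frac{1}{2}}$ again yields $c_{\b_i+\frac{d-1}{2}}$, and the argument $2x = 2\b+d$ of $b$ matches.

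There is no real obstacle here — the proposition is a bookkeeping corollary of the two preceding results, and the only mild point is that the formulas for $\intern$ and $\extern$ as well as $\Theta(x;\cdot;\cdot)$ with one empty multiset are all defined consistently in the boundary cases $d=0$ and $d=1$ by the same conventions from Examples~\ref{ex:a_def_case_d_0} and~\ref{ex:b:def_d_0}, so the identity extends automatically to those cases without a separate argument.
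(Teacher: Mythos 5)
Your proposal is correct and is essentially the paper's own proof: both simply combine Proposition~\ref{prop:int_ext_through_a_b} with Lemma~\ref{lem:Theta_with_one_set_empty} and check that the parameters match under the substitution $x=\beta+\frac{d}{2}$, $\omega_i=\beta_i+\frac{d}{2}$. Your remark about the boundary cases $d=0,1$ being covered by the conventions of Examples~\ref{ex:a_def_case_d_0} and~\ref{ex:b:def_d_0} is also consistent with how the paper handles them.
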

\begin{proof}
Both equations follow directly from Proposition~\ref{prop:int_ext_through_a_b} and Lemma~\ref{lem:Theta_with_one_set_empty}.
\end{proof}

\begin{proposition}[Nonlinear relations for the $\Theta$-function]\label{prop:relation_theta_even_odd}
For all $n\in \N_0$, $\g>-1/2$, $\g_1,\dots,\g_n\geq 0$,
\begin{align*}
\sum_{I \subseteq \{1,\dots,n\}} \Theta ( \gamma ;\{\g_i:i \in I\};\{\g_i: i \in I^c\})  = 1,
\end{align*}
where $I^c:= \{1,\ldots, n\} \bsl I$ is the complement of $I$. Assuming additionally that  $n\neq 0$,
\begin{align*}
&\sum_{\substack{I \subseteq \{1,\dots,n\} \\ \#I \text{ odd}}} \Theta ( \gamma ;\{\g_i:i \in I\};\{\g_i: i \in I^c\}  )  =\sum_{\substack{I \subseteq \{1,\dots,n\} \\ \#I \text{ even}}} \Theta ( \gamma ;\{\g_i:i \in I\};\{\g_i: i \in I^c\}  ) = \frac{1}{2}.
\end{align*}
	\end{proposition}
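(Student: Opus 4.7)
The plan is to recognize that the identities for $\Theta$ in Proposition~\ref{prop:relation_theta_even_odd} are essentially restatements, in the notation of Definition~\ref{def:theta}, of the nonlinear identities for $a$- and $b$-quantities established in Theorem~\ref{theo:relations_a_b} (for the first claim) and Proposition~\ref{prop:relation_a_b_even_odd} (for the second). The whole argument amounts to substituting the definition of $\Theta$ and collecting constants.

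First, I would unravel Definition~\ref{def:theta} to write
\begin{align*}
\Theta\bigl(\gamma;\{\gamma_i:i\in I\};\{\gamma_i:i\in I^c\}\bigr)
= \frac{1}{2\pi}\Biggl(\prod_{i=1}^n c_{\gamma_i-\frac12}\Biggr)\cdot T_I,
\end{align*}
where
\begin{align*}
T_I := a\bigl(2\gamma+2S_I+2;\{2\gamma_i:i\in I\}\bigr)\,(2\gamma+2S_I+1)\,b\bigl(2\gamma+2S_I;\{2\gamma_i:i\in I^c\}\bigr),
\end{align*}
with $S_I:=\sum_{i\in I}\gamma_i$. The crucial bookkeeping point is that every index $i\in\{1,\dots,n\}$ contributes exactly one factor $c_{\gamma_i-1/2}$, irrespective of whether $\gamma_i$ sits in the $Y$- or $Z$-slot, so the prefactor $\prod_{i=1}^n c_{\gamma_i-1/2}$ is independent of $I$ and can be pulled out of the sum.

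Next, I would set $\lambda := 2\gamma$ and $\lambda_i := 2\gamma_i$. The hypotheses $\gamma>-\tfrac12$ and $\gamma_i\geq 0$ become $\lambda>-1$ and $\lambda_i\geq 0$, exactly those required in Section~\ref{subsec:a_and_b_nonlinear_rel}. In that notation, $T_I$ is precisely the $I$-th summand of $s_{n,+1}(\lambda)$ defined in~\eqref{eq:s_d_def_sum_ab}. For the first claim, Theorem~\ref{theo:relations_a_b} yields
\begin{align*}
\sum_{I\subseteq\{1,\dots,n\}} T_I \;=\; s_{n,+1}(\lambda) \;=\; 2\pi\prod_{i=1}^n c_{\frac{\lambda_i}{2}-\frac12}^{-1} \;=\; 2\pi\prod_{i=1}^n c_{\gamma_i-\frac12}^{-1},
\end{align*}
which remains valid when $n=0$ (the product is empty). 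Multiplying by $\frac{1}{2\pi}\prod_{i=1}^n c_{\gamma_i-1/2}$ gives $1$, establishing the first identity.

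For the second identity, assuming $n\geq 1$, I would invoke Proposition~\ref{prop:relation_a_b_even_odd} in place of Theorem~\ref{theo:relations_a_b}: both the sum of $T_I$ over $\#I$ even and that over $\#I$ odd equal $\pi\prod_{i=1}^n c_{\gamma_i-1/2}^{-1}$. Multiplying by $\frac{1}{2\pi}\prod_{i=1}^n c_{\gamma_i-1/2}$ yields $\tfrac12$ in each case. There is no genuine obstacle here; the only thing to double-check is the clean factorisation of the $c_{\gamma_i-1/2}$ constants, which works precisely because $Y\sqcup Z=\{\gamma_1,\dots,\gamma_n\}$ regardless of $I$. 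Thus Proposition~\ref{prop:relation_theta_even_odd} is a direct corollary of the earlier nonlinear relations, translated into $\Theta$-notation.
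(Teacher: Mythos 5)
Your proposal is correct and is essentially the paper's own argument: the paper proves this proposition by unwinding Definition~\ref{def:theta} and invoking Proposition~\ref{prop:relation_a_b_even_odd} (equivalently Theorem~\ref{theo:relations_a_b}, from which that proposition is derived), exactly as you do, with the same observation that the prefactor $\frac{1}{2\pi}\prod_{i=1}^n c_{\gamma_i-\frac12}$ is independent of $I$ because $Y\sqcup Z=\{\gamma_1,\dots,\gamma_n\}$ for every $I$. Your substitution $\lambda=2\gamma$, $\lambda_i=2\gamma_i$ and the resulting parameter ranges match the hypotheses of those results, so nothing is missing.
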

	\begin{proof}
		The proof follows directly from Definition~\ref{def:theta} and Proposition~\ref{prop:relation_a_b_even_odd}.
	\end{proof}

\begin{proposition}[Limit as $x\to+\infty$]\label{prop:Theta_function_gamma_infinity}
For arbitrary multisets $Y$ and $Z$ of nonnegative numbers,
$$
\lim_{x \to +\infty} \Theta(x;Y;Z) = 2^{-(\# Y + \#Z)}.
$$
\end{proposition}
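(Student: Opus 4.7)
The plan is to reduce to the ``one-sided'' cases via the factorization of Theorem~\ref{theo:theta_as_prod_of_thetas}, and then exploit the asymptotics of $a$ and $b$ developed in Section~\ref{subsec:a_and_b}. Concretely, Theorem~\ref{theo:theta_as_prod_of_thetas} gives
\[
\Theta(x;Y;Z) \;=\; \Theta(x;Y;\varnothing) \cdot \Theta\bigl(x+{\textstyle\sum_{\omega\in Y}\omega};\varnothing;Z\bigr).
\]
Since $x+\sum_{\omega\in Y}\omega \to +\infty$ whenever $x\to+\infty$ (with $Y$ fixed), it suffices to prove the two limits
\[
\lim_{x\to+\infty}\Theta(x;Y;\varnothing)=2^{-\#Y}, \qquad \lim_{x\to+\infty}\Theta(x;\varnothing;Z)=2^{-\#Z}
\]
separately, after which the claim follows by multiplication.

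For the second of these, I would use Lemma~\ref{lem:Theta_with_one_set_empty} to expand
\[
\Theta(x;\varnothing;Z) \;=\; c_{x-1/2} \cdot \prod_{\omega\in Z} c_{\omega-\frac12} \cdot b(2x;2Z),
\]
and then apply the asymptotics from Lemma~\ref{lem:asymptotics_a_b}, which yields $b(2x;2Z)\sim\sqrt{\pi/x}\prod_{\omega\in Z}F_{2\omega}(0)$ as $x\to+\infty$. The Stirling estimate $\Gamma(\beta+\tfrac32)/\Gamma(\beta+1)\sim \beta^{1/2}$ applied to the definition of $c_\beta$ gives $c_{x-1/2}\sim \sqrt{x/\pi}$, so the square-root prefactors cancel exactly and
\[
\lim_{x\to+\infty}\Theta(x;\varnothing;Z) \;=\; \prod_{\omega\in Z}\bigl(c_{\omega-\frac12}\cdot F_{2\omega}(0)\bigr).
\]
The key arithmetic identity is $F_{2\omega}(0)=\int_{-\pi/2}^{0}\cos^{2\omega}(y)\,\dd y=\tfrac12\!\int_{-\pi/2}^{+\pi/2}\cos^{2\omega}(y)\,\dd y=\tfrac{1}{2c_{\omega-1/2}}$, which is the parity-symmetry identity combined with~\eqref{eq:int_cos_cosh}. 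Each factor in the product then collapses to $\tfrac12$, giving $2^{-\#Z}$.

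The first limit is handled identically: Lemma~\ref{lem:Theta_with_one_set_empty} gives
\[
\Theta(x;Y;\varnothing) \;=\; c_{x+\sum_{\omega\in Y}\omega} \cdot \prod_{\omega\in Y} c_{\omega-\frac12} \cdot a\bigl(2x+2{\textstyle\sum_{\omega\in Y}\omega}+2;2Y\bigr),
\]
and the asymptotic $a(T;2Y)\sim\sqrt{2\pi/T}\prod_{\omega\in Y}F_{2\omega}(0)$ from Lemma~\ref{lem:asymptotics_a_b}, combined again with $c_\beta\sim\sqrt{\beta/\pi}$ and the identity $c_{\omega-1/2}\cdot F_{2\omega}(0)=\tfrac12$, produces the limit $2^{-\#Y}$. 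Multiplying the two factors yields $2^{-(\#Y+\#Z)}$, as claimed.

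There is no real obstacle here; the proof is a direct bookkeeping of constants. The only subtlety is to confirm that the hypotheses of Lemma~\ref{lem:asymptotics_a_b} (which assumes $\lambda_1,\dots,\lambda_d\geq 0$) apply to the arguments $2Y$ and $2Z$, which they manifestly do since $Y$ and $Z$ are multisets of nonnegative numbers. All square-root factors balance by design, and the cancellation of the $c_{\omega-1/2}$ against $F_{2\omega}(0)$ is the essential mechanism.
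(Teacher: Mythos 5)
Your proof is correct, and it is essentially the argument the paper itself relies on for the general case: the paper's proof first notes that Propositions~\ref{prop:int_ext_through_theta} and~\ref{prop:intern_extern_beta_infinity} give the two one-sided limits (a probabilistic argument -- the beta point $Z\sim f_{d,\beta}$ concentrates at $0$ as $\beta\to\infty$, and a sign-symmetry argument then yields $2^{-d}$), but observes that this route imposes an unnecessary lower bound on the elements of $Y$ and $Z$ (they must be of the form $\beta_i+\tfrac d2$ with $\beta_i\geq -1$), and that for arbitrary nonnegative multisets one should instead combine Lemma~\ref{lem:Theta_with_one_set_empty} with the Laplace asymptotics of Lemma~\ref{lem:asymptotics_a_b} -- which is precisely what you do, with the factorization of Theorem~\ref{theo:theta_as_prod_of_thetas} splitting the task into the two one-sided cases. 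Your bookkeeping is right: $c_{x-1/2}\sim\sqrt{x/\pi}$ cancels the $\sqrt{\pi/x}$ from $b(2x;2Z)$, likewise for the $a$-factor, and the identity $c_{\omega-1/2}\,F_{2\omega}(0)=\tfrac12$ (from~\eqref{eq:int_cos_cosh}) collapses each remaining factor to $\tfrac12$. So your write-up supplies exactly the ``some computations'' the paper leaves implicit, and it covers the full stated generality directly rather than only after the reduction.
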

\begin{proof}
Indeed, it follows from Propositions~\ref{prop:int_ext_through_theta} and~\ref{prop:intern_extern_beta_infinity} that $\Theta(x;Y;\varnothing) \to 2^{-\#Y}$ and  $\Theta(x;\varnothing;Z) \to 2^{-\#Z}$ as $x \to +\infty$. Actually, this argument imposes an unnecessary lower bound on the numbers in $Y$ and $Z$. To remove it, one can combine Lemma~\ref{lem:Theta_with_one_set_empty} with the asymptotics of $a$ and $b$ given in Lemma~\ref{lem:asymptotics_a_b}. It gives the same result after some computations.
\end{proof}
\begin{example}
Letting $\gamma\to +\infty$ in Proposition~\ref{prop:relation_theta_even_odd} gives the identities $\sum_{k=0}^n \binom nk = 2^n$ and, for $n\neq 0$, $\sum_{k \text{ even}} \binom nk = \sum_{k \text{ odd}} \binom nk = 2^{n-1}$.
\end{example}

\subsection{Expected angles and  conic intrinsic volumes}
We now begin to calculate expectations of various functionals associated with beta cones. All results will be stated in terms of the $\Theta$-function introduced in Section~\ref{sec:Theta_quantity}. In the rest of Section~\ref{sec:beta_cones_results}, we work in the following setting. Let
$$
\sC=\sC_{n,d}^{\beta; \beta_1,\ldots, \beta_n} = \pos(Z_1-Z,\ldots, Z_n-Z)\sim\BetaCone(\R^d;\b;\b_1,\dots,\b_n)
$$
be a beta cone in $\R^d$ with $d \in \N$, $n \in \N$ and parameters $\beta, \beta_1,\ldots, \beta_n \geq -1$. This means that $Z\sim f_{d,\beta}, Z_1\sim f_{d,\beta_1},\ldots, Z_n\sim f_{d,\beta_n}$ are independent beta-distributed points in $\BB^d$.
To simplify the resulting formulas, we introduce the variables
$$
\gamma := \beta + \frac{d}{2} \quad \text{ and } \quad \g_i := \b_i + \frac d 2,  \quad \text{ for all } i \in \lbrace 1, \dots,n \rbrace.
$$
To exclude degenerate cases, for  $d=1$ we suppose additionally that $\beta, \beta_1,\ldots, \beta_n \neq -1$. Under these assumptions,  all conclusions of  Lemma~\ref{lem:beta_cones_generic_properties} hold.

In the next theorem we compute the probability that $\sC=\R^d$ along with the expected conic intrinsic volumes of $\sC$ and its expected angle. We consider full-dimensional beta cones only. For cones with empty interior, see Theorem~\ref{theo:beta_cones_conic_intrinsic_vol_as_A_B_non_full_dim}.
\begin{theorem}[Expected conic intrinsic volumes and angles of beta cones] \label{theo:beta_cones_conic_intrinsic_vol_as_A_B}
Let $\sC = \sC_{n,d}^{\beta; \beta_1,\ldots, \beta_n}$ be a beta cone with $n\geq d$. Then,
\begin{align}
	\P [\sC=\R^d]
	&=2\sum_{\substack{I \subseteq \{1,\dots,n\} \\ \# I \in \lbrace d+1,d+3,\ldots \rbrace}}\Theta ( \gamma;\left\{ \g_i : i\in I\right\};\left\{ \g_i : i\in I^c\right\}), \label{eq:theo:beta_cones_conic_intrinsic_vol_as_A_B_1} \\
	\P [\sC\neq \R^d]
&= 2\sum_{\substack{I \subseteq \{1,\dots,n\} \\ \# I \in \lbrace d-1,d-3,\ldots \rbrace}} \Theta ( \gamma ;\left\{ \g_i : i\in I\right\};\left\{ \g_i : i\in I^c\right\}), \label{eq:theo:beta_cones_conic_intrinsic_vol_as_A_B_2}
\end{align}
where $I^c = \{1,\ldots, n\}\bsl I$. Further, for all $k \in \{0,\dots,d-1\}$, we have
\begin{equation}\label{eq:beta_cones_conic_intrinsic_vol_as_A_B}  
\E \left[\upsilon_k(\sC)\right]=\E \left[\upsilon_k(\sC) \ind_{\{\sC\neq \R^d\}}\right]
=
\sum_{\substack{I \subseteq \{1,\dots,n\} \\ \# I=k}}\Theta ( \gamma ;\left\{ \g_i : i\in I\right\};\left\{ \g_i : i\in I^c\right\}).
\end{equation}
In the excluded case $k=d$ we have
\begin{align}
&\E \left[\a(\sC)\right] = \E \left[\upsilon_d(\sC)\right] \notag
\\
&= 1- \sum_{\substack{I \subseteq \{1,\dots,n\} \\ \# I\leq d-1}}\Theta ( \gamma ;\left\{ \g_i : i\in I\right\};\left\{ \g_i : i\in I^c\right\})
=\sum_{\substack{I \subseteq \{1,\dots,n\} \\ \# I\geq d}}\Theta ( \gamma ;\left\{ \g_i : i\in I\right\};\left\{ \g_i : i\in I^c\right\}).
\label{eq:expect_upsilon_d_beta_cone_1}
\end{align}
The expected solid angle on the event that $\sC\neq \R^d$ is given by
\begin{align}
\E \left[\a(\sC)\ind_{\{\sC\neq \R^d\}}\right] =  \E \left[\upsilon_d(\sC)\ind_{\{\sC\neq \R^d\}}\right]
= \sum_{\substack{I \subseteq \{1,\dots,n\} \\ \# I\leq d-1}} (-1)^{d-1-\#I} \Theta ( \gamma ;\left\{ \g_i : i\in I\right\};\left\{ \g_i : i\in I^c\right\}). \label{eq:upislon_R_d_ind_BetaCone}
\end{align}
\end{theorem}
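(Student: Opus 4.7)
The proposal is to derive everything from what has already been proved, treating Theorem~\ref{theo:beta_cones_conic_intrinsic_vol_as_ext_int}, Proposition~\ref{prop:int_ext_through_theta}, Theorem~\ref{theo:theta_as_prod_of_thetas}, Proposition~\ref{prop:relation_theta_even_odd} and Lemma~\ref{lem:beta_cones_generic_properties} as the raw materials.

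\textbf{Step 1: the formula for $\E\upsilon_k(\sC)$, $k\in\{0,\dots,d-1\}$.} First I would start from Theorem~\ref{theo:beta_cones_conic_intrinsic_vol_as_ext_int}, which already gives $\E\upsilon_k(\sC)$ as a sum over $I\subseteq\{1,\dots,n\}$ with $\#I=k$ of a product of an $\intern$-quantity and an $\extern$-quantity. Substitute the expressions for $\intern$ and $\extern$ through $\Theta$ from Proposition~\ref{prop:int_ext_through_theta}: a direct rewriting (adding $k/2$ to the first argument of $\intern$, and $(n-k)/2$ to that of $\extern$) yields the factor
\[
\Theta\bigl(\g;\{\g_i:i\in I\};\varnothing\bigr)\cdot \Theta\Bigl(\g+\sum_{i\in I}\g_i;\varnothing;\{\g_i:i\in I^c\}\Bigr).
\]
By Theorem~\ref{theo:theta_as_prod_of_thetas} this product equals $\Theta(\g;\{\g_i:i\in I\};\{\g_i:i\in I^c\})$. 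Summing over $I$ of size $k$ gives \eqref{eq:beta_cones_conic_intrinsic_vol_as_A_B}. Note that on the event $\{\sC=\R^d\}$ we have $\upsilon_k(\sC)=0$ for $k<d$ (by Lemma~\ref{lem:cone_intrinsic_volume_lineality_space} applied to $p=q=d$), so removing or adding the indicator $\ind_{\{\sC\neq\R^d\}}$ does not change the expectation.

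\textbf{Step 2: the formula for $\E\upsilon_d(\sC)=\E\alpha(\sC)$.} Use the identity $\sum_{k=0}^d \upsilon_k(\sC)=1$ to write $\E\upsilon_d(\sC)=1-\sum_{k=0}^{d-1}\E\upsilon_k(\sC)$ and insert the result of Step~1. This yields the first equality in \eqref{eq:expect_upsilon_d_beta_cone_1}. For the second equality, I would invoke Proposition~\ref{prop:relation_theta_even_odd}: the full sum $\sum_{I\subseteq\{1,\dots,n\}}\Theta(\g;\{\g_i:i\in I\};\{\g_i:i\in I^c\})$ equals $1$, so $1$ minus the part with $\#I\leq d-1$ is exactly the part with $\#I\geq d$.

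\textbf{Step 3: the formula for $\E[\alpha(\sC)\ind_{\{\sC\neq\R^d\}}]$.} By Lemma~\ref{lem:beta_cones_generic_properties}, on the event $\{\sC\neq\R^d\}$ the cone $\sC$ has lineality space $\{0\}$ and in particular is not a linear subspace; hence the Gauss--Bonnet relation $\sum_{k=0}^d(-1)^k\upsilon_k(\sC)=0$ holds there. Multiplying by $\ind_{\{\sC\neq\R^d\}}$, taking expectations, and using that $\E[\upsilon_k(\sC)\ind_{\{\sC\neq\R^d\}}]=\E\upsilon_k(\sC)$ for $k<d$ yields
\[
\E\bigl[\upsilon_d(\sC)\ind_{\{\sC\neq\R^d\}}\bigr]
=(-1)^{d-1}\sum_{k=0}^{d-1}(-1)^k\E\upsilon_k(\sC)
=\sum_{\#I\leq d-1}(-1)^{d-1-\#I}\Theta\bigl(\g;\{\g_i{:}i\in I\};\{\g_i{:}i\in I^c\}\bigr),
\]
which is \eqref{eq:upislon_R_d_ind_BetaCone}.

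\textbf{Step 4: the absorption probabilities.} Since $\upsilon_d(\sC)=1$ on $\{\sC=\R^d\}$ and $\upsilon_k(\sC)=0$ for $k<d$ there, we have
\[
\P[\sC=\R^d]=\E\upsilon_d(\sC)-\E\bigl[\upsilon_d(\sC)\ind_{\{\sC\neq\R^d\}}\bigr].
\]
Substituting the two previous formulas and again invoking the even/odd identities from Proposition~\ref{prop:relation_theta_even_odd} (which, since $n\geq d\geq 1$, give $\sum_{\#I\text{ odd}}\Theta=\sum_{\#I\text{ even}}\Theta=1/2$) I would collapse the resulting signed sums into the claimed formula $\P[\sC=\R^d]=2\sum_{\#I\in\{d+1,d+3,\dots\}}\Theta$. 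The complementary identity \eqref{eq:theo:beta_cones_conic_intrinsic_vol_as_A_B_2} then follows from $\P[\sC\neq\R^d]=1-\P[\sC=\R^d]$ together with the same parity identity.

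The only substantive verification is the bookkeeping in Step~4: one must check the parity collapse carefully (e.g.\ for small cases like $d=1,2$ and general $n$) to confirm that the alternating signs indeed leave only the terms $\#I\equiv d+1\pmod 2$ with $\#I\geq d+1$ doubled. Everything else is assembly of prior results, so I expect no genuine difficulty beyond this combinatorial simplification.
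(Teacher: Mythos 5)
Your proposal is correct and follows essentially the same route as the paper: Theorem~\ref{theo:beta_cones_conic_intrinsic_vol_as_ext_int} plus Proposition~\ref{prop:int_ext_through_theta} and Theorem~\ref{theo:theta_as_prod_of_thetas} for \eqref{eq:beta_cones_conic_intrinsic_vol_as_A_B}, then the closedness and Gauss--Bonnet relations together with Proposition~\ref{prop:relation_theta_even_odd}, which the paper merely packages into the standalone Proposition~\ref{prop:v_d_expressed_through_lower_indices} (deriving $\P[\sC\neq\R^d]=2\sum_{\#I\in\{d-1,d-3,\dots\}}\Theta$ first and taking complements) rather than computing $\P[\sC=\R^d]=\E\upsilon_d(\sC)-\E[\upsilon_d(\sC)\ind_{\{\sC\neq\R^d\}}]$ as you do. The parity collapse you flag in Step~4 does go through, since $\sum_{\#I\equiv d\,(2)}\Theta=\sum_{\#I\equiv d+1\,(2)}\Theta=\tfrac12$ lets the signed sum over $\#I\leq d-1$ cancel against $\sum_{\#I\geq d}\Theta$ leaving exactly $2\sum_{\#I\in\{d+1,d+3,\dots\}}\Theta$.
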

\begin{example}[Expected external angle]
Taking $k=0$ in~\eqref{eq:beta_cones_conic_intrinsic_vol_as_A_B} yields
\begin{align*}
\E[\alpha (\sC^\circ) \ind_{\{\sC\neq \R^d\}}] &= \E[\upsilon_0(C)\ind_{\{\sC\neq \R^d\}}] = \E[\upsilon_0(C)] = \Theta(\gamma; \varnothing; \{\gamma_1,\ldots, \gamma_n\}),
\\
\E[\alpha (\sC^\circ)] &= \E[\alpha (\sC^\circ) \ind_{\{\sC\neq \R^d\}}] + \P[\sC= \R^d]  = \Theta(\gamma; \varnothing; \{\gamma_1,\ldots, \gamma_n\}) + \P[\sC= \R^d].
\end{align*}
For the second formula we used that $\alpha ((\R^d)^\circ) = \alpha (\{0\}) = 1$ as well as the first formula.
\end{example}
\begin{proof}[Proof of~\eqref{eq:beta_cones_conic_intrinsic_vol_as_A_B}]
Since $\upsilon_{k}(\R^d) = 0$ for $k\neq d$, we have $\E [\upsilon_k(\sC)] =\E [\upsilon_k(\sC) \ind_{\{\sC\neq \R^d\}}]$.
Theorem~\ref{theo:beta_cones_conic_intrinsic_vol_as_ext_int} lets us express $\E [\upsilon_k(\sC)]$ as a sum of products of internal and external quantities. We can use Proposition~\ref{prop:int_ext_through_theta}  to express each factor of the product through the $\Theta$-function, that is we have for every index set $I \subseteq \{1,\ldots,n\}$ with $\# I = k$ that
\begin{align*}
	\intern\left(\b + \frac{d-k}{2}; \;\; \left\{ \b_i + \frac{d-k}{2}: i\in I\right\}\right) = \Theta \left(\b+\frac{d}{2}; \left\{\b_i + \frac{d}{2}: i \in I \right\}; \varnothing\right)
\end{align*}
as well as
\begin{multline*}
	\extern\left(\beta +\frac{d}{2} + \sum_{i\in I} \left( \beta_i +  \frac {d}{2} \right) - \frac{n-k}{2}; \;\; \left\{\beta_{i} + \frac {d}{2} - \frac{n-k}{2}: i \in I^c\right\}\right) \\ = \Theta \left(\beta +\frac{d}{2} + \sum_{i\in I} \left( \beta_i +  \frac {d}{2} \right); \varnothing ;  \left\{\beta_{i} + \frac {d}{2} : i\in I^c\right\}\right).
\end{multline*}
In particular, as we have a product of $\Theta$-functions, we can apply~\eqref{eq:theo:theta_as_prod_of_thetas} from Theorem~\ref{theo:theta_as_prod_of_thetas} to get
\begin{multline*}
	\Theta \left(\b+\frac{d}{2}; \left\{\b_i + \frac{d}{2}: i \in I \right\}; \varnothing\right)\Theta \left(\beta +\frac{d}{2} + \sum_{i\in I} \left( \beta_i +  \frac {d}{2} \right); \varnothing ;  \left\{\beta_{i} + \frac {d}{2} : i\in I^c\right\}\right) \\
	= \Theta \left(\b+\frac{d}{2}; \left\{\b_i + \frac{d}{2}: i \in I \right\}; \left\{\beta_{i} + \frac {d}{2} : i\in I^c\right\}\right).
\end{multline*}
Plugging the above into the representation of $\E [\upsilon_k(\sC)]$ given in~\eqref{eq:beta_cones_conic_intrinsic_vol_as_ext_int} completes the proof.
\end{proof}
To prove the remainder of Theorem~\ref{theo:beta_cones_conic_intrinsic_vol_as_A_B}, we need a general result which expresses the expected $d$-th conic intrinsic volume of a random cone $\cC$ in $\R^d$ (as well as some related quantities) through $\E \upsilon_k(\cC)$ with $k\leq d-1$.
\begin{proposition}\label{prop:v_d_expressed_through_lower_indices}
Let $\cC$ be any random closed convex cone in $\R^d$ such that the probability that $\cC$ is a linear subspace of dimension strictly smaller than $d$ is $0$. The probability that $\cC=\R^d$ may but need not  be positive.  Then,
\begin{align}
\E \left[\upsilon_d(\cC)\right]
&=
1 - \E \upsilon_{d-1}(\cC) - \E \upsilon_{d-2}(\cC) - \E \upsilon_{d-3}(\cC) -\ldots -  \E \upsilon_{0}(\cC),
\label{eq:expect_upsilon_rand_cone_1}
\\
\E \left[\upsilon_d(\cC)\ind_{\{\cC\neq \R^d\}}\right]
&=
\E \upsilon_{d-1}(\cC) - \E \upsilon_{d-2}(\cC) + \E \upsilon_{d-3}(\cC) -\ldots + (-1)^{d-1} \E \upsilon_{0}(\cC),
\label{eq:expect_upsilon_rand_cone_2}
\\
\P[\cC= \R^d]
&=
1 - 2\left(\E \upsilon_{d-1}(\cC) + \E \upsilon_{d-3}(\cC) + \E \upsilon_{d-5}(\cC) + \ldots \right),
\label{eq:expect_upsilon_rand_cone_3}
\\
\P[\cC \neq  \R^d]
&=
2\left(\E \upsilon_{d-1}(\cC) + \E \upsilon_{d-3}(\cC) + \E \upsilon_{d-5}(\cC) + \ldots \right).
\label{eq:expect_upsilon_rand_cone_4}
\end{align}
\end{proposition}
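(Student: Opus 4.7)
The strategy rests on two classical identities for the conic intrinsic volumes of a polyhedral cone $C \subseteq \R^d$ recalled in Section~\ref{subsec:angles_conic_intrinsic_defs}: the normalization $\sum_{k=0}^d \upsilon_k(C) = 1$, which holds unconditionally, and the Gauss--Bonnet identity $\sum_{k=0}^d (-1)^k \upsilon_k(C) = 0$, which holds whenever $C$ is not a linear subspace. Combined with the trivial observation that $\upsilon_d(\R^d) = 1$ and $\upsilon_k(\R^d) = 0$ for $k < d$, these two relations will generate all four formulas.

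\textbf{Step 1 (formula \eqref{eq:expect_upsilon_rand_cone_1}).} Applying $\sum_k \upsilon_k(\cC) = 1$ pointwise and taking expectation gives $\E \upsilon_d(\cC) = 1 - \sum_{k=0}^{d-1} \E \upsilon_k(\cC)$, which is exactly \eqref{eq:expect_upsilon_rand_cone_1}.

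\textbf{Step 2 (formula \eqref{eq:expect_upsilon_rand_cone_2}).} On the event $\{\cC \neq \R^d\}$ the cone $\cC$ is a.s.\ not a linear subspace (by the assumption of the proposition, the only linear subspace $\cC$ can be is $\R^d$). Hence Gauss--Bonnet applies on this event, yielding $(-1)^d \upsilon_d(\cC) \ind_{\{\cC \neq \R^d\}} = -\sum_{k=0}^{d-1}(-1)^k \upsilon_k(\cC) \ind_{\{\cC \neq \R^d\}}$. Now observe that for $k \leq d-1$ one has $\upsilon_k(\cC) \ind_{\{\cC = \R^d\}} = 0$, so the indicator can be dropped in each summand on the right. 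Taking expectations and multiplying by $(-1)^d$ gives
\[
\E[\upsilon_d(\cC) \ind_{\{\cC \neq \R^d\}}] = \sum_{k=0}^{d-1}(-1)^{d-1-k}\,\E \upsilon_k(\cC),
\]
which is \eqref{eq:expect_upsilon_rand_cone_2}.

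\textbf{Step 3 (formulas \eqref{eq:expect_upsilon_rand_cone_3} and \eqref{eq:expect_upsilon_rand_cone_4}).} Since $\upsilon_d(\cC) \ind_{\{\cC = \R^d\}} = \ind_{\{\cC = \R^d\}}$, we have $\P[\cC = \R^d] = \E \upsilon_d(\cC) - \E[\upsilon_d(\cC) \ind_{\{\cC \neq \R^d\}}]$. Subtracting \eqref{eq:expect_upsilon_rand_cone_2} from \eqref{eq:expect_upsilon_rand_cone_1} term by term yields
\[
\P[\cC = \R^d] = 1 - \sum_{k=0}^{d-1}\bigl(1 + (-1)^{d-1-k}\bigr)\, \E \upsilon_k(\cC),
\]
and the coefficient $1+(-1)^{d-1-k}$ equals $2$ precisely when $k$ has the same parity as $d-1$ and vanishes otherwise. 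This gives \eqref{eq:expect_upsilon_rand_cone_3}, and \eqref{eq:expect_upsilon_rand_cone_4} follows by taking the complement.

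\textbf{On obstacles.} There is no real obstacle: the only point that requires a moment of care is justifying the use of Gauss--Bonnet in Step~2. This is where the hypothesis that $\cC$ is a.s.\ not a linear subspace of dimension $<d$ enters crucially, ensuring that on the event $\{\cC \neq \R^d\}$ the cone $\cC$ is a.s.\ not a linear subspace at all, so that Gauss--Bonnet applies pointwise.
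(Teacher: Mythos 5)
Your proposal is correct and follows essentially the same route as the paper: both arguments combine the normalization $\sum_k \upsilon_k(\cC)=1$ with the Gauss--Bonnet relation on the event $\{\cC\neq\R^d\}$ (using the hypothesis to identify this event with $\{\cC\neq\lin\cC\}$), drop the indicator in the terms with $k\le d-1$ since $\upsilon_k(\R^d)=0$, and then obtain $\P[\cC=\R^d]$ as $\E\upsilon_d(\cC)-\E[\upsilon_d(\cC)\ind_{\{\cC\neq\R^d\}}]$. No gaps.
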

\begin{proof}
We use the relations for the conic intrinsic volumes given in Section~\ref{subsec:angles_conic_intrinsic_defs},
\begin{align*}
&\upsilon_0(\cC) + \upsilon_1(\cC) + \ldots + \upsilon_d(\cC) = 1,\\
&\upsilon_d(\cC) - \upsilon_{d-1}(\cC) + \upsilon_{d-2}(\cC)  - \ldots + (-1)^d \upsilon_0(\cC) = 0  \quad \text{ on } \quad  \{\cC \neq \lin \cC\}.
\end{align*}
Note further that by assumption,  $\{\cC\neq \lin \cC\}$ coincides with $\{\cC \neq \R^d\}$ up to a zero event.
Taking the expectation gives
\begin{align*}
&\E [\upsilon_0(\cC)] + \E [\upsilon_{1}(\cC)] + \ldots + \E [\upsilon_{d}(\cC)] = 1,\\
&\E\left[\upsilon_d(\cC)\ind_{\{\cC\neq \R^d\}}\right] - \E\left[\upsilon_{d-1}(\cC)\right] + \E\left[\upsilon_{d-2}(\cC)\right]  - \ldots + (-1)^d \E\left[\upsilon_0(\cC)\right] = 0,
\end{align*}
where in the second formula we used that on the event $\{\cC \neq \R^d\}$ we have $\upsilon_k(\cC) = 0$ whenever $k\neq d$ and hence
$$
\E \left[\upsilon_k(\cC) \ind_{\{\cC\neq \R^d\}}\right] = \E \left[\upsilon_k(\cC)\right], \qquad k\in\{0,\ldots, d-1\}.
$$
This proves~\eqref{eq:expect_upsilon_rand_cone_1} and~\eqref{eq:expect_upsilon_rand_cone_2}.  To prove the remaining identities observe that
$$
\E [\upsilon_d(\cC)] =  \E\left[\upsilon_d(\cC)\ind_{\{\cC\neq \R^d\}}\right] + \E\left[\upsilon_d(\cC)\ind_{\{\cC = \R^d\}}\right]
=
 \E\left[\upsilon_d(\cC)\ind_{\{\cC\neq \R^d\}}\right] + \P[\cC= \R^d]
$$
since $\upsilon_d(\R^d) = 1$. This gives, in view of~\eqref{eq:expect_upsilon_rand_cone_1} and~\eqref{eq:expect_upsilon_rand_cone_2},
$$
\P[\cC= \R^d] =  \E [\upsilon_d(\cC)] - \E\left[\upsilon_d(\cC)\ind_{\{\cC\neq \R^d\}}\right] = 1 - 2\left(\E \upsilon_{d-1}(\cC) + \E \upsilon_{d-3}(\cC) + \E \upsilon_{d-5}(\cC) + \ldots \right).
$$
The formula for $\P[\cC \neq  \R^d]$ follows by taking the complementary event.
\end{proof}

\begin{proof}[Proof of the rest of Theorem~\ref{theo:beta_cones_conic_intrinsic_vol_as_A_B}]
By Lemma~\ref{lem:beta_cones_generic_properties}, the probability that a beta cone $\sC$ is a linear subspace of dimension strictly smaller than $d$ vanishes.  Equations~\eqref{eq:expect_upsilon_d_beta_cone_1} and~\eqref{eq:upislon_R_d_ind_BetaCone} of Theorem~\ref{theo:beta_cones_conic_intrinsic_vol_as_A_B} follow from~\eqref{eq:expect_upsilon_rand_cone_1} and~\eqref{eq:expect_upsilon_rand_cone_2} combined with~\eqref{eq:beta_cones_conic_intrinsic_vol_as_A_B} and the first equation of Proposition~\ref{prop:relation_theta_even_odd}.
Equation~\ref{eq:expect_upsilon_rand_cone_4} combined with~\eqref{eq:beta_cones_conic_intrinsic_vol_as_A_B} gives 		
\begin{align*}
\P[\sC \neq \R^{d}] = 2\sum_{\substack{I \subseteq \{1,\dots,n\} \\ \# I \in \lbrace d-1,d-3,\ldots \rbrace}} \Theta ( \gamma ;\left\{ \g_i : i\in I\right\};\left\{ \g_i : i\in I^c\right\}).
\end{align*}
By the second equation of Proposition~\ref{prop:relation_theta_even_odd}, the complementary probability $\P[\sC = \R^{d}] = 1-\P[\sC \neq \R^{d}]$ is given by the same formula but with summation extended over $I$ satisfying  $\# I \in \lbrace d+1,d+3,\ldots \rbrace$.
\end{proof}

\begin{theorem}[Beta cones with empty interior]\label{theo:beta_cones_conic_intrinsic_vol_as_A_B_non_full_dim}
Consider a beta cone $\sC=\sC_{n,d}^{\beta; \beta_1,\ldots, \beta_n}$ with $n \in \{1,\ldots, d-1\}$.  Then, for all $k\in \{0,\ldots, n\}$,
\begin{equation}\label{eq:beta_cones_conic_intrinsic_vol_as_A_B_nonfull_dim}
\E \left[\upsilon_k(\sC)\right]
=
\sum_{\substack{I \subseteq \{1,\dots,n\} \\ \# I=k}}\Theta ( \gamma ;\left\{ \g_i : i\in I\right\};\left\{ \g_i : i\in I^c\right\}).
\end{equation}
In particular, expected internal and external angles of $\sC$ are given by
\begin{align*}
\E \left[\a(\sC)\right]
&= \E \left[\upsilon_n(\sC)\right] = \Theta (\gamma ;\left\{ \g_1, \dots, \g_n \right\}; \varnothing),
\\
\E \left[\a(\sC^\circ)\right]
&= \E \left[\upsilon_0(\sC)\right] = \Theta (\gamma ;\varnothing; \left\{ \g_1, \dots, \g_n \right\}).
\end{align*}
\end{theorem}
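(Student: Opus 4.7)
The plan is to mimic the proof of Theorem~\ref{theo:beta_cones_conic_intrinsic_vol_as_ext_int}, taking advantage of the fact that when $n<d$ the cone $\sC$ is deficient dimensional and pointed almost surely, so every candidate face is actually a face and no absorption event can occur. Concretely, by Lemma~\ref{lem:beta_cones_generic_properties} we have $\dim\sC=n$ and $\linsp\sC=\{0\}$ with probability one, so Lemma~\ref{lem:cone_intrinsic_volume_lineality_space} tells us that $\upsilon_k(\sC)=0$ for $k\notin\{0,\dots,n\}$ and that $\upsilon_n(\sC)=\alpha(\sC)$, $\upsilon_0(\sC)=\alpha(\sC^\circ)$. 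Moreover, again by Lemma~\ref{lem:beta_cones_generic_properties}, every $k$-dimensional face of $\sC$ has the form $G_I:=\pos(Z_i-Z:i\in I)$ for some $I\subseteq\{1,\dots,n\}$ with $\#I=k$, and in contrast to the situation $n\geq d$ treated earlier, for $n<d$ each such $G_I$ is almost surely a face, so the indicator $\ind_{\{G_I\in\cF_k(\sC)\}}$ equals $1$ a.s.

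Starting from $\upsilon_k(\sC)=\sum_{G\in\cF_k(\sC)}\alpha(G)\gamma(G,\sC)$ and the observation just made, I would write
\begin{align*}
\E[\upsilon_k(\sC)]=\sum_{\substack{I\subseteq\{1,\dots,n\}\\ \#I=k}}\E[\alpha(G_I)]\,\E[\gamma(G_I,\sC)],
\end{align*}
where the factorisation uses Part~(c) of Theorem~\ref{theo:beta_cones_faces_alpha_and_gamma}. Parts~(a) and~(b) of the same theorem (in the form~\eqref{eq:beta_cones_external_angles_as_external_quant}) then yield
\begin{align*}
\E[\alpha(G_I)]&=\intern\!\left(\b+\tfrac{d-k}{2};\,\{\b_i+\tfrac{d-k}{2}:i\in I\}\right),\\
\E[\gamma(G_I,\sC)]&=\extern\!\left(\b+\tfrac{d}{2}+\sum_{i\in I}\!\left(\b_i+\tfrac{d}{2}\right)-\tfrac{n-k}{2};\,\{\b_j+\tfrac{d}{2}-\tfrac{n-k}{2}:j\in I^c\}\right),
\end{align*}
where in the $\extern$-formula the $\ind$ may be dropped since $G_I$ is a face a.s.

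The final step is purely cosmetic: convert the internal and external quantities into $\Theta$-values using Proposition~\ref{prop:int_ext_through_theta}, which gives
\begin{align*}
\E[\alpha(G_I)]=\Theta(\g;\{\g_i:i\in I\};\varnothing),\qquad \E[\gamma(G_I,\sC)]=\Theta\!\Bigl(\g+\!\sum_{i\in I}\g_i;\varnothing;\{\g_j:j\in I^c\}\Bigr),
\end{align*}
and then merge the two factors by applying~\eqref{eq:theo:theta_as_prod_of_thetas} of Theorem~\ref{theo:theta_as_prod_of_thetas} with $\alpha=\g+\sum_{i\in I}\g_i$. This produces exactly $\Theta(\g;\{\g_i:i\in I\};\{\g_j:j\in I^c\})$ and establishes~\eqref{eq:beta_cones_conic_intrinsic_vol_as_A_B_nonfull_dim}. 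The two displayed consequences for $\E[\alpha(\sC)]$ and $\E[\alpha(\sC^\circ)]$ follow at once by specialising to $k=n$ (only $I=\{1,\dots,n\}$ contributes, so $\gamma(G_{[n]},\sC)=\alpha((\lin\sC)^\perp)=1$, consistent with the external factor reducing to $\Theta(\cdot;\varnothing;\varnothing)=1$) and $k=0$ (only $I=\varnothing$ contributes).

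No step looks genuinely hard: the only delicate point is checking at $k=n$ that the "face" $G_{[n]}=\sC$ itself is handled correctly in the formalism, which amounts to verifying that the external angle factor collapses to $1$ as expected, and that the independence claim of Theorem~\ref{theo:beta_cones_faces_alpha_and_gamma}(c) remains meaningful in this boundary case; both are immediate from the statements of those results because the tangent/normal cone construction in Theorem~\ref{theo:representation_angles_and_construction_of_Cone} still applies for $k=n<d$.
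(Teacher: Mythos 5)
Your argument for $k\in\{0,\ldots,n-1\}$ is in substance the paper's own: the paper simply invokes Theorem~\ref{theo:beta_cones_conic_intrinsic_vol_as_ext_int} (which is already stated for all $k\le\min(n,d)-1$, hence covers $n<d$) and converts $\intern\cdot\extern$ into a single $\Theta$-value via Proposition~\ref{prop:int_ext_through_theta} and Theorem~\ref{theo:theta_as_prod_of_thetas}, exactly as you do; your extra observation that for $n<d$ every $G_I$ is a.s.\ a face, so the indicators disappear, is correct but not needed for this range, since the indicator version is already built into~\eqref{eq:beta_cones_conic_intrinsic_vol_as_ext_int}. Where you genuinely diverge is the top case $k=n$: the paper gets it for free from the linear relation $\upsilon_0(\sC)+\ldots+\upsilon_n(\sC)=1$ together with the identity $\sum_{I}\Theta(\gamma;\{\gamma_i:i\in I\};\{\gamma_i:i\in I^c\})=1$ of Proposition~\ref{prop:relation_theta_even_odd}, whereas you compute $\upsilon_n(\sC)=\alpha(\sC)\,\gamma(\sC,\sC)=\alpha(\sC)$ directly (the unique $n$-face being $\sC$ itself, with normal cone $(\lin\sC)^\perp$ of angle $1$) and then identify $\E\alpha(\sC)$ with $\Theta(\gamma;\{\gamma_1,\ldots,\gamma_n\};\varnothing)$. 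Your route is more self-contained geometrically; the paper's trick avoids any boundary-case analysis and buys the answer in one line.

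One caveat in your boundary case: you assert the $k=n$ extension is ``immediate from the statements'' of Theorems~\ref{theo:representation_angles_and_construction_of_Cone} and~\ref{theo:beta_cones_faces_alpha_and_gamma}, but both are stated only for $k\le\min(n,d)-1$, so as written they do not cover $G_{[n]}=\sC$. This is not a fatal gap, because the proof of part~(a) -- project $Z,Z_1,\ldots,Z_n$ onto $\R^{n}$, apply Lemma~\ref{lem:projection} and Proposition~\ref{prop:angles_of_rotationally_invariant_projections}, and recognize the projected cone as a full-dimensional beta cone in $\R^n$ -- goes through verbatim for $k=n<d$ and yields $\E\alpha(\sC)=\intern(\beta+\tfrac{d-n}{2};\beta_1+\tfrac{d-n}{2},\ldots,\beta_n+\tfrac{d-n}{2})=\Theta(\gamma;\{\gamma_1,\ldots,\gamma_n\};\varnothing)$; but you should say this (or fall back on the paper's completeness-relation argument) rather than cite statements outside their stated range.
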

\begin{proof}
For $k\in \{0,\ldots, n-1\}$, the same argument based on Theorem~\ref{theo:beta_cones_conic_intrinsic_vol_as_ext_int} as in the proof of~\eqref{eq:beta_cones_conic_intrinsic_vol_as_A_B} applies. For $k=n$, we use the identity $\upsilon_0(\sC) + \ldots + \upsilon_n(\sC) = 1$ and then Proposition~\ref{prop:relation_theta_even_odd} to get
$$
\E \left[\upsilon_n(\sC)\right] = 1 - \sum_{k=0}^{n-1} \E \left[\upsilon_k(\sC)\right]
=
1 - \sum_{k=0}^{n-1} \sum_{\substack{I \subseteq \{1,\dots,n\} \\ \# I=k}}\Theta ( \gamma ;\left\{ \g_i : i\in I\right\};\left\{ \g_i : i\in I^c\right\})
=
\Theta (\gamma ;\left\{ \g_1, \dots, \g_n \right\}; \varnothing).
$$
This proves~\eqref{eq:beta_cones_conic_intrinsic_vol_as_A_B_nonfull_dim} for $k=n$.
\end{proof}

\subsection{Faces, tangent and normal cones}
In the next theorem we collect formulas related to faces of beta cones. In particular, we compute the probability that a given collection of vectors spans a face and the expected values of various angles associated with this face.
\begin{theorem}[Faces and angles in beta cones]\label{theo:beta_cones_angles_as_A_B}
For $k \in \lbrace 0, 1 ,\dots, \min (n,d)-1 \rbrace$, choose $K = \lbrace i_1 , \dots, i_k \rbrace \subseteq \lbrace 1, \dots, n \rbrace$ and let $G_K \coloneqq \pos (Z_{i_1}-Z,\dots,Z_{i_k}-Z)$ be a possible face of $\sC$ with the convention that $G_\varnothing:= \{0\}$. Also, let $T(G_K, \sC)$ be the tangent cone of $\sC$ at $G_K$ with the convention that it is defined as $\R^d$  if $G_K$ fails to be a face of $\sC$.
		Then,
		\begin{align}
		\P [G_K \text{ is a face of }\sC]
		= 2\sum_{\substack{I \subseteq K^c \\ \# I \in \lbrace d-k-1,d-k-3,\ldots \rbrace}}
		\Theta \left(\g  + \sum_{i \in K } \g_i  ;\left\{ \g_i : i\in I\right\}; \left\{\g_{i}: i\in K^c\bsl I\right\}  \right),
\label{eq:theo:beta_cones_angles_as_A_B_1} \\
		\P [G_K \text{ is not a face of }\sC]
		= 2\sum_{\substack{I \subseteq K^c \\ \# I \in \lbrace d-k+1,d-k+3,\ldots \rbrace}}
		\Theta \left(\g + \sum_{i \in K} \g_i  ;\left\{ \g_i: i\in I\right\}; \left\{\g_{i} : i\in K^c\bsl I\right\}  \right). \label{eq:theo:beta_cones_angles_as_A_B_2}
		\end{align}
Further, the expected angle of $G_K$ is given by
	\begin{align}\label{eq:alpha_face_beta_cone}
		\E [\alpha(G_K)] = A\left(\g ;\lbrace \g_i: i \in K \rbrace \right) 
		= \Theta (\gamma ; \lbrace \g_i: i \in K \rbrace;\varnothing).
	\end{align}
Additionally, for all $\ell\in \{k,\ldots, \min (n,d)-1\}$,
\begin{equation}\label{eq:upsilon_tangent_beta_cone}
\E [\upsilon_{\ell} (T(G_K,\sC))] =
\sum_{\substack{I \subseteq K^c \\ \# I=\ell-k}}
\Theta \left( \gamma + \sum_{i \in K} \g_i;\left\{ \g_i : i\in I\right\};\left\{ \g_i : i\in K^c \bsl I\right\}\right).
\end{equation}
Further,  we can express the expected solid angle of the tangent cone of $\sC$ at $G_K$ as
	\begin{align*}
		\E [\alpha(T(G_K,\sC))]
&=
\E [\upsilon_{\min (n,d)} (T(G_K,\sC))]
\\
		&=\sum_{\substack{I \subseteq K^c \\ \# I \geq \min(n-k,d-k)}} \Theta \left( \gamma + \sum_{i \in K} \g_i;\left\{ \g_i : i\in I\right\};\left\{ \g_i : i\in K^c\bsl I\right\}\right).
	\end{align*}
Assuming that $n\leq d$, this formula simplifies to
\begin{align*}
\E [\alpha(T(G_K,\sC))]
=
\Theta \left( \gamma + \sum_{i \in K} \g_i;\left\{ \g_i : i\in K^c\right\}; \varnothing\right).
\end{align*}
Assuming that $n > d$, the expected internal angle on the event that $G_K$ is a face of $\sC$ is given by
	\begin{align*}
		\E \left[\a (T(G_K,\sC))\ind_{\{G_K \text{ is a face of }\sC\}}\right]
		&= \sum_{\substack{I \subseteq K^c \\ \# I \leq d-k-1}}  (-1)^{d-k-1-\#I}
		\Theta \left( \gamma + \sum_{i \in K} \g_i;\left\{ \g_i : i\in I\right\};\left\{ \g_i : i\in K^c\bsl I\right\}\right).
	\end{align*}
For the expected solid angle of the normal cone of $\sC$ at $G_K$, we have,
	\begin{align*}
		\E [\alpha(N(G_K,\sC))]
		=\Theta \left(\gamma + \sum_{i \in K} \g_i; \varnothing;\left\{\g_{i}: i \in K^c\right\}\right) + \P [G_K \text{ is not a face of }\sC].
	\end{align*}
Assuming that $n > d$, the expected external angle on the event that $G_K$ is a face of $\sC$ is given by
\begin{align*}
\E \left[\alpha(N(G_K,\sC))\ind_{\{G_K \text{ is a face of } \sC\}}\right]
=
\Theta \left(\gamma + \sum_{i \in K} \g_i; \varnothing;\left\{\g_{i}: i \in K^c\right\}\right).
\end{align*}
\end{theorem}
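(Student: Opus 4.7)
The plan is to reduce every item in the theorem to the results already established for full-dimensional (or lower-dimensional) beta cones via the tangent-cone representation in Theorem~\ref{theo:representation_angles_and_construction_of_Cone}. By relabeling the indices we may assume $K = \{1,\ldots, k\}$. Then Part~(a) of that theorem says that $T(G_K, \sC)$ is isometric to $\R^k \oplus \sD$, where
$$
\sD \sim \BetaCone\!\left(\R^{d-k};\; \beta + \sum_{i\in K}\beta_i + \tfrac{k(d+1)}{2};\; \beta_{k+1} + \tfrac k2,\ldots, \beta_n+\tfrac k2\right),
$$
and the $\gamma$-parameters associated with $\sD$ work out to $\gamma + \sum_{i\in K}\gamma_i$ (for the distinguished one) and $\{\gamma_i : i\in K^c\}$ (for the remaining ones), since $\beta_i + \tfrac k2 + \tfrac{d-k}{2} = \gamma_i$ and similarly for the distinguished parameter. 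By Proposition~\ref{prop:face_events_as_absorbtion_events_cones}, $G_K$ fails to be a face of $\sC$ precisely on $\{\sD = \R^{d-k}\}$, so \eqref{eq:theo:beta_cones_angles_as_A_B_1} and \eqref{eq:theo:beta_cones_angles_as_A_B_2} follow immediately by applying \eqref{eq:theo:beta_cones_conic_intrinsic_vol_as_A_B_1}--\eqref{eq:theo:beta_cones_conic_intrinsic_vol_as_A_B_2} of Theorem~\ref{theo:beta_cones_conic_intrinsic_vol_as_A_B} to $\sD$.

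For the angle of $G_K$ itself, I would combine Part~(a) of Theorem~\ref{theo:beta_cones_faces_alpha_and_gamma} with Proposition~\ref{prop:int_ext_through_theta}: the shift $\beta \mapsto \beta + \tfrac{d-k}{2}$ inside $\intern$ is absorbed when passing to $\Theta$, giving $\E\alpha(G_K) = \Theta(\gamma; \{\gamma_i: i\in K\}; \varnothing)$. For the conic intrinsic volumes of the tangent cone, I use the product formula $\upsilon_\ell(\R^k \oplus \sD) = \upsilon_{\ell-k}(\sD)$ (only $\upsilon_k(\R^k) = 1$ is nonzero), reducing \eqref{eq:upsilon_tangent_beta_cone} to \eqref{eq:beta_cones_conic_intrinsic_vol_as_A_B} applied to $\sD$ with index $\ell - k$; the relabeled $\gamma$-parameters then produce exactly the $\Theta$-sum claimed.

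For the expected solid angle of $T(G_K, \sC)$, I would again use $\alpha(T(G_K, \sC)) = \alpha(\sD)$ (the solid angle depends only on the non-lineal part). If $n \leq d$, then $\sD$ has at most $n-k$ generators in $\R^{d-k}$ and is not full-dimensional, so Theorem~\ref{theo:beta_cones_conic_intrinsic_vol_as_A_B_non_full_dim} applies and yields the single-$\Theta$ expression; if $n > d$, I use $\alpha(\sD) = \upsilon_{d-k}(\sD)$ together with \eqref{eq:expect_upsilon_d_beta_cone_1} and \eqref{eq:upislon_R_d_ind_BetaCone}, which give the two stated sums (with the indicator the alternating-sign version). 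Finally, for the normal cone, Part~(b) of Theorem~\ref{theo:representation_angles_and_construction_of_Cone} identifies $N(G_K, \sC)$ with $\sD^\circ$. By Lemma~\ref{lem:beta_cones_generic_properties}, $\sD$ is a.s.\ either $\R^{d-k}$ or has trivial lineality, so $\alpha(\sD^\circ) = \upsilon_0(\sD) \ind_{\{\sD\neq \R^{d-k}\}} + \ind_{\{\sD = \R^{d-k}\}}$. Taking expectation, applying \eqref{eq:beta_cones_conic_intrinsic_vol_as_A_B} with $k = 0$ to $\sD$, and noting $\P[\sD = \R^{d-k}] = \P[G_K \text{ is not a face}]$ delivers both normal-cone formulas.

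The main obstacle is purely bookkeeping: one must verify that the parameter shifts $\beta \mapsto \beta + \tfrac{d-k}{2}$, $\beta_i \mapsto \beta_i + \tfrac k2$ occurring in Theorem~\ref{theo:representation_angles_and_construction_of_Cone} combine cleanly with the definition $\gamma_i = \beta_i + \tfrac d2$ to reproduce the unshifted $\gamma_i$ and the correctly shifted distinguished argument $\gamma + \sum_{i\in K}\gamma_i$, so that the $\Theta$-sums in the conclusion appear verbatim. Once this translation is set up once and for all, every formula in the theorem is a direct consequence of either Theorem~\ref{theo:beta_cones_conic_intrinsic_vol_as_A_B} or Theorem~\ref{theo:beta_cones_conic_intrinsic_vol_as_A_B_non_full_dim}, with the $n \leq d$ vs.\ $n > d$ dichotomy handled by selecting the appropriate one.
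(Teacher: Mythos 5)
Your proposal follows essentially the same route as the paper's proof: reduce everything to the beta cone $\sD$ via Theorem~\ref{theo:representation_angles_and_construction_of_Cone}, identify the face event with $\{\sD\neq\R^{d-k}\}$ via Proposition~\ref{prop:face_events_as_absorbtion_events_cones}, use $\upsilon_\ell(\R^k\oplus\sD)=\upsilon_{\ell-k}(\sD)$ and Lemma~\ref{lem:cone_intrinsic_volume_lineality_space}, and then invoke Theorems~\ref{theo:beta_cones_conic_intrinsic_vol_as_A_B} and~\ref{theo:beta_cones_conic_intrinsic_vol_as_A_B_non_full_dim} together with Proposition~\ref{prop:int_ext_through_theta} after the same parameter relabeling. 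The only small slip is in the $n\le d$ simplification of the solid angle: for $n=d$ the cone $\sD$ has $d-k$ generators in $\R^{d-k}$ and is a.s.\ full-dimensional, so Theorem~\ref{theo:beta_cones_conic_intrinsic_vol_as_A_B_non_full_dim} (which requires strictly fewer generators than the ambient dimension) does not apply and one should instead use Theorem~\ref{theo:beta_cones_conic_intrinsic_vol_as_A_B}, where only the term $I=K^c$ survives and yields the same single-$\Theta$ formula.
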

\begin{remark}[The case $k=0$]
For $k=0$, the statements in the above theorem can be interpreted as follows. Since $G_\varnothing = \lbrace0\rbrace$, the event that $G_\varnothing \text{ is a face of } \sC$ happens if and only if $\sC \neq \R^d$. Further, $T(\{0\},\sC) = \sC$, $N(\{0\}, \sC) = \sC^\circ$ and $\alpha (\{0\}) = 1$.
\end{remark}
	\begin{proof}[Proof of Theorem~\ref{theo:beta_cones_angles_as_A_B}]
To prove~\eqref{eq:alpha_face_beta_cone}, we use Theorem \ref{theo:beta_cones_faces_alpha_and_gamma}, Equation \eqref{eq:A_through_Int} and Proposition~\ref{prop:int_ext_through_theta}, which give
	\begin{align*}
		\E \alpha(G_K) = \intern\left(\g - \frac{k}{2}; \g_{i_1} - \frac{k}{2}, \dots, \g_{i_k} - \frac{k}{2}\right) = A\left(\g ;\lbrace \g_i: i \in K \rbrace \right) = \Theta \left(\gamma ; \lbrace \g_i: i \in K \rbrace;\varnothing\right).
	\end{align*}
Next we prove~\eqref{eq:theo:beta_cones_angles_as_A_B_1} and~\eqref{eq:theo:beta_cones_angles_as_A_B_2}. We know from Theorem~\ref{theo:representation_angles_and_construction_of_Cone} that $T(G_K,\sC)$ is isometric to
		\begin{align*}
			\R^k \oplus \BetaCone  \left(\R^{d-k}; \b + \frac{k}{2} +  \sum_{i \in K} \left(\beta_i+\frac{d}{2}\right); \left\{ \b_i + \frac{k}{2}: i \in K^c  \right\} \right).
		\end{align*}
Let $\mathcal{D}_K$  denote the beta cone on the right-hand side. In view of  Proposition~\ref{prop:face_events_as_absorbtion_events_polytopes},
		\begin{align*}
\P [G_K \text{ is a face of }\sC]
&= \P[T(G_K,\sC) \neq \R^{d}] = \P[\mathcal{D}_K \neq \R^{d-k}]
\\
&=
2\sum_{\substack{I \subseteq K^c \\ \# I \in \lbrace d-k-1,d-k-3,\ldots \rbrace}}
		\Theta \left(\g  + \sum_{i \in K } \g_i  ;\left\{ \g_i : i\in I\right\}; \left\{\g_{i}: i\in K^c\bsl I\right\}  \right),
		\end{align*}
		where we have used Theorem~\ref{theo:beta_cones_conic_intrinsic_vol_as_A_B} for the last equation. This proves~\eqref{eq:theo:beta_cones_angles_as_A_B_1}.  Proposition~\ref{prop:relation_theta_even_odd} gives~\eqref{eq:theo:beta_cones_angles_as_A_B_2}.

We proceed to the proof of~\eqref{eq:upsilon_tangent_beta_cone}. For $\ell \in \{k,\ldots, \min (n,d)-1\}$ we have $\upsilon_{\ell} (\R^k \oplus \mathcal{D}_K) =\upsilon_{\ell -k}(\mathcal{D}_K)$.
 Hence,
		\begin{align*}
			\E [\upsilon_{\ell} (T(G_K,\sC))] =  \E [\upsilon_{\ell} (\R^k \oplus \mathcal{D}_K)] = \E [\upsilon_{\ell -k}(\mathcal{D}_K)].
		\end{align*}
		 Therefore, we can use~\eqref{eq:beta_cones_conic_intrinsic_vol_as_A_B} of Theorem~\ref{theo:beta_cones_conic_intrinsic_vol_as_A_B} (if $n\geq d$) or Theorem~\ref{theo:beta_cones_conic_intrinsic_vol_as_A_B_non_full_dim} (if $n<d$) to express
		\begin{equation}\label{eq:upsilon_D_k_tangent}
			\E \left[\upsilon_{\ell-k}(\mathcal{D}_K)\right]
			=
			\sum_{\substack{I \subseteq K^c \\ \# I=\ell-k}}
			\Theta \left( \gamma + \sum_{i \in K} \g_i;\left\{ \g_i : i\in I\right\};\left\{ \g_i : i\in K^c \bsl I\right\}\right).
		\end{equation}
Notice that when using these theorems, we have to add $\frac{d-k}{2}$ to the parameters of the beta cone $\mathcal{D}_K$,  $d-k$ being the dimension of the space $\mathcal{D}_K$ lives in. This turns the existing $\frac{k}{2}$ in these parameters into $\frac d2$ and thus we arrive at $\gamma$ and $\gamma_i$ in the above equation. This proves~\eqref{eq:upsilon_tangent_beta_cone}.

Next we  calculate the expected solid angle of $T(G_K,\sC)$ using~\eqref{eq:expect_upsilon_d_beta_cone_1} of Theorem~\ref{theo:beta_cones_conic_intrinsic_vol_as_A_B} (if $n\geq d$) or Theorem~\ref{theo:beta_cones_conic_intrinsic_vol_as_A_B_non_full_dim} (if $n<d$),
\begin{align*}
\E [\a (T(G_K,\sC))] 
&=
\E[\alpha(\R^k \oplus \mathcal{D}_K))]
=
\E[\alpha(\mathcal{D}_K)]
=
\E [\upsilon_{\min (n-k,d-k)} (\mathcal{D}_K)]
\\
&= 1-
			\sum_{\substack{I \subseteq K^c \\ \# I \leq \min(n-k-1,d-k-1)}}
			\Theta \left( \gamma + \sum_{i \in K} \g_i;\left\{ \g_i : i\in I\right\};\left\{ \g_i : i\in K^c \bsl I\right\}\right)\\
&=
\sum_{\substack{I \subseteq K^c \\ \# I \geq \min(n-k,d-k)}}
			\Theta \left( \gamma + \sum_{i \in K} \g_i;\left\{ \g_i : i\in I\right\};\left\{ \g_i : i\in K^c \bsl I\right\}\right).
\end{align*}

Next we compute the expectation of $\a (T(G_K,\sC))$ on the event that  $G_K$ is a face of $\sC$.  We assume that  $n > d$, since if $n \leq d$, $G_K$ is always a face of $\sC$. The event that $G_K$ is a face of $\sC$ takes place if and only if $T(G_K,\sC)\neq \R^d$. Also, $\dim \mathcal{D}_K=d-k$, which gives
		\begin{multline*}
			\E [\a (T(G_K,\sC))\ind_{\{G_K \text{ is a face of }\sC\}}]= \E [\a (T(G_K,\sC))\ind_{\{T(G_K,\sC)\neq\R^d\}}] = \E [\upsilon_{d-k}(\mathcal{D}_K)\ind_{\{\mathcal{D}_K \neq \R^{d-k}\}}] \\
			= \sum_{\substack{I \subseteq K^c \\ \# I \leq d-k-1}}  (-1)^{d-k-1-\#I} \Theta \left( \gamma + \sum_{i \in K} \g_i;\left\{ \g_i : i\in I\right\};\left\{ \g_i : i\in K^c\bsl I\right\}\right).,
		\end{multline*}
where in the last step we used~\eqref{eq:upislon_R_d_ind_BetaCone} of Theorem~\ref{theo:beta_cones_conic_intrinsic_vol_as_A_B}.

For the expected  solid angle of the normal cone $N(G_K,\sC)$, we can use~\eqref{eq:upsilon_D_k_tangent} to write
		\begin{multline*}
		\E [\a (N(G_K,\sC))\ind_{\{G_K \text{ is a face of }\sC\}}] =\E [\upsilon_{0}(\mathcal{D}_K)] =
		\Theta \left( \gamma + \sum_{i \in K} \g_i; \varnothing; \left\{ \g_i : i\in K^c \right\}\right)
\\
		=\extern \left(\gamma-\frac{n-k}{2}+\sum_{i \in K}\gamma_i;\left\{\gamma_i -\frac{n-k}{2}: i \in K^c\right\}\right).
		\end{multline*}
		Observe that this  recovers the formula for $\E [\gamma(G_K,\sC)\ind_{\{G_K \text{ is a face of } \sC\}}]$ established in~\eqref{eq:beta_cones_external_angles_as_external_quant}. To prove a similar formula without the indicator function, write
$$
\E [\alpha(N(G_K,\sC))] = \E \left[\alpha(N(G_K,\sC))\ind_{\{G_K \text{ is a face of } \sC\}}\right] + \E \left[\alpha(N(G_K,\sC))\ind_{\{G_K \text{ is a not face of } \sC\}}\right]
$$
Since $\alpha(N(G_K,\sC)) = 1$   if $G_K$ is not a face of $\sC$, this proves the formula for $\E [\alpha(N(G_K,\sC))]$.
\end{proof}

\begin{theorem}[Exected $f$-vector of beta cones] \label{theo:beta_cone_expected_f_k}
For every $k\in \{0,1,\ldots, d-1\}$, the expected number of $k$-dimensional faces of $\sC \sim \BetaCone(\R^d;\b;\b_1,\dots,\b_n)$ is given by
$$
\E f_k(\sC) = 2 \sum_{\substack{K\subseteq \{1,\ldots, n\}\\ \# K = k}}
\sum_{\substack{I \subseteq K^c \\ \# I \in \lbrace d-k-1,d-k-3,\ldots \rbrace}}
		\Theta \left(\g  + \sum_{i \in K } \g_i  ;\left\{ \g_i : i\in I\right\}; \left\{\g_{i}: i\in K^c\bsl I\right\}  \right).
$$
\end{theorem}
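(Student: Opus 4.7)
The plan is a direct summation argument that combines the structural description of faces from Lemma~\ref{lem:beta_cones_generic_properties} with the face-probability formula already established in Theorem~\ref{theo:beta_cones_angles_as_A_B}. No new analytical machinery is needed.

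First, by Lemma~\ref{lem:beta_cones_generic_properties}(ii), with probability one every $k$-dimensional face of $\sC$ has the form $G_K := \pos(Z_i - Z : i \in K)$ for some $K \subseteq \{1,\ldots,n\}$ with $\#K = k$. Moreover, by Lemma~\ref{lem:beta_poi_general_affine} applied to the points $Z, Z_1, \ldots, Z_n$, the vectors $Z_1-Z, \ldots, Z_n-Z$ are a.s.\ in general linear position, so that distinct subsets $K$ with $\#K = k < d$ yield cones $G_K$ with distinct linear hulls (the span of $Z_i-Z$, $i\in K_1\cup K_2$, has dimension strictly greater than $k$ whenever $K_1\neq K_2$). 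Hence distinct candidate sets $K$ give distinct faces, and there is no double counting. Consequently, almost surely,
$$
f_k(\sC) = \sum_{\substack{K \subseteq \{1,\ldots,n\} \\ \# K = k}} \ind_{\{G_K \text{ is a face of } \sC\}}.
$$

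Second, taking expectations and interchanging the (finite) sum with the expectation yields
$$
\E f_k(\sC) = \sum_{\substack{K \subseteq \{1,\ldots,n\} \\ \# K = k}} \P[G_K \text{ is a face of } \sC].
$$

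Third, I would substitute each term of the outer sum using equation~\eqref{eq:theo:beta_cones_angles_as_A_B_1} of Theorem~\ref{theo:beta_cones_angles_as_A_B}, namely
$$
\P[G_K \text{ is a face of } \sC] = 2 \sum_{\substack{I \subseteq K^c \\ \# I \in \{d-k-1, d-k-3, \ldots\}}} \Theta\left(\g + \sum_{i \in K} \g_i; \{\g_i : i \in I\}; \{\g_i : i \in K^c \bsl I\}\right),
$$
to arrive immediately at the claimed double-sum representation for $\E f_k(\sC)$.

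The genuinely hard step -- identifying the event $\{G_K \text{ is a face of } \sC\}$ with an absorbtion event for a lower-dimensional beta cone (via the tangent-cone representation of Theorem~\ref{theo:representation_angles_and_construction_of_Cone} and Proposition~\ref{prop:face_events_as_absorbtion_events_cones}) and then evaluating the resulting absorbtion probability through the $\Theta$-function -- has already been carried out. Thus no real obstacle remains; this theorem is a bookkeeping consequence of results proved earlier. The only care needed is to justify the enumeration of faces without overcounting, which the general-position argument above handles.
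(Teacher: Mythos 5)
Your proposal is correct and follows essentially the same route as the paper: the paper's proof consists precisely of writing $\E f_k(\sC) = \sum_{\#K=k}\P[G_K \text{ is a face of } \sC]$ and substituting Equation~\eqref{eq:theo:beta_cones_angles_as_A_B_1} from Theorem~\ref{theo:beta_cones_angles_as_A_B}. Your additional justification that distinct sets $K$ yield distinct faces (and that a.s.\ every $k$-face arises as some $G_K$, via Lemma~\ref{lem:beta_cones_generic_properties}) is a correct and welcome elaboration of what the paper leaves implicit.
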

\begin{proof}
Write $\E f_k(\sC) =  \sum_{\substack{K \subseteq \{1,\dots,n\},\# K=k}} \P [G_K\text{ is a face of } \sC]$ and apply Equation~\eqref{eq:theo:beta_cones_angles_as_A_B_1} from Theorem~\ref{theo:beta_cones_angles_as_A_B} to each term.
\end{proof}

\subsection{Two extremal regimes for beta cones}
We are now going to consider a beta cone $\sC = \pos(Z-Z_1,\ldots, Z-Z_n) \sim \sC_{n,d}^{\beta; \beta_1,\ldots, \beta_{n}}$ in two extremal cases: when $\beta= -1$ and when $\beta \uparrow +\infty$.

\subsubsection{Beta cones with \texorpdfstring{$\beta = -1$}{beta = -1}}
If $\sC$ is a beta cone with $\beta= -1$ meaning that $Z\sim f_{d,-1}$ is uniformly distributed on the unit sphere $\bS^{d-1}$, then $\P[\sC= \R^d] = 0$. This leads to a non-trivial formula for the $a$-function which does not directly follow from its analytic definition.
\begin{proposition}[Special value of $a$-quantity]\label{prop:a_quant_special_val_d}
For all $d\in \N$ and  $\ell \in \N_0$ such that $d-2\ell \geq 1$,  and all $\alpha_1,\ldots, \alpha_{d+1} \geq 0$ we have
\begin{equation}\label{eq:a_quant_0}
a(d - 2\ell + \alpha_1+\ldots+\alpha_{d+1}; \alpha_1,\ldots, \alpha_{d+1}) = 0.
\end{equation}
\end{proposition}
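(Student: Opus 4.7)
The starting point is a probabilistic observation: for a beta cone $\sC \sim \BetaCone(\R^d; -1; \beta_1, \ldots, \beta_n)$ with $\beta = -1$, one has $\P[\sC = \R^d] = 0$. Indeed, $Z$ is then uniformly distributed on the sphere $\bS^{d-1} = \partial \BB^d$, which consists of extreme points of the strictly convex body $\BB^d$; since $Z_1, \ldots, Z_n$ are continuously distributed inside $\BB^d$, with probability one $Z$ is not a convex combination of them, so $Z$ is a vertex of $[Z, Z_1, \ldots, Z_n]$. By the face/absorption equivalence of Proposition~\ref{prop:face_events_as_absorbtion_events_polytopes}, this says $\pos(Z_1 - Z, \ldots, Z_n - Z) \neq \R^d$ a.s. My plan is to combine this input with Theorem~\ref{theo:beta_cones_conic_intrinsic_vol_as_A_B} and an induction on $\ell$.

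For the base case $\ell = 0$, $d \geq 2$, I would take $n = d+1$. In formula (\ref{eq:theo:beta_cones_conic_intrinsic_vol_as_A_B_1}), the only contributing index set is $I = \{1, \ldots, d+1\}$, so the vanishing probability reduces to $\Theta(\gamma; \{\gamma_1, \ldots, \gamma_{d+1}\}; \varnothing) = 0$. Lemma~\ref{lem:Theta_with_one_set_empty} expresses this $\Theta$ as a nonzero constant times $a(2\gamma + 2\sum_i \gamma_i + 2; 2\gamma_1, \ldots, 2\gamma_{d+1})$; since $\gamma = d/2 - 1$, the leading argument equals $d + \sum_i 2\gamma_i$, and reparametrizing $\alpha_i := 2\gamma_i = 2\beta_i + d$ yields $a(d + \sum_i \alpha_i; \alpha_1, \ldots, \alpha_{d+1}) = 0$, initially for $\alpha_i \geq d - 2$, and then for all $\alpha_i \geq 0$ by the analyticity of $a$ in its parameters. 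For $d = 1$, Theorem~\ref{theo:beta_cones_conic_intrinsic_vol_as_A_B} does not cover $\beta = -1$; however, the identity extends to this boundary value by continuity in $\beta$, since the right-hand side is continuous in $\gamma = \beta + 1/2$ through the explicit integral formula for $\Theta$, while the left-hand side is continuous in $\beta$ by weak continuity of the beta distribution and $\P_{\beta = -1}[\sC = \R] = 0$ by the argument above.

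For the inductive step $\ell \geq 1$, fix $d$ with $d - 2\ell \geq 1$, set $M := d - 2\ell$, and apply Theorem~\ref{theo:beta_cones_conic_intrinsic_vol_as_A_B} (or its continuous extension when $M = 1$) to a beta cone in $\R^M$ with $n = M + 2\ell + 1$ and $\beta = -1$. The equation $\P[\sC = \R^M] = 0$ becomes
\[
\sum_{k=0}^{\ell} \; \sum_{\substack{I \subseteq \{1, \ldots, n\} \\ |I| = M + 2k + 1}} \Theta\bigl(\gamma; \{\gamma_i : i \in I\}; \{\gamma_j : j \in I^c\}\bigr) = 0.
\]
By Definition~\ref{def:theta}, each $\Theta$ contains an $a$-factor of the form $a(M + \sum_{i \in I} 2\gamma_i; (2\gamma_i)_{i \in I})$, with $|I| = M + 2k + 1$ arguments; this precisely fits the statement of the proposition for the pair $(d', \ell') = (M + 2k, k)$, which satisfies $d' - 2\ell' = M \geq 1$. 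For $k < \ell$ this is a strictly smaller value of $\ell'$, so the inductive hypothesis forces the corresponding $a$-factor to vanish, hence the whole $\Theta$-term; only the unique $|I| = n$ term survives, and Lemma~\ref{lem:Theta_with_one_set_empty} reads off the desired vanishing $a(M + \sum_i \alpha_i; \alpha_1, \ldots, \alpha_{d+1}) = 0$ (with $\alpha_i = 2\gamma_i$). The full range $\alpha_i \geq 0$ is recovered, as before, by analytic continuation.

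The main obstacle is precisely the justification of the continuous extension of Theorem~\ref{theo:beta_cones_conic_intrinsic_vol_as_A_B} to $d = 1, \beta = -1$, since the theorem is formally not available there. Should this route be deemed unsatisfactory, a contingency plan for the only genuinely new base case $a(1 + \alpha_1 + \alpha_2; \alpha_1, \alpha_2) = 0$ proceeds algebraically: specialize the already-established $(d, \ell) = (2, 0)$ identity at $\alpha_3 = 1$, use $F_1(iz) = 1 + i\sinh z$ to split $a(3 + \alpha_1 + \alpha_2; \alpha_1, \alpha_2, 1) = a(3 + \alpha_1 + \alpha_2; \alpha_1, \alpha_2) + a_1(2 + \alpha_1 + \alpha_2; \alpha_1, \alpha_2)$, and then eliminate the $a_1$-terms using the recurrences (\ref{eq:a_as_sum_a_1}) and (\ref{eq:a_1_as_sum_a}) together with the closed forms of Example~\ref{ex:a_def_case_d_0}; the identity $(1 + \alpha_1 + \alpha_2) \, a(1 + \alpha_1 + \alpha_2; \alpha_1, \alpha_2) = 0$ then emerges after routine cancellations.
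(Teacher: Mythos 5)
Your argument is correct, and its first half is exactly the paper's: the base case $\ell=0$ is obtained, as in the paper, from $\P[\sC=\R^d]=0$ for a beta cone with $\beta=-1$ and $n=d+1$, read through \eqref{eq:theo:beta_cones_conic_intrinsic_vol_as_A_B_1} and Lemma~\ref{lem:Theta_with_one_set_empty}, followed by analytic continuation in the $\alpha_i$. Where you genuinely diverge is the induction on $\ell$. The paper stays on the analytic side: it feeds the recurrence \eqref{eq:a_first_relation} with $\alpha=d-2(\ell+1)+\alpha_1+\cdots+\alpha_{d+1}$, so that the already established vanishing at $(d,\ell)$ and at $(d-2,\ell)$ forces the vanishing at $(d,\ell+1)$. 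You instead return to the geometric identity: with $n=d+1$ points in $\R^{M}$, $M=d-2\ell$, and $\beta=-1$, the expansion of $\P[\sC=\R^{M}]=0$ runs over $\#I\in\{M+1,M+3,\dots,n\}$, each term with $\#I<n$ carries an $a$-factor that is an instance of the proposition with the same gap $M$ but smaller $\ell'$, so the induction hypothesis annihilates it, and the unique surviving term $I=\{1,\dots,n\}$ gives the claim via Lemma~\ref{lem:Theta_with_one_set_empty}. Both mechanisms work: yours avoids the recurrences of Theorem~\ref{theo:recurrence_relations_a_b} altogether (and in passing re-derives the Corollary~\ref{cor:theta_quant_vanishes_d/2-1}-type vanishing of the intermediate $\Theta$-terms), at the price of invoking the absorption identity once per pair $(d,\ell)$, whereas the paper needs the geometric input only at $\ell=0$ and then recurses purely analytically.

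Two points to make airtight. First, in the terms you discard you should note that the accompanying $b$-factor and the $c$-constants are finite, so that a vanishing $a$-factor really kills the product; this is automatic if you first restrict to, say, $\beta_i\geq 0$ (then all arguments are strictly positive) and recover the full range $\alpha_i\geq 0$ by the same analytic continuation you already invoke. Second, the $d=1$ (and $M=1$) boundary: you are right that Theorem~\ref{theo:beta_cones_conic_intrinsic_vol_as_A_B} formally excludes $\beta=-1$ there, and in fact the paper applies it in that configuration without comment, so your fix is a genuine improvement. The continuity argument is sound once you add that weak convergence suffices here because the limit law $\tfrac12(\delta_{-1}+\delta_{+1})$ puts no mass on the boundary of the event $\{\min_i Z_i<Z<\max_i Z_i\}$, the $Z_i$ lying in $(-1,1)$ almost surely; and your algebraic fallback does check out: combining the $(2,0)$ case at $\alpha_3=1$ with $F_1(\ii z)=1+\ii\sinh z$, the recurrences \eqref{eq:a_as_sum_a_1}, \eqref{eq:a_1_as_sum_a} and Examples~\ref{ex:a_def_case_d_0}, \ref{ex:a_and_b_for_d_equals_1} indeed yields $(1+\alpha_1+\alpha_2)\,a(1+\alpha_1+\alpha_2;\alpha_1,\alpha_2)=0$.
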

\begin{proof}
First we prove the claim for $\ell = 0$.
Take arbitrary $\beta_1,\ldots,\beta_{d+1} \geq -1$, let $\beta:= -1$ and consider the beta cone $\sC = \sC_{d+1,d}^{-1; \beta_1,\ldots, \beta_{d+1}}$.  Since $\beta= -1$ corresponds to the uniform distribution on the sphere, we have $\P[\sC = \R^d] = 0$. On the other hand, \eqref{eq:theo:beta_cones_conic_intrinsic_vol_as_A_B_1}	with $n=d+1$ gives
\begin{align*}
0 = \P [\sC=\R^d]
&=
2 \cdot \Theta \left(\frac d2-1; \{\g_1,\ldots, \g_{d+1}\};\varnothing\right)
\\
&=
2 \cdot c_{\frac d2 - 1 + \gamma_1 + \ldots + \gamma_{d+1}} \cdot  c_{\gamma_1 - \frac 12}\ldots c_{\gamma_{d+1}- \frac 12} \cdot  a(d + 2\gamma_1 + \ldots + 2 \gamma_{d+1}; 2\g_1,\dots, 2\g_{d+1}),
\end{align*}
where we used the first formula from Lemma~\ref{lem:Theta_with_one_set_empty} in the second equation. Since $2\gamma_1,\ldots, 2\gamma_{d+1}$ may take arbitrary values $\geq d-2$, it follows that
$$
a(d+\alpha_1+\ldots+\alpha_{d+1}; \alpha_1,\ldots, \alpha_{d+1})=0
$$
for all $\alpha_1,\ldots, \alpha_{d+1} \geq d-2$. By analytic continuation, the identity continues to hold for $\alpha_1,\ldots, \alpha_{d+1}\geq 0$. This proves~\eqref{eq:a_quant_0} for $\ell= 0$.

Now we argue by induction to extend~\eqref{eq:a_quant_0} to all $\ell\in \N_0$ such that $d-2\ell \geq 1$. Suppose we proved~\eqref{eq:a_quant_0} for $d\in \N$  and $\ell\in \N_0$, as well as for all strictly smaller values of $d$.  Let also $d- 2(\ell+1) \geq 1$.  To prove that~\eqref{eq:a_quant_0} holds with $\ell$ replaced by $\ell+1$, we use  recurrence relation~\eqref{eq:a_first_relation} from Theorem~\ref{theo:recurrence_relations_a_b} with $\alpha = d - 2(\ell+1) + \a_1 +\ldots+\a_{d+1}\geq 1$:
\begin{multline*}
(\a+1) \cdot a(d - 2\ell + \a_1 +\ldots+\a_{d+1};\a_1,\dots,\a_{d+1}) - \a \cdot a(d - 2(\ell+1) + \a_1 +\ldots+\a_{d+1};\a_1,\dots,\a_{d+1})
\\
= \sum_{\substack{j_1,j_2\in \{1,\ldots, d+1\}\\ j_1 \neq j_2}} \frac{a(d-2 - 2\ell + (\a_1 +\ldots+\a_{d+1}-\a_{j_1}-\a_{j_2});\widehat{\a_{j_1}},\widehat{\a_{j_2}})}{\a-\a_{j_1}}.
\end{multline*}
The first term on the left-hand side vanishes by induction assumption. All summands on the right-hand side also vanish since by the induction assumption~\eqref{eq:a_quant_0} holds with $d$ replaced by $d-2$. Hence, the remaining term also vanishes: $a(d - 2(\ell+1) + \a_1 +\ldots+\a_{d+1};\a_1,\dots,\a_{d+1}) = 0$, and the induction is complete.
\end{proof}

\begin{corollary}\label{cor:theta_quant_vanishes_d/2-1}
Let $d\in \N$ and let $Y$ and $Z$ be multisets of non-negative numbers such that $\#Y \in \{d+1,d+3,\ldots\}$. Then,
$$
\Theta \left(\frac{d}{2}-1 ;Y;Z\right) = 0.
$$
\end{corollary}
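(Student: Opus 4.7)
The plan is to reduce the claim to the vanishing of a single $a$-quantity and then invoke Proposition~\ref{prop:a_quant_special_val_d}. Setting $s := \sum_{\omega \in Y}\omega$, I would first use the factorization identity \eqref{eq:theo:theta_as_prod_of_thetas} from Theorem~\ref{theo:theta_as_prod_of_thetas} to split
\[
\Theta\!\left(\tfrac{d}{2}-1;\, Y;\, Z\right) \;=\; \Theta\!\left(\tfrac{d}{2}-1;\, Y;\, \varnothing\right)\cdot \Theta\!\left(\tfrac{d}{2}-1+s;\,\varnothing;\, Z\right),
\]
reducing the problem to showing that the first factor vanishes. Applying Lemma~\ref{lem:Theta_with_one_set_empty} to this first factor expresses it as a product of $c$-constants (all finite and nonzero, since their arguments are at least $-1/2$) multiplied by $a(d + 2s;\, 2Y)$, where $2Y := \{2\omega : \omega \in Y\}$. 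Hence the task reduces to showing that this $a$-quantity is zero.

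The hypothesis $\#Y \in \{d+1,\, d+3,\, \ldots\}$ lets me write $\#Y = d+1+2\ell$ for some $\ell \in \N_0$, which is the key observation. I would then apply Proposition~\ref{prop:a_quant_special_val_d} with parameters $d' := d+2\ell$ and $\ell' := \ell$: the required condition $d' - 2\ell' \geq 1$ becomes $d \geq 1$, which holds; the Proposition yields vanishing of an $a$-quantity with $d'+1 = \#Y$ arguments after the semicolon and first argument $d' - 2\ell' + \sum_{i}\alpha_i = d + \sum_i \alpha_i$. Taking $\alpha_i := 2y_i \geq 0$ (where $y_1, \ldots, y_{\#Y}$ are the elements of $Y$) produces exactly $a(d + 2s;\, 2Y) = 0$, as required.

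There is essentially no hard step: once one notices that the parity condition $\#Y - (d+1) \in 2\N_0$ is precisely what permits the choice $d' := d + 2\ell$ satisfying $d' - 2\ell' \geq 1$ in Proposition~\ref{prop:a_quant_special_val_d}, the corollary reduces to a routine reindexing combined with the product decomposition of $\Theta$.
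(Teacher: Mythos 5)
Your proof is correct and follows essentially the same route as the paper: the paper simply observes that, by Definition~\ref{def:theta}, $\Theta(\tfrac d2-1;Y;Z)$ contains the factor $a(d+2\sum_{\omega\in Y}\omega;2Y)$, which vanishes by Proposition~\ref{prop:a_quant_special_val_d} applied exactly as you do (with $d$ replaced by $d+2\ell$ and the same $\ell$, using $\#Y=d+1+2\ell$). Your detour through Theorem~\ref{theo:theta_as_prod_of_thetas} and Lemma~\ref{lem:Theta_with_one_set_empty} is harmless but unnecessary, since the relevant $a$-factor already appears in the definition of $\Theta$ itself.
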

\begin{proof}
The formula for $\Theta (\frac{d}{2}-1 ;Y;Z)$ from Definition~\ref{def:theta} contains a term  $a(d+2\sum_{\omega \in Y} \omega;2Y)$ which vanishes by Proposition~\ref{prop:a_quant_special_val_d}.
\end{proof}

\subsubsection{Beta cones with \texorpdfstring{$\beta\uparrow +\infty$}{beta = + infinity}}
Let $Z_1,\ldots, Z_n$ be independent and uniformly distributed on the unit sphere $\bS^{d-1}$. The so-called \emph{Donoho--Tanner} or \emph{Wendel cone}
$$
\sW_{n,d}:= \pos (Z_1,\ldots, Z_n) \subseteq \R^d
$$
can be considered as the limit of $\sC_{n,d}^{\beta; \beta_1,\ldots, \beta_n}$ as $\beta \to+\infty$.
Recall from Proposition~\ref{prop:Theta_function_gamma_infinity} that $\Theta(\gamma;Y;Z) \to  2^{-\# Y - \#Z}$ as $\gamma \to +\infty$. Inserting this limit into Theorems~\ref{theo:beta_cones_conic_intrinsic_vol_as_A_B}, \ref{theo:beta_cones_angles_as_A_B} and~\ref{theo:beta_cone_expected_f_k} yields the following
\begin{theorem}[Wendel--Donoho--Tanner cones]\label{theo:wendel_cones_angles}
For all $d\in \N$, $n\geq d$ we have
\begin{align}
&
\P[\sW_{n,d} \neq \R^d] =  \frac 1  {2^{n-1}} \sum_{\ell=0}^{d-1} \binom{n-1}{\ell},
\qquad
\P[\sW_{n,d} = \R^d] =  \frac 1  {2^{n-1}} \sum_{\ell=d}^{n-1} \binom{n-1}{\ell},
\label{eq:donoho_tanner_cone_expect_1}
\\
&\E \left[\upsilon_k(\sW_{n,d})\right]
=
\E \left[\upsilon_k(\sW_{n,d}) \ind_{\{\sW_{n,d}\neq \R^d\}}\right]
=
\frac 1 {2^n} \binom nk,
\qquad k \in \{0,\dots,d-1\},
\label{eq:donoho_tanner_cone_expect_2}
\\
&\E \left[\upsilon_d(\sW_{n,d})\right]
= 1- \frac 1 {2^n} \sum_{\ell=0}^{d-1}  \binom n\ell = \frac 1 {2^n} \sum_{\ell=d}^{n}  \binom n\ell,
\label{eq:donoho_tanner_cone_expect_3a}
\\
&\E \left[\upsilon_d(\sW_{n,d})\ind_{\{\sW_{n,d}\neq \R^d\}}\right]
=
\frac{1}{2^n} \sum_{\ell=0}^{d-1} (-1)^{d-1-\ell} \binom{n}{\ell}
=
\frac 1 {2^n} \binom{n-1}{d-1},
\label{eq:donoho_tanner_cone_expect_3b}\\
&\E f_k(\sW_{n,d})
=
\frac{1}{2^{n-k-1}}\binom{n}{k}\sum_{\ell=0}^{d-k-1}\binom{n-k-1}{\ell}, \qquad 0\le k < d\le n.
\label{eq:donoho_tanner_cone_expect_4}
\end{align}
Additionally, for $d\in \N$ and $n\in \{1,\ldots, d\}$ we have $\E [\upsilon_k(\sW_{n,d})] = \frac 1 {2^n} \binom nk$,  $k \in \{0,\dots,n\}$.
\end{theorem}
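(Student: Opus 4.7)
The plan is to realize $\sW_{n,d}$ as the weak limit of a suitable family of beta cones and then apply the formulas of Theorems~\ref{theo:beta_cones_conic_intrinsic_vol_as_A_B}, \ref{theo:beta_cones_conic_intrinsic_vol_as_A_B_non_full_dim} and \ref{theo:beta_cone_expected_f_k} in the limit. More precisely, fix $\beta_1 = \ldots = \beta_n = -1$ so that $Z_1,\ldots,Z_n \sim f_{d,-1}$ are uniform on $\bS^{d-1}$, and let $Z = Z^{(\beta)} \sim f_{d,\beta}$ be independent of them. As $\beta \to +\infty$ the density $f_{d,\beta}$ concentrates at the origin, and one can couple so that $Z^{(\beta)} \to 0$ almost surely. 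Hence $\sC_{n,d}^{\beta;-1,\ldots,-1} = \pos(Z_1 - Z^{(\beta)},\ldots,Z_n - Z^{(\beta)})$ converges a.s.\ in Hausdorff distance to $\pos(Z_1,\ldots,Z_n) = \sW_{n,d}$. By Lemma~\ref{lem:beta_poi_general_affine} the vectors $Z_1,\ldots,Z_n$ are in general linear position a.s., so the face structure of the approximating cones stabilizes at the limit; in particular the bounded functionals $f_k$, $\upsilon_k$ and $\ind_{\{\sC = \R^d\}}$ are a.s.\ continuous at $\sW_{n,d}$, and bounded convergence yields convergence of their expectations.

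Next I would insert $\beta_i = -1$ into each formula of Theorems~\ref{theo:beta_cones_conic_intrinsic_vol_as_A_B} and~\ref{theo:beta_cone_expected_f_k} and pass to the limit $\gamma := \beta + d/2 \to +\infty$. Every $\Theta$-quantity that occurs has the form $\Theta(\gamma + c; Y; Z)$ with $c$ independent of $\beta$ and $\#Y + \#Z \leq n$, so by Proposition~\ref{prop:Theta_function_gamma_infinity} it tends to $2^{-(\#Y + \#Z)}$. Termwise passage to the limit gives, e.g.,
\[
\P[\sW_{n,d} \neq \R^d] = 2^{1-n} \sum_{\substack{0 \leq \ell \leq d-1 \\ \ell \equiv d-1 \Mod{2}}} \binom{n}{\ell},
\qquad
\E f_k(\sW_{n,d}) = 2^{1-(n-k)} \binom{n}{k} \sum_{\substack{0 \leq \ell \leq d-k-1 \\ \ell \equiv d-k-1 \Mod{2}}} \binom{n-k}{\ell},
\]
and analogously for the remaining functionals.

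To match the closed forms stated in the theorem, I would apply two elementary combinatorial identities. The first,
\[
\sum_{\substack{0 \leq \ell \leq m \\ \ell \equiv m \Mod{2}}} \binom{N}{\ell} = \sum_{\ell=0}^{m} \binom{N-1}{\ell},
\]
follows by iterating Pascal's rule $\binom{N}{\ell} = \binom{N-1}{\ell} + \binom{N-1}{\ell-1}$ and telescoping; this converts the limit expressions above into~\eqref{eq:donoho_tanner_cone_expect_1} and~\eqref{eq:donoho_tanner_cone_expect_4}. The second,
\[
\sum_{\ell=0}^{d-1}(-1)^{d-1-\ell}\binom{n}{\ell} = \binom{n-1}{d-1},
\]
(proved by induction on $d$ via the same Pascal relation) handles~\eqref{eq:donoho_tanner_cone_expect_3b}. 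The identities~\eqref{eq:donoho_tanner_cone_expect_2} and~\eqref{eq:donoho_tanner_cone_expect_3a} for $\E\upsilon_k(\sW_{n,d})$ are immediate from the $\Theta$-limit since the inner sum $\sum_{\#I = k} 1 = \binom{n}{k}$ is already in closed form.

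Finally, the additional statement for $n \in \{1,\ldots,d\}$ follows by the exact same scheme, this time starting from Theorem~\ref{theo:beta_cones_conic_intrinsic_vol_as_A_B_non_full_dim} (which applies to beta cones with empty interior) instead of Theorem~\ref{theo:beta_cones_conic_intrinsic_vol_as_A_B}. I do not foresee a serious obstacle: the cone-convergence step is routine once general position is invoked, and everything else reduces to the bookkeeping of the two binomial identities above. If a cleaner argument is desired, the continuity step can be bypassed by noting that the $\Theta$-based formulas are polynomial in the parameters once one fixes the integer shifts, so passing to $\gamma = +\infty$ and invoking classical Wendel--Cover--Efron results as an independent verification is also possible.
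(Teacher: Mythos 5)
Your proposal is correct and follows essentially the same route as the paper: there, Theorem~\ref{theo:wendel_cones_angles} is likewise obtained by viewing $\sW_{n,d}$ as the $\beta\to+\infty$ limit of beta cones and inserting the limit $\Theta(\gamma;Y;Z)\to 2^{-\#Y-\#Z}$ of Proposition~\ref{prop:Theta_function_gamma_infinity} into Theorems~\ref{theo:beta_cones_conic_intrinsic_vol_as_A_B}, \ref{theo:beta_cones_conic_intrinsic_vol_as_A_B_non_full_dim} and~\ref{theo:beta_cone_expected_f_k}, with the same two binomial identities providing the final closed forms. Your explicit justification of the interchange of limit and expectation (stabilization of the face structure under general linear position plus bounded convergence) fills in a step the paper leaves implicit; the only small caveat is that for $d=1$ the cited theorems exclude $\beta_i=-1$, which you can sidestep by taking any $\beta_i>-1$, since neither the law of $\pos(Z_1,\ldots,Z_n)$ nor the limiting formulas depend on the $\beta_i$.
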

These formulas are known. Equation~\eqref{eq:donoho_tanner_cone_expect_1} follows from Wendel's formula, see~\cite{Wendel} and~\cite[Theorem~8.2.1]{schneider_weil_book}. Equations~\eqref{eq:donoho_tanner_cone_expect_2}, \eqref{eq:donoho_tanner_cone_expect_3a}, \eqref{eq:donoho_tanner_cone_expect_3b} can be deduced from~\cite[Corollaries 4.2 and 4.3]{hug_schneider_conical_tessellations} or~\cite[Lemma~5.1]{godland_kabluchko_thaele_cones_high_dim_part_I}. Finally, Equation~\eqref{eq:donoho_tanner_cone_expect_4} was proved in~\cite[Theorem 1.6]{donoho_tanner1}. Conditioning $\sW_{n,d}$ on the event $\{\sW_{n,d} \neq \R^d\}$ leads to Cover--Efron cones studied by~\citet{hug_schneider_conical_tessellations}.   Note, however, that all these references prove the formulas under less restrictive distributional assumptions on $Z_1,\ldots, Z_n$.

\section{Results on beta polytopes}\label{sec:beta_polytopes_results}
We can now use the results obtained in 
the previous sections to give explicit formulas for expected functionals of beta polytopes. Unless stated otherwise, this section considers the following setting. Fix a dimension $d\in \N$ and a number of points $n\geq 2$. Let $X_1,\dots,X_n$ be independent beta-distributed random points in $\BB^d$, that is, for each $i \in \lbrace 1, \dots, n \rbrace$, let $X_i\sim f_{d,\beta_i}$, where $\b_i \geq -1$.
We set
$$
\sP \coloneqq \sP_{n,d}^{\b_1,\dots,\b_n}=[X_1,\dots,X_n] \sim \BetaPoly(\R^d;\b_1,\dots,\b_n)
$$
to be the beta polytope these points generate. Also, we continue to use the notational convention
$$
\g_i := \b_i + \frac d 2  \qquad \text{ for all } i \in \lbrace 1, \dots,n \rbrace.
$$
If $d=1$, we additionally require that $\beta_1,\ldots, \beta_n\neq -1$. Under these assumptions,  Lemma~\ref{lem:beta_poi_general_affine} guarantees that the polytope $\sP$ is simplicial a.s.,\ that is all of its proper faces are simplices.

\subsection{Tangent cones, angles and faces}
Let us start by characterizing the tangent cones at faces of the polytope $\sP$.
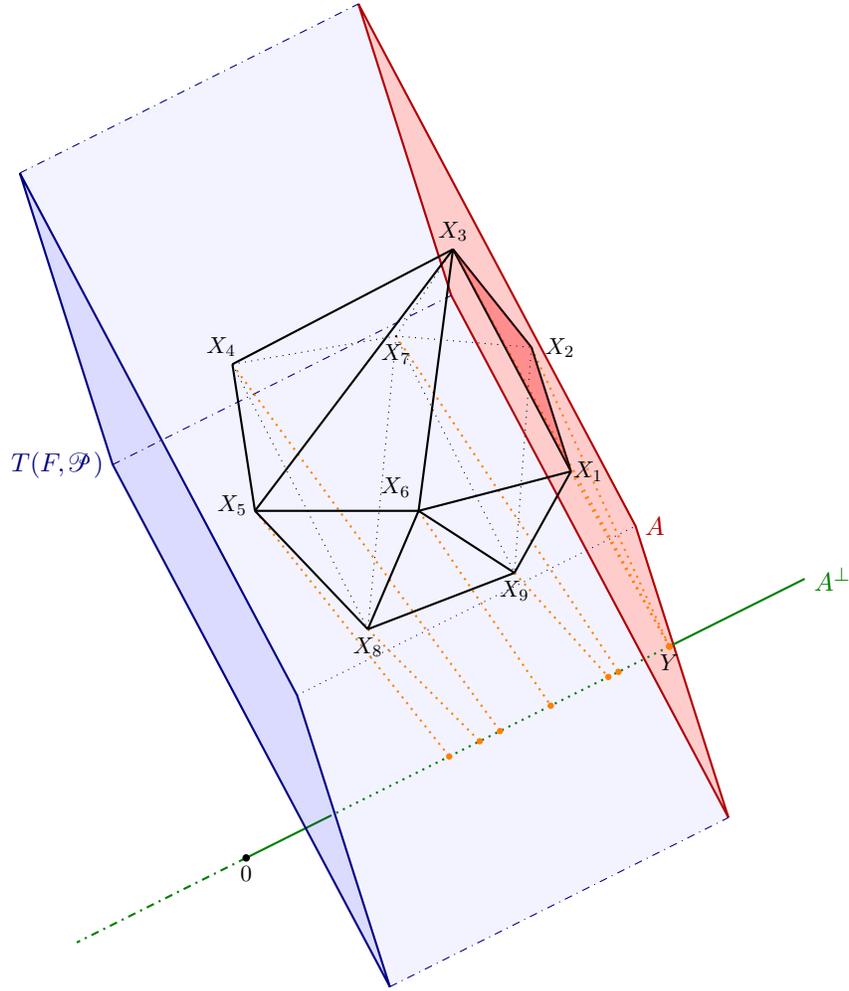
\begin{figure}[h!]
	\centering
	\label{fig:beta_polytope}
	\begin{tikzpicture}[scale=1.5]

		\coordinate (Z1) at (0.5,2.6); 
		\coordinate (Z2) at (0.7,1.3); 
		\coordinate (Z3) at (2.15,1.3); 
		\coordinate (Z4) at (3.5,1.65); 
		\coordinate (Z5) at (3.15,2.75); 
		\coordinate (Z6) at (1.95,2.85); 
		\coordinate (Z7) at (1.7,0.25); 
		\coordinate (Z8) at (3.0,0.75); 
		\coordinate (Z9) at ($(3.5,1.65) + 0.7*(-0.35,1.1) + 0.8*(-1,1.5)$); 

		\coordinate (v1) at ($(Z5)-(Z4)$);
		\coordinate (v2) at ($(Z9)-(Z4)$);
		

		\pgfmathsetmacro{\alpha}{-1}
		\pgfmathsetmacro{\beta}{1.35}
		
		\coordinate (P1) at ($(Z4) + \alpha*(v1) + \alpha*(v2)$);
		\coordinate (P2) at ($(Z4) + \beta*(v1)  + \alpha*(v2)$);
		\coordinate (P3) at ($(Z4) + \beta*(v1)  + \beta*(v2)$);
		\coordinate (P4) at ($(Z4) + \alpha*(v1) + \beta*(v2)$);
		
		\fill[red, opacity=0.2] (P1) -- (P2) -- (P3) -- (P4) -- cycle;
		\draw[red!70!black, thick] (P1) -- (P2) -- (P3) -- (P4) -- cycle
		node[red!70!black, right, scale=0.9] at (P2) {$A$};
		
		\coordinate (M) at
		  ($(Z4) + 0.2*(v1) + -0.9*(v2)$);
		
		
		\coordinate (shiftVec) at (-3.0, -1.5);
		
		\coordinate (Lstart) at
		  ($(M) + -0.4*(shiftVec)$);
		\coordinate (Lmiddle) at
		  ($(M) +  1.0*(shiftVec)$);
		  \coordinate (Lmiddle2) at
		  ($(M) +  1.25*(shiftVec)$);
		\coordinate (Lend) at
		  ($(M) +  1.75*(shiftVec)$);
		
		\draw[thick, green!50!black] (Lstart) -- (M)
		node[ green!50!black, right, scale=0.9] at (Lstart) {$A^\perp$};
		\draw[thick, dotted,green!50!black] (M) -- (Lmiddle);
		\draw[thick,green!50!black] (Lmiddle) -- (Lmiddle2);
		\draw[thick,dashdotted,green!50!black] (Lmiddle2) -- (Lend);

		\coordinate (A) at ($(Z4) + (shiftVec)$);

		\coordinate (Q1) at
			($(A) + \alpha*(v1) + \alpha*(v2)$);
		\coordinate (Q2) at
			($(A) + \beta*(v1)  + \alpha*(v2)$);
		\coordinate (Q3) at
			($(A) + \beta*(v1)  + \beta*(v2)$);
		\coordinate (Q4) at
			($(A) + \alpha*(v1) + \beta*(v2)$);

		\fill[blue, opacity=0.1] (Q1) -- (Q2) -- (Q3) -- (Q4) -- cycle
		node [pos=0, left, opacity=1, scale=0.9, blue!50!black] {$T(F,\sP)$};
		\draw[blue!50!black, thick] (Q1) -- (Q2) -- (Q3) -- (Q4) -- cycle;
		\draw[blue!50!black,dashdotted] (Q1) -- (P1);
		\draw[blue!50!black, dotted] (Q2) -- (P2);
		\draw[blue!50!black, dashdotted] (Q3) -- (P3);
		\draw[blue!50!black,dashdotted] (Q4) -- (P4);
		\fill[blue, opacity=0.05] (Q1) -- (Q4) -- (P4) -- (P1) -- cycle;
		\fill[blue, opacity=0.05] (Q4) -- (Q3) -- (P3) -- (P4) -- cycle;

%

\coordinate (dir) at (-3,-1.5);

%

\coordinate (Z1proj) at
  ($(M) + 0.50*(dir)$);

\coordinate (Z2proj) at
  ($(M) + 0.65*(dir)$);

\coordinate (Z3proj) at
  ($(M) + 0.35*(dir)$);

\coordinate (Z4proj) at
  ($(M) + 0.00*(dir)$);

\coordinate (Z5proj) at
  ($(M) + -0.0*(dir)$);

\coordinate (Z9proj) at
  ($(M) + 0.0*(dir)$);

\coordinate (Z7proj) at
  ($(M) + 0.56*(dir)$);

\coordinate (Z8proj) at
  ($(M) + 0.18*(dir)$);

\coordinate (Z6proj) at
  ($(M) + 0.15*(dir)$);
		
		\draw[dotted, thick, orange] (Z1) -- (Z1proj);
		\draw[dotted, thick, orange] (Z2) -- (Z2proj);
		\draw[dotted, thick, orange] (Z3) -- (Z3proj);
		\draw[dotted, thick, orange] (Z4) -- (Z4proj);
		\draw[dotted, thick, orange] (Z5) -- (Z5proj);
		\draw[dotted, thick, orange] (Z9) -- (Z9proj);
		\draw[dotted, thick, orange] (Z7) -- (Z7proj);
		\draw[dotted, thick, orange] (Z8) -- (Z8proj);
		\draw[dotted, thick, orange] (Z6) -- (Z6proj);
		
		\foreach \pt in {Z1proj,Z2proj,Z3proj,Z4proj,Z5proj,Z9proj,Z7proj,Z8proj,Z6proj} {
		  \fill[orange] (\pt) circle (0.8pt);
		}
		\coordinate (origin) at ($(Z4proj)+1.25*(shiftVec)$);
		\filldraw[black] (origin) circle (0.8pt);
		\draw[thick, black] (Z1) -- (Z2);
		\draw[thick, black] (Z2) -- (Z3);
		\draw[thick, black] (Z3) -- (Z4);
		\draw[thick, black] (Z4) -- (Z5);
		\draw[thick, black] (Z1) -- (Z9);
		\draw[thick, black] (Z2) -- (Z9);
		\draw[thick, black] (Z3) -- (Z9);
		\draw[thick, black] (Z4) -- (Z9);
		\draw[thick, black] (Z5) -- (Z9);
		\draw[thick, black] (Z2) -- (Z7);
		\draw[thick, black] (Z3) -- (Z7);
		\draw[thick, black] (Z7) -- (Z8);
		\draw[thick, black] (Z8) -- (Z4);
		\draw[thick, black] (Z3) -- (Z8);
		\draw[dotted, black] (Z1) -- (Z7);
		\draw[dotted, black] (Z5) -- (Z8);
		\draw[dotted, black] (Z1) -- (Z6);
		\draw[dotted, black] (Z6) -- (Z5);
		\draw[dotted, black] (Z6) -- (Z9);
		\draw[dotted, black] (Z6) -- (Z7);
		\draw[dotted, black] (Z6) -- (Z8);
		
		\fill[red, opacity=0.3] (Z4) -- (Z5) -- (Z9) -- cycle;
		
		\node[above, scale=0.8] at ($(Z1)+(-0.1,0)$) {$X_4$};
		\node[left, scale=0.8]  at ($(Z2)+(0,0.05)$) {$X_5$};
		\node[above left, scale=0.8] at ($(Z3)+(0,0.05)$) {$X_6$};
		\node[right, scale=0.8] at ($(Z4)+(-0.05,0)$) {$X_1$};
		\node[right, scale=0.8] at ($(Z5)+(0.05,0)$) {$X_2$};
		\node[below, scale=0.8] at (Z6) {$X_7$};
		\node[below, scale=0.8] at (Z7) {$X_8$};
		\node[below, scale=0.8] at (Z8) {$X_9$};
		\node[above, scale=0.8] at (Z9) {$X_3$};

		\node[below, scale=0.8] at (Z4proj) {$Y$};
		\node[below, scale=0.8] at (origin) {$0$};
		
		\end{tikzpicture}
  	\caption{Visualization of the proof of Theorem~\ref{theo:tangent_cone_is_beta_cone}. Here, we have a beta polytope $\sP=\sP_{9,3}^{\b_1,\dots,\b_9}$. The simplex $F=[X_1,X_2,X_3]$ is a 2-dimensional face of the polytope. The red plane is the affine subspace $A=\aff(X_1,X_2,X_3)$, and its orthogonal complement is the green line. The projected points $Y_4, \dots, Y_9$ are the orange dots on the dashed green line. The cone $\pos(Y_4-Y,\dots,Y_9-Y)$ is the part of the green line that is dashdotted. One can clearly see that the blue tangent cone $T(F,\sP)$ (which is in fact a half-space) is isometric to $(A-\bar{X}) \oplus \pos(Y_4-Y,\dots,Y_9-Y)$, where $\bar{X}=\frac{X_1+X_2+X_3}{3}$.
	}
  	\label{fig:beta_polytopes_projection}
\end{figure}

\begin{theorem}[Tangent cones of beta polytopes]\label{theo:tangent_cone_is_beta_cone}
For $k\in \{1,\ldots, \min(d,n-1)\}$ and  $K = \{i_1,\dots,i_k\}  \subseteq \{1,\dots,n\}$, denote by $F_K \coloneqq [X_{i_1},\dots,X_{i_k}]$ a possible face of $\sP$.
Let $T(F_K,\sP)$ be the tangent cone of $\sP$ at $F_K$ (defined as $\R^d$ if $F_K$ fails to be a face of $\sP$). Then,  $T(F_K,\sP)$ is isometric to
\begin{align*}
\R^{k-1} \oplus \BetaCone\left(\R^{d-k+1};\left(\sum_{i\in K} \gamma_i \right) -\frac{d-k+1}{2};\left\{\gamma_i-\frac{d-k+1}{2}; i\in K^c\right\}\right).
\end{align*}
Here, we recall that  $K^c = \{1,\ldots, n\}\bsl K$.
\end{theorem}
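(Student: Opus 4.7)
The plan is to mimic the proof of Theorem~\ref{theo:representation_angles_and_construction_of_Cone}, adapting it from the conical to the polytopal setting. Fix any point $v$ in the relative interior of $F_K$, e.g. $v = \frac{1}{k}(X_{i_1}+\ldots+X_{i_k})$, and let $A := \aff(X_{i_1},\ldots,X_{i_k})$. By Lemma~\ref{lem:beta_poi_general_affine}, $A$ is a.s.\ a $(k-1)$-dimensional affine subspace of $\R^d$, so $A^\perp$ has dimension $d-k+1$ almost surely. Writing the tangent cone as $T(F_K,\sP) = \lin(A-v) + \pos(X_j - v : j \in K^c)$ and orthogonally decomposing, I get the direct sum
\[
T(F_K,\sP) \;=\; (A-v) \;\oplus\; \pos\bigl(\Pi_{A^\perp} X_j - Y : j \in K^c\bigr),
\]
where $Y := \Pi_{A^\perp}(v) = \Pi_{A^\perp}(X_{i_1}) = \ldots = \Pi_{A^\perp}(X_{i_k})$ (all of $X_{i_1},\ldots,X_{i_k}$ lie on $A$), and the positive hull is taken inside $A^\perp$. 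By Proposition~\ref{prop:face_events_as_absorbtion_events_polytopes}, on the complementary event the positive hull on the right equals $A^\perp$, so the above identity is consistent with the convention $T(F_K,\sP) := \R^d$ when $F_K$ is not a face. The factor $A-v$ is isometric to $\R^{k-1}$.

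The core step is to identify the joint distribution of $Y' := I_{A^\perp}(Y)$ and $Y_j' := I_{A^\perp}(\Pi_{A^\perp} X_j)$ for $j \in K^c$, viewed as points in $\R^{d-k+1}$. Applying the canonical decomposition of Ruben and Miles (Theorem~\ref{theo:ruben_miles}) to the $k$-tuple $(X_{i_1},\ldots,X_{i_k})$, part (c2) gives that $Y' = I_{A^\perp}(0_A)$ has density $f_{d-k+1,\delta}$ with
\[
\delta \;=\; \frac{(k-1)(d+1)}{2} + \sum_{i\in K}\beta_i \;=\; \sum_{i\in K}\gamma_i - \frac{d-k+1}{2},
\]
and part (c3) together with (c1) gives that $Y'$ is stochastically independent of $A^\perp$. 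Since $A^\perp$ is independent of $X_j$ for $j \in K^c$, I can condition on a realization of $A^\perp$: by the projection property (Lemma~\ref{lem:projection}), the conditional density of $Y_j'$ is $f_{d-k+1,\,\beta_j + (k-1)/2} = f_{d-k+1,\,\gamma_j - (d-k+1)/2}$, and these projections are conditionally independent. Because these conditional laws do not depend on $A^\perp$, the full family $Y', \{Y_j' : j \in K^c\}$ is unconditionally stochastically independent with the stated marginals. A short argument (exactly as in the proof of Proposition~\ref{prop:beta_cones_joint_distr_projections}) shows that $Y'$ is independent of $Y_j'$ too, since $Y'$ is a function of $(X_{i_1},\ldots,X_{i_k})$ and of $A$ (but not of the $X_j$, $j \in K^c$, conditionally on $A^\perp$).

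Combining the two steps and matching against Definition~\ref{def:beta_cone}, $T(F_K,\sP)$ is isometric to
\[
\R^{k-1} \;\oplus\; \BetaCone\!\left(\R^{d-k+1};\; \sum_{i\in K}\gamma_i - \tfrac{d-k+1}{2};\; \left\{\gamma_j - \tfrac{d-k+1}{2} : j\in K^c\right\}\right),
\]
which is the claim. I expect the main obstacle to be purely bookkeeping: the two additive shifts in the parameter list enter through different mechanisms (the $\delta$ shift comes from Ruben--Miles applied to the face-spanning points, while the $(k-1)/2$ shift on each $\gamma_j$ comes from projecting a $d$-dimensional beta into a $(d-k+1)$-dimensional subspace), and one must verify that both coincide with the uniform rewriting $\gamma_\bullet - (d-k+1)/2$. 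The degenerate case $k=1$ (vertices) is immediate: $A = \{X_{i_1}\}$, $Y = X_{i_1}$, the factor $\R^{k-1}$ disappears, and the statement recovers the identity used in Example~\ref{ex:volume_of_polytope_through_uniform_point}.
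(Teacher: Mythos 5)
Your proposal is correct and follows essentially the same route as the paper: the same orthogonal decomposition $T(F_K,\sP) = (A-\bar X)\oplus \pos(Y_j - Y : j\in K^c)$ with the non-face case handled via Proposition~\ref{prop:face_events_as_absorbtion_events_polytopes}, and the same distributional identification of the projected points (the paper cites Proposition~\ref{prop:beta_cones_joint_distr_projections} with the shift from $k+1$ to $k$ spanning points, whereas you re-derive it directly from Theorem~\ref{theo:ruben_miles} and Lemma~\ref{lem:projection}, which amounts to the same argument). Your parameter bookkeeping $\frac{(k-1)(d+1)}{2}+\sum_{i\in K}\beta_i = \sum_{i\in K}\gamma_i - \frac{d-k+1}{2}$ and $\beta_j+\frac{k-1}{2}=\gamma_j-\frac{d-k+1}{2}$ is exactly what the paper's statement encodes.
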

\begin{remark}
The number $\min (d,n-1)$ appearing above is the number of points generating facets of $\sP$. For example, if $n\leq d$, then $\sP$ is a simplex with $n$ vertices and its facets have $n-1$ vertices.
\end{remark}
\begin{example}
Let $k=1$ and, without loss of generality, $K=\{1\}$. Then, $T(\{X_{1}\},\sP) = \pos (X_2 - X_{1}, \ldots, X_n-X_1) \sim \BetaCone(\R^d; \beta_1; \beta_2,\ldots, \beta_n)$ by definition of beta cones.
\end{example}
\begin{proof}[Proof of Theorem~\ref{theo:tangent_cone_is_beta_cone}]
The proof relies on Proposition~\ref{prop:beta_cones_joint_distr_projections} and follows the idea of the proof of Theorem~\ref{theo:representation_angles_and_construction_of_Cone}. Without restriction of generality, we take $K= \{1,\ldots, k\}$.
Let $A \coloneqq \aff (X_1,\ldots,X_k)$.
We first project the points $X_1,\dots,X_n$ onto $A^\perp$ using the orthogonal projection $\Pi_{A^\perp}$. This gives the points
\begin{align*}
Y_{k+1} \coloneqq \Pi_{A^\perp} (X_{k+1}) \in A^\perp, \qquad  \dots, \qquad
Y_n \coloneqq \Pi_{A^\perp} (X_n) \in A^\perp
\end{align*}
as well as
\begin{align*}
Y \coloneqq \Pi_{A^\perp} (X_{1}) = \dots =  \Pi_{A^\perp} (X_{k}).
\end{align*}
If $F_K$ is a face if $\sP$, then the tangent cone of $\sP$ at $F_K$ is
\begin{align*}
T(F_K,\sP) = \pos (X_1-\bar{X},\dots,X_k-\bar{X},X_{k+1}-\bar{X},\dots, X_n-\bar{X})
\end{align*}
where $\bar{X}=(X_1+\dots+X_k)/k$ is almost surely contained in the relative interior of the simplex $F_K$. Furthermore, the positive hull of $X_1-\bar{X},\dots,X_k-\bar{X}$ is the linear space $A-\bar{X}$, which lets us express said tangent cone as a direct orthogonal sum,
\begin{align*}
T(F_K,\sP) = (A-\bar{X}) \oplus \pos(Y_{k+1}-Y, \dots, Y_n-Y).
\end{align*}
Note that if $F_K$ is not a face of $\sP$, this identity is still true if we define $T(F_K,\sP) = \R^d$ since in this case we have $\pos(Y_{k+1}-Y, \dots, Y_n-Y) = A^\perp$ by Proposition~\ref{prop:face_events_as_absorbtion_events_polytopes}.
Embedding all points into $\R^{d-k+1}$ using the isometry $I_{A^\perp}$ leads to the points
\begin{align*}
	Y_{k+1}' = I_{A^\perp}(Y_{k+1}),\qquad  \dots, \qquad Y_{n}'=I_{A^\perp}(Y_n), \qquad Y' = I_{A^\perp} (Y).
\end{align*}
Their joint distribution is given by Proposition~\ref{prop:beta_cones_joint_distr_projections} (taking into account that, in our case, $A$ is generated by $k$ points, while in Proposition~\ref{prop:beta_cones_joint_distr_projections} it is generated by $k+1$ points), namely,
\begin{itemize}
\item[(i)] $Y', Y_{k+1}',\dots,Y_{n}'$ are independent points in $\R^{d-k+1}$;
\item[(ii)] $Y_{i}'$ has density $f_{d-k+1, \b_{i} + \frac{k-1}{2}}$ for each $i \in \{k+1,\dots,n\}$;
\item[(iii)] $Y'$ has density $f_{d-k+1,\beta_1 + \ldots + \beta_k + \frac{(k-1)(d+1)}{2}}$ .
\end{itemize}
By definition, $\pos(Y_{k+1}'-Y', \dots, Y_n'-Y')$ is a beta cone, which gives the statement.
\end{proof}

Now we can express various quantities related to beta polytopes through the $\Theta$-function.

\begin{theorem}[Expected angles and face probabilities of beta polytopes]\label{theo:probabily_that_F_is_face}
For $k\in \{1,\ldots, \min (d,n-1)\}$ and  $K = \{i_1,\dots,i_k\}  \subseteq \{1,\dots,n\}$, let $F_K \coloneqq [X_{i_1},\dots,X_{i_k}]$ be a possible face of $\sP$. Then,
\begin{align}
	\P [F_K \text{ is a face of } \sP] &=
	2 \sum_{\substack{I \subseteq K^c \\ \# I \in \{d-k,d-k-2,\dots\}}} \Theta \left(\sum_{i\in K }\g_i;\{\g_i: i \in I\};\{\g_i: i \in K^c\bsl I\}\right),  \label{eq:F_K_face_beta_polytope}\\
	\P [F_K \text{ is not a face of } \sP] &=
	2\sum_{\substack{I \subseteq K^c \\ \# I \in \{d-k+2,d-k+4,\dots\}}} \Theta \left(\sum_{i\in K }\g_i;\{\g_i: i \in I\};\{\g_i: i \in K^c\bsl I\}\right). \label{eq:F_K_face_beta_polytope_not}
	\end{align}
	Further, for $\ell \in \lbrace k-1, \dots, \min (d,n-1)-1 \rbrace$, we have
	\begin{align}
		\E [\upsilon_{\ell} (T(F_K,\sP))] = \sum_{\substack{I \subseteq K^c \\ \# I=\ell-k+1}}\Theta \left( \sum_{i \in K } \g_i;\left\{ \g_i : i\in I\right\};\left\{ \g_i : i\in K^c\bsl I\right\}\right). \label{eq:exp_ups_tangent_beta_poly}
	\end{align}
	Additionally, we can express the expected solid angle of the tangent cone of $\sP$ at $F_K$ as
	\begin{align*}
		\E [\alpha(T(F_K,\sP))]
&= \E [\upsilon_{\min (d,n-1)} (T(F_K,\sP))]\\
&=\sum_{\substack{I \subseteq K^c \\ \# I\geq\min(n-k,d-k+1)}} \Theta \left( \sum_{i \in K} \g_i;\left\{ \g_i : i\in I\right\};\left\{ \g_i : i\in K^c\bsl I\right\}\right).
	\end{align*}
Assuming that $n > d+1$, the expected internal angle  on the event that $F_K$ is a face of $\sP$ is given by
	\begin{align}
		\E \left[\a (T(F_K,\sP))\ind_{\{F_K \text{ is a face of }\sP\}}\right]
		=\sum_{\substack{I \subseteq K^c \\ \# I\leq d-k}}  (-1)^{d-k-\# I}
		\Theta \left( \sum_{i \in K} \g_i;\left\{ \g_i : i\in I\right\};\left\{ \g_i : i\in K^c\bsl I\right\}\right).\label{eq:exp_alpha_tangent_beta_poly_on_event}
	\end{align}
The expected solid angle of the normal cone of $\sP$ at $F_K$ on the event that $F_K$ is a face of $\sP$ is given by
\begin{align*}
\E \left[\alpha(N(F_K,\sP))\ind_{\{F_K \text{ is a face of } \sP\}}\right] = \Theta \left(\sum_{i \in K} \g_i; \varnothing;\lbrace \g_{i}: i \in K^c \rbrace\right).
\end{align*}
The expected external angle is given by
\begin{align*}
	\E \left[\alpha(N(F_K,\sP))\right] = \Theta \left(\sum_{i \in K} \g_i; \varnothing;\lbrace \g_{i}: i \in K^c \rbrace\right) + \P [F_K \text{ is not a face of } \sP].
\end{align*}
\end{theorem}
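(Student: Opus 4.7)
My plan is to reduce every assertion in the remaining part of Theorem~\ref{theo:probabily_that_F_is_face} to the corresponding result on beta cones established in Theorems~\ref{theo:beta_cones_conic_intrinsic_vol_as_A_B} and~\ref{theo:beta_cones_conic_intrinsic_vol_as_A_B_non_full_dim}. The bridge is Theorem~\ref{theo:tangent_cone_is_beta_cone}, which gives the isometry
\begin{equation*}
T(F_K,\sP) \;\cong\; \R^{k-1}\oplus \sD_K,
\quad
\sD_K\sim \BetaCone\!\left(\R^{d-k+1};\Bigl(\sum_{i\in K}\gamma_i\Bigr)-\tfrac{d-k+1}{2};\Bigl\{\gamma_i-\tfrac{d-k+1}{2}: i\in K^c\Bigr\}\right).
\end{equation*}
The crucial bookkeeping observation is that when one adds $(d-k+1)/2$ to each parameter of $\sD_K$ as prescribed in Section~\ref{sec:beta_cones_results}, the ``$\gamma$-parameter'' of $\sD_K$ becomes exactly $\sum_{i\in K}\gamma_i$ and the individual ``$\gamma_i$-parameters'' become $\{\gamma_i: i\in K^c\}$. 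So the $\Theta$-expressions appearing in the beta-cone formulas automatically assume the form stated in the theorem.

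Next I translate the geometric quantities into quantities of $\sD_K$ alone. By Proposition~\ref{prop:face_events_as_absorbtion_events_polytopes}, $F_K$ is a face of $\sP$ iff $\sD_K\neq\R^{d-k+1}$. Since conic intrinsic volumes are invariant under orthogonal sum with a Euclidean factor, $\upsilon_\ell(T(F_K,\sP))=\upsilon_{\ell-k+1}(\sD_K)$; in particular $\alpha(T(F_K,\sP))=\alpha(\sD_K)$, and by polar duality in $\R^{d-k+1}$ (Lemma~\ref{lem:cone_intrinsic_volume_lineality_space} applied to $\sD_K^\circ$) we have $\alpha(N(F_K,\sP))=\alpha(\sD_K^\circ)=\upsilon_0(\sD_K)$ on the event $\{\sD_K\neq\R^{d-k+1}\}$, where the lineality of $\sD_K$ is trivial by Lemma~\ref{lem:beta_cones_generic_properties}.

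With this dictionary, each asserted formula is obtained by a direct application of the appropriate result from Section~\ref{sec:beta_cones_results} to $\sD_K$ (with $n-k$ vectors in ambient dimension $d-k+1$). Formulas~\eqref{eq:F_K_face_beta_polytope} and~\eqref{eq:F_K_face_beta_polytope_not} come from~\eqref{eq:theo:beta_cones_conic_intrinsic_vol_as_A_B_1} and~\eqref{eq:theo:beta_cones_conic_intrinsic_vol_as_A_B_2}; formula~\eqref{eq:exp_ups_tangent_beta_poly} is~\eqref{eq:beta_cones_conic_intrinsic_vol_as_A_B} when $n\geq d+1$ (so $\sD_K$ is full-dimensional) and Theorem~\ref{theo:beta_cones_conic_intrinsic_vol_as_A_B_non_full_dim} when $n\leq d$; the expected solid angle $\E[\alpha(T(F_K,\sP))]$ follows from~\eqref{eq:expect_upsilon_d_beta_cone_1} (or its non-full-dimensional analogue), and~\eqref{eq:exp_alpha_tangent_beta_poly_on_event} follows from~\eqref{eq:upislon_R_d_ind_BetaCone}. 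For the normal cone, the identity $\alpha(N(F_K,\sP))\ind_{\{F_K \text{ face}\}}=\upsilon_0(\sD_K)$ established above combined with~\eqref{eq:beta_cones_conic_intrinsic_vol_as_A_B} at $k=0$ gives the formula with indicator; the formula without indicator is then obtained by adding $\alpha(\{0\})\cdot\P[F_K\text{ is not a face of }\sP]=\P[F_K\text{ is not a face of }\sP]$, using that on $\{F_K\text{ is not a face}\}$ we have $T(F_K,\sP)=\R^d$ by convention and hence $N(F_K,\sP)=\{0\}$ with $\alpha(\{0\})=1$.

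The proof is essentially bookkeeping; no new computation is needed. The one point that requires care is the case split $n\geq d+1$ versus $n\leq d$, which corresponds precisely to the split between full-dimensional and non-full-dimensional beta cones in Section~\ref{sec:beta_cones_results}, together with the parameter shift turning $(\beta,\beta_1,\ldots,\beta_n)$ into the $(\gamma,\gamma_1,\ldots,\gamma_n)$ of the target formulas and the index shift $\ell\mapsto\ell-k+1$ induced by the Euclidean factor $\R^{k-1}$.
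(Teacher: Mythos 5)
Your proposal is correct and takes essentially the same route as the paper's own proof: the isometry $T(F_K,\sP)\cong\R^{k-1}\oplus\sD_K$ from Theorem~\ref{theo:tangent_cone_is_beta_cone}, the equivalence of $\{F_K \text{ is a face}\}$ with $\{\sD_K\neq\R^{d-k+1}\}$, the index shift $\upsilon_\ell\mapsto\upsilon_{\ell-k+1}$ under the Euclidean factor, and then a direct application of Theorems~\ref{theo:beta_cones_conic_intrinsic_vol_as_A_B} and~\ref{theo:beta_cones_conic_intrinsic_vol_as_A_B_non_full_dim} to $\sD_K$, exactly as in the paper. The only trivial slip is the pairing in your citation of the face probabilities: \eqref{eq:F_K_face_beta_polytope} comes from \eqref{eq:theo:beta_cones_conic_intrinsic_vol_as_A_B_2} and \eqref{eq:F_K_face_beta_polytope_not} from \eqref{eq:theo:beta_cones_conic_intrinsic_vol_as_A_B_1} (the paper obtains the latter via Proposition~\ref{prop:relation_theta_even_odd}), which does not affect correctness.
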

	\begin{proof}
	 We know from Theorem \ref{theo:tangent_cone_is_beta_cone} that $T(F_K,\sP)$ is isometric to
		\begin{align*}
			\R^{k-1} \oplus \BetaCone\left(\R^{d-k+1};\sum_{i \in K} \left(\b_i + \frac{d}{2}\right) -\frac{d-k+1}{2};\left\{ \b_{i}+\frac{d}{2}-\frac{d-k+1}{2}: i \in K^c\right\}\right).
		\end{align*}
We denote the beta cone on the right-hand side with $\mathcal{D}_K$ (which is not the same cone as the one we used in the proof of Theorem~\ref{theo:beta_cones_angles_as_A_B}).  To make the application of Theorem~\ref{theo:beta_cones_conic_intrinsic_vol_as_A_B} more straightforward, let us denote the parameters of $\mathcal{D}_K$ as $d' \coloneqq d-k+1$, as well as
		\begin{align*}
			\b' \coloneqq  \sum_{i \in K} \left( \b_i + \frac{d}{2}\right) -\frac{d-k+1}{2} \qquad \text{and} \qquad  \b_i' \coloneqq \b_{i}+\frac{d}{2}-\frac{d-k+1}{2}, \qquad \text{for } i \in K^c.
		\end{align*}
		Since $\mathcal{D}_K$ is a beta cone in dimension $d-k+1$, we then set
		\begin{align*}
			\g' \coloneqq \b' + \frac{d-k+1}{2} = \sum_{i \in K} \g_i \qquad \text{and} \qquad \g_i' \coloneqq \b_i' + \frac{d-k+1}{2} = \g_i, \qquad \text{for } i \in K^c.
		\end{align*}
We know from Proposition~\ref{prop:face_events_as_absorbtion_events_cones} that $F_K$ is a face of $\sP$ if and only if $\mathcal{D}_K\neq \R^{d-k+1}$.
		We can apply Theorem~\ref{theo:beta_cones_conic_intrinsic_vol_as_A_B} on $\mathcal{D}_K \sim \BetaCone\left(\R^{d'}; \b'; \left\{ \b'_{i}: i \in K^c\right\}\right)$ to get
		\begin{align*}
			\P [F_K \text{ is a face of } \sP] &= \P[\mathcal{D}_K \neq \R^{d-k+1}]
\\
			& = 2 \sum_{\substack{I \subseteq K^c \\ \# I \in \{d-k,d-k-2,\dots\}}} \Theta \left(\sum_{i\in K }\g_i;\{\g_i: i \in I\};\{\g_i: i \in K^c\bsl I\}\right).
		\end{align*}
	In both cases of $d-k$ being either even or odd, we can use Proposition~\ref{prop:relation_theta_even_odd} for the complementary event,
	\begin{align*}
		\P [F_K \text{ is not a face of } \sP] & = 1 - \P [F_K \text{ is a face of } \sP] \\
		& =1- 2 \sum_{\substack{I \subseteq K^c \\ \# I \in \{d-k,d-k-2,\dots\}}} \Theta \left(\sum_{i\in K }\g_i;\{\g_i: i \in I\};\{\g_i: i \in K^c\bsl I\}\right)
		\\
		& = 2\sum_{\substack{I \subseteq K^c \\ \# I \in \{d-k+2,d-k+4,\dots\}}} \Theta \left(\sum_{i\in K }\g_i;\{\g_i: i \in I\};\{\g_i: i \in K^c\bsl I\}\right).
	\end{align*}
		The next steps are analogous to the proof of Theorem~\ref{theo:beta_cones_angles_as_A_B}. First, we have for $\ell \in \{k-1,\ldots, \min (d,n-1)\}$,
		\begin{align*}
			\E [\upsilon_{\ell} (T(F_K,\sP))] =  \E [\upsilon_{\ell} (\R^{k-1} \oplus \mathcal{D}_K)] = \E [\upsilon_{\ell -k+1}(\mathcal{D}_K)].
		\end{align*}
		If additionally $\ell \neq \min (d,n-1)$, we can apply Theorems~\ref{theo:beta_cones_conic_intrinsic_vol_as_A_B} and~\ref{theo:beta_cones_conic_intrinsic_vol_as_A_B_non_full_dim} to get
		\begin{align}\label{eq:upsilon_tangent_beta_polytope}
			\E [\upsilon_{\ell} (T(F_K,\sP))] = \E [\upsilon_{\ell -k+1}(\mathcal{D}_K)] = \sum_{\substack{I \subseteq K^c \\ \# I=\ell-k+1}}\Theta \left( \sum_{i \in K } \g_i;\left\{ \g_i : i\in I\right\};\left\{ \g_i : i\in K^c\bsl I\right\}\right).
		\end{align}
Applying~\eqref{eq:expect_upsilon_d_beta_cone_1}, analogously to the proof of Theorem~\ref{theo:beta_cones_angles_as_A_B}, we get for the tangent cone $T(F_K,\sP)$ that
		\begin{align*}
			\E [\alpha(T(F_K,\sP))]
&=
\E [\upsilon_{\min(n-k,d-k+1)}(\mathcal{D}_K)]
\\
&=
\sum_{\substack{I \subseteq K^c \\ \# I\geq\min(n-k,d-k+1)}} \Theta \left( \sum_{i \in K} \g_i;\left\{ \g_i : i\in I\right\};\left\{ \g_i : i\in K^c\bsl I\right\}\right).
		\end{align*}
Assuming that $n>d+1$ and observing that $F_K$ is a face of $\sP$ if and only if $\mathcal{D}_K\neq \R^{d-k+1}$, we apply~\eqref{eq:upislon_R_d_ind_BetaCone} of Theorem~\ref{theo:beta_cones_conic_intrinsic_vol_as_A_B} to get
		\begin{align*}
\E \left[\a (T(F_K,\sP))\ind_{\{F_K \text{ is a face of }\sP\}}\right]
&=
\E \left[\upsilon_{d-k+1}(\mathcal{D}_K)\ind_{\{\mathcal{D}_K\neq \R^{d-k+1}\}}\right]
\\
&=
\sum_{\substack{I \subseteq K^c \\ \# I\leq d-k}}  (-1)^{d-k-\# I}
\Theta \left( \sum_{i \in K} \g_i;\left\{ \g_i : i\in I\right\};\left\{ \g_i : i\in K^c\bsl I\right\}\right).
\end{align*}
Next, for the normal cone, we use~\eqref{eq:upsilon_tangent_beta_polytope} with $\ell = k-1$ to get
\begin{align*}
\E \left[\alpha(N(F_K,\sP))\ind_{\{F_K \text{ is a face of } \sP\}}\right]
= \E [\upsilon_{0}(\mathcal{D}_K)]=
\Theta \left(\sum_{i \in K} \g_i; \varnothing;\lbrace \g_{i}: i \in K^c \rbrace\right).
\end{align*}	
Lastly, a formula for $\E[\alpha(N(F_K,\sP))]$ follows directly  observing  that $\alpha(N(F_K,\sP)) = 1$ if $F_K$ is  not a face of $\sP$.
This completes the proof.
\end{proof}
\begin{remark}[Comparison of formulas for face probabilities]
Which of the equations, \eqref{eq:F_K_face_beta_polytope} or~\eqref{eq:F_K_face_beta_polytope_not}, is more efficient for computing the face probabilities, depends on the parameters $n,d,k$. If $k = d-p$ with ``small'' $p\in\N$, then Equation~\eqref{eq:F_K_face_beta_polytope} is efficient since it involves summation over $I\subseteq K^c$ with $\# I \in \{p,p-2,\ldots\}$. In particular, if $k=d$ and $K = \lbrace i_1, \dots, i_d \rbrace \subseteq \{1,\ldots,n\}$, then~\eqref{eq:F_K_face_beta_polytope}  simplifies to
\begin{align*}
\P [F_K \text{ is a facet of } \sP] = 2 \cdot \Theta \left(\sum_{i \in K} \g_i ;\varnothing; \lbrace \g_i: i \in K^c  \rbrace \right).
\end{align*}
On the other hand, if $n = d + q$ with ``small'' $q\in \N$, while $k\in \{1,\ldots, d\}$ is arbitrary,  then~\eqref{eq:F_K_face_beta_polytope_not} becomes efficient since the size of $I \subseteq  K^c$ satisfies $\#I\in \{d-k+2,d-k+4,\ldots,\}$ and $\# I \leq d-k+q$.   In particular, if $n= d+2$ and $k\in \{1,\ldots, d\}$ is arbitrary, then $I= K^c$ and~\eqref{eq:F_K_face_beta_polytope_not} becomes
$$
\P [F_K \text{ is a not a face of } \sP] =  2 \cdot \Theta \left(\sum_{i\in K }\g_i;\{\g_i: i \in K^c\}; \varnothing\right).
$$
Further, if $n=d+3$ and  $k\in \{1,\ldots, d\}$ is arbitrary,  then $\#I = d-k+2$ meaning that $I = K^c\bsl \{j\}$ for some $j\in K^c$, and~\eqref{eq:F_K_face_beta_polytope_not} becomes
$$
\P [F_K \text{ is not a face of } \sP] =
	2 \cdot  \sum_{j\in K^c} \Theta \left(\sum_{i\in K}\g_i;\{\g_i: i \in K^c \bsl\{j\}\};\{\g_j\}\right).
$$
\end{remark}

\begin{theorem}[Expected $f$-vector of beta polytopes]\label{theo:f_vect}
	For $k\in \{1,\ldots, \min (d,n-1)\}$, the expected number of $(k-1)$-dimensional faces of $\sP$ is given by
\begin{align}
\E f_{k-1}(\sP_{n,d}^{\beta_1, \dots, \b_n})
&=2\sum_{\substack{K \subseteq \{1,\dots,n\} \\ \# K =k }} \sum_{\substack{ J \supseteq K \\ \# J = \{d,d-2,\dots\}}} \Theta \left(\sum_{i\in K}\g_i;\{\g_i: i \in J \bsl K\};\{\g_i: i \in J^c\}\right) \label{eq:f_vector},
\\
\binom nk - \E f_{k-1}(\sP_{n,d}^{\beta_1, \dots, \b_n})
&=
2\sum_{\substack{K \subseteq \{1,\dots,n\} \\ \# K =k }} \sum_{\substack{ J \supseteq K \\ \# J = \{d+2,d+4,\dots\}}} \Theta \left(\sum_{i\in K}\g_i;\{\g_i: i \in J \bsl K\};\{\g_i: i \in J^c\}\right). \label{eq:f_vector_loosers}
	\end{align}
\end{theorem}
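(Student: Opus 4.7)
The plan is to reduce the computation to a sum of face-probabilities and then invoke the already-established formulas from Theorem~\ref{theo:probabily_that_F_is_face}. Since $\beta_1,\ldots,\beta_n\geq -1$ (with strict inequality in the case $d=1$), Lemma~\ref{lem:beta_poi_general_affine} tells us that the points $X_1,\ldots,X_n$ are almost surely in general affine position, so $\sP$ is almost surely simplicial. Consequently, every $(k-1)$-dimensional face of $\sP$ is of the form $F_K = [X_{i_1},\ldots,X_{i_k}]$ for some unique $K = \{i_1,\ldots,i_k\} \subseteq \{1,\ldots,n\}$ with $\#K = k$. Therefore,
\begin{equation*}
\E f_{k-1}(\sP) = \sum_{\substack{K \subseteq \{1,\ldots,n\} \\ \#K = k}} \P[F_K \text{ is a face of } \sP].
\end{equation*}

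Now I substitute in the formula for $\P[F_K \text{ is a face of } \sP]$ provided by~\eqref{eq:F_K_face_beta_polytope} of Theorem~\ref{theo:probabily_that_F_is_face}. The key bookkeeping step is the change of summation variable $J := K \sqcup I \supseteq K$, where $I \subseteq K^c$ was the inner summation index. Under this substitution we have $\#J = k + \#I$, so the condition $\#I \in \{d-k, d-k-2, \ldots\}$ translates into $\#J \in \{d, d-2, \ldots\}$, and the index set in the third argument transforms via $K^c \bsl I = (K \cup I)^c = J^c$. This yields directly
\begin{equation*}
\P[F_K \text{ is a face of } \sP] = 2 \sum_{\substack{J \supseteq K \\ \#J \in \{d, d-2, \ldots\}}} \Theta\biggl(\sum_{i \in K} \gamma_i;\, \{\gamma_i: i \in J \bsl K\};\, \{\gamma_i: i \in J^c\}\biggr),
\end{equation*}
and summing over $K$ as above establishes~\eqref{eq:f_vector}.

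For the complementary identity~\eqref{eq:f_vector_loosers}, the argument is entirely parallel. One writes
\begin{equation*}
\binom{n}{k} - \E f_{k-1}(\sP) = \sum_{\substack{K \subseteq \{1,\ldots,n\} \\ \#K = k}} \bigl(1 - \P[F_K \text{ is a face of } \sP]\bigr) = \sum_{\substack{K \subseteq \{1,\ldots,n\} \\ \#K = k}} \P[F_K \text{ is not a face of } \sP],
\end{equation*}
and then applies~\eqref{eq:F_K_face_beta_polytope_not} from Theorem~\ref{theo:probabily_that_F_is_face} followed by the same substitution $J = K \sqcup I$, which now turns $\#I \in \{d-k+2, d-k+4, \ldots\}$ into $\#J \in \{d+2, d+4, \ldots\}$.

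I do not expect any genuine obstacle here: the theorem is essentially a direct corollary of Theorem~\ref{theo:probabily_that_F_is_face} together with the simpliciality of $\sP$. The only thing to watch is the edge case $k = \min(d, n-1)$ (in particular $n \leq d+1$, when $\sP$ is a.s.\ a simplex and every $k$-subset gives a face): one should check that the formula~\eqref{eq:f_vector} correctly degenerates, using Proposition~\ref{prop:relation_theta_even_odd} if needed, to recover $\E f_{k-1}(\sP) = \binom{n}{k}$.
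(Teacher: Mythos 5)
Your proposal is correct and follows essentially the same route as the paper: decompose $\E f_{k-1}(\sP)$ into face probabilities (using a.s.\ simpliciality), apply Theorem~\ref{theo:probabily_that_F_is_face}, and reindex via $J = K \cup I$. For the complementary formula the paper invokes Proposition~\ref{prop:relation_theta_even_odd} directly, while you route it through~\eqref{eq:F_K_face_beta_polytope_not}, which was itself derived from that proposition, so the arguments coincide in substance; also note your edge-case worry is moot since Theorem~\ref{theo:probabily_that_F_is_face} already covers all $k\le\min(d,n-1)$.
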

\begin{proof}
We can write the expected number of $(k-1)$-faces as
\begin{align*}
\E f_{k-1}(\sP_{n,d}^{\beta_1, \dots, \b_n})  = \sum_{\substack{K \subseteq \{1,\dots,n\} \\ \# K=k}} \P [F_K\text{ is a face}],
\end{align*}
where $F_K = \conv(X_i : i\in K)$ is a possible face of $\sP = \sP_{n,d}^{\beta_1, \dots, \b_n}$. We  use Theorem~\ref{theo:probabily_that_F_is_face} to get
	\begin{align}\label{eq:theo:f_vect_proof}
		\E f_{k-1}(\sP)
		= 2\sum_{\substack{K \subseteq \{1,\dots,n\} \\ \# K=k}} \sum_{\substack{I \subseteq K^c \\ \# I \in \{d-k,d-k-2,\dots\}}} \Theta \left(\sum_{i\in K }\g_i;\{\g_i: i \in I\};\{\g_i: i \in K^c \bsl  I\}\right).
	\end{align}
	Setting $J \coloneqq I \cup K$ lets us write the sum in terms of $J$ and yields~\eqref{eq:f_vector}.
The complementary formula~\eqref{eq:f_vector_loosers} follows from Proposition~\ref{prop:relation_theta_even_odd}.
\end{proof}
	\begin{example}[Expected angles of beta simplices: The equal beta's case]\label{ex:simplex_setting}
Consider a random simplex $\sP_{n,n-1}^{x,\ldots, x}=[X_1,\dots,X_n]$ in $\R^{n-1}$ spanned by independent random points $X_1,\ldots, X_n \sim f_{n-1,x}$, where $x\geq -1$ is a parameter, the same for all points.
In~\cite{kabluchko_thaele_zaporozhets_beta_polytopes_poi_polyhedra} and~\cite{kabluchko_angles_of_random_simplices_and_face_numbers}, the following notation for the expected internal and external angles of $[X_1,\ldots, X_n]$ has been used:		
\begin{align*}
&\E \a (T([X_1,\dots,X_k],[X_1,\dots,X_n])) = J_{n,k}(x),
\\
&\E \a (N([X_1,\dots,X_k],[X_1,\dots,X_n])) = I_{n,k} (2x+n-1),
\end{align*}
for $k\in \{1,\ldots, n\}$. The corresponding angle sums were denoted by $\bI_{n,k}(x) = \binom nk I_{n,k}(x)$ and $\bJ_{n,k}(x) = \binom nk J_{n,k}(x)$.  An explicit formula for $I_{n,k}(x)$ has been obtained in~\cite[Theorem~1.2]{kabluchko_thaele_zaporozhets_beta_polytopes_poi_polyhedra} (see also Theorem~1.6 there for the definition we use), while a formula for $J_{n,k}(x)$ has been obtained in~\cite[Section~1.3]{kabluchko_angles_of_random_simplices_and_face_numbers} after some partial results in~\cite{kabluchko_angle_sums_dim_3_4,kabluchko_poisson_zero_cell,kabluchko_recursive_scheme}.
We shall now express the quantities $J_{n,k}(x)$ and $I_{n,k}(x)$ through the $\Theta$-function and other quantities introduced in the present paper. For $J_{n,k}(x)$, we use the formula for $\E [\alpha(T(F_K,\sP))]$ in Theorem~\ref{theo:probabily_that_F_is_face},
		\begin{align*}
			J_{n,k}(x) 
			=& \Theta \biggl(k\left(x+\frac{n-1}{2}\right);\left\{x+\frac{n-1}{2},\dots,x+\frac{n-1}{2}\right\};\varnothing\biggr) \\
			=& \intern \biggl( kx + \frac{(k-1)n}{2} ; x +\frac{k-1}{2}, \dots, x +\frac{k-1}{2}   \biggr) \\
			=& A\biggl(k\left(x+\frac{n-1}{2}\right); \underbrace{x+\frac{n-1}{2},\dots,x+\frac{n-1}{2}}_{n-k}\biggr),
		\end{align*}
		where we have also used Proposition~\ref{prop:int_ext_through_theta} and \eqref{eq:A_through_Int}.   To identify $I_{n,k}(2x+n-1)$, we use the formula for $\E [\alpha(N(F_K,\sP))]$ in Theorem~\ref{theo:probabily_that_F_is_face} combined with
Proposition~\ref{prop:int_ext_through_theta} and~\eqref{eq:B_through_Ext}, which gives
		\begin{align*}
		I_{n,k}(2x+n-1)
		&= \Theta \biggl(k \left(x+\frac{n-1}{2}\right); \varnothing; \left\{x+\frac{n-1}{2},\dots,x+\frac{n-1}{2}\right\}\biggr)\\
		&= \extern \biggl( kx+\frac{(k-1)n}{2} ; x +\frac{k-1}{2}, \dots, x +\frac{k-1}{2}  \biggr)  \\
		&= B\biggl(n\left(x+\frac{n-1}{2}\right);\underbrace{x+\frac{n-1}{2}, \dots, x+\frac{n-1}{2}}_{n-k}\biggr).
		\end{align*}
The results of~\cite{kabluchko_thaele_zaporozhets_beta_polytopes_poi_polyhedra,kabluchko_recursive_scheme,kabluchko_angle_sums_dim_3_4,kabluchko_angles_of_random_simplices_and_face_numbers} (where beta polytopes generated by beta points with equal parameters have been considered) can now be recovered from the results of the present paper. 
\end{example}

\subsection{Expected volume and beta content}
If $P\subseteq \BB^d$ is a polytope, we define its \emph{beta content} with parameter $\beta \geq -1$ as the probability that a beta-distributed point $X\sim f_{d,\beta}$ falls into $P$, that is
$$
\Content_\beta(P) := \P[X\in P] = \int_P f_{d,\beta}(x) \dint x.
$$
For example, the $0$-content is related to the volume via $\Vol_d(P) = \kappa_d \cdot \Content_0(P)$.
		
	\begin{theorem}[Expected beta-content of beta polytopes]\label{theo:beta_content_polys}
		Let $d\geq 2$, $n \geq d+1$ and $\beta,\beta_1,\ldots, \beta_n\geq -1$ and put  $\gamma := \beta + \frac d2$. Then,
		\begin{align*}
		\E \Content_\beta(\sP_{n,d}^{\beta_1,\ldots,\beta_n})
		&=
		1 - 2 \sum_{\substack{I \subseteq \{1,\dots,n\} \\ \# I \in \{d-1,d-3,\dots\}}} \Theta \left(\gamma ;\{\g_i: i \in I\};\{\g_i: i \in I^c\}\right)\\
		&=
		2 \sum_{\substack{I \subseteq \{1,\dots,n\} \\ \# I \in \{d+1,d+3,\dots\}}} \Theta \left(\gamma ;\{\g_i: i \in I\};\{\g_i: i \in I^c\}\right).
		\end{align*}
		\end{theorem}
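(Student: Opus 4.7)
The proof is a direct combination of Efron's identity (as discussed in Example~\ref{ex:volume_of_polytope_through_uniform_point}), the translation of face events into absorbtion events, and the absorbtion probability formula for beta cones. The plan is as follows.

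First, introduce an auxiliary point $X\sim f_{d,\beta}$ independent of $X_1,\dots,X_n$. By Fubini's theorem and the definition of the beta content,
\begin{align*}
\E\,\Content_\beta(\sP_{n,d}^{\beta_1,\dots,\beta_n})
=\E\!\int_{\sP_{n,d}^{\beta_1,\dots,\beta_n}} f_{d,\beta}(x)\,\dd x
=\P[X\in \sP_{n,d}^{\beta_1,\dots,\beta_n}].
\end{align*}
Next, I translate the containment event into a face event for the enlarged polytope $[X_1,\dots,X_n,X]$. By Lemma~\ref{lem:beta_poi_general_affine} the points $X,X_1,\dots,X_n$ are almost surely in general affine position; in particular, the event $\{X\in\sP_{n,d}^{\beta_1,\dots,\beta_n}\}$ coincides up to a null set with the event that $X$ is \emph{not} a vertex of $[X_1,\dots,X_n,X]$.

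Now apply Proposition~\ref{prop:face_events_as_absorbtion_events_polytopes} with the single point $X$ playing the role of $x_1$ (so that $A=\{X\}$ and $A^\perp=\R^d$): the point $X$ is a vertex of $[X_1,\dots,X_n,X]$ if and only if $\pos(X_1-X,\dots,X_n-X)\neq\R^d$. Consequently,
\begin{align*}
\E\,\Content_\beta(\sP_{n,d}^{\beta_1,\dots,\beta_n})
=\P\!\left[\pos(X_1-X,\dots,X_n-X)=\R^d\right].
\end{align*}
By Definition~\ref{def:beta_cone}, the cone $\sC:=\pos(X_1-X,\dots,X_n-X)$ is precisely a beta cone distributed as $\BetaCone(\R^d;\beta;\beta_1,\dots,\beta_n)$. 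It therefore only remains to invoke Theorem~\ref{theo:beta_cones_conic_intrinsic_vol_as_A_B}: Equation~\eqref{eq:theo:beta_cones_conic_intrinsic_vol_as_A_B_1} yields the second form of the answer, and Equation~\eqref{eq:theo:beta_cones_conic_intrinsic_vol_as_A_B_2} (equivalently, the relation $\sum_{I\subseteq\{1,\dots,n\}}\Theta(\gamma;\{\gamma_i:i\in I\};\{\gamma_i:i\in I^c\})=1$ of Proposition~\ref{prop:relation_theta_even_odd}) yields the first form.

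There is no serious obstacle here: every step is an immediate application of a result already established in the paper. The only minor subtlety is the null-set argument identifying $\{X\in\sP\}$ with $\{X\text{ is not a vertex of }[X_1,\dots,X_n,X]\}$, which is taken care of by the general position statement of Lemma~\ref{lem:beta_poi_general_affine}.
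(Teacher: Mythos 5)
Your proposal is correct and follows essentially the same route as the paper, which in fact gives exactly this argument as its second proof: rewrite $\E\,\Content_\beta(\sP)$ as $\P[X\in\sP]$, identify this with the absorption event $\pos(X_1-X,\dots,X_n-X)=\R^d$ for the beta cone $\BetaCone(\R^d;\beta;\beta_1,\dots,\beta_n)$, and apply Theorem~\ref{theo:beta_cones_conic_intrinsic_vol_as_A_B} together with Proposition~\ref{prop:relation_theta_even_odd}. The only cosmetic difference is that you pass through the vertex event for $[X_1,\dots,X_n,X]$ (handled correctly via Lemma~\ref{lem:beta_poi_general_affine} and Proposition~\ref{prop:face_events_as_absorbtion_events_polytopes}), whereas the paper's first proof uses the vertex probability from Theorem~\ref{theo:probabily_that_F_is_face} directly and its second proof goes straight to the absorption event.
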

		\begin{proof}
Let $X_{n+1} \sim f_{d,\beta}$ be an additional random point which is independent of the points $X_1,\ldots, X_n$ that span $\sP_{n,d}^{\beta_1,\ldots,\beta_n}$.  On the one hand,
		$$
		\E \Content_\beta(\sP_{n,d}^{\beta_1,\ldots,\beta_n}) = \P\left[X_{n+1} \in \sP_{n,d}^{\beta_1,\ldots,\beta_n}\right] = 1 - \P\left[X_{n+1} \text{ is a vertex of } \sP_{n+1,d}^{\beta_1,\ldots,\beta_n,\b}\right]
		$$
since the additional point $X_{n+1}$ takes its value inside of $\sP_{n,d}^{\beta_1,\ldots,\beta_n}$ if and only if it is not a vertex of the polytope $\sP_{n,d}^{\beta_1,\ldots,\beta_n,\beta}$ spanned by the points $X_1,\ldots, X_n, X_{n+1}$.
On the other hand, we know from Theorem~\ref{theo:probabily_that_F_is_face} applied to the polytope $\sP_{n,d}^{\beta_1,\ldots,\beta_n,\b}$ that
		\begin{align*}
		\P \left[X_{n+1} \text{ is a vertex of } \sP_{n+1,d}^{\beta_1,\ldots,\beta_n,\b}\right] =
		2 \sum_{\substack{I \subseteq \{1,\dots,n\} \\ \# I \in \{d-1,d-3,\dots\}}} \Theta \left(\gamma ;\{\g_i: i \in I\};\{\g_i: i \in I^c\}\right).
		\end{align*}
Here is another proof of the theorem. We observe that $X_{n+1}$ belongs to the interior of $\sP_{n,d}^{\beta_1,\ldots,\beta_n}$ if and only if the positive hull of the vectors $X_{1}-X_{n+1},\ldots, X_{1}-X_{n+1}$ equals $\R^d$. This leads to
\begin{align*}
\E \Content_\beta(\sP_{n,d}^{\beta_1,\ldots,\beta_n})
=
\P\left[X_{n+1} \in \sP_{n,d}^{\beta_1,\ldots,\beta_n}\right]
&=
\P[\BetaCone(\R^d; \beta; \beta_1,\ldots, \beta_n) = \R^d]
\\
&=
2\sum_{\substack{I \subseteq \{1,\dots,n\} \\ \# I \in \lbrace d+1,d+3,\ldots \rbrace}}\Theta ( \gamma;\left\{ \g_i : i\in I\right\};\left\{ \g_i : i\in I^c\right\}),
\end{align*}
where in the last step we used Equation~\eqref{eq:theo:beta_cones_conic_intrinsic_vol_as_A_B_1} of Theorem~\ref{theo:beta_cones_conic_intrinsic_vol_as_A_B}.
\end{proof}
\begin{example}[Expected $\beta$-content of a simplex]
Taking $n=d+1$ in Theorem~\ref{theo:beta_content_polys} gives
$$
\E \Content_\beta(\sP_{d+1,d}^{\beta_1,\ldots,\beta_{d+1}})
=
2 \cdot \Theta \left(\gamma ;\{\g_1,\ldots, \g_{d+1}\};\varnothing\right).
$$
\end{example}
\begin{theorem}[Expected volume of beta polytopes]\label{theo:exp_volume_beta_polytope}
For arbitrary $d\geq 2$,  $n \geq d+1$ and $\beta_1,\dots, \beta_n\geq -1$, we have
\begin{align}
\E \Vol(\sP_{n,d}^{\beta_1,\ldots,\beta_n})
&=
2\kappa_d \sum_{\substack{I \subseteq \{1,\ldots, n\} \\ \# I=d+1}}  \Theta (d/2; \{\gamma_i: i \in I\}; \{\gamma_i: i \in I^c\})
\label{eq:E_Vol_beta_poly_1}\\
&=
\k_d - 2\k_d \sum_{\substack{I \subseteq \{1,\dots,n\} \\ \# I \in \{d-1,d-3,\dots\}}} \Theta \left(d/2;\{\g_i: i \in I\};\{\g_i: i \in I^c\}\right).
\label{eq:E_Vol_beta_poly_2}
\end{align}
\end{theorem}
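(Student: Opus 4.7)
The plan is to reduce the expected volume to the expected $\beta$-content with parameter $\beta = 0$ and then invoke Theorem~\ref{theo:beta_content_polys}. The key observation is that the density $f_{d,0}$ is just the normalised indicator of the unit ball, so that $c_{d,0} = 1/\kappa_d$ and consequently $\Content_0(P) = \Vol_d(P)/\kappa_d$ for any polytope $P\subseteq \BB^d$. Taking expectations gives
\begin{equation*}
\E \Vol_d(\sP_{n,d}^{\beta_1,\ldots,\beta_n}) = \kappa_d \cdot \E\Content_0 (\sP_{n,d}^{\beta_1,\ldots,\beta_n}).
\end{equation*}

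Applying Theorem~\ref{theo:beta_content_polys} with $\beta=0$, and noting that in this case the parameter in the $\Theta$-function is $\beta + d/2 = d/2$, one immediately obtains
\begin{equation*}
\E \Vol_d(\sP_{n,d}^{\beta_1,\ldots,\beta_n}) = \kappa_d - 2\kappa_d \sum_{\substack{I \subseteq \{1,\dots,n\} \\ \# I \in \{d-1,d-3,\dots\}}} \Theta \left(d/2;\{\g_i: i \in I\};\{\g_i: i \in I^c\}\right),
\end{equation*}
which is exactly the second formula~\eqref{eq:E_Vol_beta_poly_2}. Equally,  Theorem~\ref{theo:beta_content_polys} directly yields
\begin{equation*}
\E \Vol_d(\sP_{n,d}^{\beta_1,\ldots,\beta_n}) = 2\kappa_d \sum_{\substack{I \subseteq \{1,\dots,n\} \\ \# I \in \{d+1,d+3,\dots\}}} \Theta \left(d/2;\{\g_i: i \in I\};\{\g_i: i \in I^c\}\right).
\end{equation*}

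The only non-trivial point is to see that the sum on the right-hand side actually collapses to the single cardinality $\#I=d+1$, as stated in~\eqref{eq:E_Vol_beta_poly_1}. For this we invoke Corollary~\ref{cor:theta_quant_vanishes_d/2-1}, which asserts that $\Theta(d'/2-1;Y;Z)=0$ whenever $\#Y\in\{d'+1,d'+3,\dots\}$. Applying this with $d':=d+2$ gives $\Theta(d/2;Y;Z)=0$ for every multiset $Y$ with $\#Y\in\{d+3,d+5,\dots\}$. Hence all terms in the sum with $\#I\geq d+3$ vanish, only the $\#I=d+1$ contribution survives, and~\eqref{eq:E_Vol_beta_poly_1} follows. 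The main technical input beyond Theorem~\ref{theo:beta_content_polys} is therefore the vanishing Corollary~\ref{cor:theta_quant_vanishes_d/2-1}, which itself goes back to Proposition~\ref{prop:a_quant_special_val_d} via the beta cone identity $\P[\sC=\R^d]=0$ in the uniform-on-sphere case; no further obstacle arises.
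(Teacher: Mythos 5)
Your proposal is correct and follows essentially the same route as the paper: reduce the expected volume to the expected $0$-content, apply Theorem~\ref{theo:beta_content_polys} with $\beta=0$, and then kill all terms with $\#I\geq d+3$ via Corollary~\ref{cor:theta_quant_vanishes_d/2-1} (applied, as you note, in dimension $d+2$ so that the first argument becomes $d/2$). The paper's proof is exactly this argument, so no further comment is needed.
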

\begin{proof}
Taking $\beta= 0$ in Theorem~\ref{theo:beta_content_polys} and applying the identity $\Vol_d(P) = \kappa_d \cdot \Content_0(P)$ yields~\eqref{eq:E_Vol_beta_poly_2} and
$$
\E \Vol(\sP_{n,d}^{\beta_1,\ldots,\beta_n})
=
2\kappa_d \sum_{\substack{I \subseteq \{1,\ldots, n\} \\ \# I\in \{d+1,d+3,\ldots\}}}  \Theta (d/2; \{\gamma_i: i \in I\}; \{\gamma_i: i \in I^c\}).
$$
Since $\Theta (d/2;\{\g_i: i \in I\};\{\g_i: i \in I^c\})=0$  for $\# I \in \{d+3, d+5,\dots\}$ by Corollary~\ref{cor:theta_quant_vanishes_d/2-1}, all terms with $\# I  \neq d+1$ can be omitted, which yields~\eqref{eq:E_Vol_beta_poly_1}.
\end{proof}
\begin{example}[Expected volume of a beta simplex]\label{ex:vol_beta_simplex_as_theta}
Taking $n=d+1$ in Theorem~\ref{theo:exp_volume_beta_polytope} we get
\begin{align*}
\E \Vol(\sP_{d+1,d}^{\beta_1,\ldots,\beta_{d+1}})
&= 2\k_d \cdot \Theta \left(\frac{d}{2};\left\{\b_1 + \frac{d}{2},\dots,\b_{d+1}+\frac{d}{2}\right\};\varnothing\right)
\\
&= 2 \k_d \cdot \intern \left(-\frac{1}{2}; \b_1 - \frac{1}{2}, \dots, \b_{d+1}-\frac{1}{2}\right),
\end{align*}
where the second identity follows from Proposition~\ref{prop:int_ext_through_theta}.
\end{example}

\begin{remark}[Various formulas for the expected volume]\label{rem:volume_as_theta_from_book}
	In Theorem 8.35 in \cite{kabluchko_steigenberger_thaele} as well as Theorem~2.1 in \cite{moseeva2024mixedrandombetapolytopes}, the authors have derived another formula for $\E \Vol_d \sP_{n,d}^{\beta_1,\dots,\b_n}$,
	\begin{align}\label{eq:expected_vol_beta_poly_different_formula}
		\E \Vol_d\left(\sP_{n,d}^{\beta_1,\dots,\b_n}\right)= 
		\sum_{\substack{I \subseteq \{1,\ldots, n\} \\ \# I=d+1}}
		H_{d+1}^I
		\int_{-1}^{+1} f_{1,\sum_{i \in I} \b_{i} + \frac{(d+1)^2}2 - 1} (h)
		\prod_{j \in I^c} F_{1, \beta_{j} +\frac{d-1}{2}} (h) \, \dint h,
		\end{align}
		where $F_{1,\b}(h):= c_{1,\b} \int_{-1}^h ( 1- x^2 )^\b \,\dint x$ for  $h\in [-1,1]$ and, for any index set $I = \lbrace i_1, \dots, i_{d+1} \rbrace$,
		\begin{align*}
		H_{d+1}^{I}
		&\coloneqq
		\frac{\kappa_d}{2^d} \cdot \binom{\sum_{i \in I} 2\b_{i} + (d+1)^2}{\sum_{i \in I} \b_{i} + \frac{(d+1)^2}2}^{-1}  \prod_{i \in I} \binom{d+1+2\b_i}{\frac{d+1}{2}+\b_i}
=\E \Vol_d \left(\sP_{d+1,d}^{\b_{i_1},\dots,\b_{i_{d+1}}}\right).
		\end{align*}
The last equality has been observed in Remark 8.38 in~\cite{kabluchko_steigenberger_thaele}. Alternatively, it follows by taking $n=d+1$ in~\eqref{eq:expected_vol_beta_poly_different_formula}. It follows from Example~\ref{ex:vol_beta_simplex_as_theta} that
$$
H_{d+1}^{I} = \E \Vol_d \left(\sP_{d+1,d}^{\b_{i_1},\dots,\b_{i_{d+1}}}\right) = 2\kappa_d \cdot \Theta (d/2; \{\gamma_i: i\in I\}; \varnothing).
$$
The integral in~\eqref{eq:expected_vol_beta_poly_different_formula} can also be written in terms of the $\Theta$-function using the integral representation  given in Theorem~\ref{theo:theta_as_prod_of_thetas}, which allows us to rewrite~\eqref{eq:expected_vol_beta_poly_different_formula}  as follows:
\begin{align*}
\E \Vol_d\left(\sP_{n,d}^{\beta_1,\dots,\b_n}\right)=
2\kappa_d \cdot \sum_{\substack{I \subseteq \{1,\ldots, n\} \\ \# I=d+1}}  \Theta \left(\frac d2; \{\gamma_i: i\in I\}; \varnothing\right) \cdot  \Theta \left(\frac d2+\sum_{i \in I}\gamma_i;\varnothing;\left\{  \gamma_i: i \in \{1,\ldots, n\} \backslash I \right\}\right).
\end{align*}
Using Equation~\eqref{eq:theo:theta_as_prod_of_thetas} from Theorem~\ref{theo:theta_as_prod_of_thetas},  the product of two $\Theta$-terms on the right-hand side can be written as $\Theta (d/2; \{\gamma_i: i \in I\}; \{\gamma_i: i \in I^c\})$, and we finally  recover~\eqref{eq:E_Vol_beta_poly_1}.
\end{remark}

In Theorem~6.1 of \cite{kabluchko_steigenberger_thaele}, the following formula has been derived for the volume of the $d$-dimensional beta simplex,
	\begin{align}\label{eq:volume_simplex_as_product_of_Gammas}
		\E \Vol(\sP_{d+1,d}^{\beta_1,\ldots,\beta_{d+1}})
		=(d!)^{-1} \frac{\Gamma \left( \sum_{i=1}^{d+1}  \beta_i +  \frac{d(d+1)}{2}  +1 +\frac{d+1}{2} \right) }{\Gamma \left( \sum_{i=1}^{d+1}  \beta_i + \frac{d(d+1)}{2}  +1 +\frac{d}{2} \right)} \frac{\Gamma \left( \frac{d+1 }{2} \right)}{\Gamma \left( \frac{1}{2} \right)}
		\prod_{i=1}^{d+1} \frac{\Gamma \left( \frac{d}{2} + \beta_i +1 \right)}{\Gamma \left( \frac{d}{2} + \beta_i  +1 +\frac{1}{2} \right)}.
	\end{align}
It can also be found in~\cite{ruben_miles}. We can use this representation to prove the following theorem.
\begin{theorem}[A special $a$-quantity]
For $d\in \N_0$, real numbers $\alpha_1,\ldots, \alpha_{d+1} \geq  0$, we have
\begin{align}
a\left(d+2+\sum_{i=1}^{d+1}\a_i;\a_1,\dots,\a_{d+1}\right)=\pi \prod_{i=1}^{d+1} \frac{1}{\a_i+1}.  \label{eq:a_special_val_d+2}
\end{align}
\end{theorem}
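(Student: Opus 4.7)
The plan is to derive~\eqref{eq:a_special_val_d+2} by matching two expressions for the expected volume of a beta simplex: the one given in terms of the $a$-function via the $\Theta$-function, and the classical Gamma-product formula~\eqref{eq:volume_simplex_as_product_of_Gammas} due to Ruben and Miles. The case $d=0$ is already contained in Example~\ref{ex:a_and_b_for_d_equals_1} with $x=y=\alpha_1$, so I will focus on $d\geq 1$.

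First, I would combine Example~\ref{ex:vol_beta_simplex_as_theta} with Lemma~\ref{lem:Theta_with_one_set_empty} to obtain, for all $\beta_1,\dots,\beta_{d+1}\geq -1$ and with $\gamma_i:=\beta_i+d/2$,
\begin{align*}
\E\Vol_d\bigl(\sP_{d+1,d}^{\beta_1,\dots,\beta_{d+1}}\bigr)
=2\kappa_d\cdot c_{d/2+\sum_{i}\gamma_i}\cdot\prod_{i=1}^{d+1} c_{\gamma_i-1/2}\cdot a\!\left(d+2+\sum_{i=1}^{d+1} 2\gamma_i;\,2\gamma_1,\dots,2\gamma_{d+1}\right).
\end{align*}
Setting $\alpha_i:=2\gamma_i=2\beta_i+d$ (so that $\beta_i\geq -1$ corresponds to $\alpha_i\geq d-2$) and solving for the $a$-quantity reduces the claim to an elementary Gamma-function identity.

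Second, I would plug in formula~\eqref{eq:volume_simplex_as_product_of_Gammas} on the left and the definitions $\kappa_d=\pi^{d/2}/\Gamma(d/2+1)$ and $c_{\beta}=\Gamma(\beta+3/2)/(\sqrt{\pi}\,\Gamma(\beta+1))$ on the right. The Gamma ratio $\Gamma(\sum\beta_i+d(d+1)/2+1+(d+1)/2)/\Gamma(\sum\beta_i+d(d+1)/2+1+d/2)$ appearing in~\eqref{eq:volume_simplex_as_product_of_Gammas} precisely matches the numerator/denominator of $c_{d/2+\sum\gamma_i}$, so it cancels cleanly. The remaining product of Gamma ratios telescopes using $\Gamma(x+1)=x\Gamma(x)$ to $\prod_{i=1}^{d+1}(\beta_i+d/2+1/2)^{-1}=\prod_{i=1}^{d+1}2/(\alpha_i+1)$. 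The constant prefactor collapses to $\pi$ via the Legendre duplication formula $\Gamma(x)\Gamma(x+1/2)=2^{1-2x}\sqrt{\pi}\,\Gamma(2x)$ with $x=(d+1)/2$, which gives $\Gamma((d+1)/2)\Gamma(d/2+1)=2^{-d}\sqrt{\pi}\,d!$ and hence eliminates the factors $\Gamma((d+1)/2)$, $\Gamma(d/2+1)$, $2^{d+1}$, $d!$, $\sqrt{\pi}$ all at once.

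At this point I will have proved~\eqref{eq:a_special_val_d+2} under the restriction $\alpha_i\geq d-2$. To extend the identity to all $\alpha_1,\dots,\alpha_{d+1}\geq 0$, I invoke the analyticity stated in the remark following Definition~\ref{dfn:a_quantities}: both sides of~\eqref{eq:a_special_val_d+2} are analytic functions of $(\alpha_1,\dots,\alpha_{d+1})$ on the open polydisc $\{\Re\alpha_j>0\}$ (the right-hand side trivially so, and the left-hand side because $d+2+\sum\alpha_i>\sum\alpha_i$ ensures absolute convergence), and they agree on a set with interior in this polydisc, so they must coincide throughout, and by continuity also on the closed orthant $\alpha_i\geq 0$.

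The main obstacle I anticipate is purely bookkeeping: tracking powers of $\pi$, factors of $2$, and the several Gamma functions through the substitution, and spotting the Legendre duplication formula as the mechanism that turns the surviving constant into exactly $\pi$. Beyond that, no further geometric input is needed.
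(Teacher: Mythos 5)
Your proposal follows essentially the same route as the paper's own proof: equate the $\Theta$/$a$-representation of $\E \Vol_d(\sP_{d+1,d}^{\beta_1,\ldots,\beta_{d+1}})$ (Theorem~\ref{theo:exp_volume_beta_polytope} with $n=d+1$, i.e.\ Example~\ref{ex:vol_beta_simplex_as_theta} combined with Lemma~\ref{lem:Theta_with_one_set_empty}) with the Ruben--Miles formula~\eqref{eq:volume_simplex_as_product_of_Gammas}, collapse the Gamma factors via $\Gamma(z+1)=z\Gamma(z)$ and the Legendre duplication formula, and then remove the restriction $\alpha_i\geq d-2$ by analytic continuation; your bookkeeping of the constants (the ratio matching $c_{d/2+\sum_i\gamma_i}$ up to $\sqrt{\pi}$, the telescoping to $\prod_i 2/(\alpha_i+1)$) is consistent with what the paper does. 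Two small points should be tightened. First, the volume representation you invoke is only established in the paper for $d\geq 2$ (Theorems~\ref{theo:beta_content_polys} and~\ref{theo:exp_volume_beta_polytope} assume $d\geq 2$), so the case $d=1$ of the claim, which concerns the two-argument quantity $a(3+\alpha_1+\alpha_2;\alpha_1,\alpha_2)$, is not covered by your main argument as written and needs a separate verification, just as $d=0$ does; the paper likewise treats the low-dimensional cases separately before running the matching argument for $d\geq 2$. Second, your justification of the continuation step is imprecise: the set of real tuples with $\alpha_j\geq d-2$ has empty interior in $\C^{d+1}$, so ``agreement on a set with interior in the polydisc'' is not what you actually have; the correct tool is the uniqueness theorem for analytic functions of several complex variables agreeing on a real domain (the paper cites Shabat for this), or equivalently an iterated application of the one-variable identity theorem, fixing all but one variable at admissible real values. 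With these two repairs your argument is correct and coincides with the paper's proof.
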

\begin{proof}
For $d=0$ and $d=1$ the formula is true by Example~\ref{ex:a_def_case_d_0}. Let $d\geq 2$.
	Let us look at the expected volume of $\sP_{d+1,d}^{\b_1,\dots,\b_{d+1}}$. On the one hand, we can express it using Equation~\eqref{eq:volume_simplex_as_product_of_Gammas}. On the other hand, a representation is given by Theorem~\ref{theo:exp_volume_beta_polytope} by setting $n=d+1$,
	\begin{multline*}
		\E \Vol(\sP_{d+1,d}^{\beta_1,\ldots,\beta_{d+1}})
		=2\k_d \cdot \Theta \left(\frac{d}{2};\left\{\b_1 + \frac{d}{2},\dots,\b_{d+1}+\frac{d}{2}\right\};\varnothing\right) \\
		= 2 \k_d \cdot c_{\frac{d}{2}+\sum_{i=1}^{d+1}\left(\b_i+\frac{d}{2}\right)} \cdot \prod_{i=1}^{d+1} c_{\b_i+\frac{d}{2}-\frac{1}{2}}  
\cdot a \left(d+\sum_{i=1}^{d+1}\left(2\b_i+d\right)+2; 2\b_1+d, \dots, 2\b_{d+1}+d\right).
	\end{multline*}
The last expression can be further simplified by computation of the constants. Since
	\begin{align*}
		c_{\frac{d}{2}+\sum_{i=1}^{d+1}\left(\b_i+\frac{d}{2}\right)} = \frac{\Gamma\left(\frac{3}{2}+\frac{d}{2}+\sum_{i=1}^{d+1}\left(\b_i+\frac{d}{2}\right)\right)}{\Gamma\left(1+\frac{d}{2}+\sum_{i=1}^{d+1}\left(\b_i+\frac{d}{2}\right)\right)\sqrt{\pi}}
\qquad \text{and} \qquad
		\prod_{i=1}^{d+1} c_{\b_i+\frac{d}{2}-\frac{1}{2}}  = \prod_{i=1}^{d+1} \frac{\Gamma\left(\b_i + \frac{d}{2} + 1\right)}{\Gamma\left(\b_i + \frac{d}{2} + \frac{1}{2}\right)\sqrt{\pi}},
\end{align*}
as well as $\k_d = \frac{\pi^{d/2}}{\Gamma\left(\frac{d}{2}+1\right)}$,
	we can express the $a$-quantity as
	\begin{align*}
a \left(d+\sum_{i=1}^{d+1}\left(2\b_i+d\right)+2; 2\b_1+d, \dots, 2\b_{d+1}+d\right)
		= \frac{\sqrt{\pi}}{2}   \frac{\Gamma\left(\frac{d+2}{2}\right)\Gamma \left( \frac{d+1}{2} \right)}{\Gamma \left( d+1\right)}  \prod_{i=1}^{d+1} \frac{\Gamma \left( \frac{d}{2} + \beta_i + \frac{1}{2} \right)}{\Gamma \left( \frac{d}{2} + \beta_i  +\frac{1}{2} +1 \right)}.
	\end{align*}
	Using the Legendre duplication formula, that is
$\Gamma\left(d+1\right) = \frac{2^{d}}{\sqrt{\pi}} \Gamma\left(\frac{d+1}{2}\right)\Gamma\left(\frac{d+2}{2}\right)
$,
as well as the fact that $\Gamma(z)/\Gamma(z+1) = 1/z$ 
we arrive at
	\begin{align*}
		a \left(d+2+\sum_{i=1}^{d+1}\left(2\b_i+d\right); 2\b_1+d, \dots, 2\b_{d+1}+d\right) 
		= \pi \cdot \prod_{i=1}^{d+1} \frac{1}{2\b_i+d+1}.
	\end{align*}
Setting $\a_i \coloneqq 2\b_i+d$ for each $i \in \lbrace 1, \dots, d+1\rbrace$ gives the claim under additional restrictions $\alpha_i \geq d-2$. To remove these, observe that both sides of~\eqref{eq:a_special_val_d+2} define analytic functions of complex variables $\alpha_1,\ldots,\alpha_{d+1}$ on the domain $\Re \alpha_1,\ldots, \Re \alpha_{d+1} > 0$ (which can be continuously extended to its closure). Their difference vanishes for real $\alpha_1,\ldots,\alpha_{d+1}\geq d-2$. By a uniqueness theorem from~\cite[p.~21]{shabat_book_2} this is sufficient to conclude that the difference vanishes identically.
\end{proof}

\begin{remark}[Limit cases and monotonicity]
Recall that for $\beta = -1$ the beta distribution is uniform on the unit sphere and hence $\Content_{-1}(\sP_{n,d}^{\beta_1,\ldots,\beta_n}) = 0$. (This is related to Proposition~\ref{prop:a_quant_special_val_d} and Corollary~\ref{cor:theta_quant_vanishes_d/2-1}.)
On the other extreme, if $\beta \to \infty$, then $X_{n+1}\sim f_{d,\beta}$ converges to $0$ probability and hence
$$
\lim_{\beta \to +\infty} \E \Content_\beta(\sP_{n,d}^{\beta_1,\ldots,\beta_n}) = \P\left[0 \in \sP_{n,d}^{\beta_1,\ldots,\beta_n}\right]
=
\frac 1  {2^{n-1}} \sum_{r=d}^{n-1} \binom{n-1}{r}.
$$
where in the last step we used Wendel's formula; see~\cite{Wendel} and~\cite[Theorem~8.2.1]{schneider_weil_book}. (This is related to Theorem~\ref{theo:wendel_cones_angles}.)
It seems that with increasing $\beta$ it becomes less probable that $X_{n+1}$ is a vertex, which leads to the following
\end{remark}
\begin{conjecture}
For fixed $\beta_1,\ldots,\beta_n \geq -1$,  the expected beta-content function $\beta \mapsto \E \Content_\beta(\sP_{n,d}^{\beta_1,\ldots,\beta_n})$ is increasing in $\beta$.
\end{conjecture}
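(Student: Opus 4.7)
The plan is a two-step reduction to a purely geometric claim about the random polytope, followed by the main obstacle. Step 1 (reduction to an expectation): let $X_{n+1}\sim f_{d,\beta}$ be independent of $X_1,\ldots,X_n$. By Fubini and the rotational invariance of the distribution of $\sP:=\sP_{n,d}^{\beta_1,\ldots,\beta_n}$,
\begin{align*}
\E\,\Content_\beta(\sP)=\P[X_{n+1}\in\sP]=\E\bigl[p(\|X_{n+1}\|)\bigr],\qquad p(r):=\P[re_1\in\sP],
\end{align*}
where $p\colon[0,1]\to[0,1]$ depends only on $r$ by rotational symmetry. Step 2 (stochastic monotonicity of the radial part): the density of $\|X_{n+1}\|$ on $[0,1]$ is proportional to $r^{d-1}(1-r^2)^\beta$, and for $\beta'>\beta$ the likelihood ratio $(1-r^2)^{\beta'-\beta}$ is strictly decreasing in $r$. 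Hence $\|X_{n+1}^{(\beta)}\|$ is monotonically decreasing in $\beta$ in the likelihood-ratio order and, in particular, in the usual stochastic order, so $\beta\mapsto\E[\varphi(\|X_{n+1}^{(\beta)}\|)]$ is non-decreasing for every bounded non-increasing $\varphi\colon[0,1]\to\R$. Combining both steps, the conjecture reduces to the

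\medskip
\noindent\textsc{Central claim:} $p(r)=\P[re_1\in\sP_{n,d}^{\beta_1,\ldots,\beta_n}]$ is a non-increasing function of $r\in[0,1]$.
\medskip

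For every realisation of $\sP$, convexity forces the slice $\{r\geq 0:re_1\in\sP\}$ to be a convex subinterval $[a(\sP),b(\sP)]\subseteq[0,1]$ (possibly empty), so $p(r)=\P[a(\sP)\leq r\leq b(\sP)]$. On the event $\{0\in\sP\}$ one has $a(\sP)=0$, and the contribution $\P[r\leq b(\sP),\,0\in\sP]$ to $p(r)$ is manifestly non-increasing. The difficulty lies on the complementary event $\{0\notin\sP\}$, where the integrand is non-monotone in $r$ and can in principle contribute an increasing term.

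I would attack the central claim within the analytic framework of the paper. Writing $\{re_1\in\sP\}=\{\pos(X_1-re_1,\ldots,X_n-re_1)=\R^d\}$ and running the projection and canonical-decomposition machinery of Theorem~\ref{theo:ruben_miles} and Proposition~\ref{prop:beta_cones_joint_distr_projections} with the deterministic vector $re_1$ playing the role of the beta-distributed apex $Z$ in Definition~\ref{def:beta_cone}, one should be able to derive a representation of $p(r)$ as a sum, indexed by subsets $I\subseteq\{1,\ldots,n\}$, of explicit integrals structurally analogous to the $\Theta$-sums of Sections~\ref{sec:beta_cones_results} and~\ref{sec:beta_polytopes_results}. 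Equivalently, using the explicit formula of Theorem~\ref{theo:beta_content_polys}, the conjecture is the monotonicity in $\gamma=\beta+d/2$ of
\begin{align*}
\sum_{\substack{I\subseteq\{1,\ldots,n\}\\ \#I\in\{d+1,d+3,\ldots\}}}\Theta\bigl(\gamma;\{\gamma_i:i\in I\};\{\gamma_i:i\in I^c\}\bigr),
\end{align*}
which, after differentiation under the integral sign in the first integral factor of $\Theta$ (Theorem~\ref{theo:theta_as_prod_of_thetas}), becomes a concrete one-variable calculus statement that one can plausibly resolve using the analytic tools of Section~\ref{sec:beta_cones_explicit_internal_angles}. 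A complementary coupling route uses $X_{n+1}^{(\beta')}\eqdistr\sqrt{B}\,X_{n+1}^{(\beta)}$ for $B\sim\Beta(d/2+\beta+1,\beta'-\beta)$ independent, which gives pointwise monotonicity on $\{0\in\sP\}$ but requires a separate argument to control the loss on $\{0\notin\sP\}$.

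The hard part will be the central claim, and specifically the event $\{0\notin\sP\}$: any proof must quantify the tendency of beta polytopes to be concentrated near the origin strongly enough to dominate the non-monotone contribution coming from off-origin realisations. The analytic route looks more promising because the resulting inequality is one-dimensional and lies within the scope of the integral calculus developed in Section~\ref{sec:beta_cones_explicit_internal_angles}.
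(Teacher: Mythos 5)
First, a point of order: the paper does not prove this statement at all --- it is stated and left as an open conjecture --- so your proposal has to stand entirely on its own, and as it stands it is not a proof. The parts you do carry out are correct: rotational invariance gives $\E\,\Content_\beta(\sP)=\E[p(\|X_{n+1}\|)]$ with $p(r)=\P[re_1\in\sP]$; the radial densities proportional to $r^{d-1}(1-r^2)^\beta$ are decreasing in $\beta$ in the likelihood-ratio and hence the usual stochastic order; and the coupling $X^{(\beta')}\eqdistr\sqrt{B}\,X^{(\beta)}$ is the standard beta product identity $\Beta(\tfrac d2,\beta+1)\cdot\Beta(\tfrac d2+\beta+1,\beta'-\beta)\eqdistr\Beta(\tfrac d2,\beta'+1)$. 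But the whole argument then rests on your \emph{central claim} that $r\mapsto\P[re_1\in\sP]$ is non-increasing, and you offer no proof of it. That claim does not follow from convexity plus rotational invariance of the law of $\sP$ (a randomly rotated convex body kept away from the origin has a non-monotone containment probability), so it genuinely needs the specific structure of beta polytopes; and it is a priori \emph{stronger} than the conjecture, which only asks for monotonicity of the particular mixtures of $p$ against the beta radial laws and could conceivably hold even if $p$ itself fails to be monotone. You have therefore replaced one open statement by another, harder-looking one, without closing either.

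The analytic route you sketch is also much less routine than you suggest. Via Theorem~\ref{theo:beta_content_polys} the conjecture is monotonicity in $\gamma$ of a sum of $\Theta$-values over $I$ with $\#I\in\{d+1,d+3,\ldots\}$, but the first factor of $\Theta$ in Theorem~\ref{theo:theta_as_prod_of_thetas} is an integral of a complex-valued product; differentiating in $\gamma$ under the integral sign produces $\log(1-t^2)$ and digamma terms with no evident sign, and there is no reason for term-by-term monotonicity of the sum (whose $\gamma\to\infty$ limit is only attained through the combinatorial identities of Theorem~\ref{theo:wendel_cones_angles}), so calling this ``a concrete one-variable calculus statement'' overstates its tractability. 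In short: your reduction is sound and may be a useful first step, and the difficulty you flag on the event $\{0\notin\sP\}$ is exactly the right one to flag, but no argument is given for the one claim that carries all of the content, so the conjecture remains open under your proposal.
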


\subsection{Sylvester's problem}
Next we use Theorem~\ref{theo:f_vect} to solve an analogue of Sylvester's problem in $\R^d$ for independent  beta-distributed points, thus  generalizing~\cite{gusakova_kabluchko_sylvester_beta}, where the case of equal beta's has been considered.
\begin{theorem}[Sylvester's problem for beta-distributed points]
Let $X_1,\ldots, X_{d+2}$ be independent random points in $\R^d$ such that $X_i$ is beta-distributed with parameter $\beta_i \geq -1$ for every $i\in \{1,\ldots, d+2\}$.  Then,
$$
\P[\,[X_1,\dots,X_{d+2}] \text{ is a simplex }] = 2 \cdot \sum_{j=1}^{d+2} \Theta \left(\g_j ; \{ \gamma_1,\ldots, \widehat {\gamma_j}, \ldots, \gamma_{d+2}\}; \varnothing\right),
$$
where we recall that $\g_i = \b_i +d/2$ for each $i \in \{1,\dots,d+2\}$.
\end{theorem}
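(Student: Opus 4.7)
My plan is to decompose the event that $[X_1,\dots,X_{d+2}]$ is a simplex as a disjoint union indexed by which point lies inside the convex hull of the others, and then identify each summand with an expected beta content of a beta simplex, which has already been computed in Theorem~\ref{theo:beta_content_polys}.

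First, by Lemma~\ref{lem:beta_poi_general_affine} the points $X_1,\dots,X_{d+2}$ are almost surely in general affine position. For such a configuration of $d+2$ points in $\R^d$, Radon's theorem supplies a unique partition of $\{1,\dots,d+2\}$ into two non-empty blocks whose convex hulls meet. The convex hull $[X_1,\dots,X_{d+2}]$ is a (non-degenerate $d$-dimensional) simplex if and only if the Radon partition has block sizes $1$ and $d+1$, equivalently if and only if exactly one index $j\in\{1,\dots,d+2\}$ satisfies $X_j\in [X_1,\dots,\widehat{X_j},\dots,X_{d+2}]$. Hence
\begin{align*}
\P\bigl[[X_1,\dots,X_{d+2}]\text{ is a simplex}\bigr]
=\sum_{j=1}^{d+2}\P\bigl[X_j\in[X_1,\dots,\widehat{X_j},\dots,X_{d+2}]\bigr],
\end{align*}
where the sum is over pairwise disjoint events (up to a null set).

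Second, for each fixed $j$, condition on the $d+1$ points $\{X_i:i\neq j\}$ and use independence of $X_j$. By definition of the beta content,
\begin{align*}
\P\bigl[X_j\in[X_1,\dots,\widehat{X_j},\dots,X_{d+2}]\bigr]
=\E\bigl[\Content_{\beta_j}\bigl(\sP_{d+1,d}^{\beta_1,\dots,\widehat{\beta_j},\dots,\beta_{d+2}}\bigr)\bigr].
\end{align*}
Now apply Theorem~\ref{theo:beta_content_polys} to the beta polytope with $n=d+1$ vertices and reference parameter $\beta_j$: the only subset $I\subseteq\{1,\dots,d+1\}$ with $\#I\in\{d+1,d+3,\dots\}$ is $I=\{1,\dots,d+1\}$ itself, so the formula collapses to a single term, and after relabelling back to the original indices it reads
\begin{align*}
\E\bigl[\Content_{\beta_j}\bigl(\sP_{d+1,d}^{\beta_1,\dots,\widehat{\beta_j},\dots,\beta_{d+2}}\bigr)\bigr]
=2\cdot\Theta\bigl(\gamma_j;\{\gamma_1,\dots,\widehat{\gamma_j},\dots,\gamma_{d+2}\};\varnothing\bigr).
\end{align*}
Summing over $j=1,\dots,d+2$ gives the claimed identity.

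The only step requiring care is the disjointness assertion in the first paragraph; this is the Radon-theoretic observation, and it justifies passing from a union of events to a sum of probabilities without inclusion--exclusion corrections. Everything else is a direct specialisation of Theorem~\ref{theo:beta_content_polys} to the case $n=d+1$, where the outer sum degenerates to a single term so no further simplification of the $\Theta$-expression is needed.
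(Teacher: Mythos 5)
Your proof is correct and essentially coincides with the paper's own (second) argument: the paper likewise writes $\P[\,\sP \text{ is a simplex}\,]=\sum_{j=1}^{d+2}\P[X_j\in[X_1,\dots,\widehat{X_j},\dots,X_{d+2}]]$ and evaluates each summand as $2\,\Theta(\gamma_j;\{\gamma_1,\dots,\widehat{\gamma_j},\dots,\gamma_{d+2}\};\varnothing)$, only citing the beta-cone absorption probability~\eqref{eq:theo:beta_cones_conic_intrinsic_vol_as_A_B_1} with $n=d+1$ directly rather than routing through Theorem~\ref{theo:beta_content_polys} (whose $n=d+1$ case is the same formula), while your Radon-partition step just makes explicit the disjointness the paper leaves implicit. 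One pedantic caveat: Theorem~\ref{theo:beta_content_polys} is stated for $d\geq 2$, so in the case $d=1$ (where the section assumes $\beta_i\neq -1$) you should invoke~\eqref{eq:theo:beta_cones_conic_intrinsic_vol_as_A_B_1} directly, exactly as the paper does.
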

\begin{proof}
Let $p$ denote the probability that $\sP:=[X_1,\dots,X_{d+2}]$ is a simplex. The number of vertices of $\sP$ is either $d+1$ (with probability $p$) or $d+2$ (with probability $1-p$). This lets us express the expected number of vertices of $\sP$ as
\begin{align*}
\E f_0 (\sP) = p \cdot (d+1)+ (1-p)\cdot(d+2)=d+2-p,
\end{align*}
which in turn gives us a formula for $p = (d+2) -\E f_0 (\sP)$ using the second equation of Theorem~\ref{theo:f_vect} in which we have $n=d+2$, $k=1$ and $J= \{1,\ldots, d+2\}$. Here is another proof:
\begin{align*}
\P[\,X_j \in [X_1,\ldots, \widehat{X_j},\ldots, X_n]\,]
&=
\P[\BetaCone(\R^d; \beta_j; \beta_1,\ldots, \widehat{\beta}_j, \ldots, \beta_n) = \R^d]
\\
&=
2 \cdot\Theta \left(\g_j ; \{ \gamma_1,\ldots, \widehat {\gamma_j}, \ldots, \gamma_{d+2}\}; \varnothing\right),
\end{align*}
where in the last step we used~\eqref{eq:theo:beta_cones_conic_intrinsic_vol_as_A_B_1} with $n=d+1$. Taking the sum over all $j=1,\ldots, d+2$ gives the claim.
\end{proof}

\subsection{Expected \texorpdfstring{$S$}{S}-functional}\label{subsec:S_functional}
In this section we  introduce a general class of functionals of polytopes that allows to treat in a unified way many natural examples.

\subsubsection{Definition of the \texorpdfstring{$S$}{S}-functional}
We start with some preliminaries.

\vspace*{2mm}
\noindent
\textit{Space of simplices.}
Let $\Simpl(\R^k)$  be the set of $k$-dimensional simplices $[x_1,\ldots, x_{k+1}]$ in $\R^k$. Two simplices differing only by the order of vertices are considered equal. A functional $\varphi: \Simpl(\R^k)\to \R$ is called \emph{rotationally invariant} if $\varphi([O x_1,\ldots, O x_{d+1}]) = \varphi([x_1,\ldots, x_{d+1}])$ for every orthogonal transformation $O:\R^k \to \R^k$ and every simplex $[x_1,\ldots, x_{k+1}] \in \Simpl(\R^k)$. Further, $\varphi$ is said to be  \emph{$\delta$-homogeneous} with $\delta\in \R$ if  $\varphi([c x_1,\ldots, c x_{k+1}])=c^\delta \cdot  \varphi([x_1,\ldots, x_{k+1}])$, for all $c >0$ and all $[x_1,\ldots, x_{k+1}]\in \Simpl(\R^k)$. For example, the $k$-dimensional volume is rotationally invariant and $k$-homogeneous. A functional $\varphi: \Simpl(\R^k)\to \R$ is called \emph{measurable} if the function $\bar \varphi: (x_1,\ldots, x_{k+1}) \mapsto \varphi([x_1,\ldots, x_{k+1}])$ is Borel-measurable. The function $\bar \varphi$ is defined on the (open) set of tuples $(x_1,\ldots, x_{k+1}) \in \R^{k(k+1)}$ with the property that $x_1,\ldots, x_{k+1}$ are affinely independent.

\vspace*{2mm}
\noindent
\textit{Space of cones.}
Let $\PolyCones(\R^d)$ be the set of all polyhedral cones in $\R^d$. Let $\PolyCones_k(\R^d)$ be the set of those polyhedral cones whose lineality space has dimension $k\in \{0,\ldots, d\}$.  A functional $\psi: \PolyCones_k(\R^d) \to \R$ is called rotationally invariant if $\psi(OC) = \psi(C)$ for every cone $C\in \PolyCones_k(\R^d)$ and every orthogonal transformation $O:\R^d\to\R^d$. We endow $\PolyCones_k(\R^d)$ with the Borel-$\sigma$-algebra induced by the Hausdorff distance; see Remark~\ref{rem:rem:hausdorff_dist_cones}.

\begin{definition}[$S$-functional]\label{dfn:generl_T_functional}
Let $P \subseteq \R^d$ be a simplicial polytope, and let $k \in \lbrace 0, \dots, \dim P-1\rbrace$. Consider two measurable, rotationally invariant functionals $\varphi:\Simpl(\R^k) \to [0,\infty]$  and  $\psi: \PolyCones_k(\R^d) \to [0,\infty]$.
The \emph{$S$-functional} of  $P$ is then defined as
\begin{align}\label{eq:definition_S_functional}
S_{k;\varphi,\psi} (P) = \sum_{F \in \cF_k(P)} \varphi(I_{\aff F}(F)) \cdot  \psi(T(F,P)).
\end{align}
We recall that $I_{\aff F}$ is an isometry between $\aff F$ and $\R^{k}$. To simplify the notation, we shall write $\varphi(F):= \varphi(I_{\aff F}(F))$.
\end{definition}
As we shall discuss in Section~\ref{subsec:examples_S_functional}, many natural functionals of polytopes can be represented as special cases of the $S$-functional. Examples include the intrinsic volumes $V_k(P)$, the number of $k$-faces $f_k(P)$, the $k$-volume of the $k$-skeleton $\sum_{F\in \cF_k(P)}\Vol_k(F)$, and various generalized angle sums.

\subsubsection{The expected \texorpdfstring{$S$}{S}-functional}
In the next theorem we compute the expected $S$-functional of a beta polytope.
\begin{theorem}[Expected $S$-functional of a beta polytope]\label{theo:expected_S_functional}
Let $\sP \coloneqq \sP_{n,d}^{\b_1,\dots,\b_n}=[X_1,\dots,X_n]$ be  a beta polytope in $\R^d$, $d\geq 2$, with parameters $\beta_1,\ldots, \beta_n\geq -1$. Let $k\in \{0,\ldots, \min (d, n-1)-1\}$ and consider two measurable rotationally invariant functionals  $\varphi: \Simpl(\R^k) \to [0,\infty]$ and $\psi: \PolyCones_k(\R^d)\to [0,\infty]$. Let $\varphi$ be $\delta$-homogeneous with $\delta\geq 0$. Then, for the expected $S$-functional of $\sP$ it holds that
	\begin{align*}
		\E S_{k;\varphi,\psi} (\sP)
		=& \sum_{\substack{I \subseteq \{1,\ldots,n\} \\ \# I = k+1}} \E \left[\varphi(F_I)\right] \E \left[  \psi\left(\R^{k} \oplus \cD_{\delta,I}\right) \ind_{\{\cD_{\delta,I} \neq \R^{d-k}\}}\right],
	\end{align*}
	where $\cD_{\delta,I} \sim
	\BetaCone\left(\R^{d-k}; \sum_{i \in I} \left(\b_i + \frac{d}{2}\right) - \frac{d-\delta-k}{2}; \left\{\b_i + \frac{k}{2}: i \in I^c\right\}\right)$ and $F_I := \conv(X_i: i\in I)$.
\end{theorem}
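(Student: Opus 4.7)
\medskip

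\noindent\textbf{Proof plan.} The plan is to reduce to a single summand by the general position of the $X_i$, then use the canonical decomposition of Ruben--Miles (Theorem~\ref{theo:ruben_miles}) to separate the scale of the face from everything else, and finally absorb the resulting scale factor into the first parameter of the beta cone by shifting a density exponent.

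First, since $\sP$ is simplicial almost surely (Lemma~\ref{lem:beta_poi_general_affine}), every $k$-face is of the form $F_I = \conv(X_i : i \in I)$ for a unique $I \subseteq \{1,\ldots,n\}$ with $\# I = k+1$. Writing $A_I := \aff(X_i : i \in I)$, I would obtain
\begin{align*}
\E S_{k;\varphi,\psi}(\sP) = \sum_{\substack{I \subseteq \{1,\ldots,n\} \\ \# I = k+1}} \E\left[ \varphi(F_I) \, \psi(T(F_I,\sP)) \, \ind_{\{F_I \text{ is a face of } \sP\}} \right],
\end{align*}
where by a slight abuse of notation $\varphi(F_I)$ stands for $\varphi(I_{A_I}(F_I))$. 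Fix such an $I$.

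Next I would invoke the canonical decomposition applied to $(X_i)_{i \in I}$: the rescaled points $Z_i := I_{A_I}(X_i)/\sqrt{1-h^2(A_I)} \in \BB^k$ are stochastically independent of $A_I$, and $I_{A_I^\perp}(0_{A_I})$ has density $f_{d-k,\gamma_0}$ with $\gamma_0 := \frac{k(d+1)}{2} + \sum_{i \in I} \beta_i$, independent of $A_I - 0_{A_I}$. Since $I_{A_I}(F_I) = \sqrt{1-h^2(A_I)}\cdot [Z_1,\ldots,Z_{k+1}]$, the $\delta$-homogeneity and rotational invariance of $\varphi$ give
\begin{align*}
\varphi(F_I) = (1-h^2(A_I))^{\delta/2}\, \varphi([Z_1,\ldots,Z_{k+1}]).
\end{align*}
By (the proof of) Theorem~\ref{theo:tangent_cone_is_beta_cone}, $T(F_I,\sP)$ is isometric to $\R^k \oplus \sC_I$ with $\sC_I := \pos(I_{A_I^\perp}(X_j) - I_{A_I^\perp}(0_{A_I}) : j \in I^c)$, and by Proposition~\ref{prop:face_events_as_absorbtion_events_polytopes} the event $\{F_I \text{ is a face of }\sP\}$ equals $\{\sC_I \neq \R^{d-k}\}$. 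Because $(Z_1,\ldots,Z_{k+1})$ is independent of the pair $(A_I, (X_j)_{j \in I^c})$, while $(1-h^2(A_I))^{\delta/2}$, $\sC_I$ and $\ind_{\{\sC_I \neq \R^{d-k}\}}$ are measurable with respect to the latter, the expectation factors into
\begin{align*}
\E[\varphi([Z_1,\ldots,Z_{k+1}])] \cdot \E\left[(1-h^2(A_I))^{\delta/2}\, \psi(\R^k \oplus \sC_I)\, \ind_{\{\sC_I \neq \R^{d-k}\}}\right].
\end{align*}

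The final, key step is the change-of-density trick. Multiplying the density $c_{d-k,\gamma_0}(1-\|y\|^2)^{\gamma_0}\ind_{\|y\|<1}$ of $I_{A_I^\perp}(0_{A_I})$ by $(1-\|y\|^2)^{\delta/2}$ yields, up to the scalar $c_{d-k,\gamma_0}/c_{d-k,\gamma_0+\delta/2}$, the density $f_{d-k,\gamma_0+\delta/2}$. This means
\begin{align*}
\E\left[(1-h^2(A_I))^{\delta/2}\, \psi(\R^k \oplus \sC_I)\, \ind_{\{\sC_I \neq \R^{d-k}\}}\right] = \E[(1-h^2(A_I))^{\delta/2}] \cdot \E\left[\psi(\R^k \oplus \cD_{\delta,I})\, \ind_{\{\cD_{\delta,I} \neq \R^{d-k}\}}\right],
\end{align*}
where $\cD_{\delta,I}$ is the beta cone obtained by replacing $\gamma_0$ with $\gamma_0 + \delta/2$ in the first parameter; a direct arithmetic check gives $\gamma_0 + \delta/2 = \sum_{i\in I}(\beta_i+\frac{d}{2}) - \frac{d-\delta-k}{2}$, matching the statement. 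Combining $\E[(1-h^2(A_I))^{\delta/2}]$ with $\E[\varphi([Z_1,\ldots,Z_{k+1}])]$ via the factorisation $\E[\varphi(F_I)] = \E[(1-h^2(A_I))^{\delta/2}]\cdot\E[\varphi([Z_1,\ldots,Z_{k+1}])]$ and summing over $I$ yields the claim. The main obstacle is precisely this last step: one must verify that absorbing the homogeneity factor into the density amounts exactly to shifting $\gamma_0$ by $\delta/2$ while respecting the independence structure supplied by the canonical decomposition, and that the resulting beta cone is distributed as $\cD_{\delta,I}$.
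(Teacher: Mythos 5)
Your proposal is correct and takes essentially the same route as the paper's proof: expand over all possible $k$-faces $F_I$, use the canonical decomposition together with $\delta$-homogeneity to split off $\E[\varphi(\tilde F_I)]$, identify $T(F_I,\sP)$ (and the face event) via the projection construction behind Theorem~\ref{theo:tangent_cone_is_beta_cone}, and absorb the factor $(1-h^2(A_I))^{\delta/2}$ into the density of the projected apex, shifting the first beta-cone parameter by $\delta/2$. The only cosmetic difference is that the paper makes explicit the identification of the constant $c_{d-k,\beta'}/c_{d-k,\beta'+\delta/2}$ with $\E[(1-h^2(A_I))^{\delta/2}]$ via the Beta law of $h^2(A_I)$, a step you use implicitly in the final recombination $\E[\varphi(F_I)]=\E[(1-h^2(A_I))^{\delta/2}]\,\E[\varphi([Z_1,\ldots,Z_{k+1}])]$.
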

\begin{proof}
Let us recall  the notation of the canonical decomposition, Theorem~\ref{theo:ruben_miles}. For a collection of indices $I = \{i_1,\ldots, i_{k+1}\}\subseteq [n]= \{1,\ldots, n\}$ of size $k+1 \leq \min (d,n-1)$, we consider $F_I := [X_{i_1},\ldots, X_{i_{k+1}}]$, a possible $k$-dimensional face of $\sP$, and $A_I = \aff (X_{i_1},\dots,X_{i_{k+1}})$, the affine subspace spanned by $F_I$. Let $h(A_I)$ be the distance from $A_I$ to the origin. For each $j \in I$, we set
\begin{align*}
	Z_{i_j} \coloneqq  \frac{I_{A_I}(X_{i_j})}{\sqrt{1-h^2(A_I)}} \in \BB^k.
\end{align*}
Let $\tilde{F}_I = [Z_{i_1},\dots,Z_{i_{k+1}}]\subseteq \R^k$ be the full-dimensional simplex spanned by these points.
To prove the theorem, we start by using the $\delta$-homogeneity of $\varphi$,
	\begin{align*}
		\E S_{k;\varphi,\psi} (\sP) =& \sum_{\substack{I \subseteq [n] \\ \# I = k+1}} \E \left[\varphi(F_I)   \psi(T(F_I,\sP)) \ind_{\{F_I \text{ is a face of }\sP\}}\right]. \\
		=& \sum_{\substack{I \subseteq [n] \\ \# I = k+1}} \E \left[\varphi(\tilde{F}_I) (1-h^2(A_I))^{\delta/2}   \psi(T(F_I,\sP)) \ind_{\{F_I \text{ is a face of }\sP\}}\right].
	\end{align*}
	In order to separate the expectations, we use the canonical decomposition, Theorem~\ref{theo:ruben_miles}. It states that $\tilde{F}_I$ is independent of $A_I$. Moreover, from the construction used in Theorem~\ref{theo:tangent_cone_is_beta_cone} we see that $T(F_I,\sP)$ is a direct orthogonal sum of its lineality space $A_I - 0_{A_I}$ and a beta cone which is stochastically independent of $\tilde F_I$. Note, however, that $T(F_I,\sP)$ is \textit{not} independent of $A_I$. At this point, we therefore only get
	\begin{align}\label{eq:exp_gen_T}
		\E S_{k;\varphi,\psi} (\sP) = \sum_{\substack{I \subseteq [n] \\ \# I = k+1}} \E \left[\varphi(\tilde{F}_I)\right] \E \left[ \left(1-h^2(A_I)\right)^{\delta/2}  \psi(T(F_I,\sP)) \ind_{\{F_I \text{ is a face of }\sP\}}\right].
	\end{align}
To proceed, we need a characterization of $T(F_I,\sP)$. Theorem~\ref{theo:tangent_cone_is_beta_cone} states that $T(F_I,\sP)$ is isometric to $\R^k \oplus \cD$, where
	\begin{align*}
		\cD = \pos (Y_{i}-Y: i \in [n]\backslash I)\sim  \BetaCone\left(\R^{d-k};\sum_{i \in I} \left(\b_i+\frac{d}{2}\right)  -\frac{d-k}{2};\left\{ \b_{i}+\frac{k}{2}: i \in [n] \backslash I\right\}\right).
	\end{align*}
Recall that if $F_I$ is not a face, then $T(F_I,\sP) = \R^{d-k}$. For notational simplicity, we set $\b' = \sum_{i \in I}\left(\b_i+\frac{d}{2}\right)-\frac{d-k}{2}$. Let us now look at the second expectation stated on the right-hand side of~\eqref{eq:exp_gen_T},
	\begin{align*}
		&\E \left[ \left(1-h^2(A_I)\right)^{\delta/2} \cdot \psi\left(T(F_I,\sP)\right) \ind_{\{F_I \text{ is a face of }\sP\}}\right] \\
		& \qquad \qquad = \E \left[ \left(1-\| Y\|^2\right)^{\delta/2} \cdot \psi\left(\R^k \oplus \pos (Y_{i}-Y: i \in [n]\backslash I)\right) \ind_{\{\pos (Y_{i}-Y: i \in [n]\backslash I) \neq \R^{d-k}\}}\right] \\
		&\qquad \qquad= \int_{\BB^{d-k}}\int_{\left(\BB^{d-k}\right)^{n-k-1}}  (1-\|y\|^2)^{\delta/2}\psi\left(\R^k \oplus \pos (y_{i}-y: i \in [n]\backslash I)\right)\ind_{\{\pos (y_{i}-y: i \in [n]\backslash I) \neq \R^{d-k}\}} \\
		&\qquad \qquad\qquad \qquad\times f_{d-k,\b'}(y) \dd y \prod_{i \in [n]\backslash I} \left(f_{d-k,\b_{i}+\frac{k}{2}}(y_i)  \dd y_i\right) \\
		&\qquad \qquad= \int_{\BB^{d-k}} \int_{\left(\BB^{d-k}\right)^{n-k-1}} \frac{c_{d-k,\b'}}{c_{d-k,\b'+\frac{\delta}{2}}}\psi\left(\R^k \oplus \pos (y_{i}-y: i \in [n]\backslash I)\right)\ind_{\{\pos (y_{i}-y: i \in [n]\backslash I) \neq \R^{d-k}\}}\\
		&\qquad \qquad\qquad \qquad\times f_{d-k,\b'+\frac{\delta}{2}}(y)\dd y \prod_{i \in [n]\backslash I} \left(f_{d-k,\b_{i}+\frac{k}{2}}(y_i)  \dd y_i \right)\\
		&\qquad \qquad= \frac{c_{d-k,\b'}}{c_{d-k,\b'+\frac{\delta}{2}}}
		\E \left[  \psi\left(\R^{k} \oplus \pos(Y_{i}-Y^*: i \in [n]\backslash I)\right) \ind_{\{\pos (Y_{i}-Y^*: i \in [n]\backslash I) \neq \R^{d-k}\}}\right] \\
		&\qquad \qquad= \frac{c_{d-k,\b'}}{c_{d-k,\b'+\frac{\delta}{2}}}
		\E \left[  \psi\left(\R^{k} \oplus \cD_{\delta,I}\right) \ind_{\{\cD_{\delta,I} \neq \R^{d-k}\}}\right],
	\end{align*}
	where $Y^* \sim f_{d-k,\b'+\delta/2}$ is independent of the collection $(Y_i:i \in [n]\backslash I)$ and the cone $\cD_{\delta,I}:= \pos(Y_{i}-Y^*: i \in [n]\backslash I)$ satisfies
	\begin{align*}
		\cD_{\delta,I} \sim
		\BetaCone\left(\R^{d-k}; \sum_{i \in I} \left(\b_i + \frac{d}{2}\right) - \frac{d-\delta-k}{2}; \left\{\b_i + \frac{k}{2}: i \in [n] \backslash I\right\}\right).
	\end{align*}
	Next, we can make the constants explicit to get
	\begin{align*}
		\frac{c_{d-k,\b'}}{c_{d-k,\b'+\frac{\delta}{2}}} = \frac{\Gamma \left(1-\frac{d-k-\delta}{2}+\sum_{i \in I}\left(\b_i+\frac{d}{2}\right)\right)\Gamma \left(1+\sum_{i \in I}\left(\b_i+\frac{d}{2}\right)\right)}{\Gamma \left(1-\frac{d-k}{2}+\sum_{i \in I}\left(\b_i+\frac{d}{2}\right)\right)\Gamma \left(1+\frac{\delta}{2}+\sum_{i \in I}\left(\b_i+\frac{d}{2}\right)\right)}.
	\end{align*}
We notice that the above equals $\E [(1-h^2(A_I))^{\delta/2}]$, as $h^2(A_I) \sim  \Beta \left(\frac{d-k}{2},\frac{k(d+1)}{2}+1+\sum_{i \in I}\b_i\right)$ by Corollary 4.33 in \cite{kabluchko_steigenberger_thaele} or Section 4 (II) in \cite{ruben_miles}. Therefore, in total, we get
\begin{align*}
	\E S_{k;\varphi,\psi} (\sP) =& \sum_{\substack{I \subseteq [n] \\ \# I = k+1}} \E \left[\varphi(\tilde{F}_I)\right] \E \left[ (1-h^2(A_I))^{\delta/2} \right] \E \left[  \psi\left(\R^{k} \oplus \cD_{\delta,I}\right) \ind_{\{\cD_{\delta,I} \neq \R^{d-k}\}}\right] \\
	=& \sum_{\substack{I \subseteq [n] \\ \# I = k+1}} \E \left[\varphi(F_I)\right] \E \left[  \psi\left(\R^{k} \oplus \cD_{\delta,I}\right) \ind_{\{\cD_{\delta,I} \neq \R^{d-k}\}}\right],
\end{align*}
where in the last step we have used the independence of $\tilde F_I$ and $A_I$ as well as the  homogeneity of $\varphi$ to arrive at the original face $F_I=[X_{i_{1}},\dots,X_{i_{k+1}}]$.
\end{proof}

\begin{example}[Again the expected $f$-vector]
	The expected $f$-vector of a beta polytope has been made explicit in Theorem~\ref{theo:f_vect}. It can be recovered using the $S$-functional with  $\varphi=\psi=1$ and $\delta = 0$. Then,  $S_{k-1;1,1}$ counts the number of $(k-1)$-dimensional faces, and for its expectation we get
\begin{align*}
\E f_{k-1}(\sP) = \E S_{k-1;1,1} (\sP)
&=
\sum_{\substack{I \subseteq [n] \\ \# I = k}} \P [\cD_{0,I} \neq \R^{d-k+1}]
\\
&=
2 \sum_{\substack{I \subseteq [n] \\ \# I = k}} \sum_{\substack{J \subseteq I^c \\ \# J \in \{d-k,d-k-2,\dots\}}} \Theta \left(\sum_{i\in I }\g_i;\{\g_i: i \in J\};\{\g_i: i \in I^c \bsl J\}\right),
\end{align*}
where in the last step we used~\eqref{eq:theo:beta_cones_conic_intrinsic_vol_as_A_B_2} of Theorem~\ref{theo:beta_cones_conic_intrinsic_vol_as_A_B}. This recovers Theorem~\ref{theo:f_vect} as stated in~\eqref{eq:theo:f_vect_proof}.
\end{example}

\begin{remark}
The \emph{$T$-functional} has been defined by Wieacker in \cite{wieacker} as follows: For $a,b \in \R$ and a polytope $P \subseteq \R^d$
\begin{align*}
T_{a,b}^{d,k} (P) = \sum_{F \in \cF_k(P)} h^a(F)\Vol_k^b(F),
\end{align*}
where $h(F)$ is the distance from the origin to $\aff (F)$. The proof of Theorem~\ref{theo:expected_S_functional} can be adapted to $T$-functionals, but one faces the following difficulty: the density of $Y^*$ is not a beta density anymore since it contains an  additional factor of the form $\|y\|^a$. For $k=d-1$, formulas for $\E T_{a,b}^{d,d-1}(\sP_{n,d}^{\b_1,\dots,\b_n})$ can be found in~\cite{moseeva2024mixedrandombetapolytopes} and~\cite[Section~8.3]{kabluchko_steigenberger_thaele}.
\end{remark}

\subsection{\texorpdfstring{$S$}{S}-functional: Examples}\label{subsec:examples_S_functional}
Theorem~\ref{theo:expected_S_functional} will now be used to give formulas for expected values of several classical functionals applied to beta polytopes.
In the following proofs, we shall often use the beta cone that first appeared in the proof of Theorem~\ref{theo:expected_S_functional}. Let us recall the definition of said cone.   For $d\geq 2$,  parameters $\b_1,\dots, \b_n  \geq -1$, $\delta > 0$, and a collection of indices $I = \{i_1,\ldots, i_{k+1}\}\subseteq \{1,\ldots, n\}$ of size $k+1 \leq \min (d, n-1)$, we set
\begin{align*}
	\cD_{\delta,I} \sim  
	\BetaCone\left(\R^{d-k}; \sum_{i \in I} \left(\b_i + \frac{d}{2}\right) - \frac{d-\delta-k}{2}; \left\{\b_i + \frac{k}{2}: i \in [n] \backslash I\right\}\right).
\end{align*}
\subsubsection{Intrinsic volumes}
We start with the $k$-th intrinsic volume which, for a polytope $P \subseteq \R^d$, can be defined as
\begin{align*}
V_k (P) = \sum_{F \in \cF_k(P)} \Vol_k (F) \cdot  \a (N(F,P)),
\qquad k \in \lbrace 0, \dots, d\rbrace.
\end{align*}
For example, assuming $\dim P = d$, we have  $V_d(P) = \Vol_d(P)$, $V_{d-1}(P)$ is half the surface area of $P$, and $V_0(P) = 1$.
\begin{theorem}[Expected intrinsic volumes]\label{theo:intrinsic_volume_of_BetaPoly}
	Let $\sP=\sP_{n,d}^{\b_1,\dots,\b_n}$ be a beta polytope. For $k \in \lbrace 0, \dots, \min (d,n-1)-1\rbrace$, the expected $k$-th intrinsic volume of $\sP$ is
	\begin{align*}
		\E V_k (\sP) = 2  \binom{d}{k} \frac{\k_d}{\k_{d-k}} \sum_{\substack{I \subseteq \{1,\ldots, n\} \\ \# I = k+1}}  \Theta \left(\frac{k}{2};  \lbrace \gamma_i : i \in I \rbrace;\left\lbrace \g_{i}: i \in [n] \backslash I \right\rbrace\right),
	\end{align*}
where we recall that  $\g_i \coloneqq \b_i + d/2$ for each $i \in \lbrace 1, \dots,n\rbrace$.
\end{theorem}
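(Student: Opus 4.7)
The plan is to reduce the expected $k$-th intrinsic volume of $\sP$ to the expected $k$-dimensional volume of a lower-dimensional beta polytope. The reduction rests on three ingredients: Kubota's integral-geometric formula (expressing $V_k$ as the mean $k$-volume of a uniformly random orthogonal projection), the rotational invariance of the joint law of $(X_1,\ldots,X_n)$ (which collapses the Kubota integral to evaluation at any single fixed $k$-subspace), and the projection property of the beta distribution stated in Lemma~\ref{lem:projection} (which identifies the projected polytope as a beta polytope of dimension $k$ with shifted parameters). Once this reduction is carried out, the conclusion follows directly from Theorem~\ref{theo:exp_volume_beta_polytope} applied in ambient dimension $k$.

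Concretely, I would start from Kubota's formula
$$V_k(K)=\binom{d}{k}\frac{\kappa_d}{\kappa_k\kappa_{d-k}}\int_{G(d,k)}\Vol_k(\Pi_L K)\,\nu_{d,k}(\dd L),$$
valid for any convex body $K\subseteq \R^d$, where $\nu_{d,k}$ is the Haar probability measure on $G(d,k)$. Applied to $\sP$ and combined with Fubini (both sides are nonnegative), this expresses $\E V_k(\sP)$ as an integral of $\E\Vol_k(\Pi_L\sP)$ over $L\in G(d,k)$. Since each $X_i$ has a rotationally invariant density, the joint law of $(X_1,\ldots,X_n)$ is $O(d)$-invariant, so $L\mapsto\E\Vol_k(\Pi_L\sP)$ is constant; fixing any $L_0\in G(d,k)$ yields
$$\E V_k(\sP)=\binom{d}{k}\frac{\kappa_d}{\kappa_k\kappa_{d-k}}\,\E\Vol_k(\Pi_{L_0}\sP).$$

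Identifying $L_0$ with $\R^k$ via the isometry $I_{L_0}$ and invoking Lemma~\ref{lem:projection}, the projected, embedded points $I_{L_0}(\Pi_{L_0}X_i)\sim f_{k,\,\beta_i+(d-k)/2}$ are independent across $i$, so
$$I_{L_0}(\Pi_{L_0}\sP)\sim\BetaPoly\bigl(\R^k;\beta_1+\tfrac{d-k}{2},\ldots,\beta_n+\tfrac{d-k}{2}\bigr).$$
Theorem~\ref{theo:exp_volume_beta_polytope} applied to this $k$-dimensional beta polytope yields its expected $k$-volume; the corresponding $\gamma$-parameters become $(\beta_i+(d-k)/2)+k/2=\beta_i+d/2=\gamma_i$, matching the notation of the statement. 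After cancelling the factor $\kappa_k$, the claimed identity drops out.

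There is essentially no conceptual obstacle: the analytic heavy lifting has already been done in the derivation of Theorem~\ref{theo:exp_volume_beta_polytope} and in the analysis of $\Theta$ in Section~\ref{sec:beta_cones_explicit_internal_angles}. The only mildly delicate point is the boundary case $k=0$, where the Kubota reduction degenerates ($G(d,0)$ is a singleton and trivially $V_0(\sP)=1$ by Gauss--Bonnet), so Theorem~\ref{theo:exp_volume_beta_polytope} is not directly applicable. The reconciliation with the stated sum $2\sum_i\Theta(0;\{\gamma_i\};\{\gamma_j:j\neq i\})$ follows from the factorisation identity of Theorem~\ref{theo:theta_as_prod_of_thetas}, which yields $\Theta(0;\{\gamma_i\};Z)=\tfrac{1}{2}\,\Theta(\gamma_i;\varnothing;Z)$, combined with the Gauss--Bonnet-type identity $\sum_i\Theta(\gamma_i;\varnothing;\{\gamma_j:j\neq i\})=1$. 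The latter is itself an immediate consequence of the $S$-functional formalism (Theorem~\ref{theo:expected_S_functional} applied with $\varphi\equiv 1$, $\psi=\alpha(\,\cdot^{\circ})$, $\delta=0$) specialised to $\E V_0(\sP)=1$. An alternative route that handles all $k$ uniformly is to run the entire proof through the $S$-functional machinery with $\varphi=\Vol_k$ (which is $k$-homogeneous) and $\psi=\alpha(\,\cdot^{\circ})$, at the cost of having to compute $\E\Vol_k(F_I)$ separately---which in turn reduces to the same Kubota/projection argument applied to the single simplex $F_I$.
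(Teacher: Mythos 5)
Your argument is correct, but it is a genuinely different route from the paper's. The paper proves this theorem via the $S$-functional machinery: it applies Theorem~\ref{theo:expected_S_functional} with $\varphi=\Vol_k$ (which is $k$-homogeneous) and $\psi(C)=\alpha(C^\circ)$, evaluates $\E\Vol_k(F_I)$ using the known moment formula for non-full-dimensional beta simplices, identifies $\alpha\bigl((\R^k\oplus\cD_{k,I})^\circ\bigr)=\upsilon_0(\cD_{k,I})$ and computes its expectation via Theorem~\ref{theo:beta_cones_conic_intrinsic_vol_as_A_B}, and finally merges the two resulting $\Theta$-factors using the factorisation in Theorem~\ref{theo:theta_as_prod_of_thetas}. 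You instead reduce $\E V_k(\sP)$ by Kubota's formula, rotational invariance and the projection property (Lemma~\ref{lem:projection}) to $\E\Vol_k$ of a $k$-dimensional beta polytope with shifted parameters, and then quote Theorem~\ref{theo:exp_volume_beta_polytope}; there is no circularity, since that theorem rests on the absorption/beta-content argument and not on the intrinsic-volume result. In fact the paper anticipates exactly this link in the remark containing~\eqref{eq:kubota}, but runs it in the opposite direction (recovering the Kubota relation from the theorem). Your route is shorter and reuses the heavy analytic work already done for the expected volume; the paper's route is uniform in $k$, needs no boundary-case surgery, and exhibits the intrinsic volumes as an instance of the general $S$-functional computation, which also yields the face-wise ingredients separately.

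Two boundary points deserve care. Your treatment of $k=0$ is fine: $\Theta(0;\{\gamma_i\};\varnothing)=1/2$ by Lemma~\ref{lem:Theta_with_one_set_empty} and Example~\ref{ex:a_def_case_d_0}, and $\sum_{i}\Theta(\gamma_i;\varnothing;\{\gamma_j:j\neq i\})=1$ is the statement that the expected external angles at the vertices sum to one, which is exactly $\E V_0(\sP)=1$ written through Theorem~\ref{theo:probabily_that_F_is_face}. However, you do not address $k=1$: there your reduction lands in ambient dimension $1$, while Theorem~\ref{theo:exp_volume_beta_polytope} (and Theorem~\ref{theo:beta_content_polys} behind it) is stated only for ambient dimension at least $2$. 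The gap is harmless but must be closed explicitly, e.g.\ by observing that after projection all parameters equal $\beta_i+\frac{d-1}{2}\geq -\tfrac12>-1$ and the auxiliary point has parameter $0$, so the one-dimensional versions of the absorption argument and of Theorem~\ref{theo:beta_cones_conic_intrinsic_vol_as_A_B} apply and the volume formula remains valid in dimension one; alternatively, handle $k=1$ directly as an expected-edge-length computation. With that supplement your proof is complete.
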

\begin{proof}
	We can write the $k$-th intrinsic volume as an $S$-functional. Set $\varphi= \Vol_k$ and $\psi(C) = \a (C^\circ)$ for a polyhedral cone $C$. In particular, $\Vol_k$ is $k$-homogeneous, which means $\delta = k$. Then, Theorem~\ref{theo:expected_S_functional} gives us that
	\begin{align}\label{eq:intrinsic_volume_as_S}
		\E V_k (\sP)
		&= \sum_{\substack{I \subseteq \{1,\ldots, n\} \\ \# I = k+1}}
		\E \left[\Vol_k (F_I) \right]\E \left[  \a\left((\R^k \oplus \cD_{k,I})^\circ\right) \ind_{\{\cD_{k,I} \neq \R^{d-k}\}}\right].
	\end{align}
Let us first investigate the expected volume of $\Vol_k (F_I)$. For this, note that we have shown in Remark~\ref{rem:volume_as_theta_from_book} that
\begin{align*}
	(d!)^{-1} \frac{\Gamma \left( 1 +\frac{d+1}{2} + \sum_{i=1}^{d+1}  \g_i \right) }{\Gamma \left( 1 +\frac{d}{2} + \sum_{i=1}^{d+1}  \g_i \right)} \frac{\Gamma \left( \frac{d+1 }{2} \right)}{\Gamma \left( \frac{1}{2} \right)}\prod_{i=1}^{d+1} \frac{\Gamma \left( \g_i +1 \right)}{\Gamma \left( \g_i  +\frac{3}{2} \right)}=2\kappa_d \cdot \Theta \left(\frac{d}{2}; \gamma_1,\dots,\g_{d+1}; \varnothing\right).
\end{align*}
Furthermore, for any $I \subseteq [n]$ with $\# I = k+1$, the explicit formula for the expected volume $F_I$, which is a non-full dimensional beta simplex in $\R^d$, is
\begin{align*}
	\E \left[ \Vol_k (F_I) \right] =& (k!)^{-1}\frac{\Gamma\left(1+\frac{k+1}{2}+\sum_{i \in I}\g_i\right)}{\Gamma \left(1+\frac{k}{2}+\sum_{i \in I}\g_i \right)}  \frac{\Gamma\left(\frac{d+1}{2}\right)}{\Gamma\left(\frac{d+1-k}{2}\right)} \prod_{i \in I}\frac{\Gamma\left(\g_i+1\right)}{\Gamma\left(\g_i+\frac{3}{2}\right)}
\end{align*}
as can be found in Corollary 6.16 in~\cite{kabluchko_steigenberger_thaele}, as well as in \cite[Sections~7,12]{miles} and~\cite[Corollary on p.~10]{ruben_miles}, where it has been previously shown to hold. Combining these two, we get the following expression,
\begin{align}\label{eq:volume_non_full_as_theta}
	\E \left[ \Vol_k (F_I) \right] = 2 \kappa_k\frac{ \sqrt{\pi}\; \Gamma \left(\frac{d+1}{2}\right)}{\Gamma \left(\frac{k+1}{2}\right)\Gamma \left(\frac{d-k+1}{2}\right)} \Theta \left(\frac{k}{2};  \lbrace \gamma_i : i \in I \rbrace ; \varnothing\right).
\end{align}
Next, let us investigate the second term in the sum on the right-hand side of~\eqref{eq:intrinsic_volume_as_S}. On the event $\{\cD_{k,I} \neq \R^{d-k}\}$, the lineality space of $\cD_{k,I}$ equals $\{0\}$ by Lemma~\ref{lem:beta_cones_generic_properties}. On this event,  Lemma~\ref{lem:cone_intrinsic_volume_lineality_space} gives that,
\begin{align*}
	\a \left((\R^k \oplus \cD_{k,I})^\circ\right) =\upsilon_{k} \left(\R^k \oplus \cD_{k,I}\right)= \upsilon_0(\cD_{k,I})	
\end{align*}
and since $\E [\upsilon_0(\cD_{k,I})\ind_{\{\cD_{k,I} = \R^{d-k}\}}]=0$, we can directly use Equation~\eqref{eq:beta_cones_conic_intrinsic_vol_as_A_B}
to get
	\begin{align*}
		\E \left[  \a \left((\R^k \oplus \cD_{k,I})^\circ\right) \ind_{\{\cD_{k,I} \neq \R^{d-k}\}}\right] = \E [\upsilon_0(\cD_{k,I})] = \Theta \left(\frac{k}{2}+\sum_{i \in I} \g_i; \varnothing;\left\lbrace \g_{i}: i \in [n] \backslash I \right\rbrace\right).
	\end{align*}
	Thus, we have a product of two $\Theta$-functions on the right-hand side of~\eqref{eq:intrinsic_volume_as_S}. We can use Theorem~\ref{theo:theta_as_prod_of_thetas} to combine them into one. This then gives
	\begin{align*}
		\E V_k (\sP) = 2 \kappa_k \frac{\sqrt{\pi}\; \Gamma \left(\frac{d+1}{2}\right)}{\Gamma \left(\frac{k+1}{2}\right)\Gamma \left(\frac{d-k+1}{2}\right)} \sum_{\substack{I \subseteq [n] \\ \# I = k+1}}  \Theta \left(\frac{k}{2};  \lbrace \gamma_i : i \in I \rbrace;\left\lbrace \g_{i}: i \in [n] \backslash I \right\rbrace\right).
	\end{align*}
	Finally, using the Legendre duplication formula three times, one sees that
	\begin{align*}
		\binom{d}{k} \frac{\k_d}{\k_k \k_{d-k}} = \frac{\sqrt{\pi}\; \Gamma \left(\frac{d+1}{2}\right)}{\Gamma \left(\frac{k+1}{2}\right)\Gamma \left(\frac{d-k+1}{2}\right)}
	\end{align*}
	This gives the claim.
\end{proof}

\begin{remark}[Again the expected volume]
Note that $V_d(\sP) = \Vol_d(\sP)$ and let $n\geq d+1$. Although the setting of Theorem~\ref{theo:intrinsic_volume_of_BetaPoly} excludes the case $k=d$ (and its proof breaks down for $k=d$), it should be noted that the formula for the expected volume of $\sP$ given in Equation~\eqref{eq:E_Vol_beta_poly_1} matches what one would obtain by formally putting $k=d$ in Theorem~\ref{theo:intrinsic_volume_of_BetaPoly}. Furthermore, formally setting $k=d$ in Equation~\eqref{eq:volume_non_full_as_theta}, we get the expression for the expected volume of a full-dimensional beta simplex that was obtained in Example~\ref{ex:vol_beta_simplex_as_theta}.
\end{remark}
\begin{remark}[Comparing formulas for expected intrinsic volumes]
In Proposition~8.40 in \cite{kabluchko_steigenberger_thaele}, the following relation between expected volumes and expected intrinsic volumes of beta polytopes has been shown. For $n,d \in \N$ and $n \geq d+1$, it holds that
\begin{align}\label{eq:kubota}
	\E V_k(\sP_{n,d}^{\beta_1,\ldots, \beta_n})&=\binom{d}{k} \frac{\k_d}{\k_k \k_{d-k}} \E \Vol_k\left( \sP_{n,k}^{\beta_1 + \frac{d-k}{2},\ldots, \beta_n + \frac{d-k}{2}}\right),
\end{align}
where $k \in \lbrace 1,\dots,d\rbrace$. This equation has been shown using Kubota's formula together with Fubini's theorem. One sees directly that we can also recover it by starting with Theorem~\ref{theo:intrinsic_volume_of_BetaPoly},
\begin{align*}
	\E V_k (\sP) = 2\binom{d}{k} \frac{\k_d}{ \k_{d-k}}   \sum_{\substack{I \subseteq \{1,\ldots, n\} \\ \# I = k+1}}  \Theta \left(\frac{k}{2};  \left\lbrace \b_i + \frac{d}{2} : i \in I \right\rbrace;\left\lbrace \b_i + \frac{d}{2}: i \in I^c \right\rbrace\right),
\end{align*}
and then applying Theorem~\ref{theo:exp_volume_beta_polytope} with $k=d$ on the $\Theta$-term on the right-hand side.
\end{remark}

\subsubsection{Generalized angle sums}
As one more application of the $S$-functional, we compute expected sums of conic intrinsic volumes of tangent cones at all $k$-faces of a beta polytope.  As a special case, these include sums of internal and external angles at faces of a given dimension of a beta polytope.
\begin{theorem}[Expected conic intrinsic volumes sums of  beta polytopes]\label{theo:conic_volume_sums}
For $0\leq k \leq \ell \leq \min (d,n-1)-1$, we can express the sum of conic intrinsic volumes of $\sP$ as
	\begin{align}
		\sum_{F \in \cF_{k}(\sP)} \E [\upsilon_{\ell} (T(F,\sP))] 	= \sum_{\substack{I \subseteq \{1,\dots,n\} \\ \#I = k+1}} \sum_{\substack{J \subseteq I^c \\ \# J=\ell -k}}\Theta \left( \sum_{i \in I } \g_i;\left\{ \g_i : i\in J\right\};\left\{ \g_i : i\in I^c\bsl J\right\}\right). \label{eq:theo:conic_volume_sums_1}
	\end{align}
If instead $k\in \{0,\ldots,\min (d,n-1)-1\}$ and $\ell =\min (d,n-1)$, we have
\begin{align}
	\sum_{F \in \cF_{k}(\sP)} \E [\upsilon_{\ell} (T(F,\sP))] = \sum_{\substack{I \subseteq \{1,\dots,n\} \\ \#I = k+1}} \sum_{\substack{J \subseteq I^c \\ \# J \leq \ell-k-1}} (-1)^{\ell-k-1-\#J} \Theta \left( \sum_{i \in I} \g_i;\left\{ \g_i : i\in J\right\};\left\{ \g_i : i\in I^c \bsl J\right\}\right). \label{eq:theo:conic_volume_sums_2}
\end{align}
\end{theorem}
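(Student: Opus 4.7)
The plan is to deduce both identities from Theorem~\ref{theo:probabily_that_F_is_face} by summing face by face. Since $\sP$ is almost surely simplicial by Lemma~\ref{lem:beta_poi_general_affine}, every $k$-dimensional face has the form $F_K = [X_i : i \in K]$ for a unique $K \subseteq \{1, \ldots, n\}$ of cardinality $k+1$. Adopting the convention that $T(F_K,\sP) := \R^d$ whenever $F_K$ fails to be a face of $\sP$, one can rewrite
\[
\sum_{F \in \cF_k(\sP)} \upsilon_\ell(T(F,\sP))
= \sum_{\substack{K \subseteq \{1,\ldots,n\} \\ \#K = k+1}} \upsilon_\ell(T(F_K,\sP)) \cdot \ind_{\{F_K \text{ is a face of } \sP\}},
\]
and the two cases will then be handled separately.

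For identity~\eqref{eq:theo:conic_volume_sums_1}, the hypothesis $\ell \leq \min(d,n-1)-1$ forces $\ell < d$, so Lemma~\ref{lem:cone_intrinsic_volume_lineality_space} gives $\upsilon_\ell(\R^d) = 0$. Consequently the indicator can be dropped without changing the summand (it vanishes on $\{F_K \text{ is not a face}\}$ by virtue of the convention $T(F_K,\sP)=\R^d$). I would then take expectations term by term and substitute formula~\eqref{eq:exp_ups_tangent_beta_poly} of Theorem~\ref{theo:probabily_that_F_is_face}, translating between ``face on $k+1$ vertices of dimension $k$'' used here and ``$K$ has $k$ vertices'' used there (which amounts to a single index shift). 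This yields \eqref{eq:theo:conic_volume_sums_1} immediately.

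For identity~\eqref{eq:theo:conic_volume_sums_2}, with $\ell = \min(d,n-1)$, the indicator can no longer be removed since $\upsilon_\ell(\R^d)$ may equal $1$. Instead, on the event $\{F_K \text{ is a face}\}$, Theorem~\ref{theo:tangent_cone_is_beta_cone} realises $T(F_K,\sP)$ as an isometric image of $\R^k \oplus \mathcal{D}_K$, so its lineality space has dimension exactly $k$ and its linear hull has dimension $\min(d,n-1) = \ell$. Lemma~\ref{lem:cone_intrinsic_volume_lineality_space} then identifies $\upsilon_\ell(T(F_K,\sP))$ with the solid angle $\alpha(T(F_K,\sP))$ on this event, so each summand in the expectation equals $\E[\alpha(T(F_K,\sP))\, \ind_{\{F_K \text{ is a face}\}}]$. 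Applying formula~\eqref{eq:exp_alpha_tangent_beta_poly_on_event} term by term and summing over $K$ produces the alternating-sum identity~\eqref{eq:theo:conic_volume_sums_2}.

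The main obstacle is only careful bookkeeping: the index shift between ``number of vertices'' and ``face dimension'' in the two theorems, and the separate treatment of the indicator in the two cases (trivially dropped in the first, essential in the second). No new analytic input is needed beyond the formulas already established in Theorems~\ref{theo:probabily_that_F_is_face} and~\ref{theo:tangent_cone_is_beta_cone}.
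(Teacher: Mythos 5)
Your route is the face-by-face summation that the authors themselves flag as an alternative immediately after their proof; the paper's actual argument instead specialises the $S$-functional (Theorem~\ref{theo:expected_S_functional} with $\varphi\equiv 1$, $\delta=0$, $\psi=\upsilon_\ell$) and then evaluates $\E[\upsilon_{\ell-k}(\cD_{0,I})]$, respectively $\E[\upsilon_{\ell-k}(\cD_{0,I})\ind_{\{\cD_{0,I}\neq\R^{d-k}\}}]$, via Theorems~\ref{theo:beta_cones_conic_intrinsic_vol_as_A_B} and~\ref{theo:beta_cones_conic_intrinsic_vol_as_A_B_non_full_dim}. The two arguments are essentially equivalent in content; yours avoids the $S$-functional formalism, while the paper's version makes the canonical-decomposition/independence step reusable for the other examples of Section~\ref{subsec:examples_S_functional}. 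Your treatment of \eqref{eq:theo:conic_volume_sums_1} is correct: since $\ell<d$, $\upsilon_\ell(\R^d)=0$, so under the convention $T(F_K,\sP)=\R^d$ the indicator may be dropped, and \eqref{eq:exp_ups_tangent_beta_poly} (after the shift from $k$ vertices to $k+1$ vertices) applies without any restriction on $n$. The identification $\upsilon_\ell(T(F_K,\sP))=\alpha(T(F_K,\sP))$ on the face event via Theorem~\ref{theo:tangent_cone_is_beta_cone} and Lemma~\ref{lem:cone_intrinsic_volume_lineality_space} is also correct.

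There is, however, a small but genuine gap in your second case: formula \eqref{eq:exp_alpha_tangent_beta_poly_on_event} is stated (and proved) in Theorem~\ref{theo:probabily_that_F_is_face} only under the hypothesis $n>d+1$, whereas \eqref{eq:theo:conic_volume_sums_2} covers all $n$, in particular $n=d+1$ and $n\le d$, where $\ell=\min(d,n-1)$ may equal $n-1<d$ and where $F_K$ is automatically a face. In those cases you cannot simply quote \eqref{eq:exp_alpha_tangent_beta_poly_on_event}; instead the indicator is vacuous, the unconditional formula for $\E[\alpha(T(F_K,\sP))]$ from the same theorem reduces to the single term $\Theta\bigl(\sum_{i\in K}\g_i;\{\g_i: i\in K^c\};\varnothing\bigr)$, and Proposition~\ref{prop:relation_theta_even_odd} converts this into the alternating sum appearing in \eqref{eq:theo:conic_volume_sums_2}. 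This is precisely the case distinction the paper performs at the level of beta cones ($n\ge d+1$ via \eqref{eq:upislon_R_d_ind_BetaCone}, $n\le d$ via Theorem~\ref{theo:beta_cones_conic_intrinsic_vol_as_A_B_non_full_dim} together with Proposition~\ref{prop:relation_theta_even_odd}). With that extra case added, your proof is complete.
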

\begin{proof}
	We use the expected $S$-functional for $\varphi = 1$, which gives $\delta = 0$, and $\psi = \upsilon_\ell$. Then, by Theorem~\ref{theo:expected_S_functional},
	\begin{align*}
	\sum_{F \in \cF_k(\sP)} \E \left[\upsilon_\ell (T(F,\sP)) \right]&= \E S_{k;1,\upsilon_\ell} (\sP) = \sum_{\substack{I \subseteq [n] \\ \# I = k+1}} \E \left[  \upsilon_\ell \left(\R^{k} \oplus \cD_{0,I}\right) \ind_{\{\cD_{0,I} \neq \R^{d-k}\}}\right] \\
		&= \sum_{\substack{I \subseteq \{1,\dots,n\} \\ \# I = k+1}} \E \left[  \upsilon_{\ell-k} \left(\cD_{0,I}\right) \ind_{\{\cD_{0,I} \neq \R^{d-k}\}}\right] \\
		&= \sum_{\substack{I \subseteq \{1,\dots,n\} \\ \# I = k+1}} \E \left[  \upsilon_{\ell-k} \left(\cD_{0,I}\right) \right] \\
		&= \sum_{\substack{I \subseteq \{1,\dots,n\} \\ \#I = k+1}} \sum_{\substack{J \subseteq I^c \\ \# J=\ell -k}}\Theta \left( \sum_{i \in I } \g_i;\left\{ \g_i : i\in J\right\};\left\{ \g_i : i\in I^c\bsl J\right\}\right),
	\end{align*}
	where the second to last equation holds since $\ell \neq \min (d,n-1)$ and we applied Equation~\eqref{eq:beta_cones_conic_intrinsic_vol_as_A_B} of Theorem~\ref{theo:beta_cones_conic_intrinsic_vol_as_A_B} (and Theorem~\ref{theo:beta_cones_conic_intrinsic_vol_as_A_B_non_full_dim}) in the last equation. This proves~\eqref{eq:theo:conic_volume_sums_1}.
For the proof of~\eqref{eq:theo:conic_volume_sums_2}, a similar argument can be used, but this time, we stop at $\E[  \upsilon_{\ell-k}(\cD_{0,I}) \ind_{\{\cD_{0,I} \neq \R^{d-k}\}}]$. If  $n\geq d+1$, we evaluate it using Equation~\eqref{eq:upislon_R_d_ind_BetaCone} of Theorem~\ref{theo:beta_cones_conic_intrinsic_vol_as_A_B} which we can apply since $\ell-k =\min (d,n-1)-k = \dim \mathcal{D}_{0,I}$. If $n\leq d$, we use Theorem~\ref{theo:beta_cones_conic_intrinsic_vol_as_A_B_non_full_dim} and observe that  the alternating sum of $\Theta$-functions appearing in~\eqref{eq:theo:conic_volume_sums_2} simplifies using~\eqref{prop:relation_theta_even_odd}.
\end{proof}
It is also possible to prove Theorem~\ref{theo:conic_volume_sums} by using Equations~\eqref{eq:exp_ups_tangent_beta_poly} and~\eqref{eq:exp_alpha_tangent_beta_poly_on_event} of Theorem~\ref{theo:probabily_that_F_is_face} and taking the sum over all faces.

\subsubsection{\texorpdfstring{$k$}{k}-volume of the \texorpdfstring{$k$}{k}-skeleton}
For a polytope $P \subseteq \R^d$, the \emph{$k$-skeleton} of $P$ (with $k \in \lbrace 0,\dots,\dim P-1\rbrace$) and its \emph{$L^p$-volume} (with $p\geq 0$) are defined to be
\begin{align*}
\skel_k P = \bigcup_{F \in \cF_k(P)} F
\quad \text{ and } \quad
\Vol_{k; L^p} (\skel_k P) = \sum_{F\in \cF_k(P)} (\Vol_k F)^p.
\end{align*}
\begin{theorem}\label{theo:expected_volume_of_skeleton}
For $k \in \lbrace 0,\dots,\min (d,n-1)-1\rbrace$ and $p\geq 0$, the expected $L^p$-volume of the $k$-dimensional skeleton of a beta polytope $\sP$ is given by
	\begin{multline*}
		\E \left[\Vol_{k; L^p} (\skel_k \sP) \right]
\\= 2\sum_{\substack{I \subseteq \{1,\ldots, n\} \\ \# I = k+1}} 	\E \left[\Vol_k^p (F_I) \right]  \sum_{\substack{J \subseteq I^c \\ \# J \in \{d-k-1,d-k-3,\dots\}}} \Theta \left(\frac{kp}{2}+\sum_{i\in I }\g_i;\{\g_i: i \in J\};\{\g_i: i \in I^c \bsl J\}\right),
	\end{multline*}
where we set $\g_i \coloneqq \b_i + d/2$ for $i\in \{1,\ldots, n\}$ and
	\begin{align*}
		\E [\Vol_{k}^{p}(F_I)] = (k!)^{-p} \frac{\Gamma\left(\frac{(k+1)p}{2} +1+ \sum_{i \in I} \g_i \right)}{\Gamma\left(\frac{kp}{2} + 1 + \sum_{i \in I} \g_i \right)} \prod_{i \in I} \frac{\Gamma\Big(\g_i  +1\Big)}{\Gamma\Big(  \g_i+\frac {p}2 + 1 \Big)}\prod_{i=1}^{k} \frac{\Gamma\Big(\frac{d+1+p-i}{2} \Big)}{\Gamma\Big(\frac{d+1-i}{2}\Big)}.
	\end{align*}
\end{theorem}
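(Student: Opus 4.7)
The plan is to recognize $\Vol_{k;L^p}(\skel_k \sP)$ as a special case of the $S$-functional $S_{k;\varphi,\psi}$ and then apply Theorem~\ref{theo:expected_S_functional}. Set $\varphi = \Vol_k^p$ and $\psi \equiv 1$. Since $\Vol_k$ is $k$-homogeneous, $\varphi$ is $\delta$-homogeneous with $\delta = kp \geq 0$; moreover $\psi$ is trivially rotationally invariant and measurable. By definition,
$$
S_{k;\Vol_k^p,1}(\sP) = \sum_{F \in \cF_k(\sP)} \Vol_k^p(F) = \Vol_{k;L^p}(\skel_k \sP).
$$
Theorem~\ref{theo:expected_S_functional} then immediately yields
$$
\E \Vol_{k;L^p}(\skel_k \sP) = \sum_{\substack{I \subseteq \{1,\ldots,n\} \\ \#I = k+1}} \E[\Vol_k^p(F_I)] \cdot \P\!\left[\cD_{kp,I} \neq \R^{d-k}\right],
$$
where $\cD_{kp,I} \sim \BetaCone\bigl(\R^{d-k};\sum_{i\in I}(\b_i+\tfrac{d}{2}) - \tfrac{d-kp-k}{2}; \{\b_i+\tfrac{k}{2}: i \in I^c\}\bigr)$.

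Next I would rewrite the absorbtion probability via Theorem~\ref{theo:beta_cones_conic_intrinsic_vol_as_A_B}, Equation~\eqref{eq:theo:beta_cones_conic_intrinsic_vol_as_A_B_2}. Passing from $\beta$-parameters to $\gamma$-parameters by adding $\tfrac{d-k}{2}$ to each parameter (which is the convention $\gamma = \beta + \tfrac{\dim}{2}$ in the ambient $\R^{d-k}$), the first parameter of $\cD_{kp,I}$ becomes $\sum_{i\in I}\gamma_i + \tfrac{kp}{2}$, while each remaining parameter becomes $\beta_i + \tfrac{k}{2} + \tfrac{d-k}{2} = \gamma_i$ for $i \in I^c$. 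Applying \eqref{eq:theo:beta_cones_conic_intrinsic_vol_as_A_B_2} to the cone $\cD_{kp,I}$ (with $d$ there replaced by $d-k$, and $n$ there replaced by $n-k-1$) then gives
$$
\P[\cD_{kp,I} \neq \R^{d-k}] = 2 \sum_{\substack{J \subseteq I^c \\ \#J \in \{d-k-1, d-k-3,\ldots\}}} \Theta\!\left(\tfrac{kp}{2}+\sum_{i\in I}\g_i;\{\g_i: i \in J\};\{\g_i: i \in I^c\bsl J\}\right),
$$
which produces exactly the announced expression after substitution into the displayed sum.

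It remains to establish the explicit formula for $\E[\Vol_k^p(F_I)]$. This is the Ruben--Miles moment formula for a non-full-dimensional beta simplex: $F_I = [X_{i_1},\ldots,X_{i_{k+1}}]$ is a $k$-simplex in $\R^d$ whose vertices are beta-distributed with parameters $\beta_i$, $i\in I$. The cleanest derivation uses the canonical decomposition (Theorem~\ref{theo:ruben_miles}): write $\Vol_k(F_I) = (1-h^2(A_I))^{k/2}\Vol_k(\tilde F_I)$, where $\tilde F_I = [Z_{i_1},\ldots,Z_{i_{k+1}}]$ lives inside $\BB^k$, then use that $h^2(A_I) \sim \Beta(\tfrac{d-k}{2}, \tfrac{k(d+1)}{2} + 1 + \sum_{i\in I}\beta_i)$ and that the $p$-th moment of $\Vol_k(\tilde F_I)^2$ can be written in closed form via the Selberg-type integral identity stated in Corollary~6.16 of~\cite{kabluchko_steigenberger_thaele} (which in turn follows from Miles--Ruben). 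Combining the two independent factors via the product formula for beta-density normalizing constants yields precisely the stated product of Gamma quotients.

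The main obstacle is purely bookkeeping: tracking the $\tfrac{d-k}{2}$-shifts between $\beta$- and $\gamma$-parameters when we identify the absorbtion probability of $\cD_{kp,I}$ as a sum of $\Theta$-values, and making sure the ranges of $\#J$ match the parity prescription of~\eqref{eq:theo:beta_cones_conic_intrinsic_vol_as_A_B_2}. There is no new analytic content beyond reusing the $S$-functional machinery and the classical moment formula for beta simplex volumes.
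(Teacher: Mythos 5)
Your proposal is correct and follows essentially the same route as the paper: identify $\Vol_{k;L^p}(\skel_k\sP)$ as $S_{k;\Vol_k^p,1}(\sP)$ with $\delta=kp$, apply Theorem~\ref{theo:expected_S_functional}, express $\P[\cD_{kp,I}\neq\R^{d-k}]$ via Theorem~\ref{theo:beta_cones_conic_intrinsic_vol_as_A_B} after the $\gamma$-parameter shift, and quote the Ruben--Miles/Corollary~6.16 moment formula for $\E[\Vol_k^p(F_I)]$. The parameter bookkeeping you carry out (first parameter $\tfrac{kp}{2}+\sum_{i\in I}\gamma_i$, remaining parameters $\gamma_i$, parity $\#J\in\{d-k-1,d-k-3,\dots\}$) matches the paper exactly.
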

\begin{proof}
	We use the $S$-functional with $\varphi = \Vol_k^p$, which is a $kp$-homogeneous functional, and $\psi=1$,
	\begin{align*}
		\E \left[\Vol_{k; L^p} (\skel_k \sP) \right]=		\E S_{k;\Vol_k^p,1} (\sP) &= \sum_{\substack{I \subseteq \{1,\ldots, n\} \\ \# I = k+1}}
		\E \left[\Vol_k^p (F_I) \right] \P [\cD_{kp,I} \neq \R^{d-k}],
	\end{align*}
	where the probability of the event $\lbrace \cD_{kp,I} \neq \R^{d-k} \rbrace$ can be made explicit using Theorem~\ref{theo:beta_cones_conic_intrinsic_vol_as_A_B}.
Finally, a formula for the $p$-th moment of the volume of the  beta simplex $F_I$ appearing in the above theorem can be found in Corollary~6.16 in~\cite{kabluchko_steigenberger_thaele}, as well as in \cite[Sections~7,12]{miles} and~\cite[Corollary on p.~10]{ruben_miles}, where it was originally derived.
\end{proof}
\begin{example}
Setting $p=0$ we recover the formula for $\E f_k(\sP)$ obtained in Theorem~\ref{theo:f_vect}, see~\eqref{eq:theo:f_vect_proof}.
\end{example}
\begin{example}[Expected $k$-volume of a $k$-skeleton]
The $k$-dimensional volume of the $k$-dimensional skeleton of a polytope $P$ is defined as $\Vol_k (\skel_k(P)) =  \sum_{F \in \cF_k(P)} \Vol_k(F)$; see, e.g., Chapter 10 of~\cite{schneider_weil_book}.  Setting $p=1$ in Theorem~\ref{theo:expected_volume_of_skeleton} gives an explicit formula for $\E \Vol_k (\skel_k(\sP))$.
In particular, by further setting $k=d-1$, we get the expected surface area of $\sP$, which has also been calculated in Corollary~8.34 of~\cite{kabluchko_steigenberger_thaele}, as well as in Theorem~2.2 in~\cite{moseeva2024mixedrandombetapolytopes} (where one takes $a=0$ and $b=1$).
\end{example}

\section*{Acknowledgement}
Supported by the German Research Foundation under Germany’s Excellence Strategy EXC 2044--390685587, Mathematics
Münster: Dynamics -- Geometry -- Structure and by the DFG priority program SPP 2265 Random Geometric Systems.

\bibliography{beta_poly_bib}
\bibliographystyle{plainnat}

\vspace{1cm}

\footnotesize

\textsc{Zakhar Kabluchko:} Institut f\"ur Mathematische Stochastik, Universit\"at M\"unster\\
\textit{E-mail}: \texttt{zakhar.kabluchko@uni-muenster.de}

\bigskip

\textsc{David Albert Steigenberger:} Institut f\"ur Mathematische Stochastik, Universit\"at M\"unster\\
\textit{E-mail}: \texttt{davidsteigenberger@uni-muenster.de}

\end{document}